\title{Equivariant Factorization Homology 
	\\of global quotient orbifolds}
\author{T.A.N. Weelinck}
\let\oldtocsection=\tocsection
\let\oldtocsubsection=\tocsubsection
\renewcommand{\tocsection}[2]{\hspace{0em}\oldtocsection{#1}{#2}}
\renewcommand{\tocsubsection}[2]{\hspace{2em}\oldtocsubsection{#1}{#2}}
\theoremstyle{plain}
\newtheorem{thm}{Theorem}[section]
\numberwithin{equation}{thm}
\newtheorem{prop}[thm]{Proposition}
\newtheorem{lem}[thm]{Lemma}
\newtheorem{cor}[thm]{Corollary}
\theoremstyle{definition}
\newtheorem{defn}[thm]{Definition}
\newtheorem{ex}[thm]{Example}
\newtheorem{rmk}[thm]{Remark}
\theoremstyle{remark}
\newtheorem*{conv}{Convention}
\newtheorem*{nota}{Notation}
\newcommand{\R}{\mathbb{R}}
\newcommand{\N}{\mathbb{N}}
\newcommand{\Z}{\mathbb{Z}}
\newcommand{\C}{\mathbb{C}}
\newcommand{\bE}{\mathbb{E}}
\newcommand{\cS}{\mathcal{S}}
\newcommand{\cF}{\mathcal{F}}
\newcommand{\kk}{\mathbb{K}}
\newcommand{\vect}{\mathbf{Vect}}
\newcommand{\Rex}{\mathbf{Rex}}
\newcommand{\Orb}{\mathbf{Orb}}
\newcommand{\Disk}{\mathbf{Disk}}
\newcommand{\Mfld}{\mathbf{Mfld}}
\newcommand{\Spaces}{\mathbf{Spaces}}
\newcommand{\Quot}{\mathbf{Quot}}
\newcommand{\AB}{\mathbf{Ab}}
\newcommand{\Ch}{\mathbf{Ch}}
\newcommand{\gar}{\Gamma^\rho}
\newcommand{\ev}{\mathrm{ev}}
\newcommand{\Env}{\mathrm{Env}}
\newcommand{\Alg}{\mathrm{Alg}}
\newcommand{\colim}{\mathrm{colim}\;}
\newcommand{\catC}{\mathcal{C}}
\newcommand{\catA}{\mathcal{A}}
\newcommand{\cH}{\mathcal{H}}
\newcommand{\catD}{\mathcal{D}}
\newcommand{\catM}{\mathcal{M}}
\newcommand{\catN}{\mathcal{N}}
\newcommand{\catO}{\mathcal{O}}
\DeclareSymbolFont{bbold}{U}{bbold}{m}{n}
\DeclareSymbolFontAlphabet{\mathbbold}{bbold}
\newcommand{\g}{\mathfrak{g}}
\newcommand{\e}{\mathbf{e}}
\newcommand{\triv}{\mathbb{I}}
\newcommand{\sign}{\sigma}
\newcommand{\fr}{\mathrm{fr}}
\newcommand{\U}{\mathcal{U}}
\newcommand{\ga}{\Gamma}
\newcommand{\uqg}{\U_q(\g)}
\newcommand{\tensor}{\otimes}
\newcommand{\ep}{\varepsilon}
\newcommand{\Fr}{\mathrm{Fr}}
\newcommand{\Map}{\mathrm{Map}}
\newcommand{\Emb}{\mathrm{Emb}}
\newcommand{\EEmb}{\mathbb{E}\mathrm{mb}}
\newcommand{\Fun}{\mathrm{Fun}}
\newcommand{\id}{\mathrm{id}}
\newcommand{\Ad}{\mathrm{Ad}}
\newcommand{\Fin}{\mathrm{Fin}}
\newcommand{\Ind}{\mathrm{Ind}}
\newcommand{\hocolim}{\mathrm{hocolim}\;}
\newcommand{\DD}{\mathbb{D}}
\begin{document}

\begin{abstract}
We introduce equivariant factorization homology, extending the axiomatic framework of Ayala-Francis to encompass multiplicative invariants of manifolds equipped with finite group actions. 
Examples of such equivariant factorization homology theories include Bredon equivariant homology and (twisted versions of) Hochschild homology.
Our main result is that equivariant factorization homology satisfies an equivariant version of $\tensor$-excision, and is uniquely characterised by this property.
We also discuss applications to representation theory, such as constructions of categorical braid group actions. 
\end{abstract}

\maketitle
\tableofcontents
\section{Introduction}

The goal of this paper is to generalise the theory of factorization homology of manifolds to an equivariant setting. We thereby construct new invariants of so-called global quotient orbifolds, which are smooth manifolds equipped with finite group actions. 

The origins of factorization homology lie in conformal field theory. 
In \cite{bd04} Beilinson and Drinfeld defined chiral homology, an (algebro-geometric) abstraction of the space of conformal blocks of a vertex algebra. 
A topological analogue of their construction, known as factorization homology, was defined by Lurie \cite{lurietft,lha} and further developed by Ayala, Francis and Tanaka \cite{af15, aft17}.
Additionally, factorization homology has roots in the labeled configuration spaces of Segal \cite{segal73, segal10}, McDuff \cite{mcduff75}, Salvatore \cite{salvatore01} and others. 
The invariants constructed by factorization homology were also described in the thesis of Andrade \cite{andrade10}, and are closely related to the blob homology of Morrison-Walker \cite{mw12}. Furthermore, factorization homology can be understood through the factorization algebras of Costello-Gwilliam \cite{cg17}, a point to which we will return below.
We recommend \cite{ginot15} as a survey.

Like most homology theories, the invariants of factorization homology are constructed by fixing an algebraic input $A$, that provides gluing rules, and then applying these gluing rules to a manifold $M$, which is viewed as a gluing pattern. 
The obtained invariant is denoted $\int_M A$.
The invariants are multiplicative, in the sense that they take values in a given symmetric monoidal category $\catC^{\tensor}$ and the invariant $\int_{M \amalg N} A$ assigned to a disjoint union of manifolds is naturally isomorphic to the tensor product $\int_M A \tensor \int_N A$.

Factorization homology can be defined in diverse geometric settings: for framed manifolds, oriented manifolds, as well as manifolds with singularities \cite{af15,aft17}. 
The algebraic input needed depends on the geometric context considered. To define invariants of framed $n$-manifolds one needs an $E_n$-algebra, whereas the invariants of oriented $n$-manifolds require a (confusingly named, in this context) framed $E_n$-algebra.
For example, an associative algebra $A$ can be viewed as an $E_1$-algebra in chain complexes, and the invariant $\int_{S^1} A$ that is assigned to the circle, is the Hochschild homology of $A$ \cite[Ex. 5.5.3.14]{lha}.
The invariants are functorial with respect to manifold embeddings.
This recovers the well-known circle action on Hochschild homology in the previous example. 

The interpretation of factorization homology in topological quantum field theory (abbreviated TQFT) is important, both motivationally and conceptually. 
In \cite{cg17} Costello-Gwilliam defined factorization algebras, axiomatising the structure of observables within a quantum field theory.
In the case of a TQFT the observables are completely determined by the local observables, which are a locally constant factorization algebra on $\R^n$. These can be identified with $E_n$-algebras by a result of Lurie \cite[Th. 5.4.5.9]{lha}.
Factorization homology then provides a construction of the global observables $\int_M A$ on the space-time $M$ out of the local observables $A$. 
This is an instance of the \emph{locality principle} in quantum field theory: the global observables should be determined by the local observables.
Further connections to TQFTs are provided by the work of Scheimbauer \cite{scheimbauer}, who has used factorization homology with coefficients in $E_n$-algebras to construct fully extended n-dimensional TQFTs in the sense of the Atiyah-Segal axiomatisation of TQFTs \cite{atiyah88,segal04}.
Here \emph{fully extended}, or \emph{fully local}, refers to the cobordism hypothesis \cite{bd95,lurietft}, where the locality principle is encoded via cutting-and-gluing rules.

In the context of factorization homology locality takes the form of $\tensor$-excision. 
This property states that if one decomposes a manifold along a collar-gluing $M \cong M_+ \cup_{M_0 \times \R} M_-$ then the invariant $\int_M A$ can be computed as a relative tensor product $\int_{M_+} A \tensor_{\int_{M_0 \times \R} A} \int_{M_-} A$.
A key result of \cite{af15} is that factorization homology satisfies $\tensor$-excision and is uniquely characterised, as a symmetric monoidal functor, by this property. 
This generalises the Eilenberg-Steenrod axioms for singular homology. 
The $\tensor$-excision property is then thought of as a multiplicative analogue of the Mayer-Vietoris property of singular homology.

In this work we generalise factorization homology further. 
We define factorization homology of smooth manifolds equipped with finite group actions. 
These actions are not required to be free, so that we are constructing invariants of global quotient orbifolds.  
Our main result is that this equivariant factorization homology theory satisfies an equivariant version of $\tensor$-excision, and is uniquely characterised by this property.
This extends the characterisation result of Ayala-Francis, and is reminiscent of Bredon's axioms for equivariant homology. 

\subsection{Factorization homology of manifolds} \label{subsec:summary}

Before stating our main results we will introduce the geometric context of factorization homology in greater detail, and recall some of the basic definitions in preparation of our new definitions for orbifolds. 
Ayala-Francis utilise the classical notion of \emph{$G$-structures} on manifolds, under the name of \emph{$G$-framed manifold}, as a unifying framework to treat different tangential structures such as framings and orientations.\footnote{To be precise, in \cite{af15} they encode the $G$-structure topologically via maps into classifying spaces. See \cite[\S 7 \& \S 10]{mitchell} for a clear exposition relating the topological and smooth perspective.}

The idea of a $G$-structure is in the spirit of Klein's Erlangen program. 
Rather than defining a structure on the tangent bundle, one considers the frames\footnote{A \emph{frame} on $U\subset M$ is a collection of sections of $TM$ over $U$ that provide a basis in $T_xM$, $\forall x\in U$.} on the manifold that are invariant under the corresponding structure group. 
For example, instead of studying a metric, one studies the $O(n)$-invariant frames on the manifold.
These $O(n)$-invariant frames form an $O(n)$-subbundle of the frame bundle, which becomes the central object of study.
A general $G$-structure is then simply a $G$-subbundle of the frame bundle. 
For example, a framing is exactly an $\e$-structure for the trivial group $\e$ and orientations are $SO(n)$-structures.

Ayala-Francis then define the $\infty$-category $\Mfld_n^G$ of $n$-dimensional $G$-framed manifolds, where maps are embeddings that preserve the $G$-structure up to homotopy. 
This categorical set-up has several benefits. Firsly, the invariants $\int_M A$ are simple to define.
Secondly, the invariants will be multiplicative and functorial by definition.  
Namely, one considers a subcategory $\Disk_n^G \subset \Mfld_n^G$, consisting only of disjoint unions of the trivially $G$-framed disk $\R^n$.
Then one fixes a \emph{coefficient system} A with values in a given symmetric monoidal category $\catC^\tensor$. 
They are referred to as a \emph{$\Disk_n^G$-algebras} in \cite{af15}. 
Concretely, a $\Disk_n^G$-algebra valued in $\catC^\tensor$ is a symmetric monoidal functors
$A: \Disk_n^G \rightarrow \catC^\tensor$,
where the monoidal structure on $\Disk_n^G$ is given by disjoint union. 

\begin{ex}
	The notion $\Disk_n^\e$-algebra coincides with that of $E_n$-algebra. 
\end{ex}
The factorization homology with coefficients in $A$ is then defined as the left Kan extension of $A$ along the inclusion $\Disk_n^G \subset \Mfld_n^G$.
This yields a symmetric monoidal functor 
\[ \int_\bullet A: \Mfld_n^G \rightarrow \catC^{\tensor}, \]
from which the invariant $\int_M A$ is obtained by evaluating the functor on $M$. 
Intuitively, the left Kan extension replaces the manifold $M$ with a colimit of a covering of disks $\R^n \subset M$, and then computes the invariant as $\int_M A := \colim_{\R^n \subset M } A(\R^n)$.

\subsection{Factorization homology of orbifolds}
Recall that a global quotient orbifold is a smooth manifold equipped with a finite group action. 
We wish to consider various notions of tangential structures on these: framings, orientation, almost complex structures, etc. 
These notions were not yet defined for orbifolds, so we propose definitions of our own.

A first definition of a framing of $[M/\ga]$ might be a $\ga$-invariant framing of $M$. 
This, however, is too restrictive: there are essentially no examples.
Rather, we require the framing to be $\ga$-equivariant with respect to a given representation $\rho: \ga \rightarrow GL(\R^n)$. 
This inspires our definition for general $G$-structures, which captures the intuition of $G$-invariant and $\ga$-equivariant frames.
Concretely, this means lifting the $\ga$-action on $M$ to the $G$-structure, where the interaction between $G$ and $\ga$ is controlled by a representation $\rho$. 
This data defines what we dub a $(\rho,G)$-structure on $[M/\ga]$, or $(\rho,G)$-framed global quotient.

We then define $\infty$-categories $\gar\Orb^{G}_n$ of $(\rho,G)$-framed global quotients, where maps are $\ga$-equivariant embeddings that preserve the $(\rho,G)$-structure up to homotopy. 
This generalises the categorical set-up in \cite{af15}, and allows us to define our invariants similarly.  
We pause to highlight an important feature of the category of orbifold disks.  
A manifold is covered by disks $\R^n$, but to cover an orbifold one needs local models $[\R^n/I]$ for the various subgroups $I$ of $\ga$.
Therefore, we need to include all these local models in our subcategory of orbifold disks $\gar\Disk^{G}_n \subset \gar\Orb^{G}_n$.
Otherwise our set-up is the same: a $\gar\Disk^{G}_n$-algebra is a symmetric monoidal functor
$A:  \gar\Disk_n^G \rightarrow \catC^\tensor$,
and the equivariant factorization homology with coefficients in $A$ is defined as the left Kan extension of $A$ along the inclusion $\gar\Disk^{G}_n \subset \gar\Orb^{G}_n$.
 
Let us give some examples of coefficient systems.
\begin{ex} (Proposition \ref{prop:equivariantalgebras})
	Consider the trivial representation $\triv$.\footnote{By trivial representation we mean $\triv: \ga \rightarrow GL(\R^n)$ mapping everything to $\mathrm{Id} \in GL(\R^n)$.} A $\ga^\triv\Disk^{\e}_n$-algebra is exactly a $\ga$-equivariant $E_n$-algebra, which in turn is equivalent to the data of a $\ga$-equivariant locally constant factorization algebra in the sense of Costello-Gwilliam \cite[\S 3.7]{cg17}.
\end{ex}

\begin{ex} (Example \ref{ex:diskrotalg}) 
	Let $\sign: \Z_2 \rightarrow GL(\R^2)$ be the sign representation. A $\Z_2^{\sign}\Disk_2^{\e}$-algebra in vector spaces is a pair $(A,M)$ where $A$ is a commutative algebra with involution $\phi: A \rightarrow A$, and $M$ is an $A$-module so that $\phi(a) \cdot m = a \cdot m,$ $\forall a\in A$, $m\in M$.
\end{ex}

\begin{ex} (Example \ref{ex:brpair}, \cite[Thm 1.3]{TOQSP}) \label{ex:qsp}
	Consider the $2$-category $\Rex$ of $\kk$-linear categories (see \S \ref{subsec:crt}). 
	A $\Z_2^{\sign}\Disk^{\e}_{2}$-algebra in $\Rex$ consists of a braided tensor category $\catA$ with an anti-involution $\Phi: \catA \rightarrow \catA$, and an $\catA$-module category $\catM$ with a $\Z_2$-cylinder braiding. Examples are provided by the categories of representations of a quantum symmetric pair. 	
	
	Recall, a quantum symmetric pair $(\uqg,B)$ is a quantization of the symmetric pair $(\g, \g^{\theta})$, where $\g$ is a complex semisimple Lie algebra and $\g^{\theta} \subset \g$ is a sub-Lie algebra fixed by some given involutive automorphism $\theta: \g \rightarrow \g$ (see \cite{letzter99,letzter02,letzter03}).
\end{ex}

\subsection{Main results}

Our first main result establishes $\tensor$-excision. 

\begin{conv}
	As in \cite{af15} we make a restriction on target categories . We assume target categories $\catC^\tensor$ are $\tensor$-cocomplete, meaning that $\catC^\tensor$ is cocomplete and that the tensor product preserves certain colimits (see Definition \ref{def:tensorcocomplete}). 
\end{conv}

\begin{thm} (Theorem \ref{thm:excision})\label{intro:excision}
	Let $\catC^{\tensor}$ be $\tensor$-cocomplete, and let $A$ be a $\gar\Disk^{G}_n$-algebra in $\catC^{\tensor}$.
	For every $\ga$-equivariant collar-gluing 
	\[M  \cong M_- \cup_{N \times \R} M_+\] 
	of a global quotient the following natural equivalence holds:
	\begin{align}
		\int_{[M_+/\ga]} A \; \; \underset{\int_{[N \times \R/\ga]} A}{\bigotimes} \; \; \int_{[M_-/\ga]} A \quad \longrightarrow \quad \int_{[M/\ga]} A. \label{introeq:excision}
	\end{align} 
	We refer to Equation \eqref{introeq:excision} as the $\tensor$-\textbf{excision} property. 
\end{thm}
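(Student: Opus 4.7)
My plan is to follow the strategy of Ayala--Francis for ordinary $\tensor$-excision, adapted throughout to accommodate the equivariance. Both sides of \eqref{introeq:excision} will be rewritten as colimits indexed by appropriate categories of orbifold disks, and the equivalence will then be extracted from a cofinality argument. By the construction of $\int A$ as a left Kan extension, the right-hand side is
\[ \int_{[M/\ga]} A \;\simeq\; \underset{([\R^n/I]\hookrightarrow [M/\ga])}{\colim} A([\R^n/I]), \]
the colimit ranging over the comma category $(\gar\Disk_n^G)_{/[M/\ga]}$ of $\ga$-equivariant $(\rho,G)$-framed disk embeddings.

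To analyse the left-hand side, I first observe that the $\R$-factor of $[N\times\R/\ga]$ endows $\int_{[N\times\R/\ga]}A$ with the structure of an associative algebra in $\catC^{\tensor}$, and that inclusions of finite disjoint unions of collar pieces into $M_{\pm}$ endow $\int_{[M_\pm/\ga]}A$ with compatible right and left module structures over this algebra. Under the $\tensor$-cocompleteness hypothesis on $\catC^{\tensor}$, the relative tensor product on the left of \eqref{introeq:excision} is then computed by the geometric realisation of the associated two-sided bar construction.

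The core of the proof is a cofinality statement. Let $\gar\Disk_n^G(M)^{\mathrm{coll}}$ denote the full subcategory of $(\gar\Disk_n^G)_{/[M/\ga]}$ whose objects are those equivariant disk embeddings that factor through one of $[M_+/\ga]$, $[M_-/\ga]$, or a tubular neighbourhood $[N\times \R/\ga]\hookrightarrow [M/\ga]$. This subcategory comes equipped with a natural augmentation to the simplicial diagram computing the bar construction: disks landing in the collar encode the algebra multiplication, while disks landing in $M_\pm$ encode the two module structures. The plan is now to prove (i) that the inclusion $\gar\Disk_n^G(M)^{\mathrm{coll}} \hookrightarrow (\gar\Disk_n^G)_{/[M/\ga]}$ is cofinal, and (ii) that the colimit over $\gar\Disk_n^G(M)^{\mathrm{coll}}$ recovers the geometric realisation identified in the previous paragraph. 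Assembling (i) and (ii) yields the equivalence \eqref{introeq:excision}.

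The main obstacle lies in step (i). An equivariant disk $[\R^n/I]\hookrightarrow [M/\ga]$ may fail to factor through any single piece of the decomposition because its $\ga$-orbit in $M$ can straddle the collar. To handle this I will use that the collar-gluing is itself $\ga$-equivariant, so that any such disk admits a refinement, via a $\ga$-equivariant isotopy of subdisks, to a cover by equivariant disks each of which lies in $M_+$, in $M_-$, or in the tubular neighbourhood. The subtle point is to execute this refinement while preserving the $(\rho,G)$-framing up to homotopy, so that the refining disks genuinely live in $\gar\Disk_n^G$; this is the equivariant analogue of the good-cover cofinality arguments of \cite{af15}, and it is where the compatibility between the $\ga$-action and the $(\rho,G)$-structure must be exploited carefully.
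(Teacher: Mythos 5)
You take a genuinely different route from the paper. The paper's proof (following Francis, \cite[Prop.~3.24]{francis13}) goes through the derived coend formula \eqref{coendformula}: Corollary \ref{cor:mapspacedecomp} resolves each mapping space $\EEmb^G_{\fr}(D,[M/\ga])$ as a geometric realisation of mapping spaces into the pieces of the collar-gluing — this itself rests on Proposition \ref{prop:confsspacedecomp}, a hypercover-style decomposition of the equivariant configuration spaces $F^I_k[M/\ga]$ — producing a bisimplicial object, after which one uses $\tensor$-cocompleteness and the standard bisimplicial interchange $|\,|X_{\bullet,*}|_*\,|_\bullet \simeq |\,|X_{\bullet,*}|_\bullet\,|_*$ to recognise the two-sided bar construction. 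Your proposal instead follows the Ayala--Francis cofinality strategy of \cite{af15}: keep the colimit formula \eqref{fhcolim} and shrink the indexing comma category to a cofinal subcategory adapted to the collar. Both strategies are legitimate; the paper's is more geometric (everything is done at the level of configuration spaces, and Theorem \ref{htype} is the workhorse), while yours is more categorical and defers the geometry into the contractibility of refinement categories.

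That said, two points in your proposal need more care. First, in step (ii), the subcategory $\gar\Disk_n^G(M)^{\mathrm{coll}}$ as you have defined it (disks factoring through $M_+$, $M_-$, or a tubular neighbourhood of the collar) is not by itself a simplicial diagram; to extract the bar construction you additionally need a functor to $\Delta^{op}$ recording the linear order of the collar pieces that a given disjoint union of disks passes through, and the identification of $\colim$ over this category with the realisation of \eqref{simplbar} is a nontrivial (though standard) step. Second, in step (i), your phrasing ``its $\ga$-orbit can straddle the collar'' slightly misidentifies the issue: since the collar-gluing $f\colon M\to[-1,1]$ is $\ga$-equivariant, the pieces $M_\pm$, $M_0$ are already $\ga$-invariant, so the obstruction is the same as in the non-equivariant case — a single connected component of $\DD_I$ may have image meeting more than one piece — and the refinement must be performed $\ga$-equivariantly across the whole orbit of components. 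Your proposed remedy (equivariant isotopies shrinking the disk) is the right idea, but proving weak contractibility of the relevant comma categories (the $\infty$-categorical cofinality criterion) is exactly the content that the paper offloads onto the configuration-space hypercover in Proposition \ref{prop:confsspacedecomp}, so you should not expect this step to be lighter than the paper's argument, merely different in flavour.
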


Our second main result characterises equivariant factorization homology via $\tensor$-excision.

\begin{thm} (Theorem \ref{thm:classification}) \label{intro:class}
	Let $\catC^{\tensor}$ be $\tensor$-cocomplete, and let $H: \gar\Orb^{G}_n \rightarrow \catC^{\tensor}$ be a symmetric monoidal functor satisfying $\tensor$-excision. We denote with $A_H$ the $\gar\Disk^{G}_n$-algebra obtained by restricting $H$ to $\gar\Disk^{G}_n$. 
	Then $H$ is computed by factorization homology with coefficients in $A_H$:
	\[ H[M/\ga] \cong  \int_{[M/\ga]} A_H. \]
\end{thm}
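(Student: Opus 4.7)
The strategy is to compare the two symmetric monoidal functors $\int_\bullet A_H$ and $H$ by constructing a canonical natural transformation between them and then proving it is an equivalence via an equivariant handle-decomposition induction, in the spirit of the non-equivariant classification theorem of Ayala-Francis.

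First, since $\int_\bullet A_H$ is by definition the left Kan extension of $A_H$ along the inclusion $\gar\Disk^{G}_n \hookrightarrow \gar\Orb^{G}_n$, and since $H$ restricts to $A_H$ on $\gar\Disk^{G}_n$ by construction, the universal property of left Kan extension yields a canonical natural transformation
\[ \tau : \int_\bullet A_H \longrightarrow H \]
of symmetric monoidal functors. On every object of $\gar\Disk^{G}_n$---a finite disjoint union of orbifold disks $[\R^n/I]$ with $I \leq \ga$---the map $\tau$ is tautologically an equivalence, and the task is to extend this to all of $\gar\Orb^{G}_n$.

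Using monoidality of $\tau$, I would reduce to the case of a single connected global quotient $[M/\ga]$ and argue by induction along a $\ga$-equivariant handle decomposition. Equivariant Morse theory provides an exhaustion $M_0 \subset M_1 \subset \cdots$ of $M$ by $\ga$-submanifolds such that each step $M_{k+1} \cong M_k \cup_{N_k \times \R} H_k$ is a $\ga$-equivariant collar-gluing, with $[H_k/\ga]$ a disjoint union of orbifold disks in $\gar\Disk^{G}_n$. At each stage, Theorem \ref{intro:excision} and the $\tensor$-excision hypothesis on $H$ present both $\int_{[M_{k+1}/\ga]} A_H$ and $H[M_{k+1}/\ga]$ as the same relative tensor product of their respective values on $[M_k/\ga]$, $[N_k \times \R/\ga]$, and $[H_k/\ga]$. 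By the induction hypothesis (and the disk case for $[H_k/\ga]$ and its collar), $\tau$ is an equivalence on each of these inputs, hence on $[M_{k+1}/\ga]$. Passage to the colimit $M = \bigcup_k M_k$ then uses that both sides commute with the relevant sequential colimits, a consequence of $\tensor$-cocompleteness of $\catC^\tensor$ together with iterated $\tensor$-excision.

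The principal obstacle lies in the equivariant handle-decomposition step: one must not only invoke the existence of $\ga$-equivariant Morse functions but also verify that the $(\rho,G)$-structure restricts compatibly to each collar $N_k \times \R$ and each handle $H_k$, so that these pieces genuinely define morphisms in $\gar\Orb^{G}_n$ and so that $[H_k/\ga]$ lies in $\gar\Disk^{G}_n$. In other words, the delicate point is matching equivariant Morse theory with the tangential structure encoded by the $(\rho,G)$-framing; once this compatibility is in place, the excision-driven induction proceeds formally from the Kan extension presentation.
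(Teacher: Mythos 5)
Your outline reproduces the structure of the paper's proof at a high level -- the canonical natural transformation $\tau$ is the counit of the adjunction $\int \dashv (i^\tensor)^*$, and the paper too passes to compact pieces with finite equivariant handle decompositions (via Wasserman's Corollary 4.11) and then inducts. However, there is a genuine gap in the induction you describe. In the collar-gluing $M_{k+1} \cong M_k \cup_{N_k \times \R} H_k$ arising from attaching a handle bundle of index $q$, the collar $[N_k \times \R/\ga]$ is (up to the $\ga$-action) a sphere bundle of the form $S^{n-q}\times\R^q$. This is a disjoint union of orbifold disks only when $q=n$; for lower-index handles it is not an object of $\gar\Disk^G_n$, and it is not one of the earlier stages $M_j$ in the exhaustion either. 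You write that $\tau$ is an equivalence on the collar by ``the disk case,'' but that is exactly where the argument breaks: a naive single-pass induction along the exhaustion never establishes the needed equivalence on these collars.

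The paper resolves this with a \emph{double} induction. The outer induction is a downward induction on the minimal handle index appearing in the decomposition: one first proves agreement for all quotients built only from index-$n$ handles (where the collar is $[S^0 \times \R^n/I]$, genuinely a disjoint union of local models), and then shows that if agreement holds for quotients built from handles of index $\geq q$, it holds for those built from handles of index $\geq q-1$. The key observation making this work is that the collar $[S^{n-q+1}\times\R^{q-1}/I]$ of an index-$(q-1)$ handle attachment itself admits an equivariant handle decomposition using only handles of index strictly greater than $q-1$, so the outer induction hypothesis applies to it. The inner induction, on the length of the decomposition, is the one you sketched. Without the outer index induction (or some equivalent device for controlling the collars), your argument does not close. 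Your concern about compatibility of the $(\rho,G)$-structure with equivariant Morse theory is legitimate but is the easier issue, handled in the paper by restricting the ambient structure to open $\ga$-invariant submanifolds.
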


Our characterisation in Theorem \ref{intro:class} allows us to recover the classical equivariant homology theories of Bredon as examples of equivariant factorization homology.
Recall that Bredon equivariant homology $H^\ga_*$ in its simplest form assigns the singular homology of the Borel construction to a global quotient \cite{bredon67a,bredon67b}. 
Thus $H^\ga_*(M) = H_*(M\times \mathrm{E\ga}/\ga )$. 
Given a finite group $\ga$, any choice of coefficients $A$ for Bredon homology yields a $\gar\Disk_n$-algebra\footnote{There are no restrictions on $\rho$, and $G=GL(\R^n)$.}  $C^\ga_*(-,A)$ valued in chain complexes, which assigns Bredon equivariant chains.
\begin{cor} (Proposition \ref{prop:beh})
Bredon homology is computed by factorization homology:
\[  H^\ga_*(M,A) \cong \int_{[M/\ga]} C_*^\ga(-,A).\]
\end{cor}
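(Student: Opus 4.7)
My plan is to apply the characterisation Theorem \ref{intro:class} to the functor $H$ sending a global quotient $[M/\ga]$ to its Bredon equivariant chain complex $C_*^\ga(M, A)$. It suffices to establish three facts: (i) $H$ extends to a symmetric monoidal functor $\gar\Orb_n \to \Ch$, (ii) $H$ satisfies $\tensor$-excision, and (iii) the restriction $H|_{\gar\Disk_n}$ coincides with the $\gar\Disk_n$-algebra $C_*^\ga(-,A)$ of the statement. Granted these, Theorem \ref{intro:class} provides a natural equivalence $C_*^\ga(M, A) \simeq \int_{[M/\ga]} C_*^\ga(-, A)$ in $\Ch$; passing to homology yields the claimed isomorphism.

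Facts (i) and (iii) are largely bookkeeping. Functoriality on $\gar\Orb_n$ comes from the covariance of Bredon chains under $\ga$-equivariant embeddings (forgetting the $(\rho, G)$-structure), and evaluating on an orbifold disk $[\R^n/I]$ for $I \leq \ga$ returns $C_*^I(\R^n, A|_I)$ by inspection. For the symmetric monoidal structure I equip $\Ch$ with direct sum $\oplus$ as its tensor product; this is $\tensor$-cocomplete in the sense of the convention preceding Theorem \ref{intro:excision}, and Bredon chains are manifestly monoidal in this structure since $C_*^\ga(M \amalg N, A) \cong C_*^\ga(M, A) \oplus C_*^\ga(N, A)$ naturally.

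For (ii) I would invoke the equivariant small-simplices argument. A $\ga$-equivariant collar-gluing $M \cong M_- \cup_{N \times \R} M_+$ produces a $\ga$-stable open cover $\{M_-, M_+\}$, and barycentric subdivision relative to this cover exhibits the resulting Bredon Mayer-Vietoris square as a homotopy pushout of chain complexes. Under the direct-sum monoidal structure on $\Ch$, the relative tensor product appearing in Theorem \ref{intro:excision} is by definition this homotopy pushout over the chain complex $C_*^\ga(N \times \R, A)$, so $\tensor$-excision drops out. The main obstacle is arranging a chain-level model of Bredon homology, e.g.\ via a $\ga$-CW decomposition adapted to the cover or via equivariant singular simplices together with the small-simplices theorem, so that the Mayer-Vietoris square is genuinely a homotopy pushout in $\Ch$; the remaining verifications are formal and the corollary follows by invoking Theorem \ref{intro:class}.
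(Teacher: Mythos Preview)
Your proposal is correct and follows essentially the same route as the paper: verify that $C_*^\ga(-,A)$, viewed as a symmetric monoidal functor into $\Ch_\kk^\oplus$, lies in the image of the classification theorem by checking $\tensor$-excision via the equivariant Mayer--Vietoris property. The paper's own proof is terser---it simply notes that a collar-gluing yields the required quasi-isomorphism from Mayer--Vietoris---but the substance is identical; the only hypothesis you omit is preservation of sequential colimits (needed for the full Theorem~\ref{thm:classification}, though suppressed in the introduction version you cite), which the paper dismisses as ``clear''.
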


The following examples show equivariant factorization homology provides a natural framework for various (higher) twisted traces. 
We let $\triv$ be the trivial representation of $\Z_2$, and consider the global quotient $[S^1/\Z_2]$ where $\Z_2$ acts on the circle by rotations. 

\begin{ex} (Proposition \ref{prop:twhh})
A unital $\kk$-algebra with involution $(A,\phi)$ is an example of a $\Z_2^{\triv}\mathrm{Disk}^{\e}_{1}$-algebra in vector spaces as well as in chain complexes.
The $\phi$-twisted Hochschild homology of A, denoted $HH_*^\phi(A)$, is defined to be $HH_*(A, {}_A A_\phi)$ where ${}_A A_\phi$ is the $A$-$A$-bimodule where the right action is twisted by $\phi$. 	
Note that $HH_0^\phi(A)$ is a twisted cocenter of $A$, in the sense that the linear dual of $HH^\phi_0(A)$ is naturally identified with the twisted traces
\[ \mathrm{Tr}^\phi(A) := \{f: A \rightarrow \kk \; | \; f(ab) = f(b \phi(a)) \; \; \forall a,b \in A\}.\]
The $\phi$-twisted Hochschild homology of A is computed by factorization homology:
\begin{align*}
 &\int_{[S^1/\Z_2]} A \simeq HH_0^\phi(A) \quad\quad\quad \text{ when evaluated in vector spaces},\\
 &\int_{[S^1/\Z_2]} A \simeq HH_*^\phi(A) \quad\quad\quad \text{ when evaluated in chain complexes}.
\end{align*}
\end{ex}

One also has a categorified version of (twisted) traces, provided by the notion of monoidal traces of \cite{dsps13}. For details see \S \ref{subsec:crt}.

\begin{ex} (Proposition \ref{prop:twdrinfeld})
A tensor category $\catA$ with monoidal involution $\Phi$ is a $\Z_2^{\triv}\mathrm{Disk}^{\e}_{1}$-algebra in $\Rex$.
The $\Phi$-twisted trace of $\catA$, denoted $\mathrm{tr}^\Phi(\catA)$, is computed by factorization homology:
\[ \int_{[S^1/\Z_2]} \catA \cong \mathrm{tr}^\Phi(\catA).\]
\end{ex}

\subsection{Factorization homology and quantum groups} \label{subsec:outlook}
Important motivation for this work is provided by the applications of factorization homology to representation theory as developed by Ben-Zvi, Brochier, and Jordan. 
In their works \cite{bzbj,bzbj2}, they compute the factorization homology of surfaces with coefficients in the $E_2$-algebra of quantum group representations. 
The functoriality of factorization homology naturally equips such invariants with (categorical) braid group actions. 
They recover the braid group actions of \cite{jordan09}, which were constructed via a generators-and-relations method there.
In \cite{jordan09} these braid group actions were used to construct representations of the type $A$ double affine Hecke algebra (abbreviated DAHA). 

In \cite{jordanma} orbifold braid group actions of an orbifold torus are constructed, via a generators-and-relations method and using type $A$ quantum symmetric pairs.
These actions are then used to construct representations of the $C^\vee C_n$ DAHA. 
In forthcoming work we will compute the equivariant factorization homology of orbifold surfaces with coefficients in the $\Z_2^\sign\Disk_2^{\e}$-algebra of quantum symmetric pair representations (as in Example \ref{ex:qsp}). 
Equivariant factorization homology naturally equips such invariants with orbifold braid group actions (Proposition \ref{prop:brgract}).
We expect this recovers the orbifold braid group actions of \cite{jordanma}.
This would then extend the constructions of braid group actions in \cite{jordanma} to quantum symmetric pairs of any type and orbifold surfaces of any genus. 

\subsection{Organisation}  

The contents of this paper are laid out as follows. 

In Section \ref{sec:orbifolds} we recall the classical definition of global quotient orbifolds, and consider various geometric structures on them.
In \S \ref{subsec:framings} we introduce a notion of framings of global quotients  (Definition \ref{frgqdef}) and discuss examples. 
We then generalise the definition of framing in \S \ref{subsec:tangstruc} to more general tangential structures, and discuss examples such as orientations.
In \S \ref{subsec:framedemb} we consider notions of morphism between framed global quotients with tangential structure. 
We first recall the classical notion of strongly framed embeddings.
Quickly thereafter, we replace these with better behaved maps we call framed embeddings (Definition \ref{def:hfremb}).
In \S \ref{subsec:cats} we define $\infty$-categories of global quotients, utilising the framed embeddings of \S \ref{subsec:framedemb}.

In Section \ref{sec:confspaces} we recall the definition of equivariant configuration spaces and explain their role in factorization homology. The main result is Theorem \ref{htype} which proves a weak equivalence between certain spaces of framed embeddings in terms of equivariant configuration spaces. The result may be of independent interest. However, we mostly use Theorem \ref{htype} as a technical result in this paper, for example in the proof of $\tensor$-excision. Thus Section \ref{sec:confspaces} can be skipped on a first reading. 

Section \ref{sec:facthom} forms the heart of the paper. 
In \S \ref{subsec:diskalg} we introduce the coefficient systems of equivariant factorization homology.
We then discuss various examples, such as $\ga$-equivariant $E_n$-algebras. 
In \S \ref{subsec:invariants} we define the equivariant factorization homology of a framed global quotient with values in a coefficient system. 
The remainder of \S \ref{subsec:invariants} is technical in nature: we provide two colimit formulas for equivariant factorization homology.
The first main result of this paper is contained in \S \ref{subsec:excision}. 
We define collar-gluings and $\tensor$-excision there, and prove that equivariant factorization homology satisfies $\tensor$-excision (Theorem \ref{thm:excision}).
In \S \ref{subsec:eht} we come to the second main result of the paper: Theorem \ref{thm:classification}, states that $\tensor$-excision uniquely characterises equivariant factorization homology.
We also discuss various consequences from this result, such as a Fubini formula for invariants. 

Section \ref{sec:applications} contains various examples of equivariant factorization homology. 
Theorems \ref{thm:excision} and \ref{thm:classification} are crucial to the results of this section. 
However, if the reader is willing to assume the results, one can read this section without knowledge of the technical details in \S \ref{sec:facthom}.  
In \S \ref{subsec:bredon} and \S\ref{subsec:crcoh} we recover classical orbifold homology theories, Bredon equivariant homology and Chen-Ruan cohomology, as examples of equivariant factorization homology.
In \S \ref{subsec:thh} we study equivariant factorization homology in the linear settings of vector spaces and chain complexes. We recover twisted cocenters and twisted Hochschild homology, as the invariant assigned to an orbifold circle.
In \S \ref{subsec:crt}, we consider a setting of $\kk$-linear categories, and recover a twisted categorical trace as the invariant assigned to an orbifold circle. 
We also prove equivariant factorization homology provides a canonical construction of categorical orbifold braid group actions.

In Section \ref{sec:variations} we discuss two variations of equivariant factorization homology that are useful for practical applications.
In \S \ref{subsec:restrsing} we explain one does not need the full data of a coefficient system to compute invariants of a specific orbifold. 
It suffices to only consider the types of singularities that occur in the orbifold, which reduces the algebraic input needed to construct invariants.    
In \S \ref{subsec:colouring} we introduce a slight generalisation of the coefficient systems, by defining colourings of singular strata in an orbifold. 
This creates more flexibility in constructing invariants of orbifolds.

\addtocontents{toc}{\protect\setcounter{tocdepth}{1}}
\subsection*{Acknowledgements}
First and foremost, I wish to thank my supervisor David Jordan for his invaluable guidance. 
I also want to thank Claudia Scheimbauer, for conversations on factorization homology and TQFTs that inspired Corollary \ref{cor:efhreduces}. 
Finally, I am grateful to Owen Gwilliam, for his sharp questions and useful feedback.  
This work was supported by the Engineering and Physical Sciences Research Council [grant no. 1633460].
This work was also supported by the European Research Council
(ERC) under the European Union's Horizon 2020 research and innovation
programme (grant agreement no. 637618).
\addtocontents{toc}{\protect\setcounter{tocdepth}{2}}

\section{Global quotient orbifolds} \label{sec:orbifolds}

In this section we introduce the geometric structures that form the set-up for equivariant factorization homology.  

Orbifolds, as introduced by Satake \cite{satake56,satake57} and Thurston \cite{thurston92}, are singular manifolds where singularities are of the form $[\R^n/\ga]$ for $\ga$ a finite group acting on $\R^n$. 
See \cite{ademleidaruan}, and references therein, for the general theory. 
So called global quotients are the simplest examples of orbifolds.
\begin{defn} \label{def:globalquotient}
	A \emph{$\ga$-global quotient} $[M/\ga]$ is a smooth manifold $M$ together with a $\ga$-action such that if $g\in \ga$ fixes all points in a connected component of $M$ then $g=e$.
\end{defn}
One studies such an orbifold $[M/\ga]$ via the equivariant geometry of $M$. 
\begin{defn}
A smooth map $f: [M/\ga] \rightarrow [N/\ga]$ between global quotients is a $\ga$-equivariant smooth map $f: M \rightarrow N$. 
\end{defn}
The philosophy of studying $[M/\ga]$ equivariantly informs our definitions of tangential structures on global quotients: they are equivariant analogues of those of manifolds. 
We'll also discuss a natural notion of morphism between global quotients with tangential structure. 
Before turning our attention to framings, we recall some basic terminology and facts.

\begin{conv} \label{conv:smoothness}
	Without further comment we will only consider smooth manifolds and smooth maps between them. So manifold abbreviates smooth manifold, embedding abbreviates smooth embedding, actions on manifolds are smooth, and so forth. 
\end{conv}

\begin{nota}
	$\ga$ denotes a given finite group, $\e$ denotes the trivial group, and $GL(n) = GL(\R^n)$. 
\end{nota}

\begin{rmk}
	We follow standard terminology \cite{thurston92,ademleidaruan} by referring to points in $M$ that are fixed by some element $g\in \ga$ as \emph{singular points} of $[M/\ga]$. 
	The \emph{smooth locus}, i.e. set of non-fixed points, is open and dense in $M$.\footnote{This follows immediately from the corresponding fact for faithful actions on connected manifolds, see e.g. \cite[Lemma 2.10]{moerdijkmrcun03}.}
\end{rmk}
\begin{nota}
	Let $X$ be a space with a $\ga$-action. For $x\in X$ we denote $\mathrm{Stab}_x := \{g\in \ga : gx = x \}$. For a subgroup $I\subset \ga$ we denote with $X^I :=\{ x\in X: \mathrm{Stab}_x = I\}$ the $I$-fixed points.
\end{nota}
Thus the smooth locus of $[M/\ga]$ is exactly $M^\e$.
\begin{nota}
	For a map $f: M \rightarrow N$ we denote with $Df: TM \rightarrow TN$ the differential.
\end{nota}
The following classical result will be useful:
\begin{lem} \label{isotrick} \emph{(Isometry Trick)} Let $[M/\ga]$ be a global quotient. If $g\in \ga$ fixes a point $m \in M$ and acts as the identity the tangent space, $ (Dg)_m = \id_{T_m M}$, then $g=e$.
\end{lem}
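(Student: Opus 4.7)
The plan is to exploit a $\ga$-invariant Riemannian metric on $M$ and the equivariance of the associated exponential map to propagate the condition $(Dg)_m = \id$ into an open neighbourhood of fixed points, and then invoke a connectedness argument together with Definition \ref{def:globalquotient}.

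First I would equip $M$ with a Riemannian metric that is invariant under the action of $\ga$. This is possible because $\ga$ is finite: starting from any metric one averages over the group to obtain a $\ga$-invariant one. With respect to this metric the exponential map $\exp_m : T_m M \to M$ is defined on a neighbourhood of $0$, and because $g \in \ga$ acts by isometries with $g(m) = m$, one has the standard equivariance relation
\[ g \cdot \exp_m(v) \;=\; \exp_m\bigl((Dg)_m v\bigr) \]
for all $v$ in a neighbourhood of $0$ in $T_m M$. Substituting the hypothesis $(Dg)_m = \id_{T_m M}$, the right-hand side is simply $\exp_m(v)$, so $g$ fixes every point in the image of a small ball under $\exp_m$, which is an open neighbourhood $U$ of $m$ in $M$.

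Next I would promote this local fact to a statement about an entire connected component. Consider the set
\[ F \;=\; \{\, x \in M \;:\; g(x)=x \text{ and } (Dg)_x = \id_{T_x M} \,\}. \]
The set $F$ is closed, since the fixed locus of $g$ is closed and the condition $(Dg)_x = \id$ is closed by continuity of the derivative. The previous paragraph shows that $F$ is also open: if $x \in F$ then $g$ acts as the identity on an open neighbourhood of $x$ (by the same exponential argument applied at $x$), so in particular the derivative of $g$ equals the identity at every point of that neighbourhood. Hence $F$ is a union of connected components of $M$, and by hypothesis $m \in F$, so the connected component of $M$ containing $m$ lies entirely in $F$. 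Finally, applying the defining property of a global quotient (Definition \ref{def:globalquotient}) to this connected component yields $g = e$.

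The main subtlety I expect is simply being careful that the exponential-map argument gives openness of $F$ as a subset of $M$, not merely of the fixed locus; this is handled by the observation that $g$ acts as the identity on an \emph{open} neighbourhood, so its derivative is automatically the identity on that neighbourhood. Everything else is routine given the existence of an invariant metric.
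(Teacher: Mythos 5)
Your proof is correct and follows essentially the same approach as the paper: both fix a $\ga$-invariant metric by averaging and then use the classical fact that an isometry of a connected Riemannian manifold fixing a point with identity differential must be the identity on that component. The only difference is that the paper cites this fact, whereas you reprove it via the equivariance of the exponential map together with an open-closed argument.
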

\begin{proof} Let $M_m$ denote the connected component of $M$ containing $m$.
	Fix a $\ga$-invariant metric on $M$ by averaging so that $g$ defines an isometry of $M_m$. Any isometry of a connected manifold that fixes a point and has identity differential there must fix all points. Therefore $g$ fixes $M_m$. By assumption of $[M/\ga]$ being a global quotient it follows that $g=e$.
\end{proof}
The Isometry Trick implies that any $\ga$-invariant open $U \subset M$ in a global quotient $[M/\ga]$ defines a global quotient $[U/\ga]$. 
For $m\in M$ we will call a $\ga$-invariant open containing $m$ a \emph{$\ga$-neighbourhood of $m$}. 

\subsection{Framings on global quotients} \label{subsec:framings}

The definition of framings for manifolds in differential geometry suggests that a framing of a global quotient should be a global section of the frame bundle. 
We will explain why this definition is not suitable, before suggesting an alternative. 
\begin{nota}
We let $\Fr (M) \rightarrow M$ denote the frame bundle of a manifold $M$. For a map $f: M\rightarrow N$ of manifolds we denote with $f_*: \Fr(M) \rightarrow \Fr(N)$ the differential of $f$.
\end{nota}
Let $[M/\ga]$ be a global quotient, then we have an induced $\ga$-action on $\Fr(M)$:
\begin{align*}
	&\Gamma \times \Fr(M) \rightarrow \Fr(M),\\
	& (g,p) \mapsto g_* (p).
\end{align*}
For a global quotient $[M/\ga]$ the Isometry Trick implies that the induced $\ga$-action on $\Fr(M)$ is free.
We view the $\ga$-equivariant projection $\pi: \Fr(M) \rightarrow M$ as a map $\pi: [\Fr(M)/\ga] \rightarrow [M/\ga]$ of orbifolds. This is called \emph{the frame bundle of} $[M/\ga]$ \cite[Def. 1.22]{ademleidaruan}. 

\begin{lem} Let $[M/\ga]$ be a global quotient with at least one non-trivial fixed point. 
Then $\pi: [\Fr(M)/\ga] \rightarrow [M/\ga]$ has no global sections.
\end{lem}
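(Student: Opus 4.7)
The plan is to argue by contradiction, leveraging equivariance against the Isometry Trick.

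Suppose for contradiction that a global section $s: [M/\ga] \rightarrow [\Fr(M)/\ga]$ of $\pi$ exists. Unwinding what this means in terms of $\ga$-equivariant geometry on $M$, such a section is precisely a $\ga$-equivariant smooth map $s: M \rightarrow \Fr(M)$ with $\pi \circ s = \id_M$; that is, a $\ga$-equivariant global frame on $M$. By hypothesis there exists a point $m \in M$ and an element $g \in \ga$ with $g \neq e$ and $gm = m$. The equivariance of $s$ at $m$ then reads $s(m) = s(gm) = g_* s(m)$, so the linear automorphism $(Dg)_m \in GL(T_m M)$ fixes the ordered basis $s(m) \in \Fr_m(M)$.

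The key step is then the elementary linear-algebra observation that a linear automorphism of $T_m M$ which fixes an ordered basis is the identity; hence $(Dg)_m = \id_{T_m M}$. Applying the Isometry Trick (Lemma \ref{isotrick}) to $g$ acting on $[M/\ga]$ with fixed point $m$ and trivial differential there, we conclude $g = e$, contradicting the choice of $g$.

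Thus no such $s$ can exist. I do not foresee any genuine obstacle here: the proof is essentially a one-line application of the Isometry Trick, once the definition of a global section has been unpacked. The only minor care required is to make sure the $\ga$-action on $\Fr(M)$ is taken to be the one by differentials (as fixed in the notation preceding the lemma), so that "$g_*$ fixes the frame $s(m)$" translates directly into "$(Dg)_m$ fixes the basis $s(m)$", which in turn is exactly the hypothesis needed to invoke Lemma \ref{isotrick}.
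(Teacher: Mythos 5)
Your proof is correct and follows the same route as the paper's: identify the global section with a $\ga$-equivariant global frame, observe at a nontrivial fixed point $m$ that $(Dg)_m$ fixes a basis and is therefore the identity, and invoke the Isometry Trick to force $g=e$, a contradiction.
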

\begin{proof} 
Suppose there exists be a global section $s$ of $[\Fr(M)/\ga]$ i.e. a $\ga$-equivariant map $s: M \rightarrow \Fr(M)$ so that $\pi\circ s = \id_M$.
By assumption there is a non-trivial $g\in \ga$ fixing some point $m \in M$. 
Since $s$ is a $\ga$-equivariant global frame it follows that $g$ acts as the identity on $T_m M$. 
Lemma \ref{isotrick} implies $g$ must act as the identity on the connected component of $m$. 
By definition of $[M/\ga]$ being a global quotient we find that $g=e$, a contradiction.
\end{proof}
Thus framings in the above sense are too restrictive.
We propose a different notion of framing for global quotients, which seems unexplored in the literature.
\begin{defn} \label{frgqdef} Let $\rho: \ga \rightarrow GL(n)$ be a representation, and $[M/\ga]$ an $n$-dimensional global quotient.
A \emph{$\rho$-framing} of $[M/\ga]$ is a choice of $\ga$-equivariant vector bundle isomorphism $TM \cong M \times \R^n$ where the actions are given by $g\cdot (m,v) =  (gm, D_m(g)v)$ and $g \cdot (m,v) = (gm, \rho(g)v)$ respectively. 
We call $\rho$ the \emph{framing representation} of $[M/\ga]$.
\end{defn}

We introduce notation for two representations that we will come across often.
\begin{nota}
We denote with $\triv: \ga \rightarrow GL(n)$ the trivial representation, where $\ga$ acts trivially on $\R^n$. 
We denote with $\sign: \Z_2 \rightarrow GL(n)$ the sign representation, where $\Z_2$ acts on $\R^n$ by multiplying with $\pm 1$.  
\end{nota}

\begin{ex} \label{frameex} We give some examples of framed global quotients:
\begin{enumerate}
\item Any faithful representation $\rho: \ga \rightarrow GL(n)$ defines a global quotient $[\R^n/\ga]$. The canonical framing of $\R^n$ together with $\rho$ defines a $\rho$-framing on $[\R^n/\ga]$.
\item We view the torus $\mathbb{T}$ as the quotient $\R^2/\Z^2$. The canonical framing of $\R^2$ induces a framing on $\mathbb{T}$. The rotation action of $\Z_2$ on $\R^2$ induces an action on $\mathbb{T}$ with four fixed points. Then $[\mathbb{T}/\Z_2]$ is a $\sign$-framed global quotient.
\item We view the 6-dimensional torus $\mathbb{T}^6$ as the quotient $\C^3/\Z^6$. 
Consider the representation $\rho: \Z_2\times \Z_2 \rightarrow GL(\C^3)$ defined on generators $\sigma_1$ and $\sigma_2$ by:
\[ \rho(\sigma_1)(z_1,z_2,z_3) = (-z_1,-z_2,z_3), \quad \quad \rho(\sigma_2) = (-z_1,z_2,-z_3). \]
The action on $\C^3$ descends to $\mathbb{T}^6$ \cite{vafawitten}. Then $[\mathbb{T}^6/(\Z_2\times \Z_2)]$ is $\rho$-framed.\footnote{Where $\rho$ is the 6-dimensional real representation underlying the 3-dimensional complex representation.} 
\end{enumerate}
\end{ex}

\begin{rmk}
	Note that we recover the naive definition of framing as a special case: a global section of the frame bundle $[\Fr(M)/\ga] \rightarrow [M/\ga]$ corresponds to an $\triv$-framing.
\end{rmk}

We pause briefly to highlight the simplest type of $\rho$-framed global quotient: the local models. 
The local models are exactly the charts that can occur within a $\rho$-framed global quotient. 
We will later view general global quotients as glued out of these simpler pieces.
\begin{nota}
	We write $I \le \ga$ when $I$ is a subgroup of $\ga$. For a given representation $\rho: \ga \rightarrow GL(n)$ we write $I \leq_\rho \ga$ if $I$ is a subgroup of $\ga$ such that $\rho|_I: I \rightarrow GL(n)$ is faithful.
	We denote by $\ga/I$ the set of left cosets and by $[\ga:I]$ the \emph{index} (the number of left cosets). 
\end{nota}
\begin{defn} \label{def:locmod} 
	Given a finite group $\ga$, $I \leq_\rho \ga$, and a representation $\rho: \ga \rightarrow GL(n)$.
	We let $\ga$ act on $\ga \times \R^n$ by $g\cdot (h,x) = (gh, \rho(g)x)$ and $I$ act on the right via $(h,x) \cdot i = (hi,x)$. We denote $\DD_I := \ga \times_I \R^n$ and frame it via the canonical framing of $\R^n$. The \emph{$\ga$-local model with $I$-singularities} is the $\rho$-framed $n$-dimensional global quotient $[\DD_I/\ga]$.
\end{defn}

\begin{nota}
	We denote the connected components of $\DD_I = \amalg_{[\ga:I]} \R^n$ by $\R^n_{gI}$ for $gI \in \ga/I$.
\end{nota}

\subsection{General tangential structures} \label{subsec:tangstruc}
We quickly review $G$-structures on manifolds, as a unifying framework for tangential structures. 
Standard references are \cite{sternberg64,chern66,kobayashi72}. 

\begin{nota}
	Throughout $G\subset GL(n)$ denotes a Lie group. Elements of $\ga$ are denoted by letters like $g$ and $h$, whereas elements of $G$ are denoted by letters like $A$ and $B$.
\end{nota}
Recall, a \emph{principal $G$-bundle} on a manifold $M$ is a manifold $P$ endowed with a right $G$-action and a $G$-invariant surjective map $\pi: P \rightarrow M$, that is locally trivial, i.e. for every $m\in M$ there exists a neighbourhood $U$ of $M$ with a $G$-equivariant diffeomorphism $P|_U \cong U \times G$ respecting the fibres. Here $U \times G$ is equipped with the right $G$-action $(u,A)\cdot B = (u,AB)$.

\begin{ex} The frame bundle $\Fr (M) \rightarrow M$ of a $n$-dimensional manifold $M$, together with the canonical right action of $GL(n)$, is a principal $GL(n)$-bundle.\footnote{Recall that the canonical right action of $GL(n)$ on $\Fr (M)$ rotates the frames fiberwise.}
\end{ex}

The following well-known facts will be useful:
\begin{lem} \label{lem:leefacts} Let $G$ be a Lie group and $M$ be a manifold.
	\begin{enumerate}
		\item A manifold $P$ with a right $G$-action and $G$-invariant surjective map $\pi: P \rightarrow M$ is a principal $G$-bundle iff $\pi$ is a submersion and $G$ acts freely and transitively on the fibers of $\pi$. 
		\item If $G$ acts freely and properly on $M$, then $M \rightarrow M/G$ is a principal $G$-bundle.
		\item If $\pi: P \rightarrow M$ is a submersion and $f: P \rightarrow N$ is a  map that is constant on the fibers of $\pi$, then there is a unique map $\bar{f}: M \rightarrow N$ such that $\bar{f}\circ \pi = f$.
	\end{enumerate}
\end{lem}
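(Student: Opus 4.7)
The plan is to handle the three facts in order, with each building naturally on standard differential geometry (and the local structure of submersions being the workhorse). All three statements are essentially equivalent to the existence of smooth local sections of a submersion, so I would set that up as the common tool at the outset.

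For part (1), the forward implication is routine: a local trivialization $P|_U \cong U \times G$ immediately exhibits $\pi$ as a submersion whose fiber $G$-action is free and transitive, and both properties are local. For the converse, given a point $m\in M$, I would invoke the local section theorem for submersions to obtain a smooth section $s: U \to P$ of $\pi$ over a neighborhood $U$ of $m$. Then I define
\[ \Phi: U \times G \to P|_U, \qquad (u,A) \mapsto s(u)\cdot A. \]
This is $G$-equivariant and fiber-preserving by construction; it is bijective because $G$ acts freely and transitively on fibers; and it is a local diffeomorphism because $Ds(u)$ and the free action provide complementary directions to the fiber. The main subtlety to verify is that $\Phi$ is a smooth bijection with smooth inverse, which is where freeness of the action is used to get injectivity on fibers and transitivity is used for surjectivity.

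For part (2), I would appeal to the standard quotient manifold construction: free + proper actions yield a unique smooth manifold structure on $M/G$ such that the quotient map $\pi: M \to M/G$ is a smooth submersion. Since the fibers of $\pi$ are exactly the $G$-orbits, the action of $G$ on each fiber is free (given) and transitive (by definition of orbit), so part (1) applies directly. The nontrivial content is entirely in the quotient manifold theorem, which I would cite rather than reprove.

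For part (3), existence and uniqueness of $\bar{f}$ as a set-theoretic map follow immediately from $\pi$ being surjective onto its image and $f$ being fiber-constant: set $\bar{f}(\pi(p)) := f(p)$. The only real content is smoothness of $\bar{f}$, which is where submersivity is again essential. Near any $m \in M$, choose a local section $s: U \to P$ with $\pi \circ s = \id_U$; then on $U$ one has the identity $\bar{f}|_U = f \circ s$, exhibiting $\bar{f}$ as a composition of smooth maps. I expect the main obstacle, in so far as there is one, to be the careful invocation of local sections of submersions in parts (1) and (3); once that is in hand, each statement follows from direct bookkeeping with the $G$-action and the fiberwise structure.
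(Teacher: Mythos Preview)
Your proposal is correct and follows essentially the same approach as the paper: for part (1) you both use the local section theorem for submersions to construct the trivialization $(u,A)\mapsto s(u)\cdot A$, and for parts (2) and (3) the paper simply cites Lee's textbook for the quotient manifold theorem and the smooth factoring of maps through submersions, which are precisely the arguments you sketch. Your write-up is more detailed than the paper's (which dispatches parts (2) and (3) by reference), but the underlying strategy is identical.
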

\begin{proof} 
	For part one, we recall that if $\pi$ is a submersion, then every point $m \in M$ is in the domain of a local section of $\pi$ \cite[Prop. 7.16]{lee03}.
	We claim such a local section provides a trivialisation. Namely, a local section $(s:U \rightarrow P)$ provides a $G$-equivariant map
	\begin{align*} 
		&U \times G \rightarrow P,\\
		&(u,A) \mapsto s(u) \cdot A.
		\end{align*}
	Since $G$ acts freely and transitively on the fibers of $\pi$ this map is a diffeomorphism. 
	For part two and part three, see \cite[Th. 9.16]{lee03} respectively \cite[Prop. 7.18]{lee03}.
	\end{proof}
\begin{rmk}
	Properness is automatic for actions by compact Lie groups \cite[Cor. 9.14]{lee03}. In particular Lemma \ref{lem:leefacts}.2 holds for free actions by finite groups. 
\end{rmk}
\begin{rmk}
	Due to our convention of not mentioning smoothness Lemma \ref{lem:leefacts} part 3 might seem trivial. However, the crux is that the continuous map $\bar{f}: M \rightarrow N$ is smooth.
\end{rmk}
\begin{nota}
	For spaces $X$ and $Y$ with $G$-actions of some group $G$ we denote with $X\times_G Y$ the quotient space where $(gx,y) \sim (x,gy)$ for all $g\in G$, $x \in X$ and $y\in Y$.
\end{nota}
For a principal $H$-bundle $P$ where $H \subset G$, we have the \emph{induced principal $G$-bundle}:
\[ \mathrm{Ind}_H^G P := P\times_H G.\]

\begin{defn}  
	A \emph{G-structure} on a manifold $M$ is a choice of a principal $G$-bundle $P_G(M) \rightarrow M$ together with an isomorphism of principal $GL(n)$-bundles 
	\[\iota_G: \Fr(M) \cong \Ind_G^{GL(n)} P_G(M).\]
	We refer to a manifold endowed with a $G$-structure as a \emph{G-framed manifold}.
\end{defn} 
We recall the following classical examples (see \cite[\S VII.2]{sternberg64} or \cite[\S I.2]{kobayashi72} for details).
\begin{ex} \label{gstrex} Let $M$ be an $n$-dimensional manifold. 
	\begin{enumerate}
		\item A framing on $M$ corresponds to an $\e$-structure, in which case $\Fr(M) \cong M \times GL (n)$.
		\item A Riemannian metric on $M$ corresponds to an $O(n)$-structure.
		\item An orientation on $M$ corresponds to a $GL^+(n)$-structure.
		\end{enumerate}
		Let $M$ be a $2n$-dimensional manifold. 
	\begin{enumerate}
		\item An almost complex structure on $M$ corresponds to a $GL(\C^n)$-structure. 
		\item An almost symplectic structure on $M$ corresponds to a $Sp(n)$-structure.
	\end{enumerate}
\end{ex}

For a $G$-structure $(P_G(M) \rightarrow M,\iota_G)$ we will view $P_G(M)$ as a subbundle of the frame bundle as follows
\[ P_G(M) \quad \cong \quad  P_G(M) \times_G G \quad \subset \quad P_G(M) \times_G GL(n)\quad \xrightarrow{\iota_G} \quad \Fr(M). \]

\begin{rmk}
Let us re-emphasize the intuition, discussed in the introduction, that a $G$-structure corresponds to a choice of $G$-invariant frames on $M$. 
This is made precise by the embedding $P_G(M) \hookrightarrow \Fr (M)$ defined above.\footnote{In fact, S. Sternberg chooses to define a $G$-structure as a subbundle of the frame bundle in \cite{sternberg64}.}
\end{rmk}

This concludes our review of $G$-structures.
Let us now rephrase our notion of $\rho$-framings on global quotients (Definition \ref{frgqdef}) in the terminology of $G$-structures.
\begin{defn} \label{def:frgs} Let $[M/\ga]$ be a global quotient, together with a representation $\rho: \ga \rightarrow GL(n)$. 
A \emph{$\rho$-framing} of $[M/\ga]$ is an $\e$-structure on $M$ so that the induced left action of $\ga$ on $\Fr(M) \cong M \times GL(n)$ satisfies $g_*(m,A) = (gm,\rho(g)A)$.
\end{defn}
Note that the Isometry Trick implies the induced $\ga$-action on $\Fr (M)$ cannot preserve $P_\e(M) \subset \Fr (M)$, unless $\rho$ is trivial or $\ga$ acts freely on $M$. 
However, $P_\e (M)$ is preserved by a different $\ga$-action on $\Fr (M)$.
Namely, let $\ga$ act by sending $g$ to $c_g^M$, where
\begin{align}
	c^M_g: \Fr(M) &\rightarrow \Fr(M),\\
	p &\mapsto g_*(p) \cdot \rho(g^{-1}).
\end{align}
Now observe that, for a $\rho$-framed global quotient, the maps $c^M_g$ preserve $P_\e(M)$. 
\begin{nota}
		For $A \in GL(n)$ we have the adjoint action $\Ad_A: GL(n) \rightarrow GL(n), B \mapsto A B A^{-1}$.
\end{nota}

We obtain a left $\ga \ltimes_{\Ad(\rho)} GL(n)$-action on $\Fr(M)$ via
\begin{align*}
	 (g,A) \cdot p = c_g (p) \cdot A^{-1},\quad \text{and} \quad  (h,B) \cdot (g,A) = (hg, B \cdot \Ad_{\rho(h)}(A) ).
\end{align*} 

Next we will generalise Definition \ref{def:frgs} to other $G$-structures on global quotients, but first we must generalise the notion of principal $G$-bundles. 

\begin{nota} 
	For $G \subset GL_n$ we denote by $N_{ GL(n) }(G) := \{ A \in GL(n) : \Ad_A(G) \subset G \}$ the normaliser group of $G$ in $GL(n)$.
	Let $\rho: \ga \rightarrow N_{ GL(n) }(G)$ be a representation, the group $\ga \ltimes_\rho G$ denotes the semi-direct product group where $\ga$ acts on $G$ via $\Ad(\rho (-))$. 
\end{nota}

\begin{defn}
	 Let $[M/\ga]$ be a $\ga$-global quotient, and fix a representation $\rho: \ga \rightarrow N_{GL(n)}(G)$. 
	 A principal \emph{$(\rho,G)$-bundle on $[M/\ga]$} consists of 
	 \begin{itemize}
	 \item a manifold $P$ equipped with a left $\ga \ltimes_{\rho} G$-action, 
	 \item a $\ga \ltimes_{\rho} G$-equivariant surjective map $\pi: P \rightarrow M$, where $M$ is endowed with the trivial $G$-action such that for every $m\in M$ there exists a \emph{local trivialisation}. 
	 \end{itemize} 
	 By local trivialisation we mean every $m\in M$ admits a $\ga$-neighbourhood $U$ with a $\ga \ltimes_\rho G$-equivariant diffeomorphism $P|_U \cong U \times G$ respecting the fibres.
	  Here $U\times G$ is endowed with the following left $\ga \ltimes_{\rho} G$-action\footnote{Where $G$ acts inversely since we're combining the right $G$-action with the left $\ga$-action.} 
	\[(g,A) \cdot (u,B) = (gu, \Ad_{\rho(g)}(B) A^{-1}).\]
\end{defn}
\begin{rmk}
	If $G=GL(n)$, $GL^+(n)$ or $\e$ we have $N_{GL(n)}(G)=GL(n)$ so that there are no restrictions on the representation $\rho$.
\end{rmk}

\begin{rmk}
Note that after restricting $P$ to the smooth locus $M^\e$ Lemma \ref{lem:leefacts} implies that we have a principal $\ga \ltimes_{\rho} G$-bundle $P|_{M^\e} \rightarrow M^\e/\ga$.
\end{rmk}

We have the following two important properties of $(\rho,G)$-principal bundles.
\begin{lem} \label{lem:indbundle}
	Let $H \subset G$, $\rho: \ga \rightarrow N_{ GL(n) }(G) \cap N_{ GL(n) }(H)$ and $P_H \rightarrow M$ be a $(\rho,H)$-principal bundle on $M$. 
	Then $\Ind_H^G P_H$ is naturally a $(\rho, G)$-principal bundle.
\end{lem}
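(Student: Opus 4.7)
The plan is to endow $\Ind_H^G P_H = P_H \times_H G$ with an explicit $\ga \ltimes_\rho G$-action, check it is well-defined, exhibit the appropriate equivariant projection to $M$, and lift local trivialisations of $P_H$ to the induced bundle. Throughout, $P_H \times_H G$ denotes the quotient of $P_H \times G$ by the relation $(h^{-1}p, A) \sim (p, hA)$ for $h \in H$, where we view the $H$-action on $P_H$ as coming from the given left $\ga \ltimes_\rho H$-action.

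First I would define
\[ (g, A) \cdot [p, B] := [gp, \, \Ad_{\rho(g)}(B)\, A^{-1}]. \]
Verifying this is well-defined is the main technical step. From the multiplication $(g_1,h_1)(g_2,h_2) = (g_1 g_2, h_1 \Ad_{\rho(g_1)}(h_2))$ in $\ga \ltimes_\rho H$ one computes the commutation identity $g \cdot h^{-1}p = \Ad_{\rho(g)}(h)^{-1} \cdot (gp)$ in $P_H$. Substituting into the proposed formula,
\[ (g,A)\cdot [h^{-1}p, hB] = [\Ad_{\rho(g)}(h)^{-1}(gp), \, \Ad_{\rho(g)}(h)\,\Ad_{\rho(g)}(B)\,A^{-1}], \]
which is equivalent to $[gp, \Ad_{\rho(g)}(B)A^{-1}]$ precisely because $\Ad_{\rho(g)}(h) \in H$; this is where the hypothesis $\rho(\ga) \subset N_{GL(n)}(H)$ enters crucially. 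A short calculation with the group law then confirms that the formula is indeed a left $\ga \ltimes_\rho G$-action.

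Next, the projection $\widetilde{\pi}: P_H \times_H G \to M$ sending $[p,B] \mapsto \pi(p)$ is well-defined, since $H$ acts trivially on $M$ through the original bundle map, and is surjective. Equivariance is immediate: $\widetilde{\pi}((g,A)\cdot[p,B]) = \pi(gp) = g \pi(p) = g \cdot \widetilde{\pi}([p,B])$, noting that $G$ acts trivially on $M$ in the target.

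Finally, I would construct local trivialisations. Given $m \in M$, choose a $\ga$-neighbourhood $U$ and a $\ga \ltimes_\rho H$-equivariant trivialisation $\psi: P_H|_U \xrightarrow{\cong} U \times H$. The induced map
\[ \Phi: (U \times H) \times_H G \longrightarrow U \times G, \qquad [(u,h), A] \longmapsto (u, hA) \]
is a diffeomorphism, and combining it with $\psi$ yields a diffeomorphism $(\Ind_H^G P_H)|_U \cong U \times G$ over $U$. A direct computation using the action formulas on both sides, together with $\Ad_{\rho(g)}(hA) = \Ad_{\rho(g)}(h) \Ad_{\rho(g)}(A)$, shows that $\Phi$ is $\ga \ltimes_\rho G$-equivariant, completing the verification. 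The hardest part of the whole argument is the well-definedness of the action, since it is the unique place where both the normaliser hypothesis on $\rho$ and the interaction between the $\ga$- and $H$-actions on $P_H$ are simultaneously used.
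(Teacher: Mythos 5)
Your proof follows essentially the same strategy as the paper's: define the induced $\ga \ltimes_\rho G$-action on $P_H \times_H G$, verify well-definedness via the semidirect-product commutation identity together with the normaliser hypothesis on $\rho$, and lift the local trivialisations. There is a small bookkeeping slip in the displayed well-definedness check: under your stated relation, $[h^{-1}p, hB]$ is \emph{not} the same class as $[p,B]$ (that relation identifies $[h^{-1}p, B]$ with $[p, hB]$), so you should be comparing $(g,A)\cdot[h^{-1}p, B]$ with $(g,A)\cdot[p, hB]$ --- with that correction your computation gives $[\Ad_{\rho(g)}(h)^{-1}(gp), \Ad_{\rho(g)}(B)A^{-1}] \sim [gp, \Ad_{\rho(g)}(h)\Ad_{\rho(g)}(B)A^{-1}]$ as required, and the argument is sound.
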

\begin{proof}
Indeed, let $g \in \ga$, $B \in H$, $A \in G$. The computation
\begin{align*}
	g \cdot (p, BA) &= (gp, \Ad_{\rho(g)}(BA)) = (gp, \Ad_{\rho(g)}(B) \Ad_{\rho(g)}(A)) \sim ((gp) \Ad_{\rho(g)}(B), \Ad_{\rho(g)}(A)),\\
	&= (g (pB), \Ad_{\rho(g)}(A)) = g \cdot (pB,A)
\end{align*}
shows $\Ind_H^G P_H$ has a canonical $\ga$-action defined by $g \cdot [p,A] = [gp, \Ad_{\rho(g)}(A)]$. 
By construction we obtain a $\ga \ltimes_\rho G$-action on $P_G(M)$.
To complete the proof, we observe that a $\ga \ltimes_\rho H$-equivariant trivialisation $P_H|_U \cong U \times H$ induces a $\ga \ltimes_\rho G$-equivariant trivialisation $\Ind_H^GP_H|_U \cong U \times G$. 
\end{proof} 

\begin{prop} (Covering Homotopy Theorem) \label{thm:CHT}
Let and $P_G(M) \rightarrow M$ and $P_G(N) \rightarrow N$ be principal $(\rho,G)$-bundles.
For any $\ga$-equivariant homotopy $f_t: M \rightarrow N$ with $\ga \ltimes_\rho G$-equivariant lift $F_0$ of $f_0$, there is a $\ga \ltimes_\rho G$-equivariant homotopy $F_t$ covering $f_t$. 
\end{prop}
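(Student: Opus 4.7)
The plan is to reformulate the statement as an extension-of-isomorphism problem on $M \times I$ and then solve it by parallel transport along $\partial_t$ with respect to a $\ga$-invariant Ehresmann connection.

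First, I would assemble the homotopy into a single $\ga$-equivariant map $f : M \times I \to N$, $(m,t) \mapsto f_t(m)$, where $\ga$ acts trivially on $I$. Form the two principal $(\rho,G)$-bundles on $[M \times I / \ga]$:
\[
\tilde{P} := f^* P_G(N), \qquad \bar{P} := P_G(M) \times I,
\]
both of which carry canonical $\ga \ltimes_\rho G$-actions (trivial in the $I$-direction). The lift $F_0$ is equivalent to a $\ga \ltimes_\rho G$-equivariant bundle isomorphism $\Phi_0 : \bar{P}|_{t=0} \xrightarrow{\sim} \tilde{P}|_{t=0}$ over $M \times \{0\}$. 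Producing the desired $F_t$ is then equivalent to extending $\Phi_0$ to a $\ga \ltimes_\rho G$-equivariant isomorphism $\Phi : \bar{P} \xrightarrow{\sim} \tilde{P}$ on all of $M \times I$; composing $\Phi(-,t)$ with the canonical projection $\tilde{P} \to P_G(N)$ recovers the homotopy $F_t$.

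Second, I would construct $\Phi$ by horizontal lifting. Pick any Ehresmann connection on $\tilde{P}$ and average its horizontal distribution over the finite group $\ga$ to obtain a $\ga \ltimes_\rho G$-invariant connection $H \subset T\tilde{P}$. (Averaging is well-defined because the space of $G$-equivariant horizontal distributions is affine, and the pushforward action of $\ga$ preserves it, using the compatibility of the $\ga$- and $G$-actions encoded by $\rho$.) Relative to $H$, the vector field $\partial_t$ on $M \times I$ admits a unique horizontal lift $X$ to $\tilde{P}$, which is automatically $\ga \ltimes_\rho G$-invariant. Since $\partial_t$ is complete on $[0,1]$ and $\tilde{P} \to M \times I$ is locally trivial with fiber $G$, the flow $\psi_s$ of $X$ exists on all of $\tilde{P}$ for $s \in [0,1]$ and commutes with the $\ga \ltimes_\rho G$-action. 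Then define
\[
\Phi(p, t) := \psi_t(\Phi_0(p)), \qquad p \in P_G(M),\ t \in I,
\]
which is smooth, covers $f$, and is $\ga \ltimes_\rho G$-equivariant by construction.

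The main technical obstacle will be ensuring that $\ga$-averaging yields a genuine $G$-equivariant connection, given that the $\ga$-action on $\tilde{P}$ is non-free over singular points of $[M/\ga]$. The key observation is that $G$-equivariance is an affine condition: the pushforward by an element of $\ga$ sends a $G$-equivariant distribution to another $G$-equivariant distribution (because the $\ga$-action intertwines the $G$-actions via $\Ad(\rho(\cdot))$, which is still an inner automorphism of the affine space of connections), so the finite average lands in the same affine subspace and is manifestly $\ga$-invariant. Completeness of the flow of $X$ poses no issue since horizontal lifts of complete vector fields on the base of a principal bundle are themselves complete on the same interval, by a standard local triviality and compactness argument on $[0,1]$.
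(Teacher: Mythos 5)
Your proof is correct, but takes a genuinely different route from the paper's. The paper's proof simply defers to Steenrod's classical Covering Homotopy Theorem (\cite[Th.\ 11.3]{steenrod57}), adapted mutatis mutandis to the $\ga \ltimes_\rho G$-equivariant setting: that argument is a local patching argument, covering $M \times I$ by pieces on which the bundle trivialises, lifting the homotopy piecewise, and gluing, with the equivariant version using $\ga$-invariant trivialisations. You instead argue globally, reformulating the problem as an extension-of-bundle-isomorphism problem on $M \times I$ and solving it by parallel transport along a $\ga \ltimes_\rho G$-invariant principal connection obtained by averaging over the finite group $\ga$. The key points you rely on --- that $\ga$-pushforward preserves the affine space of principal connections (because $L_g \circ R_A = R_{\Ad_{\rho(g)}(A)} \circ L_g$), that averaging lands in the same affine space, that the horizontal lift of $\partial_t$ is invariant and its flow exists on $[0,1]$ because parallel transport in a principal bundle is globally defined --- are all sound. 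This route is cleaner, works directly in the smooth category, and produces a canonical lift once the connection is chosen, whereas the patching argument is inherently non-canonical. One terminological remark: where you write ``pick any Ehresmann connection on $\tilde{P}$'' you should say \emph{principal} ($G$-equivariant) connection; a generic Ehresmann connection on the underlying fibre bundle need not be $G$-equivariant, and your averaging argument operates within the affine space of principal connections. With that wording your argument is complete.
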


\begin{proof}
	Mutatis mutandis, the classical proof of Steenrod works \cite[Th. 11.3]{steenrod57}. 
\end{proof}

We now come to the definition of $G$-structures on global quotients.

\begin{defn}  \label{def:rgfr}
	A \emph{$(\rho,G)$-structure} on a global quotient $[M/\ga]$ is a choice of a principal $(\rho,G)$-bundle $P_G(M) \rightarrow M$ together with a $\ga \ltimes_\rho GL(n)$-equivariant diffeomorphism
	\[\iota_G: \Fr(M) \cong \Ind_G^{GL(n)} P_G(M).\]
	A global quotient endowed with a $(\rho,G)$-structure is a \emph{$(\rho,G)$-framed global quotient}.
\end{defn} 

We give some examples of $(\rho,G)$-framed global quotients. 

\begin{ex}
	A $(\rho,\e)$-framed global quotient is exactly a $\rho$-framed global quotient. 
\end{ex}

\begin{ex}The local models of Definition \ref{def:locmod} are $\rho$-framed, and hence by Lemma \ref{lem:indbundle} naturally $(\rho,G)$-framed for all subgroups $G \subset GL(n)$ for which $\rho: \ga \rightarrow N_{GL(n)}(G)$.
\end{ex}

\begin{ex} We consider the trivial representation $\triv$. 
	\begin{enumerate}
		\item Lemma \ref{isotrick} implies a $\triv$-framed global quotient $[M/\ga]$ consists of a manifold $M$ with a free $\ga$-action and a $\ga$-equivariant frame on $M$. 
		\item An $(\triv,GL^+(n))$-framed global quotient $[M/\ga]$ consists of an oriented manifold $M$ with a free $\ga$-action preserving the orientation. 
		\item Any global quotient $[M/\ga]$ where the $\ga$-action is free admits an $(\triv,O(n))$-structure.
	\end{enumerate}
\end{ex}

\begin{ex} \label{ex:stringytopology}
	Let $M$ be $G$-framed. Then $M^{\times k}$ is canonically $G^{\times k}$-framed. Let  the symmetric group $S_k$ act on $M^{\times k}$ by permuting coordinates, and $\pi: S_k \rightarrow GL(k)$ be the permutation representation. 
	Then $[M^{\times k}/S_k]$ is naturally a $(\triv \tensor \pi, G^{\times k})$-framed global quotient.
\end{ex}

\begin{rmk} \label{rmk:localmodels}
	Not all singularities can occur in a $(\rho,G)$-framed global quotient, rather one finds that $M = \bigcup_{I \leq_\rho \ga} M^I$.\footnote{This is a consequence of our faithfulness requirement in Definition \ref{def:globalquotient}.}
	If $\rho$ is trivial, one thus only has the free local model $[\DD_\e/\ga]$.
\end{rmk}

When $G = GL^+(n)$ we will say \emph{$\rho$-oriented global quotient} rather than $(\rho,GL^+(n))$-framed global quotient. 
It makes sense to restrict to representations of the form $\rho: \ga \rightarrow GL^+(n)$, so that the $\ga$-action on $M$ is orientation preserving. In classical definitions of oriented orbifolds the $\ga$-action is always required to be orientation preserving \cite{satake56,ademleidaruan}.
\begin{ex}
	Let $\mathrm{rot}: \Z_g \rightarrow SO(2)$ be the rotation representation, and consider $\Sigma_g$ the genus $g$ surface $(g\geq2$) as a quotient of a polygon $P$ with $4g$ sides. The induced  $\Z_g$-action on $P$ descends to $\Sigma_g$ so that $[\Sigma_g/\Z_g]$ is $\mathrm{rot}$-oriented. It has two $\Z_g$-singularities.  
\end{ex}

\begin{ex}
Let $\Z_2$ rotate a genus $g$ surfaces $\Sigma_g$ as in Figure \ref{fig:surface}. Then $[\Sigma_g/\Z_2]$ is a $\sign$-oriented global quotient, for the sign representation $\sigma$.
\end{ex}

\begin{figure}[h]
	\includegraphics[trim={.5cm 13.5cm .5cm 11.5cm},clip,scale=.6]{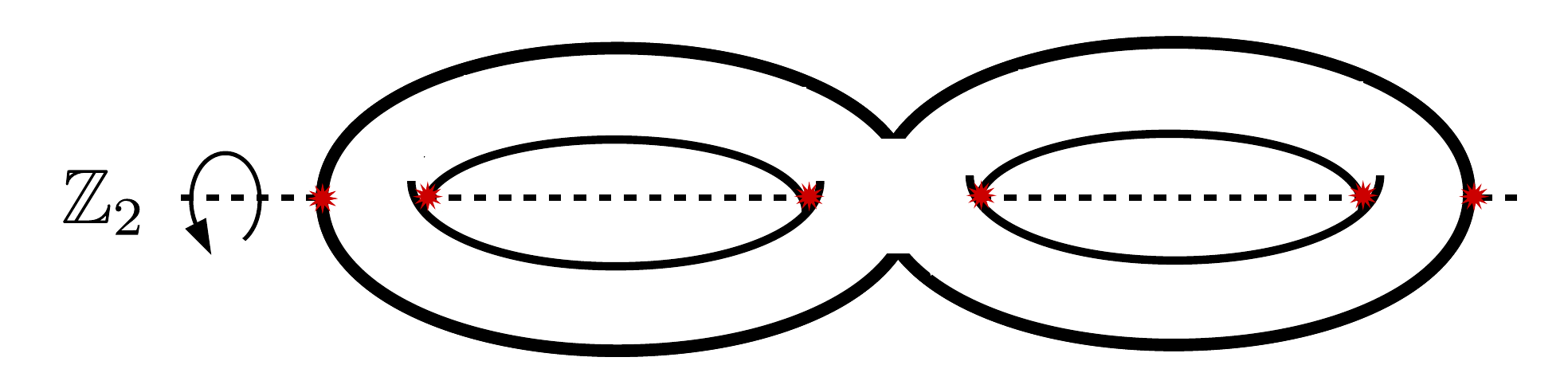}
	\caption{An oriented $\Z_2$-action on a genus two surface, with fixed points drawn as red stars.}
	\label{fig:surface}
\end{figure}

Though any manifold has a canonical $GL(n)$-structure, given by the frame bundle itself, the same is not true for global quotients as the next example shows.

\begin{ex}
	Consider the M\"obius band $M$ as a quotient space of the square by inversely gluing the top and bottom edge. 
	The rotation action of $\Z_2$ on the square descends to $M$ and acts via $\sign$ at the isolated fixed point, but acts via the representation $\begin{pmatrix}
	1 & 0 \\ 0 & -1 \end{pmatrix}$ at the fixed line. 
	Therefore, there is no representation $\rho$ such that $[M/\Z_2]$ admits a $(\rho,GL(2))$-structure. 
\end{ex}
\subsection{Framed embeddings between global quotients} \label{subsec:framedemb}

We now consider notions of morphism between $(\rho,G)$-framed global quotients.
Recall, a map between two given principal $G$-bundles, $\pi_i: P_i \rightarrow M_i$, is a $G$-equivariant map $f: P_1 \rightarrow P_2$.\footnote{\label{underlyinggmap} Some authors prefer to define $G$-bundle maps as pairs $(f_{b},f)$ where $f$ is as before and $f_{b}: M_1 \rightarrow M_2$ is a map such that $\pi_2\circ f = f_b \circ \pi_1$. However, $f$ canonically induces $f_b$ via Lemma \ref{lem:leefacts}.}
For $G$-framed manifolds $M$ and $N$, induction of principal $G$-bundles provides an inclusion 
\[ \Ind_G^{GL(n)}: \Map^G(P_G(M),P_G(N)) \hookrightarrow \Map^{GL(n)}(\Fr(M),\Fr(N)) \] where the image is the subspace of those $GL(n)$-bundle maps that send $P_G(M)$ to $P_G(N)$.
This naturally leads to the classical notion of strongly framed maps.

\begin{defn}
	A \emph{strongly framed embedding} between $(\rho,G)$-framed global quotiens $[M/\ga]$ and $[N/\ga]$ is an embedding $f:[M/\ga] \rightarrow [N/\ga]$ such that $f_*(P_G(M)) \subset P_G(N)$.
\end{defn}

\begin{rmk}
The name strongly framed is non-standard; we use it to highlight the difference between these maps and our framed embeddings introduced in Definition \ref{def:hfremb}.
\end{rmk}

\begin{ex} Let $\rho$ be any $\ga$-representation.
	\begin{enumerate}
		\item A strongly framed embedding $f: [M/\ga] \rightarrow [N/\ga]$ between $\rho$-framed global quotients is a $\ga$-equivariant embedding $f: M \rightarrow N$ such that the diagram 
		\begin{tikzcd}
		TM \arrow[d,"\cong"] \arrow[r,"{Df}"] & TN \arrow[d,"\cong"]\\
		M \times \R^n \arrow[r,"{(f,\id_{\R^n})}"] & N \times \R^n
	\end{tikzcd}
		commutes.
		\item A strongly framed embedding $f: [M/\ga] \rightarrow [N/\ga]$ between $(\rho,GL^+(n))$-framed global quotients is a $\ga$-equivariant oriented embedding $f: M \rightarrow N$. 
		\item A strongly framed embedding $f: [M/\ga] \rightarrow [N/\ga]$ between $(\rho,GL(n))$-framed global quotients is a $\ga$-equivariant embedding $f: M \rightarrow N$.
	\end{enumerate}
\end{ex}

The notion of strongly framed embedding is very rigid for certain structure groups:

\begin{ex} \label{rigidex}
	Consider $[\DD_\ga/\ga]$ the $\rho$-framed local model of Definition \ref{def:locmod}.
	\begin{enumerate}
		\item The map $\id_{\R^n}$ is the only strongly framed self-embedding of $[\DD_\ga/\ga]$.
		\item Let $[M/\ga]$ be a $\rho$-framed global quotient and let $\{v_i\}_{1\leq i \leq n}$ be the vector fields on $M$ trivializing $TM$. Let $U \subset M$ be an open such that $[v_i,v_j]|_{U} \neq 0$ for some $i$ and $j$. Then there are no strongly framed embeddings of $[\DD_\ga/\ga]$ into $[M/\ga]$ that cover $U$.\footnote{This is restrictive: the $n$-torus is the only compact $n$-manifold that admits a global frame whose vector fields commute everywhere.}
		\item Equip $\R^2$ with its canonical $O(2)$-structure, and the sphere with the standard $O(2)$-structure.\footnote{The standard $O(2)$-structure on the sphere is induced from the canonical $O(3)$-structure on $\R^3$.} There are no strongly framed embeddings $\R^2 \rightarrow S^2$ \cite[Ex. V.8.2]{andrade10}.
	\end{enumerate}
\end{ex}

We wish to view global quotients as glued from the local models of Definition \ref{def:locmod}. However, Example \ref{rigidex}.2 shows not all $(\rho,G)$-framed global quotients can be covered by local models via strongly framed embeddings.
To salvage this we have to consider a more general type of map than the strongly framed embeddings.
\begin{rmk}
	This is analogous to the use of embeddings between manifolds that are framed up to a specified homotopy. This is standard in factorization homology \cite[Constr. 4.1.1.18]{lurietft}, \cite[\S V.8]{andrade10}, \cite[Def. 2.7]{af15}.
\end{rmk}
In order to weaken the strongly framed maps, we first consider the \emph{space} of strongly framed embeddings. 
We start by fixing notation and conventions.
\begin{conv}
	Any set of maps will be topologised using the compact-open topology. 
\end{conv}

\begin{nota}
	$\Map(M,N)$ denotes the space of maps between given manifolds $M$ and $N$.
\end{nota}

\begin{ex}	Let $[M/\ga]$ and $[N/\ga]$ be global quotiens. 
	\begin{enumerate}
		\item We have an induced action of $\ga$ on $\Emb(M,N)$ by letting $(g\cdot f)(x):= gf(g^{-1}x)$.	
		The space of $\ga$-equivariant embeddings from $M$ to $N$ are exactly the $\ga$-fixed points $\Emb(M,N)^\ga$ and will be denoted $\Emb^\ga(M,N)$.
		\item Similarly, the space of $G$-bundle maps is $\Map^G(P_G(M),P_G(N))$.
	\end{enumerate}
\end{ex}

\begin{rmk}
We have two distinct actions of $\ga$ on $\Fr (M)$, given by $g_*$ and $c_g$, that induce two different actions on $\Map( \Fr(M), \Fr(N) )$. However, for any bundle map $f$ one has $c^N_g \circ f \circ c^M_{g^{-1}} = g_* \circ f \circ g_*^{-1}$. Therefore, the two $\ga$-actions coincide on $\Map^{GL(n)}(\Fr(M),\Fr(N))$. 
\end{rmk}

We can now characterise the space of strongly framed embeddings as follows. 
Let 
\[D: \Emb^\ga (M,N) \rightarrow   \Map^{\ga \times GL(n)}(\Fr(M),\Fr(N)), \quad \quad f \mapsto f_*\] 
be the continous map that sends an embedding to its differential. 
\begin{prop} \label{frpb}
	Let $[M/\ga]$ and $[N/\ga]$ be $(\rho,G)$-framed global quotients. 
	The space of strongly framed embeddings from $[M/\ga]$ to $[N/\ga]$ is naturally identified with the pullback of the diagram 
	\begin{equation} \label{pbdiagram}
	\begin{tikzcd}
	&\Map^{\ga\ltimes G}(P_G(M), P_G (N)) \arrow[hookrightarrow,d] \\
	\Emb^\ga (M,N) \arrow[r,"{D}"] &  \Map^{\ga \times GL(n)}(\Fr (M),\Fr (N) ).
	\end{tikzcd}
	\end{equation}
\end{prop}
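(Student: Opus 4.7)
\textbf{Proof plan for Proposition \ref{frpb}.} My plan is to exhibit a natural homeomorphism between the space of strongly framed embeddings and the pullback by sending a strongly framed embedding $f$ to the pair $(f, f_*|_{P_G(M)})$, and to produce an inverse. The work to do is essentially verifying equivariance and continuity.

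First I would unpack the inclusion on the right of diagram \eqref{pbdiagram}. Using the isomorphism $\iota_G: \Fr(M) \cong P_G(M) \times_G GL(n)$, any $\ga \ltimes_\rho G$-equivariant bundle map $\phi: P_G(M) \to P_G(N)$ extends uniquely to the $\ga$-equivariant, $GL(n)$-equivariant map $[p,A] \mapsto [\phi(p),A]$ on the induced frame bundles; conversely, a $\ga \times GL(n)$-equivariant map $\Fr(M) \to \Fr(N)$ whose image intersects $P_G(N)$ non-trivially is determined by its restriction to $P_G(M)$, using $G$-equivariance to identify $P_G(M)$ with the orbit of the identity coset. One should also verify here that the natural $\ga \ltimes_\rho GL(n)$-action on $\Fr(M)$ defined via $c_g$ matches the $\ga \times GL(n)$-action defined via $g_*$ on the level of equivariant maps; this follows from the identity $c^N_g \circ f \circ c^M_{g^{-1}} = g_* \circ f \circ g_*^{-1}$ noted in the excerpt.

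Next, I would define the map from strongly framed embeddings to the pullback. Given a strongly framed embedding $f$, the condition $f_*(P_G(M)) \subset P_G(N)$ means we may restrict $f_*$ to obtain $\phi := f_*|_{P_G(M)}: P_G(M) \to P_G(N)$. Continuity of $f \mapsto \phi$ follows from continuity of the differential and the compact-open topology. The map $\phi$ inherits $\ga$-equivariance from $f$ and $G$-equivariance from the naturality of the frame bundle construction, which together assemble into $\ga \ltimes_\rho G$-equivariance by the discussion above. By construction the pair $(f,\phi)$ maps to a common element under both legs of diagram \eqref{pbdiagram}, hence defines a point of the pullback.

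For the inverse, given $(f,\phi)$ in the pullback, the fact that the induced $\ga \times GL(n)$-equivariant map $\Ind_G^{GL(n)}\phi$ on frame bundles equals $Df$ forces $Df$ to send $P_G(M) \subset \Fr(M)$ into $P_G(N) \subset \Fr(N)$, so $f$ is strongly framed. These assignments are mutually inverse by construction, and the inverse is continuous because it is just the projection to the first factor of the pullback. The main (minor) obstacle is the bookkeeping around the two $\ga$-actions $g_*$ and $c_g$ on frame bundles, and the verification that the equivariance data on $P_G(M)$ and on $\Fr(M)$ match up under the isomorphism $\iota_G$; once this is settled, the remainder is a direct unwinding of definitions.
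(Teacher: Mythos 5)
Your argument is correct and follows essentially the same route as the paper, which dismisses the result as ``immediate from the definition'': the pullback picks out exactly the $\ga$-equivariant embeddings whose differential preserves the $G$-subbundle, which is the definition of a strongly framed embedding. You have simply unpacked the bookkeeping — that the inclusion $\Ind_G^{GL(n)}$ is injective with image the $GL(n)$-bundle maps sending $P_G(M)$ into $P_G(N)$, and that the two $\ga$-actions $g_*$ and $c_g$ agree on equivariant bundle maps — which the paper records in the surrounding text rather than in the proof itself.
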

\begin{proof}
	Immediate from the definition: the pullback describes those $\ga$-equivariant embeddings whose differential preserves the $G$-structure.
\end{proof}
All spaces of the Diagram \ref{pbdiagram} have natural maps to the space $\Map^\ga(M,N)$ making two triangles commute as in Proposition \ref{hpbmodel}.\footnote{For a $G$-bundle map we are taking the induced map on the base space, see footnote \ref{underlyinggmap}.}
We can then weaken the notion of strongly framed embedding by using the relative homotopy pullback.
\begin{conv}
	We use the classical Quillen model structure on topological spaces i.e. weak equivalences are weak homotopy equivalences and fibrations are Serre fibrations. 
\end{conv}
\begin{defn} \label{def:hfremb}
	Let $[M/\ga]$, $[N/\ga]$ be two $(\rho,G)$-framed global quotients. The space of \emph{framed embeddings}, denoted $\EEmb^{G}_{\fr}([M/\ga],[N/\ga])$, is defined to be the homotopy pullback of Diagram \ref{pbdiagram}
	over the space $\Map^\ga(M,N)$.
\end{defn}
To clarify Definition \ref{def:hfremb} we will recall the definition and properties of (relative) homotopy pullbacks. We follow the exposition in \cite[\S V.9]{andrade10}.
Note that the relative homotopy pullback, or homotopy pullback in the over category, is only defined up to weak equivalence. 
However, there is a convenient model we can make use of.
\begin{nota}
	Let $[0,1]$ denote the unit interval. For a topological space $X$ let $X^{[0,1]}$ denote the space of continuous maps from $[0,1]$ to $X$ endowed with the compact-open topology.
\end{nota}

\begin{prop} \cite[\S V.9]{andrade10} \label{hpbmodel} Let 
	\begin{center}
		\begin{tikzcd}
			& Y \arrow[d,"f"] \arrow[rdd, bend left, "\pi_Y"]& \\
			X \arrow[r,"g"] \arrow[rrd, bend right, "\pi_X"] & Z \arrow[rd, "\pi_Z"]& \\
			& & W
		\end{tikzcd}
	\end{center}
	be a diagram of topological spaces, such that $\pi_Y$ and $\pi_Z$ are fibrations. 
	\begin{enumerate}
	\item A model for the homotopy pullback in the over-category $Top_{/W}$ is given by the space
	\[ \left\{ (x,\gamma,y) \in X \times Z^{[0,1]} \times Y: 
	\begin{array}{c} \gamma_0 = g(x), \quad \gamma_1 = f(y), \\
	\pi_Z(\gamma_t) = \pi_X(x) = \pi_Y(y) \; \forall t \end{array} \right\}.\]
	\item The canonical projection of the relative homotopy pullback onto $X \times_W Z$ is a fibration.
	\end{enumerate}
\end{prop}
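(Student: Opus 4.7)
The plan is to endow $\mathbf{Top}_{/W}$ with its standard slice model structure, in which weak equivalences, cofibrations, and fibrations are all created by the forgetful functor to $\mathbf{Top}$. Under the standing hypothesis that $\pi_Y$ and $\pi_Z$ are Serre fibrations, $Y$ and $Z$ are fibrant objects of the slice category. Consequently, a homotopy pullback of the cospan $X \xrightarrow{g} Z \xleftarrow{f} Y$ in $\mathbf{Top}_{/W}$ is computed by factoring $g$ as an acyclic cofibration followed by a fibration in $\mathbf{Top}_{/W}$ and then taking the strict pullback.

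For part (1), I would carry out this factorization via a fiberwise mapping path-space construction refined to respect the projection to $W$. Define
\[
X' := \{(x,\gamma) \in X \times Z^{[0,1]} : \gamma_0 = g(x),\ \pi_Z \circ \gamma \equiv \pi_X(x)\},
\]
with structure map $(x,\gamma) \mapsto \pi_X(x)$ to $W$, section $s: X \to X'$ sending $x \mapsto (x,\mathrm{const}_{g(x)})$, and evaluation $\mathrm{ev}_1: X' \to Z$ sending $(x,\gamma) \mapsto \gamma_1$. The section $s$ is a strong deformation retract over $W$: the path-shortening homotopy $(x,\gamma) \mapsto (x,\gamma_{(1-u)(-)})$ stays in $X'$ because each reparameterised path still lies in the fiber of $\pi_Z$ over $\pi_X(x)$. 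A standard path-concatenation argument, combined with the fact that in the slice category base homotopies project to constants in $W$, shows that $\mathrm{ev}_1$ is a fibration in $\mathbf{Top}_{/W}$. Taking the strict pullback $X' \times_Z Y$ and unpacking its elements yields precisely the space described in the proposition.

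For part (2), the canonical projection is the map $P \to X \times_W Z$ sending $(x,\gamma,y) \mapsto (x,\gamma_1)$; this lands in $X \times_W Z$ because $\pi_Z(\gamma_1) = \pi_X(x)$. To verify the Serre lifting property I would test against the generating acyclic cofibrations $D^n \hookrightarrow D^n \times [0,1]$ and observe that a lift translates into extending a partial map on a ``U-shaped'' subcomplex of the unit square $[0,1]^2$ (three of its four edges being prescribed by the initial path $\gamma$, the homotopy $g \circ x^t$, and the endpoint homotopy coming from the given path in $X \times_W Z$) to a map on the entire square, compatibly with a specified projection to $W$. Since the U-shape inclusion is an acyclic cofibration of CW pairs and $\pi_Z$ is a Serre fibration, the desired extension exists, and the remaining components of the lift are produced from the input data.

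The main technical point is the verification in part (2): the crux is to recognize the lifting problem as a relative extension problem along an acyclic cofibration of CW pairs so that the fibration hypothesis on $\pi_Z$ can be invoked uniformly over the parameter space. Everything else in both parts is a formal consequence of the slice model structure on $\mathbf{Top}_{/W}$ together with the classical path-space factorization.
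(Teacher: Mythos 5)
The paper gives no proof of Proposition~\ref{hpbmodel}; it is cited to Andrade's thesis. Your proof of part~(1), via the fiberwise mapping path space $X'$ and the deformation retract / evaluation--fibration factorization over $W$, is the standard argument and essentially matches Andrade's. You should remark that one only needs $s\colon X\to X'$ to be a weak equivalence (not necessarily a cofibration), since $\mathbf{Top}_{/W}$ is right proper and so pullback along the fibration $\mathrm{ev}_1$ already computes the homotopy pullback; the hypothesis that $\pi_Z$ is a fibration is what makes $\mathrm{ev}_1$ a fibration (via the same U-shape lift you use later), and you should say this rather than gesture at "path-concatenation."

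In part~(2) there is a genuine gap. You read "the canonical projection onto $X\times_W Z$" as $(x,\gamma,y)\mapsto(x,\gamma_1)$, and this map is \emph{not} a Serre fibration in general. Take $W=X=Y=\{*\}$, $Z=S^1$, and $f,g$ both the inclusion of a basepoint $z_0$; then $P\simeq\Omega_{z_0}S^1$, $X\times_W Z\cong S^1$, and the projection is the constant map at $z_0$, which fails the lifting property against any nonconstant path. Symptomatically, your argument supplies $\gamma_t$ by a U-shape extension but never explains how to produce the component $y_t$ with $f(y_t)=\gamma_t(1)$: there is no reason such a lift of $t\mapsto\gamma_t(1)$ along $f$ should exist, since $f$ is not assumed to be a fibration. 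The statement in the paper almost certainly contains a typo and should read $X\times_W Y$, with projection $(x,\gamma,y)\mapsto(x,y)$; this is precisely the map the paper uses in the proof of Theorem~\ref{htype}, where the target is $\Emb^\ga(\cdot,M)\times_{\Map^\ga(\cdot,M)}\Map^G(\cdot,\cdot)$, i.e.\ $X\times_W Y$. Under that reading your U-shape lifting argument does go through verbatim: the three prescribed edges are the initial path $\gamma_0$, $t\mapsto g(x_t)$, and $t\mapsto f(y_t)$ (all of $x_t,y_t$ being supplied by the data in $X\times_W Y$), the inclusion of the U-shape into $[0,1]^2$ is an acyclic CW cofibration, and the Serre fibration hypothesis on $\pi_Z$ supplies the extension fiberwise over $W$. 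There are then no "remaining components" to produce. I recommend you rewrite part~(2) with target $X\times_W Y$ and flag the apparent typo.
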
 
Thus the relative homotopy pullback sits inside the usual model of the homotopy pullback of $X \rightarrow Z \leftarrow Y$ as the subspace whose homotopies $\gamma$ have constant underlying path in $W$. As an object of $Top_{/W}$ the relative homotopy pullback has a canonical map to $W$. 
\begin{prop} \cite[\S V.9]{andrade10}\label{hpbinclusion}
	Let $X,Y,Z,W$ be as in Proposition \ref{hpbmodel}. The inclusion of the relative homotopy pullback into the homotopy pullback $X \times_Z^h Y$ is a weak equivalence. 
\end{prop}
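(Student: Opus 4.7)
My plan is to realize both the relative homotopy pullback $A$ and the standard homotopy pullback $B := X \times_Z^h Y$ as strict pullbacks along Serre fibrations, obtained from two different path-space factorizations of $f: Y \to Z$. A weak equivalence between the two path-space objects will then pull back to the inclusion $j: A \hookrightarrow B$.

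Concretely, I set $\mathrm{Path}_{/W}(Z) := \{\gamma \in Z^{[0,1]} : \pi_Z \circ \gamma \text{ is constant}\}$ and define $\tilde Y := Y \times_Z Z^{[0,1]}$ and $\tilde Y_{/W} := Y \times_Z \mathrm{Path}_{/W}(Z)$, both pullbacks using evaluation $\gamma \mapsto \gamma(1)$. Each carries a map to $Z$ by $\gamma \mapsto \gamma(0)$ and receives a weak equivalence from $Y$ via $y \mapsto (y, c_{f(y)})$; for $\tilde Y_{/W}$ a homotopy inverse is the explicit retract $\gamma_t(s) := \gamma((1-t)s + t)$, which stays in $\mathrm{Path}_{/W}(Z)$ because $\pi_Z \circ \gamma$ is already constant. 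By construction $B = X \times_Z \tilde Y$, $A = X \times_Z \tilde Y_{/W}$, and $j$ is induced by the inclusion $\tilde Y_{/W} \hookrightarrow \tilde Y$.

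The crucial step is verifying that $\tilde Y_{/W} \to Z$ is a Serre fibration (the corresponding statement for $\tilde Y$ is classical). Given a CW lifting problem consisting of $(y, \gamma): K \to \tilde Y_{/W}$ and a homotopy $H: K \times [0,1] \to Z$ extending $k \mapsto \gamma(k)(0)$, I first use the fibration $\pi_Y$ to lift the path $s \mapsto \pi_Z H(k, s)$ in $W$ starting at $\pi_Y(y(k))$, producing $y^s(k)$ with $\pi_Y(y^s(k)) = \pi_Z H(k, s)$. I then obtain the desired $\gamma^s$ by solving a lifting problem for $\pi_Z$: extend the map defined on the U-shape $(\{0\} \times [0,1]) \cup ([0,1] \times \{0, 1\}) \subset [0,1]^2$ by the initial data $\gamma$ and the boundary conditions $\gamma^s(0) = H(k, s)$, $\gamma^s(1) = f(y^s(k))$ to all of $[0,1]^2$, with prescribed projection $(s, t) \mapsto \pi_Z H(k, s)$ to $W$. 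Since the inclusion of the contractible U-shape into the square is a subcomplex inclusion and a weak equivalence, it is an acyclic cofibration, and the Serre fibration $\pi_Z$ has the right lifting property against it.

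Finally, the inclusion $\tilde Y_{/W} \hookrightarrow \tilde Y$ is a weak equivalence by two-out-of-three applied to $Y \to \tilde Y_{/W} \to \tilde Y$. Being a weak equivalence between two fibrations over $Z$, it pulls back along $g: X \to Z$ to a weak equivalence $j: A \xrightarrow{\simeq} B$, by invariance of homotopy pullbacks under weak equivalences of diagrams. The hard part will be establishing the fibration property for $\tilde Y_{/W} \to Z$: both hypotheses that $\pi_Y$ and $\pi_Z$ are fibrations enter essentially, since without either of them the two-stage lifting argument collapses.
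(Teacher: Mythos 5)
Your proof is correct, and it fills in the details carefully. The paper itself does not prove Proposition~\ref{hpbinclusion} but defers to Andrade's thesis \cite[\S V.9]{andrade10}, so there is no in-paper argument to compare against; nonetheless your route is the natural one. You realize the relative homotopy pullback as a strict pullback $X\times_Z \tilde Y_{/W}$, where $\tilde Y_{/W} \to Z$ is the mapping-path fibration formed inside $\mathbf{Top}_{/W}$ (using the fibrewise path object $\mathrm{Path}_{/W}(Z)$), and realize $X\times_Z^h Y$ as $X\times_Z \tilde Y$ via the absolute mapping-path fibration. The identification $A = X\times_Z\tilde Y_{/W}$ holds because the commuting triangles force $\pi_X(x)=\pi_Z(\gamma_0)$ and $\pi_Y(y)=\pi_Z(\gamma_1)$, so the constancy of $\pi_Z\circ\gamma$ is equivalent to the constraints in the model of Proposition~\ref{hpbmodel}. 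Your two-stage lifting argument for the fibration property of $\tilde Y_{/W}\to Z$ is sound: the initial condition $\pi_Y(y(k))=\pi_Z H(k,0)$ is valid since $\pi_Z\circ\gamma(k)$ is constant, the $\pi_Y$-lift produces $y^s$ with $\pi_Y(y^s(k))=\pi_Z H(k,s)$, and the subsequent $\pi_Z$-lift against $K\times U \hookrightarrow K\times[0,1]^2$ (the U-shape with the side $\{s=1\}$ omitted, which is a deformation retract) is exactly the homotopy lifting property after identifying this pair with $(K\times[0,1]\times[0,1],\, K\times[0,1]\times\{0\})$; the boundary data is compatible with the prescribed projection to $W$ on all three sides. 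The deformation retraction $\gamma_t(s)=\gamma((1-t)s+t)$ shows $Y\hookrightarrow\tilde Y_{/W}$ is an equivalence, two-out-of-three gives $\tilde Y_{/W}\hookrightarrow\tilde Y$ an equivalence, and then the conclusion follows since strict pullbacks along fibrations model homotopy pullbacks and homotopy pullbacks are invariant under objectwise weak equivalences of cospans.
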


To interpret Definition \ref{def:hfremb}, we use:
\begin{prop} \label{hpbserrefibr}
	The projections 
	\begin{align*}
	&\Map^{\ga\ltimes G} (P_G (M), P_G(N)) \rightarrow \Map^\ga (M,N), \\
	&\Map^{\ga \times GL (n)}(\Fr (M),\Fr (N)) \rightarrow \Map^\ga (M,N), 
	\end{align*} 
	sending a map to the induced map on the base are fibrations.
\end{prop}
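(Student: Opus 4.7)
The plan is to verify the Serre fibration property by applying the Covering Homotopy Theorem (Proposition \ref{thm:CHT}) parametrically over the test disks. Thus for each $n \geq 0$ I would consider a commutative square
\begin{equation*}
\begin{tikzcd}
D^n \arrow[r, "F_0"] \arrow[d, hook] & \Map^{\ga \ltimes G}(P_G(M), P_G(N)) \arrow[d, "p"] \\
D^n \times [0,1] \arrow[r, "h"] \arrow[ru, dashed, "F"] & \Map^\ga(M,N)
\end{tikzcd}
\end{equation*}
and produce the dashed filler $F$. By the exponential law, $h$ transposes to a $\ga$-equivariant map $\tilde h: M \times D^n \times [0,1] \to N$, with $\ga$ acting trivially on the $D^n \times [0,1]$ factor, while $F_0$ transposes to a $\ga \ltimes G$-equivariant lift $\tilde F_0: P_G(M) \times D^n \to P_G(N)$ of $\tilde h|_{t=0}$. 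Constructing a filler $F$ is then equivalent to extending $\tilde F_0$ to a $\ga \ltimes G$-equivariant homotopy $\tilde F: P_G(M) \times D^n \times [0,1] \to P_G(N)$ that covers $\tilde h$.

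To produce $\tilde F$ I would apply Proposition \ref{thm:CHT} to an appropriate pulled-back bundle. The product $P_G(M) \times D^n \to M \times D^n$, with the trivial $\ga \ltimes G$-action on the $D^n$ factor, inherits from $P_G(M) \to M$ the structure of a principal $(\rho,G)$-bundle, since local trivialisations are preserved under pullback along $\mathrm{pr}_1: M \times D^n \to M$. Viewing $\tilde h$ as a $\ga$-equivariant homotopy of maps $M \times D^n \to N$ together with the $\ga \ltimes G$-equivariant time-zero lift $\tilde F_0$, the Covering Homotopy Theorem directly produces the required $\tilde F$.

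The second projection is treated by exactly the same argument, now using the pulled-back frame bundle $\Fr(M) \times D^n \to M \times D^n$ as a $(\rho,GL(n))$-principal bundle, together with the identification of the two $\ga$-actions on $\Map^{GL(n)}(\Fr(M),\Fr(N))$ recorded in the remark preceding the proposition. The one point that warrants attention is that Steenrod's proof of the Covering Homotopy Theorem must be checked to go through parametrically, i.e.\ when one replaces the base $M$ by $M \times D^n$. This is indeed the case, since the proof only uses local triviality of the target bundle and the homotopy extension property of $\{0\} \hookrightarrow [0,1]$, both of which are unaffected by taking the product with $D^n$; this is where I expect the only real bookkeeping to lie.
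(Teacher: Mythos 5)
Your proposal is correct and is essentially the proof in the paper: both reduce the lifting problem via the exponential law (the paper phrases this as the equivariant inner hom) to a covering homotopy problem for the product bundle $P_G(M)\times D^n \to M\times D^n$, then invoke the Covering Homotopy Theorem (Proposition~\ref{thm:CHT}) and transpose back. One small remark on your final paragraph: you don't actually need to re-examine Steenrod's argument ``parametrically,'' because once you have verified (as you do) that $P_G(M)\times D^n \to M\times D^n$ is itself a principal $(\rho,G)$-bundle, Proposition~\ref{thm:CHT} applies to it verbatim as stated, with the base already $M\times D^n$ — the parameter disk has been absorbed into the base, so no extra bookkeeping in the proof of the CHT is required.
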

\begin{proof}
	We adapt the standard proof due to of I.M. James \cite{james63}.
	We need to show that we can always find a filler for diagrams of the form
	\begin{tikzcd}
		D \arrow[d, "i_0"] \arrow[r] & \Map^{\ga\ltimes G} (P_G (M), P_G(N)) \arrow[d] \\
		D \times [0,1] \arrow[r] \arrow[ru,dotted] & \Map^\ga (M,N)
	\end{tikzcd}
	where $D$ is some $m$-dimensional cube. If we endow $D$ with a trivial $\ga \ltimes_\rho G$-action we use the equivariant inner hom to obtain the diagram
	\begin{tikzcd}
		P_G (M) \times D   \arrow[d, "i_0"] \arrow[r] & P_G(N) \arrow[d] \\
		P_G(M) \times D \times [0,1] \arrow[r] \arrow[ru,dotted] \arrow[d]& N \\
		M \times D \times [0,1] \arrow[ru]
	\end{tikzcd}
	Proposition \ref{thm:CHT} provides a $\ga \ltimes_\rho G$-equivariant homotopy of bundle maps $P_G(M) \times D \times [0,1] \rightarrow P_G(N)$.
	We now use the equivariant inner hom again to transform back and obtain the desired lift $D \times [0,1] \rightarrow \Map^G(P_G(M), P_G(N))$.
	The second proof is analogous. 
\end{proof}
Proposition \ref{hpbmodel} immediately gives the following corollary.
\begin{cor} \label{hfrmodel}
	A model for the space $\EEmb^G_{\fr}([M/\ga],[N/\ga])$ is given by the space of triples $(f,\gamma,r)$ where $f: [M/\ga] \rightarrow [N/\ga]$ is an embedding, $r: \Fr(M) \rightarrow \Fr(N)$ is a $\ga$-equivariant bundle map so that $r(P_G(M)) \subset P_G(N)$ and $\gamma$ is a homotopy of $\ga$-equivariant bundle maps between $f_*$ and $r$ that lies over the constant path $f$ in $\Map^\ga(M, N)$. 
\end{cor}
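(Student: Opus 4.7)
The plan is to derive the corollary directly from Proposition~\ref{hpbmodel}, applied to Diagram~\ref{pbdiagram} viewed in the slice category $\mathrm{Top}_{/\Map^\ga(M,N)}$. By Definition~\ref{def:hfremb}, the space $\EEmb^G_{\fr}([M/\ga],[N/\ga])$ is by construction the relative homotopy pullback of that diagram over $\Map^\ga(M,N)$, so the task reduces to reading off the point-set model guaranteed by Proposition~\ref{hpbmodel}.

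First I would verify the fibration hypothesis required to apply Proposition~\ref{hpbmodel}: both projections $\Map^{\ga\ltimes G}(P_G(M),P_G(N)) \to \Map^\ga(M,N)$ and $\Map^{\ga\times GL(n)}(\Fr(M),\Fr(N)) \to \Map^\ga(M,N)$ must be Serre fibrations, which is exactly the content of Proposition~\ref{hpbserrefibr}. With this in hand, I would set $X = \Emb^\ga(M,N)$, $Y = \Map^{\ga\ltimes G}(P_G(M),P_G(N))$, $Z = \Map^{\ga\times GL(n)}(\Fr(M),\Fr(N))$, take the map $X \to Z$ to be the differential $D$, and the map $Y \to Z$ to be the inclusion induced by $\Ind_G^{GL(n)}$. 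Proposition~\ref{hpbmodel}(1) then presents the model as the space of triples $(f,\gamma,r_0)$ consisting of a $\ga$-equivariant embedding $f$, an element $r_0 \in Y$, and a path $\gamma$ in $Z$ from $f_*$ to the image of $r_0$ whose projection to $\Map^\ga(M,N)$ is the constant path at $f$.

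The last step is to translate the $Y$-datum into the form appearing in the statement: by the inclusion used in Proposition~\ref{frpb}, a $\ga\ltimes G$-equivariant map $r_0: P_G(M)\to P_G(N)$ is the same information as a $\ga$-equivariant $GL(n)$-bundle map $r: \Fr(M)\to \Fr(N)$ satisfying $r(P_G(M)) \subset P_G(N)$. Substituting this reformulation gives exactly the triples $(f,\gamma,r)$ described in the corollary. No genuine obstacle arises: the corollary is essentially a bookkeeping consequence of Propositions~\ref{hpbmodel}, \ref{hpbserrefibr}, and the subspace characterisation of Proposition~\ref{frpb}, with the only minor care needed in identifying $Y$ with its image inside $Z$.
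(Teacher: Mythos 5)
Your proof is correct and follows exactly the paper's intended route: the paper states the corollary as an immediate consequence of Proposition~\ref{hpbmodel} applied to Diagram~\ref{pbdiagram}, with the fibration hypothesis supplied by Proposition~\ref{hpbserrefibr} and the translation of the $Y$-coordinate via the inclusion $\Ind_G^{GL(n)}$ from Proposition~\ref{frpb}. You have simply made explicit what the paper leaves as "immediate."
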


We will study the (spaces of) framed embeddings further in Section \ref{sec:confspaces}, and show they are better behaved than the strongly framed embeddings. 
In particular, in Corollary \ref{cor:cover}, we show that any $(\rho,G)$-framed global quotient is covered by local models via framed embeddings.

\begin{rmk} \label{rmk:underlyingemb}
	Proposition \ref{hpbinclusion} implies another model for $\EEmb^G_{\fr}([M/\ga],[N/\ga])$ is given by the homotopy pullback. We prefer the relative homotopy pullback so that a framed embedding has a canonical underlying embedding.
\end{rmk}

The space of strongly framed embeddings includes into the space of framed embeddings as the subspace of triples $(f,c,f_*)$, where $c$ is constant at $f_*$.
Sometimes this inclusion is a homotopy equivalence.

\begin{prop} \label{prop:hfrisgfr}
	Let $G$ be $GL(n)$ or $GL^+(n)$. The inclusion of the space of strongly framed embeddings into the space of framed embeddings is a weak equivalence.
\end{prop}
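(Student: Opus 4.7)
My plan is to construct an explicit strong deformation retract from framed embeddings $\mathcal{F} := \EEmb^G_\fr([M/\ga],[N/\ga])$ onto strongly framed embeddings $\mathcal{S}$. The central geometric observation is that for $G = GL(n)$ and $G = GL^+(n)$ the induction map
\[ \Ind : \Map^{\ga \ltimes G}(P_G(M), P_G(N)) \hookrightarrow \Map^{\ga \times GL(n)}(\Fr(M), \Fr(N)) =: Z \]
is a homeomorphism onto a clopen subspace of $Z$. For $G = GL(n)$ this is trivial, since $P_G = \Fr$ and $\Ind$ is the identity. For $G = GL^+(n)$ the image consists of bundle maps $r$ that send $\Fr^+(M)$ into $\Fr^+(N)$; this is a locally constant condition on $Z$, detected at each $m \in M$ by the sign of a determinant in a local trivialisation.

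Given the clopen property, a retraction $p : \mathcal{F} \to \mathcal{S}$ is immediate: for any framed embedding $(f,\gamma,r)$ in the model of Corollary \ref{hfrmodel}, the path $\gamma$ in the fibre $Z_f$ has endpoint $\gamma_1 = \Ind(r) \in \Ind(Y)$, so the whole path lies in the clopen subset $\Ind(Y)$ and in particular $f_* = \gamma_0$ lies there. By Proposition \ref{frpb} this says $(f, \Ind^{-1}(f_*))$ is strongly framed, and $p(f,\gamma,r) := (f, \Ind^{-1}(f_*))$ gives a continuous retraction of the inclusion $i : \mathcal{S} \hookrightarrow \mathcal{F}$. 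To deform $i \circ p$ to $\id_\mathcal{F}$ I define
\[ H_t(f, \gamma, r) := (f, \gamma^{(t)}, \Ind^{-1}(\gamma_t)), \qquad \gamma^{(t)}_s := \gamma_{ts}, \]
which shrinks $\gamma$ to the constant path at $f_*$ and moves $r$ along correspondingly. Each $H_t(f,\gamma,r)$ is a valid framed embedding: the clopen claim provides $\gamma_t \in \Ind(Y)$ so $\Ind^{-1}(\gamma_t)$ is well defined, and the endpoint and constant-underlying-map conditions are immediate. At $t = 0$ we recover $i \circ p(f,\gamma,r)$ and at $t=1$ we recover $(f,\gamma,r)$, and continuity of $H$ in $t$ follows from continuity of endpoint evaluation on paths in the compact-open topology and continuity of $\Ind^{-1}$ on its image.

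The only substantive step is the clopen claim for $G = GL^+(n)$, which amounts to checking that orientation-preservation of the bundle map is preserved under continuous deformation of both the underlying equivariant map and the bundle map itself. Once that is in hand the retraction and homotopy are purely formal. The strategy genuinely relies on the ambient structure group being large: it does not extend to smaller groups such as $G = \e$ or $G = O(n)$, where $\Ind(Y)$ is generally not clopen in $Z$ (compare Example \ref{rigidex}), reflecting the genuine homotopical content of framed embeddings for those structures.
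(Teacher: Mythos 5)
Your argument is correct, and it rests on the same key observation as the paper's proof — that for $G = GL(n)$ or $GL^+(n)$ the inclusion $\Map^{\ga \ltimes G}(P_G(M), P_G(N)) \hookrightarrow \Map^{\ga \times GL(n)}(\Fr(M), \Fr(N))$ is respectively the identity or the inclusion of a union of components (the paper phrases this as ``inclusion of a connected component'', you phrase it as ``clopen''; these amount to the same thing here). Where you diverge is in what you do with this observation: the paper notes that such an inclusion is a fibration and then invokes the general principle that a strict pullback models the homotopy pullback when one leg is already a fibration, together with Propositions \ref{hpbinclusion} and \ref{frpb}; you instead construct an explicit strong deformation retraction $H_t(f,\gamma,r) = (f,\gamma^{(t)},\Ind^{-1}(\gamma_t))$ that shrinks the homotopy $\gamma$ to its initial point, using the clopen property to see that the whole path $\gamma$ (and in particular $\gamma_0 = f_*$) lies in the image of $\Ind$. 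Your argument is more hands-on and self-contained since it avoids the black-box comparison between strict and homotopy pullbacks; the paper's is shorter by citing general machinery. Both correctly identify that the argument is specific to these structure groups and that the clopen/fibration property fails for, e.g., $G = \e$ or $G = O(n)$.

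One small remark: in your retraction $p$, it is worth spelling out that $p(f,\gamma,r) = (f, \mathrm{const}_{f_*}, \Ind^{-1}(f_*))$ as a triple in the model of Corollary \ref{hfrmodel}, so that $p$ does land in $\mathcal{S}$ viewed as a subspace of $\mathcal{F}$, and that $H_t$ fixes $\mathcal{S}$ pointwise for all $t$ (since a constant $\gamma$ gives constant $\gamma^{(t)}$), making it a strong deformation retraction. These are easy checks and do not affect correctness.
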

\begin{proof}
	For $GL(n)$ and $GL^+(n)$ the inclusion 
	\[\Map^{\ga \ltimes G} (P_G(M), P_G (N)) \rightarrow \Map^{\ga  \times GL(n)}(\Fr (M),\Fr (N) )\]
	is respectively the identity or the inclusion of a connected component.
	In particular, the inclusion is a fibration in both cases. 
	Consequently, the homotopy pullback can be modeled by a strict pullback.
	Now apply Proposition \ref{frpb}.
\end{proof}

\subsection{Categories of global quotients with tangential structure} \label{subsec:cats}

\begin{conv} 
	From now on we will always use the concrete model of Proposition \ref{hfrmodel} as our model for the space of framed embeddings.
\end{conv} 

Composition of framed embeddings is then easily defined by
\[(f,\gamma,r) \circ (f',\gamma',r') := (f \circ f', \gamma \circ \gamma',r \circ r').\]

Note this composition is strictly associative, so that we can define a topological category of framed global quotients. In contrast, in \cite{af15} the framed embeddings between topological framed manifolds do not compose strictly associatively, and hence naturally define an $\infty$-category rather than a topological category.

\begin{defn}
	Let $\rho: \ga \rightarrow N_{GL(n)}(G)$ be a group homomorphism. 
	The topological category $\gar\Orb^{G}_n$ has as objects $n$-dimensional $(\rho,G)$-framed global quotients and morphisms spaces are the spaces of framed embeddings. 
\end{defn}
\begin{conv}
	We allow the empty $(\rho,G)$-framed global quotient $\emptyset \in \gar\Orb^{G}_n$. 
\end{conv}
\begin{rmk}
	As the hom-spaces in $\gar\Orb^{G}_n$ are only defined up to homotopy equivalence, the topological category is only defined up to weak equivalence. Correspondingly, the associated $\infty$-category is defined up to equivalence of $\infty$-categories.
\end{rmk}

\begin{rmk} \label{rmk:trivrep}
	If $\rho = \triv$ we find that $c_g = g_*$ and obtain a $\ga \times G$-action on $P_G(M)$. Moreover, the action of $\ga$ on $M$ must be free by Remark \ref{rmk:localmodels}.
\end{rmk}

\begin{nota} Motivated by Remark \ref{rmk:trivrep} we will write $\ga\Quot^{G}_n$ rather than $\ga^\triv\Orb^{G}_n$. We call the objects \emph{$G$-framed free $\ga$-quotients}.
\end{nota}
Note that for two $(\rho,G)$-framed global quotients $[M/\ga]$ and $[N/\ga]$ the disjoint union $[M \amalg N/\ga]$ canonically inherits the structure of a $(\rho,G)$-framed global quotient. The category $\gar\Orb^{G}_n$ is symmetric monoidal with tensor product $\amalg$ and unit object $\emptyset$. 
\begin{rmk} \label{rmk:starismflds}
	The category $\e^\triv\Orb^{G}_n$ recovers the $\infty$-category of $G$-framed smooth manifolds as in \cite{andrade10,af15}.\footnote{To be precise, in \cite{af15} topological manifolds are considered, but by smoothing theory outside of dimension 4 the category of smooth and topological $G$-framed manifolds coincide. In dimension 4 we recover the subcategory of $G$-framed smooth manifolds and smooth embeddings. See also \cite[Remark 3.29]{af15}.}
\end{rmk}

\begin{nota}
Motivated by Remark \ref{rmk:starismflds} we will henceforth denote $\e^\triv\Orb^{G}_n$ by $\Mfld_n^G$.
Furthermore, we denote the categories $\gar\Orb^{G}_n$ for $G=\e$, $GL^+(n)$, $\mathrm{GL}(n)$ respectively by $\gar\Orb^{\mathrm{fr}}_n$, $\gar\Orb^{\mathrm{or}}_n$ and $\gar\Orb_n$.
\end{nota}

We will not be much concerned with relations between the different categories $\gar\Orb^G_n$ for different groups $\ga$, but let us consider one specific example.

\begin{prop} \label{prop:timesandquotient}
	There are canonical symmetric monoidal functors 
	\[ \ga \times: \Mfld_n^G \rightarrow \ga\Quot_n^G \quad \text{and} \quad /\ga: \ga\Quot_n^G \rightarrow \Mfld_n^G \]
	such that $/\ga \circ \ga\times \simeq \id_{\Mfld_n^G}$.
\end{prop}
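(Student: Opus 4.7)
The plan is to construct each functor on objects and morphisms, then verify symmetric monoidality and the composition identity.

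For $\ga\times$, on objects I send $M \in \Mfld_n^G$ to the global quotient $[\ga \times M / \ga]$ with $\ga$ acting by left translation on the first factor. This action is manifestly free, and the $G$-structure on $M$ pulls back to a $(\triv,G)$-structure on $\ga \times M$ via $P_G(\ga \times M) := \ga \times P_G(M)$, with the $\ga \ltimes_\triv G = \ga \times G$-action given by translation and the right $G$-action. On morphisms, a framed embedding $(f,\gamma,r)$ is sent to $(\id_\ga \times f,\,\id_\ga \times \gamma,\,\id_\ga \times r)$, which is tautologically $\ga$-equivariant. Functoriality and strict preservation of composition are immediate from the strict associativity of framed embedding composition fixed in \S \ref{subsec:cats}.

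For $/\ga$, on an object $[M/\ga] \in \ga\Quot_n^G$, Remark \ref{rmk:trivrep} gives that $\ga$ acts freely, so Lemma \ref{lem:leefacts}.2 makes $\pi \colon M \to M/\ga$ into a principal $\ga$-bundle; in particular $\pi$ is a local diffeomorphism and $M/\ga$ is a smooth manifold. Since $\rho = \triv$ we have $\ga \ltimes_\triv G = \ga \times G$, so the $\ga$-action on $P_G(M)$ commutes with the right $G$-action, and $P_G(M)/\ga \to M/\ga$ is a principal $G$-bundle. The differential of $\pi$ induces a canonical isomorphism $\Fr(M)/\ga \cong \Fr(M/\ga)$ along which $\iota_G$ descends to an isomorphism $\Fr(M/\ga) \cong \Ind_G^{GL(n)}(P_G(M)/\ga)$, giving the required $G$-structure. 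On morphisms, each component of a framed embedding $(f,\gamma,r)$ is $\ga$-equivariant, so Lemma \ref{lem:leefacts}.3 uniquely descends it to a smooth triple $(\bar f, \bar \gamma, \bar r)$ on the quotients; injectivity of $\bar f$ follows from equivariance and injectivity of $f$, while $\bar f$ is an immersion because $\pi$ is a local diffeomorphism, and a topological embedding because $\ga$ is finite and acts properly. The condition $\bar r(P_G(M/\ga)) \subset P_G(N/\ga)$ and the constraint that $\bar\gamma$ lies over the constant path at $\bar f$ both descend directly from the corresponding properties of $(f,\gamma,r)$.

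Symmetric monoidality of both functors is immediate from the natural identifications
\[ \ga \times (M \amalg N) \cong (\ga \times M) \amalg (\ga \times N), \qquad (M \amalg N)/\ga \cong (M/\ga) \amalg (N/\ga), \]
together with $\ga \times \emptyset = \emptyset$ and $\emptyset/\ga = \emptyset$, each of which preserves the relevant $G$-structure strictly. The composition identity $/\ga \circ (\ga \times) \simeq \id_{\Mfld_n^G}$ follows from the canonical diffeomorphism $(\ga \times M)/\ga \cong M$ furnished by projection to the second factor, which plainly intertwines the $G$-structures constructed above and is natural in framed embeddings.

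The main technical obstacle is ensuring that the descent of framed-embedding data through the free quotient is continuous and composition-preserving at the level of the relative homotopy pullback model of Corollary \ref{hfrmodel}; this reduces to the standard fact that $\pi \colon M \to M/\ga$ is a covering map together with adjunction properties of equivariant inner homs, so that homotopies and bundle maps transfer functorially between $M$ and $M/\ga$.
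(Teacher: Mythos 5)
Your construction matches the paper's own proof: both build $\ga\times$ and $/\ga$ as enriched functors by descending or lifting each of the three component spaces of the relative homotopy pullback model, both invoke Lemma \ref{lem:leefacts} for smoothness of the descended data, and both observe the composition identity via the obvious diffeomorphism $(\ga \times M)/\ga \cong M$. The extra detail you give on why the descended map is an embedding (covering map, proper finite action) is correct but is exactly the kind of standard verification the paper leaves implicit.
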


\begin{proof}
	We will construct these functors as enriched functors i.e. functors at the level of the topological categories. 
	Let us first construct the functor $\ga \times $. For a $G$-framed manifold $M$ the manifold $\ga \times M$ is naturally $G$-framed and has a free $\ga$-action making $[\ga \times M/\ga]$ a $G$-framed free $\ga$-quotient. The obvious maps 
	\begin{align*}
	&\Emb(M,N) \rightarrow \Emb^\ga(\ga \times M, \ga \times N),\\
	&\Map^G(P_G(M),P_G(N)) \rightarrow \Emb^{\ga \times G}(P_G(\ga \times M), P_G(\ga \times N)),\\
	&\Map^{GL(n)}(\Fr(M), \Fr(N)) \rightarrow \Map^{\ga \times GL(n)}(\Fr(\ga \times M), \Fr(\ga \times N))
	\end{align*}
	are functorial and all lie over the obvious map $\Map(M,N) \rightarrow \Map(\ga \times M,\ga \times N)$. Hence we have an induced functorial map 
	\[\EEmb^G_{\fr}(M,N) \rightarrow \EEmb^G_{\fr}([\ga \times M/\ga],[\ga \times N/\ga]).\]
	
	Next let us construct the functor $/\ga$. Recall that for $[M/\ga] \in \ga\Quot_n^G$ the group $\ga$ acts freely on $M$. Hence $M\rightarrow M/\ga$ is a principal $\ga$-bundle by Lemma \ref{lem:leefacts} and in particular a submersion. Since $\rho$ is the trivial representation $g_*(P_G(M)) = c_g(P_G(M)) \subset P_G(M)$ for all $g\in \ga$. We then have two commuting free actions of $\ga$ and $G$ on $P_G(M)$. Consider the commuting diagram
	\begin{center}
		\begin{tikzcd}
			P_G(M) \arrow[d] \arrow[r] & M \arrow[d]\\
			P_G(M)/\ga \arrow[r] & M/\ga
		\end{tikzcd}
	\end{center}
	Since three out of four arrows are submersions the induced map $P_G(M)/\ga \rightarrow M/\ga$ is also a submersion by Lemma \ref{lem:leefacts}. 
	Thus the manifold $M/\ga$ has a canonical $G$-structure. We define $/\ga$ on objects by sending $[M/\ga] \mapsto M/\ga$. 
	We will define $/\ga$ on maps as follows.  
	Lemma \ref{lem:leefacts} allows us to functorially assign maps induced maps on quotients:
	\begin{align*}
	&\Emb^\ga(M,N) \rightarrow \Emb(M/\ga, N/\ga),\\
	&\Map^{\ga \times G} (P_G(M),P_G(N) \rightarrow \Emb^G(P_G(M)/\ga, P_G(N)/\ga),\\
	&\Map^{\ga \times GL(n)}(\Fr(M), \Fr(N)) \rightarrow \Map^{GL(n)}(\Fr(M)/\ga, \Fr(N)/\ga)
	\end{align*}
	lying over the map $\Map^\ga(M,N) \rightarrow \Map(M/\ga,N/\ga)$. 
	Hence we have an induced map 
	\[\EEmb^G_{\fr}([M/\ga],[N/\ga]) \rightarrow \EEmb^G_{\fr}(M/\ga,N/\ga).\]
	It is clear that $\ga \circ \ga \times  \simeq \id_{\Mfld_n^G}$ and that the functors are symmetric monoidal. 	
\end{proof}

	We refer to objects in the essential image of the functor $\ga \times$ in $\ga\Quot_n^G$ as \emph{$G$-framed trivial $\ga$-quotients}.

\section{Equivariant configuration spaces} \label{sec:confspaces}
Configuration spaces are manifolds parametrising finite sets of points inside a given manifold.
They appear naturally in factorization homology as a model for the mapping spaces of finite collections of disks into a manifold. 
Similarly, equivariant configuration spaces naturally appear in equivariant factorization homology. 
Namely, a framed embedding of some local models into a global quotient is encoded, up to homotopy, by the position of the images of the origins and the derivatives there.
As a result, the space of framed embeddings is weakly equivalent to a principal $G$-bundle over an equivariant configuration space. 
This is made precise in Theorem \ref{htype}, which is the central result of this section. 
The result implies that any global quotient is covered by local models via framed embeddings (Corollary \ref{cor:cover}).

We now give precise definitions. For a manifold $M$ the \emph{configuration space of $k$ ordered points in $M$} is the open submanifold $F_kM \subset M^{\times k}$ defined by
\[ F_k M := \{ (z_1,\dots,z_k)\in M^{\times k} : z_i \neq z_j \text{ if } i \neq j \}.\]
There are different flavours of configuration spaces in equivariant topology, see \cite{xicot97,rourkesanderson} and references therein. 
Our definitions coincide with \cite[Def. 2.5]{hill17}.
\begin{defn}
	Let $[M/\ga]$ be a global quotient and let $I\leq \ga$ be a subgroup.
	\emph{The configuration space of $k$ ordered $I$-singular points} in $[M/\ga]$ is the open submanifold $F^I_k[M/\ga] \subset M^{\times k}$ defined by
		\[F_k^I [M/\ga] := \{ (z_1,\dots,z_k)\in M^{\times k} : \forall i \; \; z_i \in M^I \text{ and } \forall g\in \ga \; \; g z_i \neq z_j \text{ if } i \neq j \}.\]
\end{defn}

We then immediately get a corresponding natural notion of braid groups.
\begin{defn}Let $[\Sigma/\ga]$ be a two-dimensional global quotient. \label{def:braidmcg}
	We denote with $PB_k[\Sigma/\ga]$ the fundamental group of $F_k^\e[\Sigma/\Z_2]$ and call it the \emph{pure orbifold braid group}. 
	The group $\ga^{\times k} \ltimes S_k$ has an obvious action on  $F_k^\e[\Sigma/\ga]$. We denote with $B_k[\Sigma/\ga]$ the fundamental group of the quotient space $F_k^\e[\Sigma/\Z_2]/(\ga^{\times k} \ltimes S_k)$ and call it the \emph{orbifold braid group}.
\end{defn} 

\subsection{Bundles of fixed points} 
We now define natural principal $G$-bundles over the equivariant configuration spaces. They play a central role in Theorem \ref{htype}.
It will be useful to view equivariant configuration spaces as fixed-point sets in normal configuration spaces:
\begin{lem} \label{lem:fixedpointconf}
	Let $\ga$ act on the $M_{k[\ga:I]}$ by acting on the points in $M$ and acting on the indices via the $\ga$-action on $\ga/I$. 
	Then we have a canonical homeomorphism
	\begin{align*}
		F_{k [\ga:I]}(M)^\ga &\rightarrow F^I_k[M/\ga],\\
		(z_{i,gI})_{1\leq i\leq k}^{gI \in \ga/I,} &\mapsto (z_{1,eI},\dots,z_{k,eI})
	\end{align*}
\end{lem}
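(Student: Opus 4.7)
The plan is to unpack the map, establish that it is well-defined into $F_k^I[M/\ga]$, write down the obvious inverse, and then check continuity of both directions; none of the steps are deep, but the bookkeeping of the $\ga$-action on indices versus points is where all the content sits.

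First I would analyse the $\ga$-fixed-point condition on $F_{k[\ga:I]}(M)$. With the indexing set $\{1,\dots,k\} \times \ga/I$ and the action $g' \cdot (i, gI) = (i, g'gI)$, a configuration $(z_{i,gI})$ is $\ga$-fixed precisely when $z_{i, g'gI} = g' \cdot z_{i,gI}$ for every $g' \in \ga$. This reduces the whole configuration to the tuple $(z_{1,eI}, \dots, z_{k,eI})$, and shows the projection map in the statement is a bijection onto its image. I would then verify that $z_{i,eI} \in M^I$: the fixed-point relation applied with $g' \in I$ gives $I \subseteq \mathrm{Stab}_{z_{i,eI}}$, and conversely if $h' \cdot z_{i,eI} = z_{i,eI}$ then the fixed-point relation forces $z_{i,h'I} = z_{i,eI}$, which contradicts distinctness of coordinates in $F_{k[\ga:I]}(M)$ unless $h' \in I$. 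An analogous argument in two cases ($i = j$ with $gI \ne g'I$, or $i \ne j$) shows the orbits $\ga \cdot z_{i,eI}$ are pairwise disjoint, so the tuple lies in $F_k^I[M/\ga]$.

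Second, I would construct the inverse explicitly. Given $(y_1,\dots,y_k) \in F_k^I[M/\ga]$, set $z_{i,gI} := g \cdot y_i$. This is well defined because $g \cdot y_i = g' \cdot y_i$ iff $(g')^{-1}g \in \mathrm{Stab}_{y_i} = I$ iff $gI = g'I$. The pairwise distinctness of the $z_{i,gI}$ for differing $(i,gI)$ is exactly the combination of the fact that each $y_i$ has stabiliser $I$ (handling the $i = j$ case) and the orbit-disjointness of the $y_i$'s (handling the $i \ne j$ case). By construction this assignment is $\ga$-fixed, and composing with the projection yields the identity on $F_k^I[M/\ga]$; composing in the other order reproduces a $\ga$-fixed configuration from its $eI$-slice, which the first step showed to be the identity.

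Finally I would address continuity. The forward map is the restriction to $F_{k[\ga:I]}(M)^\ga$ of the coordinate projection $M^{k[\ga:I]} \to M^k$, hence continuous. The inverse is the restriction of the continuous map $M^k \to M^{k[\ga:I]}$ whose $(i,gI)$-coordinate is $(y_1,\dots,y_k) \mapsto g \cdot y_i$ (continuous because the $\ga$-action on $M$ is continuous and there are only finitely many cosets). Both maps are therefore continuous, proving the claimed homeomorphism.

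The only real subtlety is in step one, namely ruling out an unintended larger stabiliser of $z_{i,eI}$ and ensuring pairwise orbit disjointness; both reduce to exploiting the distinct-coordinates condition in $F_{k[\ga:I]}(M)$ against the fixed-point identity $z_{i,gI} = g \cdot z_{i,eI}$. Everything else is formal.
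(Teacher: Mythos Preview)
Your proposal is correct and follows the same approach as the paper: both arguments establish the homeomorphism by writing down the explicit inverse $(y_1,\dots,y_k) \mapsto (g \cdot y_i)_{i,gI}$ and checking it is well defined. The paper's proof is a two-line sketch that only records the inverse and asserts independence of coset representatives, whereas you additionally spell out the verification that the projection lands in $F_k^I[M/\ga]$ (in particular that $\mathrm{Stab}_{z_{i,eI}} = I$ exactly, not merely $\supseteq I$) and treat continuity; these details are left implicit in the paper but are exactly what one would fill in.
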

\begin{proof}
Choose representatives $g_j$ of $gI \in \ga/I$ and observe that the map
\[ (z_1,\dots,z_k) \mapsto (g_jz_i)_{1 \leq j \leq [\ga:I], 1 \leq i \leq k}\]
is well-defined on $F^I_k[M/\ga]$, independent of the representatives and defines the inverse. 
\end{proof}

\begin{nota} For $I \leq \ga$ and $\rho: \ga \rightarrow N_{GL(n)}(G)$.
	We denote with $C_{G}(I) \subset G$ the centraliser sub Lie group $C_G(I) := \{A \in G: \rho(g)A = A \rho(g) \; \; \forall g \in I\}$.
\end{nota}
\begin{prop} \label{prefixedpointbundle} Let $[M/\ga]$ be a $(\rho,G)$-framed global quotient, $I\leq_\rho \ga$, and restrict the $\ga$-action on $P_G(M)$ to $I$.
	Then the $I$-fixed points of the principal $G$-bundle $P_G(M) \rightarrow M$ define a principal $C_G(I)$-bundle denoted
	\[ P_G^I(M^I) \rightarrow M^I.\]
\end{prop}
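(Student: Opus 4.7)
The plan is to show that $M^I$ is a smooth submanifold, that $C_G(I)$ acts smoothly and freely on $P_G^I(M^I)$, and that the bundle is locally trivial by taking $I$-fixed points in the trivialisations provided by Definition \ref{def:rgfr}.

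First, $M^I$ inherits a natural smooth structure. Since $I$ is finite and acts smoothly on $M$, averaging a chart near a point in the pointwise fixed set $M^{I,\mathrm{pt}} := \{m \in M : im = m \; \forall i \in I\}$ linearises the $I$-action there, presenting $M^{I,\mathrm{pt}}$ as a closed smooth submanifold of $M$. The subset $M^I$ of points with stabiliser exactly $I$ is the complement in $M^{I,\mathrm{pt}}$ of the finite union of closed submanifolds $M^{J,\mathrm{pt}}$ for $J \supsetneq I$, hence open in $M^{I,\mathrm{pt}}$, and thus itself a smooth manifold.

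Next I would verify that $C_G(I)$ acts on $P_G^I(M^I)$. The $\ga$- and $G$-actions on $P_G(M)$ fit into the semi-direct product law $(i,e)(e,A) = (i,\Ad_{\rho(i)}(A))$; when $A \in C_G(I)$ this collapses to $(i,A) = (e,A)(i,e)$, so the $C_G(I)$- and $I$-actions on $P_G(M)$ commute strictly. Hence $C_G(I)$ preserves the $I$-fixed locus, acts smoothly on $P_G^I(M^I)$, and acts freely (inherited from freeness of the $G$-action on $P_G(M)$).

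Finally I would construct local trivialisations. Fix $m \in M^I$ and pick a $\ga$-neighbourhood $U$ of $m$ admitting a $\ga \ltimes_\rho G$-equivariant trivialisation $P_G(M)|_U \cong U \times G$. Passing to $I$-fixed points is functorial, and a direct computation using $i \cdot (u,B) = (iu, \Ad_{\rho(i)}(B))$ identifies $(U \times G)^I = U^{I,\mathrm{pt}} \times C_G(I)$. Intersecting with $M^I$ yields a $C_G(I)$-equivariant diffeomorphism $P_G^I(M^I)|_{U \cap M^I} \cong (U \cap M^I) \times C_G(I)$, establishing both local triviality and transitivity of the $C_G(I)$-action on fibres. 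The main subtlety is reconciling smoothness: one must use that the trivialisations come from the full $\ga \ltimes_\rho G$-structure on $\ga$-neighbourhoods, not merely $I$-neighbourhoods, so that restricting to $I$-fixed points produces a smooth bundle over the locally closed submanifold $M^I \subset M$ rather than just a set-theoretic fibration.
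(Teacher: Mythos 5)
Your proof is correct, but it takes a genuinely different route from the paper's. You establish local triviality directly: taking $I$-fixed points of the $\ga \ltimes_\rho G$-equivariant local trivialisations $P_G(M)|_U \cong U \times G$ over $\ga$-neighbourhoods, identifying $(U \times G)^I \cong U^{I,\mathrm{pt}} \times C_G(I)$ via the computation $i\cdot(u,B) = (iu, \Ad_{\rho(i)}(B))$, and restricting over $U \cap M^I$. The paper avoids exhibiting trivialisations at all: it observes that $C_G(I)$ acts freely and transitively on the fibres of $P_G^I(M^I) \to M^I$, checks that this action is proper (inherited from properness of the $G$-action on $P_G(M)$), and invokes Lemma \ref{lem:leefacts}(2) (a free, proper Lie group action yields a principal bundle over the quotient). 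Both routes are legitimate. Your approach is more self-contained — it does not rely on a properness criterion — and has the added benefit of producing the fixed-point trivialisations explicitly, which is useful elsewhere. It is also slightly more careful in distinguishing $M^{I,\mathrm{pt}}$ (the pointwise $I$-fixed locus, which is where the $I$-fixed points of $P_G(M)$ naturally sit) from $M^I$ (stabiliser exactly $I$, which is open in $M^{I,\mathrm{pt}}$), a distinction the paper's proof glosses over but which is needed to make the statement about $M^I$ precise. The paper's argument is shorter and follows the standard pattern of recognising principal bundles via free-and-proper actions, which meshes well with its earlier citation of that lemma.
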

\begin{proof}
	Recall that $G$ acts transitively on the fibers. Observe that if two $I$-fixed points in $P_G(M)$ lie the same fiber they must be related by a unique element in $C_G(I) \subset G$.
	Moreover, $P^I_G(M^I)$ clearly projects down to $M^I$ so that we obtain a surjective projection
	\[ P^I_G(M^I) \rightarrow P^I_G(M^I)/C_G(I) = M^I. \]
	Recall an action of a Lie group $H$ on a manifold $N$ is proper iff $\{h \in H: hK \cap K \neq \emptyset \}$ is compact for all compact $K \subset N$ \cite[9.12]{lee03}.
	One then easily verifies that the closed subgroup $C_G(I) \subset G$ acts properly on $P^I_G(M^I)$ since $G$ acts properly on $P_G(M)$.
	Consequently, $P_G^I(M^I) \rightarrow M^I$ is a principal $C_G(I)$-bundle by Lemma \ref{lem:leefacts}.
\end{proof}

Recall that if a manifold $M$ has a $G$-structure $P_G(M) \rightarrow M$ then any product manifold $M^{\times k}$ canonically inherits a $G^{\times k}$-structure $P_{G^{\times k}}(M^{\times k})$, and hence so do the open submanifolds $F_k M \subset M^{\times k}$.
Following Lemma \ref{lem:fixedpointconf} we consider the natural $\ga$-action on $P_{G^{ \times k[\ga:I] } } (F_{k[\ga:I]}M)$ induced from $c_g$ and the action of $\ga$ on the coordinate labels $\ga/I$. We then also have:
\begin{cor} \label{fixedpointbundle} The $I$-fixed points of the $G^{\times k[\ga:I]}$-bundle $P_{G^{ \times k[\ga:I] } } (F_{k[\ga:I]}M) \rightarrow F_{k[\ga:I]}M$ defines a principal $C_{G}(I)^{\times k}$-bundle denoted
\begin{align}
	P^I_{G}(F_k^I[M/\ga]) \rightarrow F_k^I[M/\ga].
\end{align}
\end{cor}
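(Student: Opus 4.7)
The plan is to reduce the claim to Proposition \ref{prefixedpointbundle} applied factorwise to a product, together with a short centralizer computation. First, I would regard $M^{\times k[\ga:I]}$ as a $G^{\times k[\ga:I]}$-framed manifold in the natural way, and interpret the $\ga$-action on its frame bundle $P_G(M)^{\times k[\ga:I]}$ stipulated just before the corollary as the one which combines $c_g$ factorwise with the left-multiplication action of $\ga$ on the $\ga/I$-indexed factors. This makes $[F_{k[\ga:I]}M/\ga]$ into a $(\rho', G^{\times k[\ga:I]})$-framed global quotient for the composite representation $\rho'$ built from $\rho$ and the permutation action on $\ga/I$, and the hypothesis $I \leq_\rho \ga$ ensures $I$ remains faithful for $\rho'$.

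Second, by Lemma \ref{lem:fixedpointconf} the base $F_k^I[M/\ga]$ is identified with $F_{k[\ga:I]}(M)^\ga$, which in particular embeds into $(M^I)^{\times k[\ga:I]}$. Applying Proposition \ref{prefixedpointbundle} to each of the $k[\ga:I]$ copies of $M$ in the product yields the $C_G(I)^{\times k[\ga:I]}$-bundle $(P_G^I(M^I))^{\times k[\ga:I]}$ over $(M^I)^{\times k[\ga:I]}$, whose restriction to the embedded $F_k^I[M/\ga]$ is precisely the restriction of the ambient $I$-fixed bundle to that subspace. Taking the further reduction coming from $\ga$-equivariance on this subspace, an element $(A_{j,hI})$ lies in the centralizer of $\rho'(\ga)$ in $G^{\times k[\ga:I]}$ iff $A_{j,ghI} = \Ad_{\rho(g)}(A_{j,hI})$ for every $g \in \ga$; specialising to $h=e$ determines $A_{j,gI}$ from $A_{j,I}$, while the consistency demand for $g \in I$ forces $A_{j,I} \in C_G(I)$. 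Thus the reduced structure group is $C_G(I)^{\times k}$, matching the corollary.

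The main point carrying any content is the centralizer identification $C_{G^{\times k[\ga:I]}}(\rho'(\ga)) \cong C_G(I)^{\times k}$, together with the verification that $A_{j,gI} := \Ad_{\rho(g)}(A_{j,I})$ is well-defined on cosets (which is precisely the requirement $A_{j,I} \in C_G(I)$). Everything else is formal bookkeeping: the bundle projection, free action, and local trivialisations are inherited from Proposition \ref{prefixedpointbundle} applied to each factor, and local triviality on $F_k^I[M/\ga]$ follows from restricting a local trivialisation on $(M^I)^{\times k[\ga:I]}$ to the embedded $\ga$-fixed locus.
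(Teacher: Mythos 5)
Your centralizer identification $C_{G^{\times k[\ga:I]}}(\rho'(\ga)) \cong C_G(I)^{\times k}$ is the substantive step and you carry it out correctly; it is exactly what the paper's one-line proof (``combine Lemma \ref{lem:fixedpointconf} and Proposition \ref{prefixedpointbundle}'') packages, namely that a $\ga$-fixed tuple of frames is determined by its $eI$-coordinate, which is forced to be $I$-fixed.

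However, there is a concrete error in your geometric set-up: $F_{k[\ga:I]}(M)^\ga$ does \emph{not} embed into $(M^I)^{\times k[\ga:I]}$. By the inverse of the map in Lemma \ref{lem:fixedpointconf}, a $\ga$-fixed configuration is $(g_j z_i)$ with $z_i \in M^I$, and the point $g_j z_i$ has stabiliser $g_j I g_j^{-1}$, hence lies in $M^{g_j I g_j^{-1}}$, which is disjoint from $M^I$ unless $g_j$ normalises $I$. So the image lies in $\prod_{gI} M^{gIg^{-1}}$, not in $(M^I)^{\times k[\ga:I]}$. This invalidates the factorwise application of Proposition \ref{prefixedpointbundle} to all $k[\ga:I]$ copies of $M$ (for the $hI$-coordinate one would need $hIh^{-1}$-fixed frames, not $I$-fixed ones, giving an intermediate structure group $\prod_{j,hI} C_G(hIh^{-1})$), and with it your local-triviality argument, which restricts a trivialisation from $(M^I)^{\times k[\ga:I]}$ to an embedded subspace that is not actually there.

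The repair is to invoke Proposition \ref{prefixedpointbundle} only $k$ times, once per configuration point, always at the $eI$-coordinate. A $\ga$-fixed tuple of frames $(p_{j,hI})$ over a $\ga$-fixed configuration is uniquely determined by $(p_{j,eI})_{1\le j\le k}$ via $p_{j,gI} = c_g(p_{j,eI})$, and well-definedness for $g \in I$ is precisely the condition that $p_{j,eI}$ lies in the $I$-fixed locus $P_G^I(M^I)$. Local triviality and freeness of the $C_G(I)^{\times k}$-action are then inherited from the $k$ copies of the principal $C_G(I)$-bundle $P_G^I(M^I) \rightarrow M^I$, with the base identified by Lemma \ref{lem:fixedpointconf}. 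Your Lie-group computation already encodes this; the geometric bookkeeping needs to be aligned with it rather than routed through $(M^I)^{\times k[\ga:I]}$.
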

\begin{proof}
	Combine Lemma \ref{lem:fixedpointconf} and Proposition \ref{prefixedpointbundle}.
\end{proof}
\begin{nota}
	The special case $G=GL(n)$ is denoted $\Fr^I(F_k^I[M/\ga]) \rightarrow F_k^I[M/\ga]$.
\end{nota}
\subsection{Spaces of framed embeddings} \label{subsec:sofe}

We now come to the main result of this section.
Using a `derivative at the centers map' $D_0^G$ to be defined below, we have:
\begin{thm} \label{htype}
Let $[M/\ga]$ be $(\rho,G)$-framed.
The derivative at the centers map is a fibration and weak equivalence
	\begin{align} 
		D_0^G: \EEmb^G_{\fr} \left(\coprod_{I \leq_\rho \ga} [\DD_I/\ga]^{\amalg k_I},[M/\ga] \right) \longrightarrow \prod_{I \leq_\rho \ga} P^I_{G}(F^I_{k_I}[M/\ga] )
	\end{align}
for any choices $k_I \geq 0$.
\end{thm}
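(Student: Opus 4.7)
The plan is to first unpack the map $D_0^G$, then factor the proof into a fibration part and a weak equivalence part, handling each via equivariant enhancements of the standard ``scanning'' arguments in factorization homology (see e.g. \cite[\S V]{andrade10}).

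A framed embedding from $[\DD_I/\ga]$ to $[M/\ga]$ is $\ga$-equivariantly determined by its restriction to the $I$-component $\{e\}\times\R^n$, which is an $I$-equivariant framed embedding $\R^n \hookrightarrow M$ (with the homotopy datum from Corollary \ref{hfrmodel}). Evaluating at $0 \in \R^n$ lands in $M^I$, and the derivative there lands in $P^I_G(M^I)$ by Proposition \ref{prefixedpointbundle}; the disjointness condition on the $k_I$ copies (modulo $\ga$) together with the different singularity types $I$ gives a point in the product $\prod_I P^I_G(F^I_{k_I}[M/\ga])$. This defines $D_0^G$.

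For the fibration claim, I would verify the homotopy lifting property directly. Given a path in $\prod_I P^I_G(F^I_{k_I}[M/\ga])$ and a framed embedding lifting its starting point, the underlying path of configurations in $M$ can be lifted to a $\ga$-equivariant ambient isotopy of $M$ via an equivariant isotopy extension argument (applied stratum-by-stratum using an invariant Riemannian metric); this isotopy transports the framed embedding. The frame-level datum is then adjusted along the path using the Covering Homotopy Theorem (Proposition \ref{thm:CHT}) for the principal $(\rho,G)$-bundle, and the homotopy component $\gamma$ of the triple is adjusted using that $\Map^{\ga\times GL(n)}(\Fr(M),\Fr(N)) \to \Map^{\ga}(M,N)$ is a fibration (Proposition \ref{hpbserrefibr}). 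Concatenating these lifts gives the required path of framed embeddings.

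For the weak equivalence, by the product structure of the target it suffices (after reducing to each $I$ separately, since points of different singularity types lie in disjoint strata) to analyze a single fiber of $D_0^G$ over a fixed configuration $(m_1,\dots,m_{k_I})\in F^I_{k_I}[M/\ga]$ with fixed $I$-invariant frames. By taking equivariant tubular neighborhoods of the $\ga$-orbits of the $m_j$, the ambient $[M/\ga]$ restricted to these neighborhoods is $(\rho,G)$-diffeomorphic to a disjoint union of local models $[\DD_I/\ga]$; the fiber retracts onto the subspace of framed embeddings landing in these neighborhoods. The fiber is then homotopy equivalent to the space of $I$-equivariant framed self-embeddings of $\R^n$ (one per copy) which fix $0$ and have specified derivative, and this space is contractible by the standard equivariant linear scaling argument: the straight-line homotopy $f_t(x) := t^{-1}f(tx)$, $t\in(0,1]$, extends continuously at $t=0$ to $Df(0)$, and this homotopy lifts to framed embeddings since the bundle data $(\gamma, r)$ can be scaled compatibly.

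The main obstacle I expect is the simultaneous bookkeeping of the three pieces of data $(f,\gamma,r)$ in the relative homotopy pullback model, in particular keeping $\gamma$ a homotopy of bundle maps lying over the \emph{constant} path $f$ in $\Map^\ga(M,N)$ while performing the isotopy extension on $f$. This is handled by first contracting $\gamma$ to the identity homotopy (using contractibility of path spaces with fixed endpoints on a fixed base path, which works because the fibration $P_G(N)\to N$ pulls back to a fibration of path spaces) and only then performing the ambient isotopy, so that $\gamma$ can be transported without obstruction along the lifted path.
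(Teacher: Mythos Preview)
Your overall strategy is sound and in the same spirit as the paper, but the execution differs substantially from what the paper does, and there are two places where your argument as written has real gaps.

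First, the paper does \emph{not} attack the triple $(f,\gamma,r)$ head-on. Instead it exploits the homotopy pullback structure of $\EEmb^G_{\fr}$ and proves the weak equivalence component-wise: it shows separately that the evaluation $\ev_0$ on the $r$-leg $\Map^{\ga\ltimes G}(P_G(\DD_I),P_G(M))$ is a homotopy equivalence (via the scaling $h_t(gI,x,A)=(gI,tx,A)$), and that the map $D_0$ on the $f$-leg $\Emb^\ga(\amalg\DD_I,M)$ is a fibration with contractible fibers (Cerf--Palais plus the scaling $f(xt)/t$). These are then combined via homotopy invariance of homotopy pullbacks and the pasting lemma, so the $\gamma$-component is handled formally and never needs to be manipulated by hand. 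Your direct approach forces you to track $\gamma$ explicitly, which is exactly the ``main obstacle'' you flag; your proposed fix (contract $\gamma$ first, then isotope $f$) is plausible but is not obviously compatible with the constraint that $\gamma$ lies over the \emph{constant} path $f$ while $f$ itself is moving. The paper's modular route sidesteps this entirely.

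Second, two smaller issues: (i) your reduction ``to each $I$ separately'' needs justification, since embeddings of disks of different types must still have disjoint images in $M$; the paper handles this by a shrinking-disks argument. (ii) Your description of the fiber is slightly off: $D_0^G$ is $\ev_0\circ D^G$ where $D^G$ projects to the $r$-component, so the fiber over $p$ fixes $r(0,e)=p$, not the derivative $f_*(0,e)$ of $f$; these are only linked through $\gamma$. This matters for how you set up the contraction. None of this is fatal, but as written the argument is a sketch that would need the paper's separation-of-concerns machinery (or a careful direct treatment of the path-space of $\gamma$) to become a proof.
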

Before we turn to the proof, let us discuss two corrolaries. 
\begin{cor} \label{cor:cover}
Any $(\rho,G)$-framed global quotient can be covered by local models via framed embeddings.
\end{cor}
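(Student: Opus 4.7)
The plan is to reduce the covering statement to the pointwise existence of a framed embedding through a given point, and derive this from Theorem~\ref{htype}. By Remark~\ref{rmk:localmodels}, we have the stratification $M = \bigcup_{I \leq_\rho \ga} M^I$, so every point $m \in M$ lies in $M^I$ for the subgroup $I = \mathrm{Stab}_m$, which automatically satisfies $I \leq_\rho \ga$. It therefore suffices to produce, for each such $m$, a framed embedding $f: [\DD_I/\ga] \to [M/\ga]$ whose image contains $m$; the collection of these over all $m \in M$ then yields the desired cover.

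To produce $f$, I apply Theorem~\ref{htype} with $k_I = 1$ and $k_J = 0$ for all other subgroups $J \leq_\rho \ga$. The disjointness condition in the definition of $F^I_k[M/\ga]$ is vacuous when $k = 1$, so $F^I_1[M/\ga] = M^I$, and Theorem~\ref{htype} supplies a fibration and weak equivalence
\[ D_0^G: \EEmb^G_{\fr}([\DD_I/\ga],[M/\ga]) \longrightarrow P^I_G(M^I). \]
By Proposition~\ref{prefixedpointbundle} the fiber of $P^I_G(M^I) \to M^I$ over $m$ is a $C_G(I)$-torsor, hence non-empty; pick any $p$ in it. A framed embedding $f$ with $D_0^G(f) = p$ then automatically sends the designated origin of $\DD_I$ to $m$ by the construction of the derivative-at-centers map, so that $m \in f([\DD_I/\ga])$.

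The only slightly delicate point is the passage from weak equivalence to honest surjectivity of $D_0^G$ onto each individual point $p$, rather than merely onto each path component. This is where the Serre fibration property in Theorem~\ref{htype} is essential: a Serre fibration that is also a weak equivalence has weakly contractible fibers, and such fibers are non-empty over every point in the target (as a weak equivalence is in particular surjective on $\pi_0$). Thus $D_0^G$ is surjective on points, which is precisely what is needed to pull back the chosen $p$ to a framed embedding $f$.
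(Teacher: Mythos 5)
Your proof is correct and follows essentially the same route as the paper: apply Theorem \ref{htype} with $k_I=1$ (and $k_J=0$ otherwise) to get a fibration and weak equivalence $D_0^G: \EEmb^G_{\fr}([\DD_I/\ga],[M/\ga]) \to P^I_G(M^I)$, observe that such a map must be surjective, and postcompose with the bundle projection $P^I_G(M^I) \to M^I$. The paper compresses the surjectivity argument into a single clause; your more careful justification (weak equivalence $\Rightarrow$ surjective on $\pi_0$, Serre fibration $\Rightarrow$ path-lifting populates fibers over all points in a hit component) is the correct underlying reasoning.
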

\begin{proof}
For any subgroup $I$ of $\ga$ we have the map
\[ D_0^G: \EEmb^G_{\fr} ([\DD_I/\ga], [M/\ga]) \rightarrow P_G^I(M^I),\]
which is a surjection since it is a fibration and weak equivalence. Postcomposing with the projection $P_G^I(M^I) \rightarrow M^I$ then yields a surjection $\EEmb^G_{\fr} ([\DD_I/\ga], [M/\ga]) \rightarrow M^I$ given by mapping a framed embedding $f$ to $f(0)$ the image of the center $0\in \R^n_{eI} \subset \DD_I$.
\end{proof} 

\begin{cor}
Let $\ga$ act via representation $\rho$ on $\R^n$, and let $\Emb_{\mathrm{or}}^\ga(\R^n,\R^n)$ denote the space of oriented self-embeddings of $\R^n$. We have weak equivalences
\begin{align*} 
\Emb^\ga(\R^n,\R^n) \simeq C_{GL(n)}(\ga),\quad \quad \Emb^\ga_{\mathrm{or}}(\R^n,\R^n) \simeq C_{GL^+(n)}(\ga).
\end{align*}
\end{cor}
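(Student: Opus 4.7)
The plan is to obtain both weak equivalences as special cases of Theorem \ref{htype} applied to the local model $[\R^n/\ga]$ itself. First I would reduce to the case of faithful $\rho$: if $K=\ker\rho$, then both $\Emb^\ga(\R^n,\R^n)$ and $C_{GL(n)}(\ga)$ (and likewise their oriented counterparts) depend only on the image $\rho(\ga)$, so replacing $\ga$ by $\ga/K$ with the induced faithful representation $\bar\rho$ leaves both sides of the asserted equivalence unchanged. Under this assumption $\ga \leq_\rho \ga$, so $[\R^n/\ga]$ coincides with the local model $[\DD_\ga/\ga]$, and by Lemma \ref{lem:indbundle} it carries canonical $(\rho,GL(n))$- and $(\rho,GL^+(n))$-structures (the latter because $GL^+(n)$ is normal in $GL(n)$, so $N_{GL(n)}(GL^+(n))=GL(n)$ imposes no condition on $\rho$).

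Next, for $G\in\{GL(n),GL^+(n)\}$, I would use Proposition \ref{prop:hfrisgfr} to replace $\EEmb^G_\fr([\R^n/\ga],[\R^n/\ga])$ with its strongly framed analogue. For $G=GL(n)$ the condition $f_*(P_G(M))\subset P_G(N)$ is automatic since $P_G=\Fr$, so strongly framed embeddings are precisely elements of $\Emb^\ga(\R^n,\R^n)$; for $G=GL^+(n)$ one gets $\Emb^\ga_{\mathrm{or}}(\R^n,\R^n)$. Combining with Proposition \ref{prop:hfrisgfr} yields
\[
\Emb^\ga(\R^n,\R^n)\simeq \EEmb^{GL(n)}_\fr([\R^n/\ga],[\R^n/\ga]),\qquad \Emb^\ga_{\mathrm{or}}(\R^n,\R^n)\simeq \EEmb^{GL^+(n)}_\fr([\R^n/\ga],[\R^n/\ga]).
\]

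Now I would invoke Theorem \ref{htype} with $k_\ga=1$ and $k_I=0$ for $I\neq\ga$, giving a weak equivalence
\[
\EEmb^G_\fr([\DD_\ga/\ga],[\R^n/\ga])\xrightarrow{\;\simeq\;} P^\ga_G\bigl(F^\ga_1[\R^n/\ga]\bigr).
\]
The base $F^\ga_1[\R^n/\ga]=\{z\in\R^n:\mathrm{Stab}_z=\ga\}$ is the $\ga$-fixed linear subspace of $\R^n$, and is therefore star-convex and hence contractible. By Proposition \ref{prefixedpointbundle} (with $I=\ga$) the projection $P^\ga_G(F^\ga_1[\R^n/\ga])\to F^\ga_1[\R^n/\ga]$ is a principal $C_G(\ga)$-bundle over a contractible base, so it is trivial and weakly equivalent to its fiber $C_G(\ga)$. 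Chaining the weak equivalences produces the two claims of the corollary.

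The only mild subtlety is the faithfulness reduction at the start: Theorem \ref{htype} is phrased over subgroups $I\leq_\rho\ga$, so some care is needed to justify that taking $I=\ga$ is legitimate, which is exactly why one passes to the quotient by $\ker\rho$. Everything else is a direct unpacking of the definitions.
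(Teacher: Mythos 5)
Your proof is correct and follows essentially the same route as the paper's: apply Proposition \ref{prop:hfrisgfr} to pass from framed to strongly framed self-embeddings, invoke Theorem \ref{htype} with the single local model $[\DD_\ga/\ga]=[\R^n/\ga]$ to identify the space with the bundle $P^\ga_G\bigl((\R^n)^\ga\bigr)$, and then use contractibility of the fixed linear subspace to conclude it is weakly equivalent to the fiber $C_G(\ga)$. The only addition is your opening reduction to faithful $\rho$, which the paper leaves implicit (the corollary presupposes $[\R^n/\ga]$ is a bona fide global quotient, forcing $\rho$ faithful by Definition \ref{def:globalquotient}); your explicit justification that both sides depend only on $\rho(\ga)$ is a harmless but sensible piece of bookkeeping.
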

\begin{proof}
The proofs of both statements are identical, so let us only prove the second weak equivalence holds.  
By Proposition \ref{prop:hfrisgfr} and Theorem \ref{htype} we have weak equivalences
\[ \Emb^\ga_{\mathrm{or}}(\R^n,\R^n) \simeq \EEmb^{GL^+(n)}_{\fr}([\R^n/\ga],[\R^n/\ga]) \simeq P_{GL^+(n)}^\ga (\R^n)^\ga. \] 
We observe that the $\ga$-fixed points $(\R^n)^\ga$ are a linear subspace of $\R^n$ and hence contractible.
As a result, the fixed points bundle is trivial bundle: $P_{GL^+(n)}^\ga (\R^n)^\ga \cong (\R^n)^\ga \times C_{GL^+(n)}(\ga)$. 
Using again that $(\R^n)^\ga$ is contractible we then find
\[ P_{GL^+(n)}^\ga (\R^n)^\ga \cong (\R^n)^\ga \times C_{GL^+(n)}(\ga) \simeq C_{GL^+(n)}(\ga). \] 
\end{proof}

In the remainder of the section we will prove Theorem \ref{htype}. Our proof strategy is a generalisation of that in \cite[Proposition 14.4]{andrade10}, which considers the case $\ga = \e$.
Consider a local model $\DD_I$ with its $G$-structure $\DD_I \times G$. For a disjoint union $\amalg_k \DD_I$ we denote with $0_i$ the center of the disk $\R^n_{eI}$ in the $i$th copy of $\DD_I \subset \amalg_k \DD_I$.
We define an evaluation at the centers map as follows
\begin{align*} 
	\ev_0: \Map^{\ga \ltimes G}( \amalg_k \DD_I\times G, P_G(M)) &\rightarrow P_{G^{\times k}} (M^{\times k}), \\
	f &\mapsto \big(f(0_1,e),\dots,f(0_k,e)\big),
\end{align*}
in particular we have evaluation maps for $G=GL(n)$ and the frame bundle. 

\begin{lem}
	The evaluation maps extend naturally to evaluation maps
	\begin{align}
		\ev_0: \Map^{\ga \ltimes G}( \coprod_{I \leq_\rho \ga}  (\DD_I\times G)^{\amalg k_I}, P_G(M)) \rightarrow \prod_{I \leq_\rho \ga} P^I_{G} (M^I)^{\times k_I}. \label{eac}
	\end{align}
	that define weak equivalences for any choices $k_I \geq 0$.
\end{lem}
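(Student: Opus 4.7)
The plan is to prove the lemma by first reducing the equivariant mapping space on the left to a much simpler product of mapping spaces out of $\R^n$, and then showing that evaluation at the origin is a deformation retract of each factor onto the corresponding fixed-point bundle.

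First I would carry out the following adjunction argument. The $G$-equivariance (restricting the $\ga \ltimes_\rho G$-action to the subgroup $G$) allows us to identify
\[ \Map^{\ga \ltimes G}(\DD_I \times G, P_G(M)) \cong \Map^\ga(\DD_I, P_G(M)), \]
where the $\ga$-action on $P_G(M)$ is the one by $c_g$ (this is what makes $P_G(M)$ into a $\ga$-space compatibly with the $G$-structure). Then, since $\DD_I = \ga \times_I \R^n$, a $\ga$-equivariant map from $\DD_I$ is determined by its restriction along $\{e\} \times \R^n \hookrightarrow \ga \times \R^n$, which must be $I$-equivariant for the $\rho|_I$-action on $\R^n$ and the induced $I$-action on $P_G(M)$. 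This yields
\[ \Map^{\ga \ltimes G}\Big(\coprod_{I \leq_\rho \ga}  (\DD_I\times G)^{\amalg k_I}, P_G(M)\Big) \cong \prod_{I \leq_\rho \ga} \Map^I(\R^n, P_G(M))^{\times k_I}, \]
under which the evaluation map $\ev_0$ corresponds, factor by factor, to the evaluation at the origin $\ev_0 \colon \Map^I(\R^n, P_G(M)) \to P_G^I(M^I)$. This map is well-defined because $\rho(i)$ fixes $0 \in \R^n$ for every $i \in I$, so $I$-equivariance of $f$ forces $f(0)$ to be $I$-fixed in $P_G(M)$.

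Second, I would exhibit $\ev_0$ as a strong deformation retract using the linear scaling homotopy. Define $H_t \colon \R^n \to \R^n$ by $H_t(x) = tx$ for $t \in [0,1]$; each $H_t$ is $I$-equivariant since $\rho(I) \subset GL(n)$ is linear, and $H_0$ is the constant map at $0$. The inclusion of constants $s \colon P_G^I(M^I) \to \Map^I(\R^n, P_G(M))$, $p \mapsto (\text{constant map at }p)$, is a section of $\ev_0$. The assignment $(f,t) \mapsto f \circ H_t$ then defines a continuous homotopy from $s \circ \ev_0$ (at $t=0$) to the identity (at $t=1$) through $I$-equivariant maps, and along this homotopy $\ev_0(f \circ H_t) = f(H_t(0)) = f(0)$ is constant. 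Hence $\ev_0$ is a strong deformation retract of $\Map^I(\R^n, P_G(M))$ onto $P_G^I(M^I)$, and in particular a weak equivalence. Taking products over $I$ and powers $k_I$ then gives the lemma.

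The routine step is the adjunction identification, whose only subtlety is keeping straight how the left $\ga \ltimes_\rho G$-action on $\DD_I \times G$ restricts to an inverted right $G$-action compatible with the principal bundle structure on $P_G(M)$. The substantive input is the scaling contraction, which works precisely because $\R^n$ is the \emph{linear} $I$-representation, so the straight-line homotopy is automatically $I$-equivariant; this is where the passage from $\DD_I$ to the local fibre $\R^n$ pays off. I do not expect a serious obstacle, since both ingredients are formal once the identification is set up correctly.
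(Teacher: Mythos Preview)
Your proof is correct and follows essentially the same strategy as the paper: reduce to a single factor and use the linear scaling homotopy $x \mapsto tx$ to contract onto the value at the origin. The only cosmetic difference is that you first pass through the adjunctions $\Map^{\ga \ltimes G}(\DD_I \times G, P_G(M)) \cong \Map^\ga(\DD_I, P_G(M)) \cong \Map^I(\R^n, P_G(M))$ before applying the contraction, whereas the paper writes down the section $s_p(g,x,A) = c_g(p)\cdot A$ and the homotopy $h_t(gI,x,A) = (gI,tx,A)$ directly on $\Map^{\ga \ltimes G}(\DD_I \times G, P_G(M))$; these correspond to each other under your adjunctions.
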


\begin{proof}
	One extends the evaluation maps by evaluating at each $I\leq_\rho \ga$ component separately. 
	Now note these map into disjoint $P^I_{G} (M^I)^{\times k}$ components. 
	It remains to show these maps define weak equivalences.
	It suffices to consider $k_I = 0$ for all but one $I \leq_\rho \ga$ where $k_I=1$ since we have 
	\[ \Map^{\ga \ltimes G}( \coprod_{I \leq_\rho \ga}  (\DD_I\times G)^{\amalg k_I}, P_G(M)) = \prod_{I \leq_\rho \ga} \prod_{k_I} \Map^{\ga \ltimes G}( \DD_I\times G, P_G(M)).\]
	We will prove the stronger statement that $\ev_0$ is a homotopy equivalence. 
	We define a section
	\[ s: P_G(M^I) \rightarrow \Map^{\ga \ltimes G}(\DD_I\times G, P^I_G(M^I))\]
	 as follows. For $p\in P^I_G(M^I)$ we have the map 
	 \[ \tilde{s}_p: \ga \times \R^n \times G \rightarrow P^I_G(M^I), \quad \quad (g,x,A) \mapsto c_g(p) \cdot A \]
	 since $p \in P^I_G(M^I)$ this descends to $\ga \times_I \R^n \times G$ i.e. we have $s_p: \DD_I \times G \rightarrow P_G(M)$.
	 Clearly the map $s_p$ is $\ga \ltimes G$-equivariant and $s$ defines a section. 
	 Next we show that $s \circ \ev_0 \simeq \id$. Consider the homotopy 
	 \[ h_t: \DD_I \times G \rightarrow \DD_I \times G,\quad  (gI,x,A) \mapsto (gI,tx,A),\] 
	 since it is linear in the $x$-coordinate it is $\ga$-equivariant.  
	 Clearly it is also $G$-equivariant. 
	 Precomposition with $h_t$ defines a homotopy $h_t^*$ on $\Map^{\ga \ltimes G}(\DD_I\times G, P^I_G(M^I))$.
	 This concludes the proof since $h_1^*(f) = f \circ h_1 = f$ and $h^*_0(f) = f \circ h_0 = s \circ \ev_0 (f)$. 
 \end{proof}

We define the \emph{derivative at the centers} map $D_0^G$ to be the composite
\begin{align} 
	\EEmb^G_{\fr} \big( \coprod_{I \leq_\rho \ga} [\DD_I/\ga]^{\amalg k_I},[M/\ga] \big) &\xrightarrow{D^G} \Map^{\ga \ltimes G} \big( \coprod_{I \leq_\rho \ga}(\DD_I\times G)^{\amalg k_I},P_G(M) \big) \\
	&\xrightarrow{\ev_0} \prod_{I \leq_\rho \ga} P^I_{G}(F_{k_I}^I[M/\ga]) \label{dac}
\end{align}
where $D^G$ is the canonical projection of the relative homotopy pullback onto a component of the defining diagram \eqref{pbdiagram}.

\begin{prop}
	The composite map $D_0 := \ev_0 \circ D$, 
	\[D_0: \Emb^\ga \big( \coprod_{I \leq_\rho \ga} \DD_I^{\amalg k_I},M ) \rightarrow \prod_{I \leq_\rho \ga} \Fr^I(F^I_{k_I}[M/\ga]),  \]
	is a fibration and a weak equivalence for any choices $k_I \geq 0$.
\end{prop}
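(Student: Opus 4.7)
The plan is to imitate Andrade's strategy for the non-equivariant case \cite[Prop. 14.4]{andrade10}, adapting each ingredient to the $\ga$-equivariant setting, and then to reduce the statement about multiple disks to the single-disk case by an inductive collar argument.

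First I would reduce to the case of a single local model, i.e. $k_{I_0} = 1$ for exactly one $I_0 \leq_\rho \ga$ and $k_I = 0$ otherwise. The general case follows by an induction on $\sum_I k_I$: given a framed embedding of several disks, one removes the image of the last disk and uses the openness of its complement (together with an equivariant tubular-neighbourhood argument) to show that $D_0$ factors as a composite of fibrations with contractible homotopy fibres. Concretely, restriction to all but one disk gives a fibration onto $\Emb^\ga\bigl( \coprod_{I \leq_\rho \ga} \DD_I^{\amalg k'_I}, M \bigr)$ whose fibre is the space of equivariant embeddings into an open $\ga$-invariant sub-global-quotient $[M'/\ga] \subset [M/\ga]$; the statement for this fibre is then the single-disk statement.

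In the single-disk case, the map in question is
\[
D_0 : \Emb^\ga(\DD_I, M) \longrightarrow \Fr^I(M^I).
\]
To construct a section I would fix a $\ga$-invariant Riemannian metric on $M$ (which exists by averaging, as in the proof of Lemma \ref{isotrick}). The associated exponential map $\exp: TM \rightarrow M$ is $\ga$-equivariant. Given a point $p \in \Fr^I(M^I)$, lying over $m \in M^I$ with $I$-equivariant frame $\phi: \R^n \xrightarrow{\cong} T_m M$, one builds an equivariant embedding of $\DD_I$ by sending $(gI,x) \mapsto g \cdot \exp_m(\varepsilon(m)\phi(x))$ for a suitably small $\ga$-invariant scaling function $\varepsilon: M^I \rightarrow \R_{>0}$, which exists by a standard partition-of-unity argument. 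The $I$-invariance of $\phi$ and the $\ga$-equivariance of $\exp$ guarantee the assignment descends through the coequaliser defining $\DD_I = \ga \times_I \R^n$ and is $\ga$-equivariant. After verifying injectivity on a small enough scale (shrinking $\varepsilon$ if necessary so that the images of the distinct cosets remain disjoint and each branch is an embedding), this yields a continuous section $s$ of $D_0$.

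To prove that $D_0$ is a weak equivalence I would show the section $s$ is also a homotopy inverse. The composite $D_0 \circ s$ is the identity on $\Fr^I(M^I)$ by construction. For the other composite, I would use the scaling homotopy $h_t: \DD_I \rightarrow \DD_I,\; (gI,x) \mapsto (gI, tx)$ for $t \in (0,1]$; precomposition defines a homotopy between $\id$ and a map which factors through the linearisation near the centre, matching $s \circ D_0$ in the limit $t \to 0$ after passing through $\exp$-coordinates. This is directly analogous to the homotopy $h_t^*$ used in the preceding lemma, the only new wrinkle being that one must remain within the open subspace of embeddings, which is ensured by the small-scale choice of $\varepsilon$. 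For the fibration property, I would lift paths in $\Fr^I(M^I)$ by applying the section pointwise along the path and interpolating via the scaling homotopy, or alternatively invoke the equivariant isotopy extension theorem: a smooth path in $\Fr^I(M^I)$ is covered by a $\ga$-equivariant isotopy of $M$, which in turn transports any given embedding of $\DD_I$.

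The main obstacle will be the equivariant tubular neighbourhood/exponential step: one must verify that the scaling function $\varepsilon$ can be chosen $\ga$-invariantly and small enough that the cosets $\{g \cdot \exp_m(\varepsilon \phi(\R^n))\}_{gI \in \ga/I}$ are pairwise disjoint for every $m \in M^I$, so that the section really lands in $\Emb^\ga(\DD_I, M)$ and not merely in $\Map^\ga(\DD_I, M)$. Once this is in place, all other steps are equivariant versions of standard manifold-topology arguments.
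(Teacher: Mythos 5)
Your proposal is in the right spirit but takes a genuinely different route from the paper, and one step is less clean than you suggest. The paper follows Horel's strategy (\cite[Prop.\ 6.4]{horel17}) rather than Andrade's section-building strategy: it first establishes the fibration property via the Cerf--Palais local triviality criterion, and then proves the \emph{fibers} are contractible. The contraction is carried out explicitly only in the local model $[\DD_I/\ga]$, where the rescaling $h_t(f)=f(tx)/t$ makes literal sense; for general $[M/\ga]$ the paper does \emph{not} pass to exponential coordinates but instead filters $\Emb^\ga(\DD_I(1),M)=\bigcup_r M_r$ by the condition $f(\DD_I(r))\subset U$ (with $U$ a $\ga$-neighbourhood of the chosen basepoint), observes the inclusions $M_r\subset M_{r+1}$ are deformation retracts, and reduces to the local-model case via \cite[Lemma 6.1]{horel17}.

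Your construction of a section $s$ via a $\ga$-invariant metric, the exponential map, and an invariant scaling $\varepsilon$ is a legitimate alternative, and the obstacle you flag (disjointness of the coset branches) is indeed the thing to check. However, the step where you assert that the rescaling homotopy ``factors through the linearisation near the centre, matching $s\circ D_0$ in the limit $t\to0$'' is the weakest link. Precomposition with $h_t$ alone degenerates as $t\to0$; to converge to $s\circ D_0(f)$ you must simultaneously post-conjugate by the exponential chart and rescale in the target, i.e.\ form something like $x\mapsto \exp_m\bigl(t^{-1}\exp_m^{-1}(f(tx))\bigr)$, and verify equivariance and embedding-hood along the way. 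This is precisely the kind of bookkeeping the paper avoids by contracting fibers only in the flat local model and invoking the filtration lemma for general $M$. Your inductive reduction on $\sum_I k_I$ via forgetting one disk is also different from the paper's: the paper separates different singularity types $I$ essentially for free (their centers lie on disjoint strata $M^I$), and then dispenses with general $k\geq 1$ for a fixed $I$ in a single sentence; your induction requires, in addition, that the restriction map $\Emb^\ga(\amalg_k\DD_I,M)\to\Emb^\ga(\amalg_{k-1}\DD_I,M)$ is a fibration, which is an extra Fadell--Neuwirth-type step you would need to prove. So: same basic ingredients (invariant metrics, scaling homotopies, isotopy extension / Cerf--Palais), but a different architecture, and your architecture concentrates the hard work in the exponential-conjugated scaling homotopy where the paper diffuses it into a filtration argument.
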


We adapt the proof of \cite[Proposition 6.4]{horel17}, which treats the case $\ga = \e$.
\begin{proof}
	Note that the center of $\DD_I$ is mapped into $M^I$. Hence we obtain a weak equivalence 
	\[ \Emb^\ga \big( \coprod_{I \leq_\rho \ga} \DD_I^{\amalg k_I},M \big) \simeq \prod_{I \leq_\rho \ga}\Emb^\ga ( \amalg_{k_I} \DD_I,M ),  \]
	essentially by shrinking disks. It thus suffices to show 
	\[D_0: \Emb^\ga ( \amalg_k \DD_I ,M ) \rightarrow \Fr^I(F^I_{k_I}[M/\ga])  \]
	is a weak equivalence for any $I \leq_\rho \ga$, $k\geq 0$.
	Using the Cerf-Palais fibration criterion (\cite[Lemme 2,p.240]{cerf61}, \cite[Theorem A]{palais60}) one shows $D_0$ is a fibration in the standard way (see \cite[Proposition 6.4]{horel17}).
	Therefore, the proof is finished once we show the fibers of $D_0$ are contractible. 
	
	For $k=1$ and $[M/\ga] = [\DD_I/\ga]$ we may choose the fiber to lie over $(0,A)$ for some $A  \in GL(n)$ (note that in fact we must have $A \in C_{GL(n)}(I))$. 
	The homotopy $h_t(f) := \frac{f(xt)}{t}$ with $h_0(f) = D_0(f)$ contracts the fiber.\footnote{To be precise $f$ consists of a collection of $f_{gI}$ for the connected components of $\DD_I$ and we apply the homotopy $h_t$ component-wise.} 
	For $k=1$ and general $[M/\ga]$, $p\in \Fr^I[M/\ga]$, let $m=\pi(p) \in M^I$ and let $U$ be a $\ga$-neighbourhood of $m$. 
	For $r \in \N$ we denote with $D(r)$ the open ball of points in $\R^n$ of radius less than $1/r$, and set $\DD_I(r) := \ga \times_I D(r)$ (note that $\ga$ preserves $D(r)$ since it is finite). 
	Since $\DD_I$ and $\DD_I(1)$ are $\ga$-equivariantly homeomorphic it suffices to show that
	\[ \Emb^\ga (\DD_I(1), M) \rightarrow \Fr^I[M/I]\]
	is a weak equivalence. We set $M_r := \{ f \in \Emb^\ga ( \DD_I(1) ,M ): f(\DD_I(r)) \subset U \}$. 
	Then $\Emb^\ga (\DD_I(1), M) = \cup_r M_r$ is an increasing filtration, for which the inclusions $M_r \subset M_{r+1}$ are deformation retracts.
	By Lemma \cite[6.1]{horel17} it then suffices to show 
	\[M_1 \rightarrow \Fr^I[M/I]\] 
	is a weak equivalence. Indeed, since by definition maps $f \in M_1$ embed $\DD_I(1)$ into the $\ga$-neighbourhood $U$ this reduces to the case $k=1$ and $[M/\ga]=[\DD_I/\ga]$ treated above. 
	The case of general $k\geq 0$ then easily follows from $k=1$ and is left to the reader.
\end{proof}

We will now prove Theorem \ref{htype} using some standard facts about homotopy pullbacks such as the pasting lemma and homotopy invariance. 
We recommend Martin Frankland's lecture notes on homotopy pullbacks to the reader unfamiliar with (the proofs of) these facts \cite{frankland}.  

\begin{proof}[Proof of Theorem \ref{htype}]
We first prove $D_0^G$ is a fibration.
We can factorise $D_0^G$ as a composite
\begin{align*} \EEmb^G_{\fr} (\amalg_I [\DD_I/\ga]^{\amalg k_I},[M/\ga] ) &\rightarrow \Emb^\ga (\amalg_I \DD_I^{\amalg k_I}, M) \underset{\Map^\ga(\amalg_I \DD_I^{\amalg k_I}, M)}{\times} \Map^G (P_G(\amalg_I \DD_I^{\amalg k_I}), P_G(M)) \\
&\rightarrow \prod_I P_{G}^I( F_{k_I}^I[M/\ga])
\end{align*}
where the first map is a fibration by \ref{hpbmodel} part 2. 
That the second map is a fibration follows from the fact that we have fibrations
\begin{align*}
&\Emb^\ga ( \amalg_I \DD_I^{\amalg k_I}, M) \xrightarrow{D_0} \prod_I P^I_{G} F_{k_I}^I[M/\ga],\\
&\Map^{\ga\ltimes G} (P_G (\amalg_I \DD_I^{\amalg k_I}), P_G(M)) \rightarrow \Map^\ga (\amalg_I \DD_I^{\amalg k_I},M),
\end{align*}
and that $P^I_{G} (F_{k_I}^I [M/\ga])$ are principle $C_{G}(I)^{\times k}$-bundles. 

	Now we will show $D_0^G$ is a weak equivalence. 
	Let \textbf{HPB} denote the homotopy pullback of Diagram \eqref{pbdiagram} for $M= \coprod_{I \leq_\rho \ga} \DD_{I}^{\amalg k_I}$. 
	We can factorise the map $D_0^G$ as a composite
	\begin{align*} \EEmb^G_{\fr} (\amalg_k [\DD_I/\ga],[M/\ga] ) &\lhook\joinrel\longrightarrow \mathbf{HPB} \\ 
	&\xrightarrow{D_0 \underset{ev_0}{\overset{h}{\times}} ev_0} \prod_{I \leq_\rho \ga }\Fr^I(F^I_{k_I}[M/\ga])  \underset{\prod_{I \leq_\rho \ga } \Fr^I((M^I)^{\times k_I})}{\overset{h}{\times}}   \prod_{I \leq_\rho \ga } P^I_{G}((M^I)^{\times k_I}) \\ &\xrightarrow{\mathbf{proj}} \prod_{I \leq_\rho \ga } P^I_{G}(F^I_{k_I}[M/\ga] ).
	\end{align*}
	To show that $D_0^G$ is a weak equivalence, we will show that all the component maps of the factorisation are weak equivalences. 
	The inclusion is a weak equivalence by Lemma \ref{hpbserrefibr} and Proposition \ref{hpbinclusion}. 
	Then Proposition \ref{dac} and Lemma \ref{dac} imply that the components of the map $D_0 \underset{ev_0}{\overset{h}{\times}} ev_0$ are weak equivalences.
	It follows by homotopy invariance that the map $D_0 \underset{ev_0}{\overset{h}{\times}} ev_0$ is a weak equivalence.
	It remains to show \textbf{proj} is weak equivalence. 
	From the model in Proposition \ref{hpbmodel} it is clear that 
	\begin{gather*} 
		\prod_{I \leq_\rho \ga }\Fr^I(F^I_{k_I}[M/\ga])  \underset{\prod_{I \leq_\rho \ga } \Fr^I((M^I)^{\times k_I})}{\overset{h}{\times}}   \prod_{I \leq_\rho \ga } P^I_{G}((M^I)^{\times k_I}) \\ 
		\simeq \\
		\prod_{I \leq_\rho \ga } \big( \Fr^I(F^I_{k_I}[M/\ga])  \underset{\Fr^I((M^I)^{\times k_I})}{\overset{h}{\times}} P^I_{G}((M^I)^{\times k_I}) \big).
	\end{gather*}
	Hence to show \textbf{proj} is a weak equivalence it suffices to show the maps
	\[ \mathbf{proj}_I : \Fr^I(F^I_{k_I}[M/\ga])  \underset{\Fr^I((M^I)^{\times k_I})}{\overset{h}{\times}} P^I_{G}((M^I)^{\times k_I}) \rightarrow P^I_{G}(F^I_{k_I}[M/\ga] ).\]
	are weak equivalences. 
	Consider the following diagram (we relabel $k_I = k$)
	\begin{center}
		\begin{tikzcd}
			P^I_{G}(F_k^I[M/\ga]) \arrow[r,hookrightarrow] \arrow[d]	& P^I_{G}((M^I)^{\times k}) \arrow[d] \\
				\Fr^I(F^I_k[M/\ga]) \arrow[r,hookrightarrow] \arrow[d] &  \Fr^I((M^I)^{\times k}) \arrow[d] \\
				F^I_k[M/\ga] \arrow[r,hookrightarrow] & M^{\times k} 
		\end{tikzcd}
	\end{center}
	and note the outer square is a pullback diagram whose vertical arrows are fibrations. Then the outer square is a homotopy pullback diagram. 
	Moreover, the lower square is a pullback diagram whose vertical arrows are fibrations and hence is a homotopy pullback.
	The pasting lemma then implies the upper square is a homotopy pullback diagram.
	Consequently, we obtain a canonical weak equivalence 
	\begin{align*}
	\mathbf{can}: P^I_{G}(F_k^I[M/\ga]) &\longrightarrow \Fr^I(F^I_k[M/\ga]) \underset{\Fr^I((M^I)^{\times k})}{\overset{h}{\times}} P^I_{G}((M^I)^{\times k}),\\
	 x &\longmapsto (x, \text{constant}_x,x)
	\end{align*}
	using the model of Proposition \ref{hpbmodel}. 
	We obtain a commutative diagram
	\begin{center}
	\begin{tikzcd}
		& P^I_{G}(F_k^I[M/\ga])\\
		P^I_{G}(F_k^I[M/\ga]) \arrow[r,"\textbf{can}"] \arrow[ru,"\id"] & \Fr^I(F^I_k[M/\ga]) \underset{\Fr^I((M^I)^{\times k})}{\overset{h}{\times}} P^I_{G}((M^I)^{\times k}) \arrow[u, "\textbf{proj}_I"]
	\end{tikzcd}
	\end{center}
	and conclude \textbf{proj} is a weak equivalence by the two-out-of-three property of weak equivalences. 
	This finishes the proof.
\end{proof}

\section{Equivariant factorization homology} \label{sec:facthom}
In this section we define equivariant factorization homology and establish its properties. Subsections \ref{subsec:diskalg} and \ref{subsec:invariants} contain the definitions and some basic properties. 
In the subsections \ref{subsec:excision} and \ref{subsec:eht} we prove the theory satisfies $\tensor$-exicison and is characterised by this property. 
We also discuss several consequences of this characterisation. 

\begin{conv}
For $\infty$-categorical notions we refer to \cite{lhtt,lha}. 
By colimit we will mean colimit in the $\infty$-category, which corresponds to a homotopy colimit in the associated topological (or simplicial) category \cite[Th. 4.2.4.1]{lhtt}. Only in the $\infty$-category $\Spaces$ do we write hocolim instead of colim.
\end{conv}

\subsection{Disk algebras or coefficient systems} \label{subsec:diskalg}
\begin{defn}
	Let $\gar\Disk^{G}_n$ denote the full subcategory of $\gar\Orb^{G}_n$ whose objects are equivalent to a disjoint union of local models.
\end{defn}
The category $\gar\Disk_n^G$ inherits the symmetric monoidal structure $\amalg$ from $\gar\Orb_n^G$. 
\begin{defn} \label{def:diskalg}
Let $\catC$ be a symmetric monoidal $\infty$-category. A \emph{$\gar\Disk^G_n$-algebra in $\catC$} is a symmetric monoidal functor $\gar\Disk_n^G \rightarrow \catC$. 
\end{defn}

\begin{rmk}
	The $\gar\Disk^G_n$-algebras will be the \emph{coefficient systems} for equivariant factorization homology, compare to equivariant homology in Definition \ref{coeffsystem}.
\end{rmk}

We have the following natural $\infty$-category of $\gar\Disk_n^G$-algebras.
\begin{defn} Let $\catC$ be a symmetric monoidal $\infty$-category.
	The $\infty$-category of $\gar\Disk_n^G$-algebras in $\catC$
	, denoted $\mathrm{Alg}_{\gar\Disk_n^G}(\catC)$, 
	is the $\infty$-category of symmetric monoidal functors\footnote{That is, the full subcategory of $\mathrm{Fun}_{N(\mathrm{Fin}_*)}(\gar\Disk_n^G,\catC)$ with objects being the symmetric monoidal functors \cite[Def. 2.1.3.7]{lha}.}
	\[ \mathrm{Fun}^{\tensor}(\gar\Disk_n^G,\catC). \]
\end{defn}

\begin{ex} \label{ex:enalg}
	An $\e\Disk^G_n$-algebra in $\catC$ is exactly a $\Disk^{BG}_n$-algebra in the sense of \cite{af15}. In particular $\e\Disk^{\mathrm{fr}}$-algebras are the same thing as $E_n$-algebras \cite[Rmk. 2.10]{af15}, and $\e\Disk^{\mathrm{or}}$-algebras are the same thing as algebras over the (confusingly named, in this context) framed disk operad $fE_n$ \cite[Ex. 2.11]{af15}.
\end{ex}
\begin{nota}
	We will denote with $\Disk^G_n$ the category $\e^\triv\Disk_n^G$.
\end{nota}

Let $\mathrm{B}\ga$ denote the the one object groupoid with automorphisms $\ga$. 
We view it as an $\infty$-category by taking its nerve. 
\begin{nota}We denote by $\catC^\ga := \Fun(B\ga,\catC)$ the category $\ga$-equivariant objects in $\catC$.
\end{nota}

\begin{prop} \label{prop:equivariantalgebras}
	The $\infty$-category $\Alg_{\ga^\triv\Disk_n^G}(\catC)$ of $\ga^\triv\Disk_n^G$-algebras in $\catC$ is equivalent to the $\infty$-category $\Alg_{\Disk_n^G}(\catC)^\ga$ of $\ga$-equivariant $\Disk_n^G$-algebras in $\catC$.
	\end{prop}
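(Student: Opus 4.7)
The plan is to exhibit the equivalence via pullback along the symmetric monoidal functor $\ga\times \colon \Disk_n^G \to \ga^\triv\Disk_n^G$ of Proposition \ref{prop:timesandquotient}, equipped with a natural $\ga$-action. First I would unpack the structure of $\ga^\triv\Disk_n^G$. Since $\rho = \triv$, the faithfulness condition $I \leq_\triv \ga$ forces $I = \e$, so the only local model is $\DD_\e = [\ga \times \R^n/\ga]$ and every object of $\ga^\triv\Disk_n^G$ is a disjoint union $\DD_\e^{\amalg k}$. Combining Theorem \ref{htype} with the identification $F_k^\e[\ga\times\R^n/\ga] \cong \ga^k \times F_k\R^n$, the morphism spaces decompose as
\[
\EEmb^G_{\fr}(\DD_\e^{\amalg k}, \DD_\e^{\amalg l}) \simeq \ga^k \times \EEmb^G_{\fr}((\R^n)^{\amalg k},(\R^n)^{\amalg l}),
\]
where the $\ga^k$-factor records an independent ``$\ga$-twist'' on each of the $k$ source components.

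Next I would construct the comparison functor. Precomposition with $\ga\times$ gives a symmetric monoidal restriction $\Theta_0 \colon F \mapsto F\circ(\ga\times)$ from $\Alg_{\ga^\triv\Disk_n^G}(\catC)$ to $\Alg_{\Disk_n^G}(\catC)$. To upgrade $\Theta_0$ to the $\ga$-equivariant category, I would observe that for each $g\in\ga$, right multiplication by $g^{-1}$ on the $\ga$-factor of $\ga\times M$ defines a $\ga$-equivariant framed automorphism of $[\ga\times M/\ga]$, natural and symmetric monoidal in $M$; these assemble into a natural $\ga$-action on the functor $\ga\times$, and hence on every $F\circ(\ga\times)$ by postcomposition, yielding a factored functor
\[
\Theta \colon \Alg_{\ga^\triv\Disk_n^G}(\catC) \longrightarrow \Alg_{\Disk_n^G}(\catC)^\ga.
\]

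For the inverse, given a $\ga$-equivariant $\Disk_n^G$-algebra $(A, \{\phi_g\}_{g\in\ga})$, I would define $\Psi(A)\colon \ga^\triv\Disk_n^G \to \catC$ on objects by $\Psi(A)(\DD_\e^{\amalg k}) := A(\R^n)^{\tensor k}$ and on a morphism $((g_1,\dots,g_k),\beta) \in \ga^k \times \EEmb^G_{\fr}((\R^n)^{\amalg k},(\R^n)^{\amalg l})$ by
\[
\Psi(A)((g_i),\beta) := A(\beta) \circ (\phi_{g_1}\tensor \cdots \tensor \phi_{g_k}).
\]
The hard part will be verifying that $\Psi(A)$ respects composition and the symmetric monoidal structure: under the above decomposition, composition in $\ga^\triv\Disk_n^G$ intertwines the group law of $\ga$ with the reindexing of the $\ga$-twists along the underlying $\Disk_n^G$-morphism (and the induced symmetric group action permuting source components), and one must check this matches precisely the naturality of $\{\phi_g\}$ together with the $\ga$-action on $A$. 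Once this coherence is in place, $\Theta\circ\Psi\simeq\id$ and $\Psi\circ\Theta\simeq\id$ follow from a direct unwinding. At the $\infty$-categorical level, the cleanest way to organise this argument is to identify $\ga^\triv\Disk_n^G$ with the wreath (or Grothendieck) construction of the trivial $\ga$-action on $\Disk_n^G$ regarded as an $\infty$-operad, after which the proposition becomes an instance of the standard adjunction $\Fun^{\tensor}(\Disk_n^G \rtimes B\ga,\catC) \simeq \Fun(B\ga,\Fun^{\tensor}(\Disk_n^G,\catC))$.
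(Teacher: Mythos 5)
Your proposal is essentially the paper's own argument: the paper likewise reduces to operads (via monoidal envelopes), identifies $\ga^\triv\Disk_n^G$ with the fiber product $B\ga^\amalg \times_{N(\Fin_*)} \mathbf{D}_n^G$ (your $\Disk_n^G \rtimes B\ga$), and then invokes Lurie's result \cite[Thm 2.4.3.18]{lha} for the adjunction $\Alg_{\catO}(\catC) \simeq \Fun(B\ga,\Alg_{\mathbf{D}_n^G}(\catC))$, with the remaining work being the explicit identification of morphisms which both you and the paper describe in the same way. Your opening decomposition of mapping spaces via Theorem \ref{htype} is a correct supplementary observation the paper doesn't use, but the decisive final step coincides.
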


\begin{proof}
	First, we reformulate everything in terms of operads.
	We let $\ga\mathbf{D}_n^G$ denote the $\infty$-operad generated by the object $[\DD_\e/\ga]$ in $\ga^\triv \Disk_n^{\fr}$. 
	Since the objects of $\ga^\triv \Disk_n^{\fr}$ are generated under $\amalg$ by $[\DD_\e/\ga]$ we can identify $\Env(\ga\mathbf{D}_n^G)^\tensor \cong \ga^\triv\Disk_n^G$ and so $\Alg_{\ga\mathbf{D}_n^G}(\catC) \cong \Alg_{\ga^\triv\Disk_n^G}(\catC)$ (see \cite[\S 2.4.4]{lha}).
	Similarly $D_n^G$ is the $\infty$-operad generated by $\R^n$ in $\Disk_n^{\fr}$, and
	$\Alg_{\mathbf{D}_n^G}(\catC) \cong \Alg_{\Disk_n^G}(\catC)$. 
	It thus suffices to show the following equivalence
	\[\Alg_{\ga\mathbf{D}_n^G}(\catC) \cong \Alg_{\mathbf{D}_n^G}(\catC)^\ga. \]
	Using \cite[Constr. 2.4.3.3]{lha} we have the $\infty$-operad $B\ga^\amalg \rightarrow N(\Fin_*)$, where $B\ga^\amalg$ is obtained as the nerve of the category with
	\begin{align*}
	&\text{objects} & &\langle k \rangle = \{*_1,\dots,*_k\} \text{ for } k \geq 0 \\
	&\text{morphisms}	& &(\alpha: \langle k \rangle \rightarrow \langle m \rangle \in \Fin_*, g_i: *_i \rightarrow *_{\alpha(i} )
	\end{align*} 
	We then form the $\infty$-operad $\catO^\tensor := B\ga^\amalg \times_{N(\Fin_*)} \mathbf{D}_n^G \rightarrow N(Fin_*)$, and by \cite[Thm 2.4.3.18]{lha} we have
	\[ \Alg_{\catO} (\catC) \cong \Fun(B\ga, \Alg_{\mathbf{D}_n^G}(\catC) = \Alg_{\mathbf{D}_n^G}(\catC)^\ga.\] 
	It thus remains to show that $\ga\mathbf{D}_n^G \cong \catO^\tensor$ as $\infty$-operads.
	This is a straightforward (but tedious) verification; we provide a sketch. 
	To $f: \amalg_k \R^n \rightarrow \R^n \in \mathbf{D}_n^G(k)$ we associate
	\[\ga \times f: \amalg_{\ga} \amalg_k \R^n = \amalg_k [\DD_\e/\ga] \xlongrightarrow{\amalg_\ga f} [\DD_\e/\ga] \text{ in } \ga\mathbf{D}_n^G\]
	and to $g\in \ga$ we associate $g: [\DD_\e/\ga] \rightarrow [\DD_\e/\ga]$ which swaps the indices of $\DD_\e = \amalg_\ga \R^n$. 
	A morphism $(f, \alpha,g_i)$ in $\catO^\tensor$ then corresponds to the framed embedding
	\[ \amalg_k [\DD_e/\ga] \xrightarrow{ \amalg_k g_i} \amalg_k [\DD_e/\ga] \xrightarrow{\ga \times f} [\DD_e/\ga]. \]
	Vice versa, we can decomposes morphisms in $\ga \mathbf{D}_n^G$ canonically into a $\ga \times f$ and $g_i\in G$ components.
	To identify the $k$-simplices of $\catO^\tensor$ and $\ga\mathbf{D}_n^G$ we observe that a $k$-simplex, i.e. a (higher coherent) isotopy in $\ga\mathbf{D}_n^G$, decomposes as a (higher coherent) isotopy between the $\ga\times f$ part and equations between the group elements. 
	This is in turn interpreted as a $k$-simplex in $\mathbf{D}_n^G$ and a $k$-simplex in $B\ga^\amalg$ which lie over the same $k$-simplex in $N(\Fin_*)$ so that they define a $k$-simplex in $\catO^\tensor$.
\end{proof} 
\begin{ex}
Combining Proposition \ref{prop:equivariantalgebras} and Example \ref{ex:enalg} yields many concrete examples of $\ga^\triv\Disk_n^G$-algebras. 
For example, a $\ga^\triv \Disk^{\mathrm{fr}}_1$-algebra in chain complexes is a $\ga$-equivariant $A_\infty$-algebra \cite[Ex. 2.11]{aft17}, and a $\ga^\triv \Disk_2^{\mathrm{or}}$-algebra in $\Rex$ is a $\ga$-equivariant balanced braided tensor category \cite[Rmk 3.8]{bzbj}.
\end{ex}
We can also associate $\ga^\triv\Disk_n^G$-algebras to $\Disk_n^G$-algebras and vice versa. Namely, one has the canonical functors
\begin{align} 
	/\ga^*: \mathrm{Alg}_{\Disk_n^G}(\catC) \rightarrow \mathrm{Alg}_{\ga^\triv\Disk_n^G}(\catC) \quad \text{and} \quad \ga\times^*: \mathrm{Alg}_{\ga^\triv\Disk_n^G}(\catC) \rightarrow  \mathrm{Alg}_{\Disk_n^G}(\catC) \label{eq:forgetequivariance}
\end{align}
obtained respectively by precomposition with the functors $/\ga$ and $\ga \times$ of Proposition \ref{prop:timesandquotient}. 
\begin{defn}
	We call a $\ga^\triv\Disk_n^G$-algebra \emph{trivially $\ga$-equivariant} if it is in the essential image of the functor $/\ga^*$. 
\end{defn}

We give some further examples of coefficients. 

\begin{ex} \label{ex:diskrotalg} Let $p$ be a prime number, $\mathrm{rot}: \Z_p \rightarrow SO(n)$ be the rotation representation, $\vect$ be the plain category of $\kk$-vector spaces with its standard tensor product.
	\begin{enumerate}
		\item A $\Z_2^\mathrm{rot}\Disk_1^{\mathrm{fr}}$-algebra in $\vect$ consists of a pair of vector spaces $(A,M)$ so that $A$ is a unital algebra with anti-involution $\phi: A \rightarrow A^{op}$, and $M$ is a pointed right $A$-module.
		\item When $n\geq 2$ a $\Z_p^\mathrm{rot}\Disk_n^{\mathrm{fr}}$-algebra in $\vect$ consists of a pair of vector spaces $(A,M)$ so that $A$ is an commutative algebra with an order $p$ automorphism $\phi: A \rightarrow A$, and $M$ is a pointed right $A$-module so that $m \cdot \phi(a) = m\cdot a$ for all $m\in M$ and $a \in A$. 
	\end{enumerate}
\end{ex}

\begin{ex}
Let $D_2 = \Z_2 \times \Z_2$ act via its standard representation $\mathrm{st}$ on $\R^2$. 
A $D_2^{\mathrm{st}}\Disk^{\fr}_2$-algebra in $\vect$ consists of a commutative algebra $C$ with commuting involutions $h$ and $v$, a unital algebra $A$ with anti-involution $\sigma$, together with algebra maps $f_v,f_h:C \rightarrow Z(A)$ such that $\sigma (f_h(c)) = f_h(h(c))$ and $\sigma(f_v(c)) = f_v(v(c))$, and a pointed $A_{f_h} \tensor_C {}_{f_v}A$-module $M$. 
\end{ex}

\begin{ex} \label{ex:brpair} In \cite{TOQSP} we introduced coloured topological operads $\Z_2\catD_n$ for $n\geq 1$ with colours $\DD$ and $\DD_*$ corresponding to the global quotients $[\DD_e/\Z_2]$ and $[\DD_{\Z_2}/\Z_2]$ for the sign representation $\sigma$. 
The operation spaces in $\Z_2\catD_n$ are rectilinear equivariant embeddings, and are weakly equivalent to the spaces $F_k^\e[\DD_\e/\Z_2]$ and $F_k^\e[\DD_{\Z_2}/\Z_2]$ via the evaluation at the centers map $\ev_0$ \cite[Prop. 2.9]{TOQSP}. The monoidal envelope naturally includes 
\[ i:\Env(\Z_2\catD_n)^\tensor \rightarrow \Z_2^\sign\Disk_n^{\fr} \]
by interpreting a rectilinear equivariant embedding as a framed embedding. 
Since $D_0^\e(f) = \ev_0(f)$ for a framed embedding $f$ the inclusion of hom spaces defined by $i$ lies over the identity map on the configuration spaces.
Therefore, Theorem \ref{htype} implies $i$ defines weak equivalences on mapping spaces. It follows that $i$ is a monoidal equivalence (see \cite[Rmk 2.1.3.8]{lha}), and hence  the notion of $\Z_2\catD_n$-algebra in \cite{TOQSP} coincides with that of $\Z_2^\sign\Disk_n^{\fr}$-algebra. See \cite{TOQSP} for a classification of $\Z_2\catD_n$-algebras in $\Rex$, see also Example \ref{ex:qsp}.
\end{ex}

\begin{rmk}
	For an $n$-dimensional representation $\rho: \ga \rightarrow GL(V)$ one has the little $V$-disk operad. 
	Equivariant iterated loopspaces provide examples of algebras over these operads.
	See \cite{blumberghill13,guilloumay17}, and references therein, for more details. 
	We expect that the algebras over the little $V$-disks operads are closely related to our $\gar\Disk^{\mathrm{fr}}_n$-algebras. 
\end{rmk}

\subsection{Invariants of global quotient orbifolds}  \label{subsec:invariants}
Following \cite{af15,aft17} we define our invariants via a left Kan extension. 
Let us briefly recall left Kan extensions along inclusions of $\infty$-categories.
Let $i: \catD \rightarrow \catO$ be an inclusion functor of $\infty$-categories. 
The restriction along $i$ induces a functor
\[ i^*: \Fun(\catO, \catC) \rightarrow \Fun(\catD, \catC).\]
The \emph{global left Kan extension} is a left adjoint to $i^*$.
The left Kan extension can be computed via a colimit formula. Denote the canonical projection functor $P: I\downarrow O \rightarrow \catD$, and suppose that for every $O\in \catO$ the colimit 
\begin{align} \colim (I\downarrow O \xrightarrow{P} \catD \xrightarrow{F} \catC ) \label{LKE} \end{align}
exists. 
Then this assignment on objects extends to a functor $\mathrm{Lan}_i(F): \catO \rightarrow \catC$, \emph{the Left Kan extension of $F$ along $i$}, such that composition along $\mathrm{Lan}_i(F)$ defines a left adjoint $\mathrm{Lan}_i(F)_* \dashv i^*$ \cite[Lemma 4.3.2.13, Proposition 4.3.2.15]{lhtt}. 
\begin{rmk}
	The corresponding homotopy colimit computes what is known as the homotopy left Kan extension between topological categories \cite[Proposition A.2.8.9]{lhtt}. 
\end{rmk}

We will make use of the symmetric monoidal left Kan extension. Namely, let $i^{\tensor}: \catD^{\tensor} \rightarrow \catO^{\tensor}$ be a symmetric monoidal functor. 
The restriction along $i^{\tensor}$ induces a functor
\[ (i^\tensor)^*: \Fun^{\tensor}(\catO^{\tensor}, \catC^{\tensor}) \rightarrow \Fun^{\tensor}(\catD^{\tensor}, \catC^{\tensor}).\]
The \emph{symmetric monoidal left Kan extension} is a symmetric monoidal functor $\mathrm{Lan}^{\tensor}_{i}(F): \catO^{\tensor} \rightarrow \catC^{\tensor}$ together with the data of an adjunction $\mathrm{Lan}^{\tensor}_{i}(F)_* \dashv (I^\tensor)^*$.

\begin{defn}
Let $A$ be a $\gar\Disk_n^G$-algebra in $\catC$,  and let $i^{\tensor}: \gar\Disk_n^G \rightarrow \gar\Orb_n^G$ denote the (symmetric monoidal) inclusion. 
\emph{Equivariant factorization homology with coefficients in $A$}, provided it exists, is defined as the symmetric monoidal left Kan extension of $A$ along $i^{\tensor}$, and denoted as 
\begin{align}
	\int_- A: \gar\Orb_n^G &\rightarrow \catC^{\tensor},\quad \quad \quad \quad
	[M/\ga] \mapsto \int_{[M/\ga]} A.
\end{align} 
\end{defn}

To obtain a formula for the symmetric monoidal left Kan extension we need to make an assumption on target categories.
\begin{defn} \label{def:tensorcocomplete}
	Let $\catC$ be a symmetric monoidal $\infty$-category. We say that $\catC$ is \emph{$\tensor$-cocomplete} if $\catC$ admits arbitrary colimits and the tensor product $\tensor$ commutes with geometric realisations and filtered colimits in both variables.
\end{defn}

\begin{ex} \label{ex:tensorcocomplete} The following symmetric monoidal categories are $\tensor$-cocomplete.
	\begin{enumerate}
		\item The plain category $\vect^\tensor$ of $\kk$-vector spaces with the standard $\tensor$-product.
		\item The (2,1)-category $\Rex^\boxtimes$ of finitely cocomplete $\kk$-linear categories and right exact functors with $\boxtimes$ the Deligne-Kelly tensor product \cite[Prop. 3.5]{bzbj}.
		\item The $\infty$-category $\Ch_\kk^\oplus$ of chain complexes with $\oplus$ the direct sum \cite[Ex 3.14]{aft17}.
		\item The $\infty$-category $\Ch_\kk^\tensor$ with the standard tensor product \cite[Ex. 3.5]{af15}.
	\end{enumerate}
\end{ex}

We then have the following central result: 
\begin{prop} \label{aftlem}
	Assume $\catC$ is $\tensor$-cocomplete and let $A^{\tensor}$ be a $\gar\Disk_n^G$-algebra. 
	Equivariant factorization homology with coefficients in $A^{\tensor}$ exists and is computed on objects by the left Kan extension 
	\begin{align} 
	\int_{[M/\ga]} A^{\tensor} \simeq \colim (i \downarrow [M/\ga] \xrightarrow{P} \gar\Disk_n^G \xrightarrow{A} \catC ) \label{fhcolim} 
	\end{align}
	of the underlying functor $A$ of $A^{\tensor}$.
\end{prop}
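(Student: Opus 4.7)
The plan is to deduce both existence and the colimit formula from Lurie's general machinery of operadic left Kan extensions, following the template of \cite[Lemma 3.23]{af15} in the non-equivariant setting.

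First, I would establish existence of the ordinary (non-monoidal) left Kan extension of the underlying functor $A: \gar\Disk_n^G \rightarrow \catC$ along $i: \gar\Disk_n^G \hookrightarrow \gar\Orb_n^G$. Since $\catC$ is cocomplete by hypothesis, each of the colimits indexed by the over-categories $i \downarrow [M/\ga]$ exists, so \cite[Proposition 4.3.2.15]{lhtt} guarantees that the pointwise left Kan extension exists and is computed by the formula in Equation \eqref{fhcolim}.

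Second, I would upgrade this to a symmetric monoidal left Kan extension by invoking Lurie's operadic Kan extension theorem \cite[Corollary 3.1.3.5]{lha}: for a symmetric monoidal inclusion $i^\tensor$, the symmetric monoidal left Kan extension exists provided (a) the pointwise colimits computing the underlying extension exist in $\catC$, and (b) the tensor product on $\catC$ preserves the relevant class $\mathcal{K}$ of indexing colimits in each variable separately. Condition (a) has just been established, and moreover in this case the symmetric monoidal left Kan extension is computed on objects by the same pointwise colimit formula, yielding Equation \eqref{fhcolim}.

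The main technical obstacle is identifying the class $\mathcal{K}$ and verifying condition (b). The plan is to show that each over-category $i \downarrow [M/\ga]$ is \emph{sifted}, so that $\mathcal{K}$ may be taken to consist of sifted colimits, which in turn decompose into geometric realisations and filtered colimits — precisely those covered by our $\tensor$-cocompleteness assumption (Definition \ref{def:tensorcocomplete}). To establish sifted-ness in the equivariant setting, I would model $i \downarrow [M/\ga]$ by a Čech-type diagram of framed embeddings
\[ \coprod_{I \leq_\rho \ga} [\DD_I/\ga]^{\amalg k_I} \;\lhook\joinrel\longrightarrow\; [M/\ga], \]
whose homotopy type is controlled by Theorem \ref{htype}, identifying the embedding spaces with principal $C_G(I)$-bundles over the equivariant configuration spaces. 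Combined with Corollary \ref{cor:cover}, which ensures that every $(\rho,G)$-framed global quotient is covered by local models, this allows one to cofinally reindex the colimit by a sifted poset of such covers, exactly as in the proof of \cite[Lemma 3.23]{af15}. Once sifted-ness is in hand, the $\tensor$-cocompleteness hypothesis supplies condition (b), and the result follows.
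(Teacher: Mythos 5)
Your proposal diverges from the paper's, which appeals to \cite[Lemma 2.17]{aft17} and then verifies that the comparison functors
\[
(\gar\Disk_n^G)_{/\emptyset} \to (\gar\Orb_n^G)_{/\emptyset}
\quad\text{and}\quad
(\gar\Disk_n^G)_{/[M/\ga]} \times (\gar\Disk_n^G)_{/[N/\ga]} \to (\gar\Disk_n^G)_{/[M\amalg N/\ga]}
\]
are final --- in fact equivalences, so the check is immediate. These finality conditions are precisely what guarantee that the pointwise colimit formula yields a \emph{symmetric monoidal} functor; your proposal never engages with them. The assertion that ``in this case the symmetric monoidal left Kan extension is computed on objects by the same pointwise colimit formula'' is not a free consequence of conditions (a) and (b) as you state them: one must compare $\colim_{i \downarrow [M \amalg N / \ga]} A$ with $\bigl(\colim_{i \downarrow [M/\ga]} A\bigr) \tensor \bigl(\colim_{i \downarrow [N/\ga]} A\bigr)$, and the finality of the multiplication comparison functor is exactly what delivers that comparison. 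Siftedness of the over-categories addresses a different concern (that $\tensor$ commutes with the indexing colimits) and does not substitute for it.

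Beyond that, the siftedness step is asserted rather than verified. You claim it goes ``exactly as in'' \cite{af15}, but the equivariant indexing --- several isomorphism classes of local models $[\DD_I/\ga]$ for $I \leq_\rho \ga$, and centralizer bundles $P_G^I$ over equivariant configuration spaces as produced by Theorem~\ref{htype} --- requires genuine checking, not an appeal to analogy. Notably the paper itself declines to do this: the Remark immediately following Proposition~\ref{aftlem} concedes that $\tensor$-cocompleteness is ``probably unnecessarily strong'' and could be relaxed to $\tensor$-sifted cocompleteness if siftedness were established, but only records this as known for $\ga = \e$. So your plan simultaneously omits the step the paper actually carries out (finality of the two comparison functors) and takes on a harder step the paper chose to sidestep.
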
 
\begin{proof}
	This is a general statement about symmetric monoidal homotopy left Kan extensions, see \cite[Lemma 2.17]{aft17}.
	By Lemma 2.17 in \cite{aft17} we only need to verify that the functors
	\begin{align*}
		&(\gar\Disk_n^G)_{/\emptyset} \rightarrow (\gar\Orb_n^G)_{/\emptyset},\\
		&(\gar\Disk_n^G)_{/[M/\ga]}\times (\gar\Disk_n^G)_{/[N/\ga]} \rightarrow (\gar\Disk_n^G)_{/[M\amalg N/\ga]},
	\end{align*}
	are final for all $[M/\ga]$ and $[N/\ga]$. 
	By inspection both functors are equivalences so that they are in particular final. 
\end{proof}

\begin{rmk}
	The restriction of $\tensor$-cocompleteness is probably unnecessarily strong.
 For $\ga= \e$ it suffices to assume $\tensor$-sifted cocompleteness \cite[Thm. 2.15]{aft17}. 
\end{rmk}

\begin{cor} \label{cor:localcomp}
	Let $A$ be a $\gar\Disk_n^G$-algebra in $\catC$ and assume $\catC$ is $\tensor$-cocomplete. 
	Then for a local model $[\DD_I/\ga] \in \gar\Orb_n^G$ we have
	\[ \int_{[\DD_I/\ga]}A \simeq A([\DD_I/\ga]). \]
\end{cor}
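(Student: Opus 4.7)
The plan is to deduce this from the colimit formula of Proposition \ref{aftlem} by exhibiting a terminal object in the indexing $\infty$-category. Since $[\DD_I/\ga]$ is by definition a local model, it lies in the full subcategory $\gar\Disk^{G}_n \subset \gar\Orb^{G}_n$. Consequently, the identity morphism $\id: [\DD_I/\ga] \to [\DD_I/\ga]$ defines an object of the comma $\infty$-category $i \downarrow [\DD_I/\ga]$.

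Next I would verify that $(\,[\DD_I/\ga],\, \id\,)$ is a terminal object of $i \downarrow [\DD_I/\ga]$. Concretely, for any other object $(D, f: D \to [\DD_I/\ga])$, the mapping space
\[ \Map_{i \downarrow [\DD_I/\ga]}\bigl( (D,f),\, ([\DD_I/\ga], \id) \bigr) \]
is, by definition, the homotopy fiber over $f$ of the post-composition map
\[ \id \circ (-) \colon \Map_{\gar\Disk^{G}_n}(D, [\DD_I/\ga]) \longrightarrow \Map_{\gar\Disk^{G}_n}(D, [\DD_I/\ga]). \]
But this map is the identity, hence the homotopy fiber over any point is contractible. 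This establishes terminality in the $\infty$-categorical sense.

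Finally, having a terminal object $t$ in an $\infty$-category $\mathcal{J}$ implies that the inclusion $\{t\} \hookrightarrow \mathcal{J}$ is a final functor (i.e. cofinal in the sense of \cite[Def. 4.1.1.1]{lhtt}), so colimits of $\mathcal{J}$-indexed diagrams in $\catC$ are computed by evaluation at $t$. Combining this with the colimit formula \eqref{fhcolim} of Proposition \ref{aftlem} yields
\[ \int_{[\DD_I/\ga]} A \;\simeq\; \colim \bigl( i \downarrow [\DD_I/\ga] \xrightarrow{P} \gar\Disk^{G}_n \xrightarrow{A} \catC \bigr) \;\simeq\; A([\DD_I/\ga]), \]
which is the desired equivalence. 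The only step requiring genuine care is the $\infty$-categorical verification of terminality; once that is in place, the rest is formal. A clean alternative, if one prefers to avoid the cofinality argument, is to observe directly that $A \circ P$ restricted to the terminal object recovers $A([\DD_I/\ga])$, and that the universal cone from this value exhibits it as the colimit.
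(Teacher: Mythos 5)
Your proof is correct and takes essentially the same route as the paper: the paper's one-line proof simply cites \cite[Prop. 4.3.2.17]{lhtt} for the general fact that a left Kan extension along a fully faithful functor restricts to the original functor, and your argument unfolds the standard proof of exactly that fact. That is, you observe that because $[\DD_I/\ga]$ lies in the full subcategory $\gar\Disk_n^G$, the comma $\infty$-category $i \downarrow [\DD_I/\ga]$ has the pair $([\DD_I/\ga],\id)$ as a terminal object (here full faithfulness is what makes the comparison map $\Map_{\gar\Disk_n^G}(D,[\DD_I/\ga]) \to \Map_{\gar\Orb_n^G}(D,[\DD_I/\ga])$ an equivalence, so its fibers are contractible), whence the colimit in the formula of Proposition \ref{aftlem} collapses to the value of $A$ there.
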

\begin{proof}
	The left Kan extension of any functor $A: \catD \rightarrow \catC$ along a full and faithful functor $I:\catD \rightarrow \mathcal{O}$ restricts to $\catD$ as $A$, see for example the proof of  \cite[Proposition 4.3.2.17]{lhtt}.
\end{proof}

Aside from the colimit formula \eqref{fhcolim} one also has a (derived) coend formula that computes the (homotopy) left Kan extension, see e.g. \cite[Ex. 9.2.11]{riehl14}. We have
\begin{align} 
\int_{[M/\ga]} A \quad \simeq \quad \EEmb^G_{\fr}(D,[M/\ga]) \bigotimes_{D \in \gar\Disk_n^G} A(D), \label{coendformula}
\end{align}
where $\EEmb^G_{\fr}(D,[M/\ga])$ acts on $A(D) \in \catC$ via tensoring over spaces in $\catC$.\footnote{Recall $S \tensor c\in \catC$ for $S \in \mathrm{Spaces}$ and $c \in \catC$ is the homotopy colimit of the constant functor $S \xrightarrow{c} \catC$ where we view $S$ as a Kan complex.}
Also, compare to \cite[Def. 3.2]{af15}.

\subsection{The proof of excision} \label{subsec:excision}

We will now formulate and prove the $\tensor$-excision property.
We start by specifying how one is allowed to decompose global quotients. 

\begin{conv}
	From now on we will assume that any symmetric monoidal $\infty$-category $\catC$, appearing as a target for factorization homology, is $\tensor$-cocomplete. 
\end{conv}

\begin{defn}
	Let $[M/\ga]$ be a $\ga$-global quotient, and endow $[-1,1]$ with the trivial $\ga$-action. A \emph{collar-gluing} is a choice of $\ga$-equivariant surjective map
	\[ f: M \rightarrow [-1,1] \]
	that is a manifold bundle over $(-1,1)$. 
\end{defn}

Let $M_- := f^{-1}[-1,1)$, $M_+ := f^{-1}(-1,1]$ and $M_0 := f^{-1}(-1,1)$ denote the open submanifolds of $M$ defined by the collar-gluing. 
We have the decomposition
\[ M  \cong M_- \cup_{M_0} M_+\]
where the global quotient $[M_0/\ga]$ is $\ga$-equivariantly diffeomorphic $M_0 \cong N \times \R$ and $\ga$ acts trivially on $\R$. 

\begin{lem} \label{lem:e1str}
	Let $[M/\ga]$ be a $(\rho,G)$-framed global quotient with a collar-gluing and let \\$H:\gar\Orb_n^G \rightarrow \catC$ be a symmetric monoidal functor. 
	Then the object
	\[ H[M_0/\ga] \]
	naturally has the structure of an $E_1$-algebra in $\catC$.
\end{lem}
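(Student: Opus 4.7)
The plan is to produce the $E_1$-algebra structure by constructing a symmetric monoidal functor of $\infty$-operads
\[ P_{N}: \Disk_{1}^{\fr} \longrightarrow \gar\Orb_n^G \]
that sends $\R$ to $[M_0/\ga] \cong [N \times \R/\ga]$, and then post-composing with $H$. Since $\Disk_1^{\fr}$-algebras in $\catC$ are exactly $E_1$-algebras (Example \ref{ex:enalg}), the composite $H \circ P_{N}$ endows $H[M_0/\ga]$ with the desired structure.

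First, I would define $P_{N}$ on objects by sending a framed $1$-manifold $\coprod_k \R \in \Disk_1^{\fr}$ to $\coprod_k [N \times \R/\ga]$, using that $\gar\Orb_n^G$ is symmetric monoidal under disjoint union and that $[N \times \R/\ga]$ is $(\rho,G)$-framed as an open sub-orbifold of $[M/\ga]$ (via the collar-gluing identification $M_0 \cong N \times \R$). On morphisms, given a framed embedding $(f,\gamma,r) \in \Emb_{\fr}(\coprod_k \R, \R)$, define
\[ P_{N}(f,\gamma,r) \;:=\; (\id_N \times f,\; \id_{TN} \boxplus \gamma,\; \id_{TN} \boxplus r). \]
The map $\id_N \times f$ is $\ga$-equivariant because $\ga$ acts trivially on the $\R$-coordinate in the collar-gluing, and the extension of $\gamma$ and $r$ to the $TN$-factor by the identity produces a framed embedding of global quotients in the sense of Corollary \ref{hfrmodel}. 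Symmetric monoidality of $P_{N}$ is immediate from the fact that $\id_N \times (-)$ preserves disjoint unions and the trivial action on $\R$.

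The main technical issue will be verifying that $P_{N}$ is a well-defined functor of \emph{$\infty$}-categories: one must check that the assignment on $\Emb_{\fr}$-spaces is continuous, strictly associative with respect to the composition defined in \S \ref{subsec:cats}, and preserves the (higher) isotopies encoded by the relative homotopy pullback model of Proposition \ref{hpbmodel}. This reduces to the observation that $\id_N \times (-)$ applied to embeddings, their differentials, and bundle-map homotopies all commute with composition on the nose, since product with $\id_N$ is a strict functor at the topological level; the compact-open topology on framed embedding spaces makes the induced map continuous.

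Finally, I would set the $E_1$-structure on $H[M_0/\ga]$ to be the symmetric monoidal composite $H \circ P_{N}: \Disk_1^{\fr} \to \catC$. By construction this sends $\R \mapsto H[M_0/\ga]$, and the multiplication $H[M_0/\ga] \tensor H[M_0/\ga] \to H[M_0/\ga]$ is the image under $H$ of the framed embedding $\id_N \times \iota$ induced by any framed embedding $\iota: \R \amalg \R \hookrightarrow \R$; the unit comes from the empty embedding. Higher associativity coherences are automatic since they are already present in $\Disk_1^{\fr}$ and are transported by the symmetric monoidal functor $H \circ P_{N}$.
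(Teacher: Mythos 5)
Your proposal is correct and follows essentially the same approach as the paper: both construct the $E_1$-structure by "thickening" framed embeddings of $1$-disks by $\id_N$ to produce $(\rho,G)$-framed embeddings of $[N\times\R/\ga]$, using the splitting of the tangent/frame bundle of the product, and then apply $H$. You spell out a bit more carefully that this assignment assembles into a symmetric monoidal functor $\Disk_1^{\fr}\to\gar\Orb_n^G$, which is the functoriality the paper's proof invokes more tersely.
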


\begin{proof}
	Recall that specifying an $E_1$-algebra structure is equivalent to specifying a $\Disk_1^{\mathrm{fr}}$-algebra structure, by Example \ref{ex:enalg}.
	Let us denote the free local model by $\R\in \Disk_1^{\mathrm{fr}}$.
	Any framed embedding $(g,h,r): \amalg_k \R \rightarrow \amalg_m \R$ induces a \ $G$-framed embedding
	\[ (\id_N \times g, \id_{\Fr(N)} \times h, \id_{\Fr(N)} \times r) : \amalg_k [N \times \R/\ga] \rightarrow \amalg_m [N\times \R/\ga],\] 
	where we used $\Fr(N\times \R) \cong \Fr(N) \times \Fr(\R)$ and $P_G(N \times \R) \cong P_G(N) \times P_G(\R)$.
	Functoriality of the symmetric monoidal functor $H$ yields a map
	\[  H[N\times \R/\ga]^{\tensor k} \rightarrow H[N\times \R/\ga] \]
	which defines the $E_1$-structure on $H[N\times \R/\ga] \cong H[M_0/\ga]$. 
\end{proof} 

We fix oriented embeddings between the intervals
\begin{align*}
&\mu_-: [-1,1) \amalg (-1,1) \rightarrow [-1,1),\\
&\mu: (-1,1) \amalg (-1,1) \rightarrow (-1,1),\\
&\mu_+: (-1,1) \amalg (-1,1] \rightarrow (-1,1]
\end{align*}
such that $\mu_-(-1) = -1$ and $\mu_+(1) = 1$.
For a $(\rho,G)$-framed global quotient $[M/\ga]$ with a collar-gluing we abuse notation by also denoting 
\begin{align}
&\mu_-: [M_-/\ga] \amalg [M_0/\ga] \rightarrow [M_-/\ga],\\
&\mu: [M_0/\ga] \amalg [M_0/\ga] \rightarrow [M_0/\ga],\\
&\mu_+: [M_-/\ga] \amalg [M_0/\ga] \rightarrow [M_-/\ga]
\end{align}
the induced framed embeddings. 
\begin{defn} \label{barconstr}
	Let $H: \gar\Orb_n^G \rightarrow \catC^{\tensor}$ be a symmetric monoidal functor, and let $[M/\ga]$ be a $(\rho,G)$-framed global quotient with a collar-gluing.
	We define the \emph{two-sided bar construction} 
	\[H[M_+/\ga] \; \underset{H[M_0/\ga]}{\bigotimes} \; H[M_-/\ga] \] 
	to be the geometric realization of the following simplicial object\footnote{Note that this is a simplicial object in the sense of an $\infty$-functor $\Delta^{op} \rightarrow \catC$, meaning a homotopically coherent simplicial object in the associated topological category.} in $\catC$
	\begin{align}
	H_+ \tensor H_- \quad \substack{\leftarrow\\[-1em] \leftarrow} \quad  H_+ \tensor H_0 \tensor H_- \quad \substack{\leftarrow\\[-1em] \leftarrow \\[-1em] \leftarrow} \quad H_+ \tensor H_0^{\tensor 2} \tensor H_- \quad \substack{\leftarrow\\[-1em] \leftarrow \\[-1em] \leftarrow \\[-1em] \leftarrow} \label{simplbar}
	\end{align}
	where $H_i := H[M_i/\ga]$ for $i\in \{+,-,0\}$ and the simplicial maps are the standard bar construction maps using $H(\mu_-)$, $H(\mu_0)$ and $H(\mu_+)$. 
\end{defn}

\begin{rmk}
	Though we will not give the definitions of modules over $E_1$-algebras, we are in fact computing the two-sided bar construction of the modules $H_+$ and $H_-$ over the $E_1$-algebra $H_0$. As suggested by the notation this simplicial object computes the relative tensor products of the modules over the algebra, see \cite[\S 4.4.2]{lha} for details. 
\end{rmk}
\begin{lem} \label{canarrow}
	For $H$ and $[M/\ga]$ as in Definition \ref{barconstr} there is a canonical arrow
	\[ H[M_+/\ga] \; \underset{H[M_0/\ga]}{\bigotimes} \; H[M_-/\ga] \quad \longrightarrow \quad H[M/\ga]. \]
\end{lem}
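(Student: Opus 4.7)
The plan is to produce the canonical arrow by exhibiting the simplicial object \eqref{simplbar} as augmented over $H[M/\ga]$, and then using the universal property of the geometric realization. Concretely, I will build an augmented simplicial object in $\gar\Orb_n^G$ that, after applying the symmetric monoidal functor $H$, becomes the augmented bar object.

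First I would lift the bar construction to the category of framed global quotients. Define a simplicial object $X_\bullet$ in $\gar\Orb_n^G$ with
\[
X_k \; := \; [M_+/\ga] \; \amalg \; [M_0/\ga]^{\amalg k} \; \amalg \; [M_-/\ga],
\]
with outer face maps induced by the framed embeddings $\mu_-$ and $\mu_+$, inner face maps induced by $\mu$, and degeneracies induced by the unit $\emptyset \hookrightarrow [M_0/\ga]$ (pointwise disjoint inclusion of an empty collar). Since $H$ is symmetric monoidal, applying $H$ levelwise to $X_\bullet$ and using $H(A\amalg B) \simeq H(A) \tensor H(B)$ and $H(\emptyset) \simeq \mathbb{1}$ reproduces the simplicial object in \eqref{simplbar}.

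Next I would construct an augmentation $X_\bullet \to [M/\ga]$ in $\gar\Orb_n^G$, viewing $[M/\ga]$ as a constant simplicial object. For each $k$, I fix a subdivision $-1 = t_0^{(k)} < t_1^{(k)} < \cdots < t_{k+2}^{(k)} = 1$ of $[-1,1]$ and choose oriented embeddings
\[
[-1,1) \; \amalg \; (-1,1)^{\amalg k} \; \amalg \; (-1,1] \; \hookrightarrow \; [-1,1]
\]
whose images land in $[-1,t_1^{(k)})$, $(t_i^{(k)}, t_{i+1}^{(k)})$, and $(t_{k+1}^{(k)},1]$ respectively. Pulling back along the $\ga$-equivariant collar-gluing $f: M \to [-1,1]$ (which is a manifold bundle over $(-1,1)$ and hence in particular a $\ga$-equivariant submersion away from the endpoints) yields a $\ga$-equivariant embedding $X_k \hookrightarrow [M/\ga]$ of the underlying global quotients. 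The framings transfer from $M$ to its open submanifolds, so this upgrades canonically to a framed embedding.

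The crucial compatibility is that these augmentations are natural in $[k] \in \Delta^{\mathrm{op}}$. For an inner face $d_i$, the subdivision for $X_{k-1}$ should be obtainable from that of $X_k$ by merging the two intervals $(t_i^{(k)}, t_{i+1}^{(k)})$ and $(t_{i+1}^{(k)}, t_{i+2}^{(k)})$ via $\mu$; the analogous statement for outer faces uses $\mu_\pm$. One can arrange this strictly by choosing, for each $k$, a preferred $(k+2)$-subdivision together with preferred collapse maps; alternatively the space of such choices is contractible, so the data assemble into a simplicial object canonically up to contractible choice. This is the main obstacle: making these choices coherent in the $\infty$-categorical sense. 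The cleanest implementation is to parametrise the choice of subdivision by a contractible simplicial space of ordered $(k+2)$-subdivisions of $[-1,1]$ (a simplex of intervals) and observe that contractibility forces a unique augmentation in the $\infty$-category $\gar\Orb_n^G_{/[M/\ga]}$.

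Finally, applying the symmetric monoidal functor $H$ to this augmented simplicial object produces an augmented simplicial object in $\catC^\tensor$ whose underlying simplicial object is \eqref{simplbar} and whose augmentation target is $H[M/\ga]$. The universal property of the geometric realization then yields the desired canonical arrow
\[
H[M_+/\ga] \; \underset{H[M_0/\ga]}{\bigotimes} \; H[M_-/\ga] \; \longrightarrow \; H[M/\ga],
\]
which moreover is natural in $[M/\ga]$ equipped with a collar-gluing.
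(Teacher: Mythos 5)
Your proposal is correct and follows essentially the same strategy as the paper: lift the bar simplicial diagram \eqref{simplbar} to a simplicial object $Y_\bullet$ in $\gar\Orb_n^G$ with $Y_k = [M_+/\ga] \amalg [M_0/\ga]^{\amalg k} \amalg [M_-/\ga]$, extend it to an augmented simplicial object with $Y_{-1} = [M/\ga]$, and apply the symmetric monoidal functor $H$ to obtain the cocone that furnishes the arrow out of the geometric realization. The paper's own proof is far terser --- it simply identifies the level-$0$ augmentation $H[M_- \amalg M_+/\ga] \to H[M/\ga]$ as $H$ applied to the (suitably restricted) embedding $\mu_- \amalg \mu_+$ and asserts that this ``induces a map from the geometric realization,'' leaving the higher-coherence data implicit. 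You fill exactly this gap: the augmentation maps $Y_k \to [M/\ga]$ must be compatible with all face and degeneracy maps in an $\infty$-coherent way, and your device of parametrising the shrinking embeddings by a contractible space of ordered subdivisions of $[-1,1]$ makes that coherence explicit. One small caveat: ``pulling back'' a subdivision along the collar-gluing $f$ does not literally produce an embedding of $M_+ \amalg M_0^{\amalg k} \amalg M_-$ into $M$; you need to compose with the self-embedding of $M_\pm$ that reparametrises the collar $M_0 \cong N\times\R$ so that the pieces shrink to the prescribed preimage regions. This is exactly the role played by the fixed embeddings $\mu_-$, $\mu$, $\mu_+$ in the paper's setup, so the two arguments are the same in substance --- yours is simply a careful spelling-out of what the paper leaves as a one-line assertion.
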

\begin{proof}
	The simplicial object \ref{simplbar} has a canonical augmentation 
	\[ H[M_-/\ga] \tensor H[M_+/\ga] \cong H[M_- \amalg M_+/\ga] \rightarrow H[M/\ga])\]
	given by  $H(\mu_- \amalg \mu_+)$.\footnote{To be precise the image under $H$ of the restriction of $\mu_+ \amalg \mu_-$ to $[M_-/\ga] \amalg [M_+/\ga]$.} This induces a map from the geometric realization. 
\end{proof}

\begin{defn} \label{def:excision}
Let $H: \gar\Orb_n^G \rightarrow \catC^{\tensor}$ be a symmetric monoidal functor. 
We say $H$ satisfies \emph{$\tensor$-excision}, if for every collar-gluing of a $(\rho,G)$-framed global quotient $[M/\ga]$ the canonical arrow of Lemma \ref{canarrow} is an equivalence.
\end{defn}

\begin{thm} \label{thm:excision}
	Let $A$ be a $\gar\Disk_n^G$-algebra in a $\tensor$-cocomplete category $\catC$. Then equivariant factorization homology satisfies $\tensor$-excision, i.e. for $[M/\ga]$  a $(\rho,G)$-framed global quotient with a collar-gluing the canonical arrow 
	\[ \int_{[M_+/\ga]} A \; \; \underset{\int_{[M_0/\ga]}A}{\bigotimes} \; \; \int_{[M_-/\ga]} A \quad \longrightarrow \quad \int_{[M/\ga]} A \]
	is an equivalence. 
\end{thm}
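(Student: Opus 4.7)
The plan is to compute both sides of the claimed equivalence as colimits of $A$ over appropriate indexing categories and then exhibit a final functor between them, adapting the Ayala-Francis strategy to the presence of fixed-point strata. By Proposition \ref{aftlem}, the right-hand side can be written as
\[ \int_{[M/\ga]} A \simeq \colim \bigl( (\gar\Disk_n^G)_{/[M/\ga]} \xrightarrow{A} \catC \bigr). \]
Each factor appearing in the bar construction is also a colimit over an analogous overcategory by Proposition \ref{aftlem}, and since $\catC$ is $\tensor$-cocomplete both the tensor product and the geometric realization commute with colimits. Consequently, the two-sided bar construction is presented as a colimit of $A$ over an assembly category $\catA$ whose objects in simplicial degree $k$ are tuples $(D_+ \to [M_+/\ga], D_0^1 \to [M_0/\ga], \ldots, D_0^k \to [M_0/\ga], D_- \to [M_-/\ga])$, with face maps induced by the collar multiplication maps $\mu, \mu_\pm$ and the inclusions $[M_0/\ga] \hookrightarrow [M_\pm/\ga]$.

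Next, I would construct an assembly functor $F \colon \catA \to (\gar\Disk_n^G)_{/[M/\ga]}$ by sending a tuple to the disjoint union $D_+ \amalg D_0^1 \amalg \cdots \amalg D_0^k \amalg D_- \to [M/\ga]$ obtained by post-composing with the collar inclusions. A direct check on simplicial skeleta identifies the canonical arrow of Lemma \ref{canarrow} with the induced map $\colim (A \circ F) \to \colim A$. The theorem therefore reduces to the statement that $F$ is final.

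To verify finality, I would apply Theorem \ref{htype} to replace the mapping spaces of framed embeddings by principal $C_G(I)^{\times k_I}$-bundles over the equivariant configuration spaces $F_{k_I}^I [M/\ga]$. For an object $(E \to [M/\ga])$ of $(\gar\Disk_n^G)_{/[M/\ga]}$ with centers $\{x_\alpha\} \subset M$, the undercategory $F_{/(E \to [M/\ga])}$ then has a classifying space equivalent to the space of ways to perturb and cluster the centers so that each center lies in $M_- \cup M_+$, together with an ordering of the temporary excursions through $M_0$. Contractibility of this space comes from pushing centers off the interior collar $M_0 \cong N \times \R$ via the free $\R$-direction; because $f \colon M \to [-1,1]$ is $\ga$-equivariant with trivial $\ga$-action on $[-1,1]$, the isotopy restricts to a collar flow on every fixed stratum $M^I$ with respect to $f|_{M^I}$ and preserves the principal bundle structure from Corollary \ref{fixedpointbundle}.

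The main obstacle will be precisely this finality verification in the presence of nontrivial fixed-point strata: one must run the center-pushing argument for every $I \leq_\rho \ga$ simultaneously and check that no two centers collide under the flow after taking $\ga$-orbits into account. Theorem \ref{htype} is essential here since it trivialises the data of a framed embedding to equivariant configuration data stratum-by-stratum, so the verification reduces to the stratified $\ga$-equivariant version of the classical contractibility statement for configurations on $[-1,1]$ of Ayala-Francis. Granted the finality of $F$, one concludes $\colim(A\circ F) \simeq \colim A$, which is the desired $\tensor$-excision equivalence.
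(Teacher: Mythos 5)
Your route is genuinely different from the paper's. You propose the Ayala--Francis-style argument: present the two-sided bar construction as a colimit of $A$ over a single ``assembly'' $\infty$-category $\catA$ fibered over $\Delta^{op}$, map it by an assembly functor $F$ into the slice $(\gar\Disk_n^G)_{/[M/\ga]}$, and prove $F$ final. The paper instead follows Francis's bisimplicial Fubini argument: it takes the coend presentation \eqref{coendformula} of $\int_{[M/\ga]}A$, uses Corollary \ref{cor:mapspacedecomp} to resolve every mapping space $\EEmb^G_{\fr}(D,[M/\ga])$ as a geometric realization over the collar, assembles the result into a bisimplicial object $X_{\bullet,*}$, and then exchanges the order of realization using $|\,|X_{\bullet,*}|_*\,|_\bullet \simeq |\,|X_{\bullet,*}|_\bullet\,|_*$ and $\tensor$-cocompleteness. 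Both approaches hinge on exactly the same geometric input --- Theorem \ref{htype} reducing mapping spaces to the bundles $P_G^I(F^I_{k_I}[M/\ga])$, and the decomposition of these along the collar (Proposition \ref{prop:confsspacedecomp}) --- so the two strategies draw on the same technical lemma but package it differently. The Fubini route avoids any cofinality verification outright, which is why the paper chose it; your route makes the geometric content more visible but trades it for an $\infty$-categorical finality argument.

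That finality step is where your proposal is incomplete in a way that matters. First, $\catA$ is an $\infty$-category (its fibers are products of topologically-enriched slice categories), so ``classifying space of the comma category'' must mean the $\infty$-categorical slice $\catA_{(E\to[M/\ga])/}$, and its weak contractibility has to be established with that interpretation; it is not enough to reason about objects and morphisms of a strict category. Second, the identification of $\colim(A\circ F)$ with the two-sided bar construction already requires a Fubini for iterated colimits over $\Delta^{op}$-fibered diagrams, so you do not fully escape the bookkeeping the paper does with bisimplicial objects. Third and most substantively, the center-pushing argument you outline is precisely the content of Proposition \ref{prop:confsspacedecomp}, which the paper proves via a strictification and a hypercover argument; you gesture at the $\ga$-equivariance of the collar flow and the stratum-by-stratum control, but the proof of contractibility of the relevant slice $\infty$-categories would need to reproduce that analysis. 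In particular you must handle, for each $I \leq_\rho \ga$, the configurations on $M^I$ near the preimage of $\{-1,1\}$ in $[-1,1]$, where the flow cannot push centers entirely out of the collar without first pairing/unpairing with the $M_\pm$ pieces --- this is exactly what the simplicial space $S_\bullet$ in the paper's proof of Proposition \ref{prop:confsspacedecomp} models. So the strategy is sound and would likely go through, but the argument as written does not yet close the gap that Theorem \ref{htype} together with Proposition \ref{prop:confsspacedecomp} close in the paper.
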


To prove the theorem we will rewrite the colimit based on a collar-gluing. In order to do that we need to have a good understanding of how the mapping spaces decompose based on a collar-gluing. We will first answer this question, making use of the model for the mapping spaces provided by the configuration spaces (Theorem \ref{htype}). Let $[M/\ga]$ be a $(\rho,G)$-framed global quotient with a collar-gluing, and fix integers $k_I \geq 0$ for $I \leq_\rho \ga$. 
The bar maps $\mu_-, \mu$ and $\mu_+$ induce maps between the configuration spaces
\begin{align*}
F_{k_I}^I[M_+ \amalg M_0/\ga] \rightarrow  F_{k_I}^I[M_+/\ga],\\
F_{k_I}^I[M_0 \amalg M_0/\ga] \rightarrow  F_{k_I}^I[M_0/\ga],\\
F_{k_I}^I[M_0 \amalg M_-/\ga] \rightarrow  F_{k_I}^I[M_-/\ga],
\end{align*}
which allows us to define the following simplicial object 
\begin{align}
\label{confsspace}
\prod_{I \leq_\rho \ga} P^I_{G} F_{k_I}^I[M_+ \amalg M_-/\ga] \; \substack{\leftarrow\\[-1em] \leftarrow} \; \prod_{I \leq_\rho \ga} P^I_G F_{k_I}^I[M_+ \amalg M_0 \amalg M_-/\ga] \; \substack{\leftarrow\\[-1em] \leftarrow \\[-1em] \leftarrow}
\end{align}
in $\Spaces$.
\begin{prop} \label{prop:confsspacedecomp}
	The geometric realization of
	the simplicial object in $\Spaces$ of Equation \eqref{confsspace} is modelled by the space
	\[ \prod_{I \leq_\rho \ga} P_G^I F_{k_I}^I[M/\ga] .\]
\end{prop}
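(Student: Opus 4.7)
\noindent The plan is to construct a canonical augmentation from the geometric realization of the simplicial object to the target, and then reduce—via two simplifications—to a classical bar-construction statement about configuration spaces of a manifold glued along a collar.

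\emph{First reduction}: split over $I$ and pass through principal bundles. Both sides are products indexed by $I \leq_\rho \ga$, and finite products commute with geometric realization in $\Spaces$, so it suffices to prove the statement for each $I$ individually. Next, the framed embeddings $\mu_-, \mu, \mu_+$ restrict to maps between the $C_G(I)^{\times k_I}$-principal bundles supplied by Corollary \ref{fixedpointbundle}, so the simplicial object of \eqref{confsspace} is a simplicial principal $C_G(I)^{\times k_I}$-bundle over the simplicial space of configuration spaces. Its geometric realization is therefore a principal $C_G(I)^{\times k_I}$-bundle over the realization of the base, and the augmentation on total spaces is a weak equivalence iff the augmentation on base spaces is. Thus the problem reduces to establishing
\[ |F^I_{k_I}[M_+ \amalg M_0^{\amalg \bullet} \amalg M_-/\ga]| \simeq F^I_{k_I}[M/\ga]. \]

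\emph{Second reduction}: decompose along labels. For any pair of $\ga$-invariant open submanifolds $X, Y$ of $M$, the condition $gz_i \neq z_j$ for $i\neq j$ is automatic between the $X$- and $Y$-components, giving the canonical decomposition
\[ F^I_k[X \amalg Y/\ga] = \coprod_{s: \{1,\dots,k\} \to \{X, Y\}} F^I_{|s^{-1}(X)|}[X/\ga] \times F^I_{|s^{-1}(Y)|}[Y/\ga]. \]
Iterating, each level of the simplicial object splits as a disjoint union indexed by labelings of the $k_I$ configuration points by $\{+, 0_1, \dots, 0_n, -\}$, and the face maps respect this decomposition by merging adjacent labels. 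For each ordered partition $(a, c)$ with $a + c = k_I$ one obtains, after realization, a contribution from labeled configurations of $a$ points in $M_+$ and $c$ points in $M_-$, where a point lying in the collar $M_0 = M_+ \cap M_-$ can be freely labeled as either $+$ or $-$. This is precisely the standard two-sided bar construction model for the pushout $M_+ \cup_{M_0} M_- = M$, with the configuration points carrying the labels.

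\emph{Main obstacle}: the technical heart is to verify that, for the cover $M = M_+ \cup M_-$ with $M_+ \cap M_- = M_0 \cong N \times \R$, the labeled bar-construction realization obtained above is indeed weakly equivalent to $F^I_{k_I}[M/\ga]$. My plan is to use the $\R$-coordinate of the collar to construct an explicit section of the augmentation (labeling a point of $M_0$ by $+$ or $-$ according to the sign of its $\R$-coordinate near the boundary values $\pm 1$), and then to exhibit a deformation retraction of the fibers that collapses the $\R$-factor of each $M_0$. The delicate point will be compatibility with the combinatorial indexing of labelings—specifically, showing that as points in a configuration cross between the $+$-region and the $-$-region, the identifications supplied by the bar-construction face maps correctly patch the local models into the global configuration space with the correct topology.
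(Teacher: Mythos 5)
There are two concrete gaps. First, the simplicial object in \eqref{confsspace} is only a homotopy-coherent diagram $\Delta^{op}\to\Spaces$: the bar embeddings $\mu_\pm,\mu$ do not satisfy the simplicial identities strictly, only up to isotopy, so there is no literal simplicial space whose geometric realization you can analyze level by level. Both of your reductions and the final bar-construction comparison implicitly treat the diagram as strict. Before that is legitimate you must produce a strict, Reedy-cofibrant replacement; the paper does this by shrinking $M_\pm$ and $M_0$ into nested preimages $M_+^i, M_0^{i,j}, M_-^i$ (\`a la the Moore path space) so that the face maps become literal inclusions, and only then identifies the homotopy colimit with a genuine geometric realization.

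Second, the ``delicate point'' you flag in the main-obstacle step is precisely the unresolved heart of the argument, and the section you propose does not exist as stated: labeling a collar point by $\pm$ according to the sign of its $\R$-coordinate is discontinuous at the dividing locus, and repairing this requires lifting into higher simplicial levels together with simplex coordinates --- exactly the combinatorial patching you say is delicate and leave open. The paper avoids any section or deformation-retraction construction: after strictifying, the non-degenerate components of $S_0$ form an open cover of $F^I_k[M/\ga]$ and $S_n$ consists of their $(n{+}1)$-fold intersections, so $S_\bullet$ is a hypercover, and \cite[Th. 1.3]{duggerisakson} gives the weak equivalence directly. Your first reduction via levelwise principal bundles is a reasonable alternative to the paper's terser claim that the general-$G$ case is verbatim the same as $G=\e$, but it too would need a cited fact (that a levelwise principal $G$-bundle of Reedy-cofibrant simplicial spaces realizes to a principal $G$-bundle compatibly with the augmentations) rather than an assertion.
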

Let us call the simplicial space of Equation \eqref{confsspace} $C_\bullet$. We will compute $\hocolim C_\bullet$ by strictifying the simplicial diagram $C_\bullet$.
We are thankful for Hiro Lee Tanaka's suggestion of strictifying the diagram by using a variation of the Moore path space.
\begin{proof}
	The proof for general $G$ is verbatim the same as the proof for $G=\e$. So to simplify notation we will treat the case $\prod_{I \leq_\rho \ga} F_{k_I}^I[M/\ga]$. 
	Recall finite products commute with homotopy colimits of simplicial spaces. Therefore, it suffices to treat the case $F_k^I[M/\ga]$ for arbitrary $I\leq_\rho \ga$ and $k > 0$. Thus we fix from now on $I$ and some $k \geq 0$, and we introduce some notation. We let $\pi:M \rightarrow [-1,1]$ be the collar-gluing and we set
	\begin{align*}
	M_+^i := \pi^{-1}\left[-1,\frac{i}{k}\right),\quad \quad M_0^{i,j} := \pi^{-1}\left(\frac{i}{k},\frac{i+j}{k}\right), \quad \quad M_-^i := \pi^{-1}\left(1-\frac{i}{k},1\right].
	\end{align*}
	We can then define the following (strict) simplicial space $\tilde{S}_\bullet$:
	\begin{align*} 
	&\coprod_{k_1+k_2=k} F_{k_1}^I[M_{+}^{k_1}/\ga] \times F_{k_2}^I[M_{-}^{k_2}] \; \substack{\leftarrow\\[-1em] \leftarrow} \; \coprod_{k_1+k_2+k_3=k} F_{k_1}^I[M_{+}^{k_1}/\ga] \times F_{k_2}[M_0^{k_1,k_2}/\ga] \times F_{k_3}^I[M_{-}^{k_3}]\\
	&\substack{\leftarrow\\[-1em] \leftarrow \\[-1em] \leftarrow} \; \coprod_{\sum_{i=1}^4 k_i=k} F_{k_1}^I[M_{+}^{k_1}/\ga] \times F_{k_2}[M_0^{k_1,k_2}/\ga] \times F_{k_3}[M_0^{k_1+k_2,k_3}/\ga] \times F_{k_4}^I[M_{-}^{k_4}].
	\end{align*}
	Here the face maps are a variation on bar maps where one combines pieces as follows
	\begin{align*}
	F_i[M_+^i/\ga]\times F_j[M_0^{i,j}/\ga] \rightarrow F_{i+j}[M^{i+j}_+/\ga],\\
	F_j[M_0^{i,j}/\ga]\times F_l[M_0^{i+j,l}/\ga] \rightarrow F_{j+l}[M^{i,j+l}_0/\ga],\\
	F_j[M_0^{i,j}/\ga]\times F_l[M_-^{l}/\ga] \rightarrow F_{j+l}[M^{j+l}_-/\ga],
	\end{align*}
	by mapping to a configuration in the cocatenation of $M_+^i$ and $M_0(i,j)$, and so forth. 
	The degeneracies are standard (identifying a piece with the matching part of the coproduct).
	We need to adapt the simplicial space $\tilde{S}_\bullet$ slightly to account for the following. The configuration spaces $F_k^I$ are ordered configurations so that 
	\[F^I_k[M_1 \amalg M_2/\ga] =  \coprod_{k_1+k_2 = k} ( \coprod_{\binom{k}{k_1,k_2}} F_{k_1}^I[M_1/\ga]\times F_{k_2}^I[M_2/\ga] ).\] 
	We define the simplicial space $S_\bullet$ exactly as $\tilde{S}_\bullet$, but accounting for these multiplicities, e.g. 
	\[S_0 = \coprod_{k_1+k_2=k} ( \coprod_{\binom{k}{k_1,k_2}} F_{k_1}^I[M_{+}^{k_1}/\ga] \times F_{k_2}^I[M_{-}^{k_2}] ).\]
	Note that we have a (homotopy coherent) map $C_\bullet \rightarrow S_\bullet$ obtained by rescaling components. This is a levelwise weak equivalence and therefore we find $\hocolim C_\bullet \simeq \hocolim S_\bullet$. Observe that the levels of $S_i$ are smooth manifolds (and in particular, CW complexes) and the maps embeddings. By definition $S_\bullet$ is then Reedy cofibrant. 
	Therefore, $\hocolim S_\bullet$ can be computed as the geometric realization $|S_\bullet|$ of $S_\bullet$ \cite[Cor. A.2.9.30]{lhtt}. 
	
	Clearly $F_k^I[M/\ga]$ has a cone structure over $S_\bullet$ so we get an induced map $p: |S_\bullet| \rightarrow F_k^I[M/\ga]$. 
	Now observe that the non-degenerate connected components of $S_0$ are a cover of $F^I_k[M/\ga]$.
	Moreover, the non-degenerate connected components of $S_k$ are exactly the intersections of $k$ distinct pieces of the cover $S_0$. 
	Therefore, $S_\bullet$ is the simplest type of example of a hypercover in the sense of \cite{duggerisakson}. 
	In particular, since $S_\bullet$ is a hypercover $F^I_k[M/\ga]$ we find that the induced map $p$ is a weak equivalence
	\[ |S_\bullet | \simeq F^I_k[M/\ga],\]
	by \cite[Th. 1.3]{duggerisakson}.
	This completes the proof.
\end{proof}

\begin{cor} \label{cor:mapspacedecomp}
	Let $[M/\ga]$ be a $(\rho,G)$-framed global quotient with a collar-gluing. The  geometric realization of
	the simplicial object in $\Spaces$
	\begin{align*}
	\EEmb^G_{\fr} \big( \coprod_{I \leq \ga} [\DD_I/I]^{\amalg k_I},[M_+ \amalg M_- /\ga] \big) \; \substack{\leftarrow\\[-1em] \leftarrow} \; \EEmb^G_{\fr} \big( \coprod_{I \leq \ga} [\DD_I/I]^{\amalg k_I},[M_+ \amalg M_0 \amalg M_- /\ga] \big) \; \substack{\leftarrow\\[-1em] \leftarrow \\[-1em] \leftarrow}
	\end{align*}
	is modelled by the space
	\[\EEmb^G_{\fr} \big( \coprod_{I \leq \ga} [\DD_I/I]^{\amalg k_I},[M/\ga] \big).\]
\end{cor}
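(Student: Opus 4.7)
The plan is to reduce the statement to Proposition \ref{prop:confsspacedecomp} by applying Theorem \ref{htype} at every simplicial level. More precisely, for each simplicial degree $p$ the space in question is
\[ \EEmb^G_{\fr}\left(\coprod_{I \leq_\rho \ga}[\DD_I/\ga]^{\amalg k_I},[M_+ \amalg M_0^{\amalg p} \amalg M_-/\ga]\right), \]
and Theorem \ref{htype} gives a natural fibration and weak equivalence from this space onto $\prod_{I \leq_\rho \ga} P^I_G F_{k_I}^I[M_+ \amalg M_0^{\amalg p} \amalg M_-/\ga]$ via the derivative-at-the-centers map $D_0^G$. So first I would check that $D_0^G$ is natural enough to assemble into a map of simplicial spaces from the simplicial object of framed embeddings to the simplicial object \eqref{confsspace}.

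Naturality of $D_0^G$ is essentially automatic: the bar-type face maps on framed embeddings are induced by post-composition with $\mu_-$, $\mu$, and $\mu_+$, and these pre-composed oriented embeddings of intervals extend to framed embeddings whose own derivative-at-the-centers map is the identity on the relevant configurations. Hence the face maps on the framed-embedding side project, under $D_0^G$, to precisely the face maps defining \eqref{confsspace}, and the degeneracies are handled identically. This gives a map of simplicial spaces which is a weak equivalence in each degree.

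Next, I would invoke a standard homotopy colimit argument. Since both simplicial spaces take values in $\Spaces$ and the map is a levelwise weak equivalence, the induced map on homotopy colimits is a weak equivalence. To be safe, one rescales degenerate simplices as in the proof of Proposition \ref{prop:confsspacedecomp} so that both simplicial spaces are Reedy cofibrant and their homotopy colimits are computed by their geometric realizations; naturality of this rescaling in the $D_0^G$ map is straightforward to check.

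Finally, Proposition \ref{prop:confsspacedecomp} identifies the geometric realization of \eqref{confsspace} with $\prod_{I \leq_\rho \ga} P^I_G F_{k_I}^I[M/\ga]$, which by a last application of Theorem \ref{htype} is weakly equivalent to $\EEmb^G_{\fr}(\coprod_I[\DD_I/\ga]^{\amalg k_I},[M/\ga])$. Composing the three weak equivalences yields the claim. The one point requiring care is the compatibility between the derivative-at-the-centers map and the bar-type face operators, but this reduces to the evident fact that the chosen interval embeddings $\mu_\pm, \mu$ fix the marked centers and act trivially on frames there, so the main work is bookkeeping rather than a genuine obstacle.
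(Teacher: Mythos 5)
Your proposal is correct and follows essentially the same route as the paper: apply $D_0^G$ levelwise to get a map of simplicial spaces $E_\bullet \rightarrow C_\bullet$ which is a levelwise weak equivalence, then combine Proposition \ref{prop:confsspacedecomp} with a final application of Theorem \ref{htype}. The only minor imprecision is the phrase ``composing the three weak equivalences'': the weak equivalence $D_0^G\colon E \rightarrow C$ points the wrong way, so one actually needs the two-out-of-three property in the evident commutative square (as the paper does) to conclude that the augmentation $\hocolim E_\bullet \rightarrow E$ is a weak equivalence; also note that the Reedy-cofibrancy discussion is unnecessary here, since a levelwise weak equivalence of simplicial spaces already induces a weak equivalence of homotopy colimits without any cofibrancy assumptions.
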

\begin{proof}
	Let us call the simplicial space of Equation \eqref{confsspace} $C_\bullet$ and the simplicial space above $E_\bullet$. Furthermore, let us denote $\prod_{I \leq_\rho \ga} P_G^I F_{k_I}^I[M/\ga]$ by $C$ and $\EEmb^G_{\fr} \big( \coprod_{I \leq \ga} [\DD_I/I]^{\amalg k_I},[M/\ga] \big)$ by $E$. 
	Recall that we have the weak equivalences 
	\[ D_0^G: \EEmb^G_{\fr} \left(\coprod_{I \leq_\rho \ga} [\DD_I/\ga]^{\amalg k_I},[M/\ga] \right) \longrightarrow \prod_{I \leq_\rho \ga} P^I_{G^{\times k_I}}(F^I_{k_I}[M/\ga] )\]
	of Theorem \ref{htype}.
	Observe that applying $D_0^G$ levelwise defines a map $D_0^G: E_\bullet \rightarrow C_\bullet$. 
	This map is a levelwise weak equivalence and hence induces a weak equivalence between the homotopy colimits. 
	We obtain the following commutative diagram
	\begin{center}
		\begin{tikzcd}
			\hocolim E_\bullet \arrow[r] \arrow[d,"D_0^G"] & E \arrow[d,"D_0^G"] \\
			\hocolim C_\bullet \arrow[r] & C
		\end{tikzcd}
	\end{center}
	where have shown that the vertical maps are a weak equivalences.
	The bottom horizontal map is a weak equivalence by Proposition \ref{prop:confsspacedecomp}. 
	By two-out-of-three it follows that the top horizontal map is also a weak equivalence.
\end{proof}

We can now prove Theorem \ref{thm:excision}.
Our proof is a generalisation and expansion of Francis's proof of excision for factorization homology of manifolds given in \cite[Proposition 3.24]{francis13}.

\begin{proof}[Proof of Theorem \ref{thm:excision}]
	We will make use of the coend formula \eqref{coendformula} for factorization homology. By definition of the derived coend $\int_{[M/\ga]} A$ is then computed as the geometric realization of the simplicial object
	\begin{align} \label{eq:coenddef}
	\coprod_{d \in \catD} \bE_{M}^d \tensor A(d) \; \substack{\leftarrow\\[-1em] \leftarrow} \; \coprod_{d_1,d_2 \in \catD} \bE_{M}^{d_2} \tensor \catD_{d_2}^{d_1} \tensor A(d_1) \; \substack{\leftarrow\\[-1em] \leftarrow \\[-1em] \leftarrow} 
	\end{align}
	where we used short-hand notations $\catD = \gar\Disk^{G}_n$, and $\catD^d_{d_2} = \catD(d_1,d_2)$ and $\bE_{M}^d = \EEmb^G_{\fr}(d,[M/\ga])$. 
	Using Corollary \ref{cor:mapspacedecomp}, we can vertically resolve every mapping space $\bE_{M}^d$ as the geometric realization of a simplicial object.
	As such, we obtain a bisimplicial object $X_{\bullet,*}$. Using the assumption that tensor products commute with geometric realisations, we find
	\[ | \; \;| X_{\bullet, *}|_* \; \;|_\bullet \simeq \int_{[M/\ga]} A.\]
	Now recall that for any bisimplicial object $X_{\bullet,*}$ we have the following canonical equivalences
	\[ | \; \;| X_{\bullet, *}|_* \; \;|_\bullet \quad \simeq \quad | X_{\bullet,*} |_{\mathrm{diagonal}} \quad \simeq \quad | \; \;| X_{\bullet, *}|_\bullet \; \;|_*.\]
	Thus $\int_{[M/\ga]} A$ is also computed by first computing the vertical realisation of $X_{\bullet,*}$ and then computing the horizontal realisation. 
	At level $i$ we are thus computing the realisation of 
	\[ X_{i,0} \; \substack{\leftarrow\\[-1em] \leftarrow} \; X_{i,1} \; \substack{\leftarrow\\[-1em] \leftarrow \\[-1em] \leftarrow} \; X_{i,2}\]
	which for us is the realisation of the simplicial object
	\begin{align*} 
	&X_{i,0} & &\coprod_{d\in \catD } \left(\coprod_{\substack{d_1,\dots,d_i \in \catD,\\d_1 \amalg \dots \amalg d_i = d}} \bE_{M_+}^{d_i} \tensor \bigotimes_{j=i-1}^{2}\bE_{M_0}^{d_j} \tensor \bE_{M_-}^{d_1} \right) \tensor A(d), \\
	&X_{i,1} & &\coprod_{d^1,d^2\in \catD } \left(\coprod_{\substack{d^l_j \in \catD, \\ \amalg_j d^l_j = d^l}} \bE_{M_+}^{d_i^l} \tensor \bigotimes_{j=i-1}^{2}\bE_{M_0}^{d_j^l} \tensor \bE_{M_-}^{d^1} \right) \tensor \catD(d^1,d^2) \tensor A(d^1). 
	\end{align*}
	Now using $A$ is monoidal and how $\catD(d^2,d_1)$ decomposes when $d^2 = \amalg_{j} d^2_j$ and $d^1=\amalg_j d^1_j$ we can rewrite the terms as
	\begin{align*} 
	&\left( \coprod_{d_i\in \catD } \bE_{M_+}^{d_i} \tensor A(d_1) \right)  \tensor \left( \coprod_{d_j \in \catD}\bE_{M_0}^{d_j} \tensor A(d_j) \right)^{\bigotimes_{j=i-1}^{2}} \tensor \left( \coprod_{d_1\in \catD } \bE_{M_-}^{d_1}  \tensor A(d^1)\right), \\
	&\left( \coprod_{d_i^1,d_i^2\in \catD } \bE_{M_+}^{d_i^2} \tensor \catD^{d_i^1}_{d_i^2} \tensor A(d_1) \right)  \tensor \left( \coprod_{d_j \in \catD}\bE_{M_0}^{d_j} \tensor \catD^{d_j^1}_{d_j^2}\tensor A(d_j^1) \right)^{\bigotimes_{j=i-1}^{2}} \tensor \left( \coprod_{d_1\in \catD } \bE_{M_-}^{d_1^1} \tensor \catD^{d_1^1}_{d_1^2} \tensor A(d^1_1)\right), 
	\end{align*}
	
	Thus we recognise the distinct terms of our coend formula computing factorization homology. Using again our assumption that the tensor product commutes with geometric realizations we find that 
	\[ |X_{i,*}|_* \simeq  \int_{[M_+/\ga]} A \tensor \left( \int_{[M_0/\ga]} A \right)^{\tensor i}\tensor \int_{[M_-/\ga]} A\]
	with the face maps being exactly the bar maps. By definition we thus find
	\[ | \; \;| X_{\bullet, *}|_* \; \;|_\bullet \quad \simeq \quad \int_{[M_+/\ga]} A \; \; \bigotimes_{\int_{[M_0/\ga]}A} \; \; \int_{[M_-/\ga]} A.\]
	This completes the proof.
\end{proof}
\subsection{The characterisation of equivariant factorization homology} \label{subsec:eht}

We now prove $\tensor$-excision characterises factorization homology.

\begin{defn} \label{defn:homthy}
	The $\infty$-category of \emph{$\catC$-valued equivariant factorization homology theories} for $(\rho,G)$-framed global quotients is the full $\infty$-subcategory
	\[ \cH (\gar\Orb_n^G, \catC) \subset \Fun^{\tensor}( \gar\Orb_n^G, \catC)  \]
	of those symmetric monoidal functors that 
	\begin{enumerate}
	\item satisfy $\tensor$-excision,
	\item preserve sequential colimits, i.e. for inclusions $M_1 \subset M_2 \subset \dots$ of manifolds with $\cup_i M_i = M$ for a $(\rho,G)$-framed global quotient $[M/\ga]$ then $\underset{i}{\colim} \int_{[M_i/\ga]} A \cong \int_{[M/\ga]} A$. 
	\end{enumerate}
\end{defn}

\begin{rmk}
	Preserving sequential colimits is an unimportant technical restriction only made so that we can do transfinite induction, see Remark \ref{findecomp}.
\end{rmk}

\begin{ex}
Fix some $I\leq_\rho \ga$. The functor $\gar\Orb_n^G \rightarrow \Spaces^{\amalg}$ sending a $[M/\ga]$ to the space $M^I$ is an equivariant factorization homology theory.
\end{ex}

\begin{lem} \label{lem:efhseqcol}
Let $A$ be a $\gar\Disk_n^G$-algebra in $\tensor$-cocomplete category $\catC$. Equivariant factorization homology with coefficients in $A$ preserves sequential colimits.
\end{lem}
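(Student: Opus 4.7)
The plan is to use the coend/colimit formula for equivariant factorization homology (Proposition \ref{aftlem}), reducing the question to whether mapping spaces from disks commute with sequential colimits of open inclusions. Then $\tensor$-cocompleteness of $\catC$ (which gives preservation of filtered colimits by $\tensor$) combined with Fubini for colimits will let us pull the sequential colimit outside the coend.

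First, I would write
\[ \int_{[M/\ga]} A \; \simeq \; \int^{D \in \gar\Disk^G_n} \EEmb^G_{\fr}(D,[M/\ga]) \tensor A(D), \]
using the coend formula \eqref{coendformula}. The main reduction is then to show that for every object $D = \coprod_{I \leq_\rho \ga} [\DD_I/\ga]^{\amalg k_I}$ in $\gar\Disk^G_n$, one has the weak equivalence
\[ \EEmb^G_{\fr}(D,[M/\ga]) \; \simeq \; \underset{i}{\colim} \; \EEmb^G_{\fr}(D,[M_i/\ga]). \]
By Theorem \ref{htype}, both sides are computed (naturally in $i$) by the principal bundles $\prod_I P_G^I(F^I_{k_I}[-/\ga])$, so it suffices to verify the analogous statement for equivariant configuration spaces. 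Any finite tuple of points in $M = \bigcup_i M_i$ lies in some $M_i$ by nestedness and finiteness, giving $F^I_{k_I}[M/\ga] = \bigcup_i F^I_{k_I}[M_i/\ga]$ as an ascending union of open subsets, and the principal $C_G(I)^{\times k_I}$-bundle over the union restricts on each $F^I_{k_I}[M_i/\ga]$ to the corresponding bundle there. Ascending unions of open inclusions of manifolds compute their own homotopy colimit (any map from a compact CW complex factors through some term), giving the desired equivalence.

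Second, I would combine these ingredients. By $\tensor$-cocompleteness, the tensor product in $\catC$ preserves filtered (in particular sequential) colimits in each variable, so
\[ \EEmb^G_{\fr}(D,[M/\ga]) \tensor A(D) \; \simeq \; \underset{i}{\colim} \; \EEmb^G_{\fr}(D,[M_i/\ga]) \tensor A(D). \]
Since coends are themselves colimits and colimits commute with colimits, this gives
\[ \int_{[M/\ga]} A \; \simeq \; \underset{i}{\colim} \int^{D} \EEmb^G_{\fr}(D,[M_i/\ga]) \tensor A(D) \; \simeq \; \underset{i}{\colim} \int_{[M_i/\ga]} A, \]
which is the claim.

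The main obstacle is the rigorous identification of $\EEmb^G_{\fr}(D,[M/\ga])$ with the sequential colimit of $\EEmb^G_{\fr}(D,[M_i/\ga])$; everything else is formal from $\tensor$-cocompleteness and commutation of colimits. The key geometric input is the finiteness of the set of components of $D$ (i.e. only finitely many ``centers'' in the sense of Section \ref{sec:confspaces}), which lets the compactness argument for configuration spaces go through and then propagates via Theorem \ref{htype} to the spaces of framed embeddings.
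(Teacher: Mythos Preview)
Your proof is correct and follows essentially the same route as the paper: reduce via the coend formula and Theorem~\ref{htype} to showing that the bundles $\prod_I P_G^I(F^I_{k_I}[-/\ga])$ commute with the sequential colimit, then use $\tensor$-cocompleteness to pull the filtered colimit through the coend. The only cosmetic difference is that where you argue directly via compactness that ascending unions of open submanifolds compute their own homotopy colimit, the paper instead phrases this as the $M_i$ forming a hypercover of $M$ and invokes \cite[Th.~1.3]{duggerisakson}; the two justifications amount to the same thing in this situation.
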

\begin{proof}
Note that since $M_i \cap M_j = M_{\mathrm{max}(i,j)}$ that $M_i$ is a hypercover of $M$ in the sense of \cite{duggerisakson}. Denote $C_i := \prod_{I \leq_\rho \ga} P^I_{G}(F^I_{k_I}[M_i/\ga] )$, for some $k_I\geq 0$. 
Then $C_i$ is a hypercover of $\prod_{I \leq_\rho \ga} P^I_{G}(F^I_{k_I}[M/\ga])$.
By \cite[Th. 1.3]{duggerisakson} we then have that 
\[ \underset{i}{\hocolim} C_i \simeq  \prod_{I \leq_\rho \ga} P^I_{G}(F^I_{k_I}[M/\ga] )\]
for any $k_I \geq 0$.
It follows by Theorem \ref{htype} that
\[ \underset{i}{\hocolim} \EEmb^G_{\fr}(D,[M_i/\ga]) \simeq \EEmb^G_{\fr}(D,[M/\ga])\]
for any $D \in \gar\Disk_n^G$. 
Using filtered colimits commute with $\tensor$ we compute:
\begin{align*}
\underset{i}{\colim} \int_{[M_i/\ga]} A &\simeq \underset{i}{\colim} \left( \EEmb^G_{\fr}(D,[M_i/\ga]) \bigotimes_{D \in \gar\Disk_n^G} A(D) \right) \\
&\simeq \left(\underset{i}{\hocolim} \EEmb^G_{\fr}(D,[M_i/\ga]) \right) \bigotimes_{D \in \gar\Disk_n^G} A(D)\\
&\simeq \EEmb^G_{\fr}(D,[M/\ga]) \bigotimes_{D \in \gar\Disk_n^G} A(D) \simeq \int_{[M/\ga]} A.
\end{align*}
\end{proof}

Recall that by definition factorization homology defines an adjunction 
\[ \int: \mathrm{Alg}_{\gar\Disk_n^G}(\catC) \rightleftharpoons \Fun^{\tensor}(\gar\Orb_n^G, \catC): (i^\tensor)^*  .\]
\begin{nota}
	For a $\catC$-valued homology theory $H$ for $(\rho,G)$-framed global quotients we denote with $A_H:= H \circ i^\tensor$ the $\gar\Disk_n^G$-algebra in $\catC$ obtained by restriction.
\end{nota}
By Theorem \ref{thm:excision} and Lemma \ref{lem:efhseqcol} the adjunction restricts to an adjunction
\begin{align}
\label{efhadj} \int: \mathrm{Alg}_{\gar\Disk_n^G}(\catC) \rightleftharpoons   \cH(\gar\Orb_n^G, \catC): (i^\tensor)^*.
\end{align}
\begin{thm} \label{thm:classification}
	Let $\catC$ be $\tensor$-cocomplete. The adjunction \ref{efhadj} is an equivalence.
	In particular, any $\catC$-valued homology theory $H$ for $(\rho,G)$-framed global quotients is computed via
	\[ H \cong \int A_H\] 
	and thus is an example of factorization homology. 
\end{thm}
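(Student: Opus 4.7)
The plan is to verify that the unit and counit of the adjunction \eqref{efhadj} are equivalences. For the unit, given a $\gar\Disk_n^G$-algebra $A$, we must show $A \simeq (i^\tensor)^* \int A$ as $\gar\Disk_n^G$-algebras. Since the inclusion $i^\tensor : \gar\Disk_n^G \hookrightarrow \gar\Orb_n^G$ is fully faithful, Corollary \ref{cor:localcomp} (together with the fact that symmetric monoidal left Kan extension along a fully faithful symmetric monoidal functor restricts to the original functor, by \cite[Proposition 4.3.2.17]{lhtt}) gives this equivalence immediately on generators $[\DD_I/\ga]$, and extends to all of $\gar\Disk_n^G$ by monoidality. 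The real content of the theorem is thus the counit: for any $H \in \cH(\gar\Orb_n^G,\catC)$, the canonical natural transformation $\int A_H \Longrightarrow H$ is an equivalence.

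I will prove counit equivalence by verifying $\int_{[M/\ga]} A_H \simeq H[M/\ga]$ for all $[M/\ga] \in \gar\Orb_n^G$, via induction on a filtration of $M$ by local models. First, for a local model $[\DD_I/\ga]$, the equivalence holds by Corollary \ref{cor:localcomp} and the definition $A_H = H \circ i^\tensor$. For finite disjoint unions of local models, the equivalence follows by monoidality of both $\int A_H$ and $H$. Next, using Corollary \ref{cor:cover}, any $(\rho,G)$-framed global quotient admits a countable cover by local models. Fix such a cover $\{U_k\}_{k\in \N}$ by framed embeddings $[\DD_{I_k}/\ga] \hookrightarrow [M/\ga]$, and set $M_k := U_1 \cup \cdots \cup U_k$. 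Then $M = \bigcup_k M_k$ is a sequential colimit of its finite pieces in $\gar\Orb_n^G$; since both $\int A_H$ (by Lemma \ref{lem:efhseqcol}) and $H$ (by Definition \ref{defn:homthy}.(2)) preserve sequential colimits, it suffices to establish the equivalence for each $M_k$.

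The inductive step is where excision enters: given the equivalence on $M_k$, I want to produce a $\ga$-equivariant collar-gluing expressing $M_{k+1} \cong M_k \cup_{N \times \R} U_{k+1}'$ (with $U_{k+1}'$ a slight shrinking of $U_{k+1}$) so that $\tensor$-excision (Theorem \ref{thm:excision}, Definition \ref{def:excision}) computes both $\int_{[M_{k+1}/\ga]} A_H$ and $H[M_{k+1}/\ga]$ as the same two-sided bar construction in the pieces $[M_k/\ga]$, $[U_{k+1}'/\ga]$ and $[N\times\R/\ga]$. Functoriality of the two-sided bar construction in the $E_1$-algebra $H[N\times \R/\ga]$ and its modules (Lemma \ref{lem:e1str} and Lemma \ref{canarrow}) guarantees that the canonical transformation $\int A_H \Rightarrow H$ commutes with the excision identifications. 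By the induction hypothesis applied to $M_k$, $U_{k+1}'$, and $N \times \R$ (which are themselves built from fewer local models, or are disjoint unions of local models), the equivalence on each of the three pieces of the collar-gluing promotes to an equivalence on $M_{k+1}$.

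The main obstacle is the geometric step of producing an equivariant collar-gluing at each stage of the induction: one must construct a $\ga$-invariant smooth function $f: M_{k+1} \to [-1,1]$ realising $M_{k+1}$ as $M_k \cup_{N \times \R} U_{k+1}'$. The tool is a $\ga$-invariant partition of unity subordinate to $\{M_k, U_{k+1}\}$ (available by averaging a standard partition of unity, since $\ga$ is finite), followed by mollification to ensure $f$ is a submersion over $(-1,1)$. Care is needed so that the slice $N := f^{-1}(0)$ is transverse to each singular stratum $M^I$ and so that the $(\rho,G)$-structure restricts compatibly; this reduces to the standard equivariant transversality theorem for finite group actions, together with the observation that a $(\rho,G)$-structure on $M_{k+1}$ restricts to $(\rho,G)$-structures on any $\ga$-invariant open subset and on a tubular neighbourhood of $N$ of the form $N \times \R$. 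Once the collar-gluing is in place, the inductive step and hence the theorem follow.
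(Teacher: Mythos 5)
Your overall strategy is the same as the paper's (unit by full faithfulness of $i^\tensor$; counit by an induction that reduces to local models via $\tensor$-excision), but your choice of filtration does not close the induction, and this is a genuine gap, not a cosmetic one.

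The paper filters $[M/\ga]$ by Wasserman's equivariant handle bundle decomposition (Corollary \ref{handlebundledecomp}), not by an arbitrary cover by local models. This matters for two reasons. First, a handle attachment comes with a built-in collar-gluing: the attaching region is of the form $[S^{n-q}\times\R^q/I]$, where the $\R$-direction comes from the radial coordinate of an annulus $\mathbf{S}_\ep(V)$. By contrast, if you set $M_{k+1} = M_k\cup U_{k+1}$ for an arbitrary local-model cover $\{U_k\}$, the overlap $M_k\cap U_{k+1}$ has no reason to be $\ga$-equivariantly of the form $N\times\R$; Definition \ref{def:excision} requires $f^{-1}(-1,1)\to(-1,1)$ to be a manifold bundle, and an equivariant partition of unity followed by mollification only produces a submersion, not a trivial bundle over an interval. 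Producing a genuine collar would require controlling the isotopy type of the overlap, which is exactly what the handle decomposition supplies and what an arbitrary cover does not.

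Second, and more seriously, even if you could produce a collar-gluing $M_{k+1}\cong M_k\cup_{N\times\R}U_{k+1}'$, the two-sided bar construction also involves the collar $[N\times\R/\ga]$, and you must already know the counit is an equivalence there. You assert $N\times\R$ is ``built from fewer local models'' or is a disjoint union of local models, but there is no such control: $N$ is an open $\ga$-submanifold of $M_{k+1}$ of dimension $n-1$ crossed with $\R$, and in general it is not simpler than $M_{k+1}$ in any sense that feeds a decreasing induction. The paper resolves this with a \emph{second} induction, on the handle index $q$ (Claim 2 of the paper's proof): when attaching an index-$q$ handle the collar is $[S^{n-q}\times\R^q/I]$, and $S^{n-q}$ admits a handle decomposition with handles of index strictly greater than $q-1$, so the inner inductive hypothesis applies. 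Your proposal has no analogue of this descending-index argument, so the recursion never terminates on the overlap regions. To repair the proof you would essentially need to reintroduce Wasserman's decomposition, at which point you recover the paper's argument.
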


Before proving the theorem, we will discuss some corollaries.

\begin{cor}
	Let $A$ be a $\gar\Disk_n^G$-algebra in $\catC$, $G: \catC \rightarrow \catD$ be a symmetric monoidal functor preserving geometric realizations and sequential colimits, and suppose $\catC$ and $\catD$ are $\tensor$-cocomplete.
	Then we have 
	\[ G \circ \int A = \int G\circ A\]
	where $G\circ A$ is the $\gar\Disk_n^G$-algebra in $\catD$ defined by composing functors.
\end{cor}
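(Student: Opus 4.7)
The plan is to invoke the characterisation of equivariant factorization homology (Theorem \ref{thm:classification}) applied to the functor $G \circ \int A$. Concretely, I will show that $G \circ \int A$ lies in $\cH(\gar\Orb_n^G, \catD)$ and that its restriction to $\gar\Disk_n^G$ is naturally equivalent to $G \circ A$; then uniqueness of homology theories with prescribed coefficient system forces $G \circ \int A \simeq \int (G \circ A)$.

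First I would observe that $G \circ \int A$ is a symmetric monoidal functor $\gar\Orb_n^G \to \catD$, being a composite of such. To verify $\tensor$-excision, fix a $\ga$-equivariant collar-gluing $M \cong M_- \cup_{N\times \R} M_+$ of a $(\rho,G)$-framed global quotient. By Theorem \ref{thm:excision}, the canonical map
\[
\int_{[M_+/\ga]} A \; \underset{\int_{[N \times \R/\ga]} A}{\bigotimes} \; \int_{[M_-/\ga]} A \;\longrightarrow\; \int_{[M/\ga]} A
\]
is an equivalence. The left-hand side is, by Definition \ref{barconstr}, the geometric realization of a simplicial bar construction whose terms are tensor products of the objects $\int_{[M_i/\ga]}A$. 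Since $G$ is symmetric monoidal and preserves geometric realizations, applying $G$ yields an equivalence
\[
G\!\int_{[M_+/\ga]} A \; \underset{G\!\int_{[N \times \R/\ga]} A}{\bigotimes} \; G\!\int_{[M_-/\ga]} A \;\longrightarrow\; G\!\int_{[M/\ga]} A,
\]
which is precisely $\tensor$-excision for $G \circ \int A$. Preservation of sequential colimits follows immediately from Lemma \ref{lem:efhseqcol} combined with the hypothesis that $G$ preserves sequential colimits. Thus $G \circ \int A \in \cH(\gar\Orb_n^G, \catD)$.

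Next I compute the restriction $A_{G\circ \int A} := (G \circ \int A) \circ i^\tensor$. By Corollary \ref{cor:localcomp}, for every local model $[\DD_I/\ga]$ we have $\int_{[\DD_I/\ga]} A \simeq A([\DD_I/\ga])$, and this equivalence is natural in framed embeddings between disjoint unions of local models. Since $G$ is a symmetric monoidal functor (so in particular preserves the $\amalg$ structure on $\gar\Disk_n^G$), this gives a natural equivalence of $\gar\Disk_n^G$-algebras $A_{G \circ \int A} \simeq G \circ A$. Applying Theorem \ref{thm:classification} to the homology theory $G \circ \int A$ now yields
\[
G \circ \int A \;\simeq\; \int A_{G \circ \int A} \;\simeq\; \int (G \circ A),
\]
as desired.

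The only genuinely delicate point is the compatibility of $G$ with the two-sided bar construction used in defining $\tensor$-excision: one must know that $G$ commutes with the geometric realization of the simplicial object \eqref{simplbar}, which is guaranteed by our standing hypothesis. Everything else reduces to routine unpacking of the characterisation theorem; no further analysis of the colimit formula \eqref{fhcolim} is needed once Theorem \ref{thm:classification} is in hand.
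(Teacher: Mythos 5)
Your proof is correct and follows essentially the same route as the paper: observe that $G \circ \int A$ is a homology theory in $\cH(\gar\Orb_n^G,\catD)$ (using the hypotheses on $G$ to preserve $\tensor$-excision and sequential colimits), restrict to $\gar\Disk_n^G$ via Corollary \ref{cor:localcomp}, and invoke Theorem \ref{thm:classification}. The paper states this in a single sentence; your write-up just unpacks the same observation in detail.
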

\begin{proof}
	Observe that $G \circ \int A$ by our assumption of $G$ defines a homology theory that restricts as $G \circ A$ on $\gar\Disk_n^G$ by Corollary \ref{cor:localcomp}.
\end{proof}

Consider a $(\rho,H)$-framed manifold $[M/\ga]$ where $H\subset G$ and $\rho: \ga \rightarrow N_{G}(\ga) \cap N_{H}(\ga)$. Then $[M/\ga]$ is also canonically a $(\rho,G)$-framed manifold, and framed embeddings for the $(\rho,H)$-structure are examples of framed embeddings for the $(\rho,G)$-structure. 
Thus we have subcategories $\gar\Orb_n^H \subset \gar\Orb_n^G$, and $\gar\Disk_n^H \subset \gar\Disk_n^G$.
\begin{nota} 
	We will denote with $\Ind_H^G: \gar\Orb_n^H \rightarrow \gar\Orb_n^G$ the symmetric monoidal inclusion. 
\end{nota}

The notation for factorization homology is somewhat ambiguous as the structure group $G$ is suppressed. 
	Corollary \ref{cor:unamb} removes this ambiguity. 
	For example, for a $\rho$-framed global quotient it is irrelevant whether one computes its factorization homology as a $\rho$-oriented global quotient, or as a $\rho$-framed global quotient.
\begin{cor} \label{cor:unamb}
	Let $[M/\ga] \in \gar\Orb_n^H$, $\rho: \ga \rightarrow N_{G}(\ga) \cap N_{H}(\ga)$ and $A$ be an $\gar\Disk_n^G$-algebra in $\catC$.
	Then 
	\[ \int_{\Ind_H^G [M/\ga]} A  \cong \int_{[M/\ga]} A \circ \Ind_H^G \] 
\end{cor}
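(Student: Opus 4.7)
The plan is to apply the characterisation theorem \ref{thm:classification}. Let $i_H^\tensor : \gar\Disk_n^H \hookrightarrow \gar\Orb_n^H$ and $i_G^\tensor: \gar\Disk_n^G \hookrightarrow \gar\Orb_n^G$ denote the inclusions. Define
\[ F := \left(\int_{(-)} A\right) \circ \Ind_H^G : \gar\Orb_n^H \longrightarrow \catC. \]
I will verify that $F$ is a $\catC$-valued equivariant factorization homology theory for $(\rho,H)$-framed global quotients in the sense of Definition \ref{defn:homthy}, then identify its restriction to $\gar\Disk_n^H$ as $A \circ \Ind_H^G$. Theorem \ref{thm:classification} will then yield $F \cong \int(A \circ \Ind_H^G)$, which is exactly the claim.

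Symmetric monoidality of $F$ is immediate since both $\Ind_H^G$ and $\int A$ are symmetric monoidal. For $\tensor$-excision, observe that a collar-gluing $M \to [-1,1]$ depends only on the underlying $\ga$-manifold, so it is simultaneously a collar-gluing of $[M/\ga]$ and of $\Ind_H^G[M/\ga]$; moreover $\Ind_H^G$ commutes with the open restrictions defining $[M_\pm/\ga]$ and $[M_0/\ga]$ and with their inclusions into $[M/\ga]$. Hence Theorem \ref{thm:excision} applied to $\int A$ on $\gar\Orb_n^G$ yields the analogous equivalence for $F$. Similarly, sequential colimits $M = \bigcup_i M_i$ are preserved by $\Ind_H^G$ (they are also determined on underlying manifolds), so Lemma \ref{lem:efhseqcol} for $\int A$ implies $F$ preserves sequential colimits. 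Thus $F \in \cH(\gar\Orb_n^H, \catC)$.

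It remains to compute $A_F := F \circ i_H^\tensor$. By Lemma \ref{lem:indbundle}, induction sends a local $(\rho,H)$-model $[\DD_I/\ga]$ to the corresponding local $(\rho,G)$-model, so $\Ind_H^G$ restricts to a symmetric monoidal functor $\gar\Disk_n^H \to \gar\Disk_n^G$ making the square
\begin{center}
\begin{tikzcd}
\gar\Disk_n^H \arrow[r,"\Ind_H^G"] \arrow[d, "i_H^\tensor"'] & \gar\Disk_n^G \arrow[d, "i_G^\tensor"] \\
\gar\Orb_n^H \arrow[r, "\Ind_H^G"] & \gar\Orb_n^G
\end{tikzcd}
\end{center}
commute. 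Combining this with Corollary \ref{cor:localcomp} gives
\[ A_F = \left(\int A\right) \circ i_G^\tensor \circ \Ind_H^G = A \circ \Ind_H^G, \]
and then Theorem \ref{thm:classification} produces the equivalence $F \cong \int A_F$.

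The main point requiring care is the $\tensor$-excision step: one must ensure that the two-sided bar construction defining $\int_{[M_+/\ga]} A \otimes_{\int_{[M_0/\ga]} A} \int_{[M_-/\ga]} A$ is insensitive to the passage from the $H$-structure to the induced $G$-structure. This reduces to the observation that the $E_1$-algebra structure of Lemma \ref{lem:e1str} on $\int_{[M_0/\ga]} A$ is functorial under $\Ind_H^G$, since the bar maps $\mu_\pm, \mu$ are built from collar identifications that involve no additional tangential data. Apart from this verification, every step is formal.
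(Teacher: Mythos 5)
Your proof is correct and follows the same strategy as the paper's: exhibit $\int_{\Ind_H^G(-)} A$ as an object of $\cH(\gar\Orb_n^H,\catC)$, compute its restriction to $\gar\Disk_n^H$ as $A\circ\Ind_H^G$, and invoke Theorem \ref{thm:classification}. The paper's proof is a one-liner asserting exactly these facts; you have just spelled out the verification (symmetric monoidality, that $\Ind_H^G$ carries collar-gluings and sequential covers to collar-gluings and sequential covers, and the compatibility of the bar construction), all of which is correct.
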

\begin{proof}
	Both define $\catC$-valued equivariant factorization homology theories of $(\rho,H)$-framed manifolds that restrict to $\gar\Disk_n^H$ as $A\circ \Ind_H^G$.
\end{proof}

In special cases, equivariant factorization homology can be computed using factorization homology of manifolds.
\begin{nota} \label{nota:forgetequivariance}
	We denote with $A[\DD_\e/\ga]$ the $\Disk_n^G$-algebra associated to an $\ga^\triv\Disk_n^G$-algebra $A$.
\end{nota}
\begin{cor} \label{cor:efhreduces}
	Let $[M/\ga] \in \ga\Quot_n^G$, $\catC^{\tensor}$ be $\tensor$-cocomplete, $A \in \mathrm{Alg}_{\ga\Disk_n^G}(\catC)$.
	Then
	\begin{align} 
	\int_{[M/\ga]} A \simeq \int_{M/\ga} A[\DD_\e/\ga]
	\end{align}
	if either $[M/\ga]$ is a $G$-framed trivial $\ga$-quotient or if $A$ is trivially $\ga$-equivariant.
\end{cor}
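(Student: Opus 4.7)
The plan is to apply the characterisation theorem (Theorem \ref{thm:classification}) in both cases, reducing the claimed equivalence to a comparison on disks. In the first case I will do this on $\ga\Disk_n^G$; in the second, where only trivial $\ga$-quotients are involved, I will drop to $\Disk_n^G$ inside $\Mfld_n^G$ and apply the non-equivariant specialisation ($\ga = \e$) of Theorem \ref{thm:classification}.

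For the case when $A$ is trivially $\ga$-equivariant, write $A \simeq {/\ga^*} B$ for some $\Disk_n^G$-algebra $B$; then $A[\DD_\e/\ga] = \ga\times^* A \simeq B$, since $/\ga \circ (\ga\times) \simeq \id_{\Mfld_n^G}$ by Proposition \ref{prop:timesandquotient}. I will introduce the auxiliary functor
\[ F := (\textstyle\int B) \circ /\ga \,:\, \ga\Quot_n^G \rightarrow \catC \]
and verify that $F$ lies in $\cH(\ga\Quot_n^G, \catC)$ (Definition \ref{defn:homthy}): symmetric monoidality is automatic as a composite of symmetric monoidal functors; $\tensor$-excision follows because $/\ga$ sends a $\ga$-equivariant collar-gluing $M \cong M_- \cup_{N \times \R} M_+$ to a collar-gluing of $M/\ga$ along $(N\times\R)/\ga$ and then Theorem \ref{thm:excision} applies to $\int B$; and sequential colimits are preserved likewise via Lemma \ref{lem:efhseqcol}. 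I will then identify $F|_{\ga\Disk_n^G}$ with $A$: on $[\DD_\e/\ga]$ both give $B(\R^n)$ by Corollary \ref{cor:localcomp}, and on a $\ga$-equivariant framed self-embedding $f$ of $\DD_\e$ both produce $B(f/\ga)$ by the very definition of $/\ga^*$. Theorem \ref{thm:classification} then gives $F \simeq \int A$, which at each $[M/\ga]$ is exactly the desired equivalence.

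For the case when $[M/\ga]$ is a trivial $\ga$-quotient, I will write $[M/\ga] \simeq [\ga \times N/\ga]$ for some $N \in \Mfld_n^G$ and introduce
\[ H_1 := (\textstyle\int A) \circ (\ga \times), \qquad H_2 := \textstyle\int \ga\times^* A, \]
both functors $\Mfld_n^G \rightarrow \catC$. Then $H_2$ lies in $\cH(\Mfld_n^G, \catC)$ by construction, and for $H_1$ I will use that $\ga \times$ preserves disjoint unions, collar-gluings, and sequential colimits to transfer $\tensor$-excision and sequential-colimit preservation from $\int A$ to $H_1$. Restricted to $\Disk_n^G$, both functors yield $\ga\times^* A = A[\DD_\e/\ga]$: on objects, $H_1(\R^n) = \int_{[\DD_\e/\ga]} A \simeq A([\DD_\e/\ga]) = (\ga\times^* A)(\R^n)$ by Corollary \ref{cor:localcomp}; on morphisms both take a framed embedding $f: \R^n \rightarrow \R^n$ to $A(\ga \times f)$. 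Theorem \ref{thm:classification} applied with $\ga = \e$ then yields $H_1 \simeq H_2$, and evaluation at $N$ gives the statement.

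The only step that is not purely mechanical is the verification that $F$ and $H_1$ inherit $\tensor$-excision. The crucial point is that the trivial-representation convention built into $\ga\Quot_n^G$ (Remark \ref{rmk:trivrep}) forces $\ga$ to act freely, so that $/\ga$ and $\ga \times$ interact compatibly with collar-gluings in both directions: a $\ga$-equivariant collar-gluing of $M$ descends to a collar-gluing of $M/\ga$, and a collar-gluing of $N$ pulls back to a $\ga$-equivariant collar-gluing of $\ga \times N$. Once this compatibility is noted, the rest of the argument is a direct application of the characterisation theorem.
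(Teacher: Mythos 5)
Your proof is correct and follows essentially the same strategy as the paper's: in each case, compare two symmetric monoidal functors, verify both are equivariant factorization homology theories (checking $\tensor$-excision by transporting collar-gluings along $/\ga$ respectively $\ga \times$, using freeness of the $\ga$-action), show both restrict to the same disk algebra, and conclude via the characterisation theorem. Your write-up is somewhat more explicit about the intermediate functors and the compatibility of $/\ga$ and $\ga\times$ with collar-gluings, but the argument is the one in the paper.
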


\begin{proof}
	In both cases we will prove the result by showing two functors are both equivariant factorization homology theories restricting to the same disk algebra. That all functors involved preserve sequentials colimits is clear. 
	Now let us first treat the case of trivial global quotients.
	A collar-gluing of the manifold $M$ induces a $\ga$-invariant collar-gluing of $[\ga \times M/\ga]$ so that $\tensor$-excision implies that $\int_{\ga \times -} A$ is a $\catC$-valued factorization homology theory of $G$-framed manifolds. 
	The functor $\int_{\bullet} A[\DD_\e/\ga]$ also defines a $\catC$-valued factorization homology theory of $G$-framed manifolds. 
	Moreover, both restrict to $\Disk_n^G$ as $A[\DD_\e/\e]$ by \ref{cor:localcomp}. 
	Therefore they must agree. 
	
	Next we treat the case of $A$ being trivially $\ga$-equivariant. 
	We have $\int_{-} A \in \cH(\ga\Quot_n^G,\catC)$. 
	Since collar-gluings of global quotients are $\ga$-invariant, a collar-gluing of a free quotient $[M/\ga]$ induces a collar gluing of the quotient manifold $M/\ga$.
	Therefore, $\tensor$-excision implies that $\int_{-/\ga} A[\DD_\e/\ga]$ is also a $\catC$-valued equivariant homology theory of $G$-framed free quotients. 
	By assumption $A = \ga \times A[\DD_\e/\ga]$, so that both functors restrict to $\ga^\triv\Disk_n^G$ as $A$ by \ref{cor:localcomp}. 
	We conclude they must agree. 
\end{proof}

\begin{rmk}
	This is the analogue of Proposition \ref{prop:bcreduces} for Bredon cohomology.
\end{rmk}

We now show equivariant factorization homology satisfies a Fubini formula \eqref{fubini}.
To do this we extend the functor $\times \ga$ of Proposition \ref{prop:timesandquotient} as follows.
Given global quotients $[M/\ga] \in \ga^{\rho}\Orb_{m}^{G}$ and $[N/J] \in J^{\chi}\Orb_{n}^{H}$ we can canonically associate a global quotient $[M\times N/\ga \times J] \in \ga \times J^{\rho \oplus \chi}\Orb_{m+n}^{G \times H}$. 
This defines a functor
\[ \ga^{\rho}\Orb_{m}^{G} \times J^{\chi}\Orb_{n}^{H} \longrightarrow \ga \times J^{\rho \oplus \chi}\Orb_{m+n}^{G \times H}.\] 
We recover the functor $\times \ga$ in the case $J=\e, m=0$.
\begin{cor} \label{cor:fubini}
	Consider a global quotient $[M\times N/\ga \times J] \in \ga \times J^{\rho \oplus \chi}\Orb_{m+n}^{G \times H}$ and a $\ga \times J^{\rho \oplus \chi}\Disk_{m+n}^{G \times H}$-algebra $A$ in $\catC$. Then
	\begin{align} \label{fubini}
	\int_{[M/\ga]} \int_{[N/J]} A \quad \simeq \quad \int_{[M \times N/\ga \times J]} A \quad \simeq \quad \int_{[N/J]} \int_{[M/\ga]} A.
	\end{align}
\end{cor}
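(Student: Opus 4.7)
The plan is a double application of Theorem \ref{thm:classification}. Define
\[ F: \gar\Orb_m^G \rightarrow \catC, \qquad F([M/\ga]) := \int_{[M \times N / \ga \times J]} A, \]
where functoriality is induced by the assignment $f \mapsto f \times \mathrm{id}_{[N/J]}$ sending $(\rho,G)$-framed embeddings to $(\rho \oplus \chi, G \times H)$-framed embeddings. I would first verify that $F$ is a $\catC$-valued equivariant factorization homology theory on $\gar\Orb_m^G$ in the sense of Definition \ref{defn:homthy}, then identify its restriction to $\gar\Disk_m^G$ with $\int_{[N/J]} A$, and conclude by the uniqueness half of Theorem \ref{thm:classification}.

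Symmetric monoidality of $F$ is immediate from $(M_1 \amalg M_2) \times N = (M_1 \times N) \amalg (M_2 \times N)$ and the monoidality of $\int A$. Preservation of sequential colimits follows from Lemma \ref{lem:efhseqcol} and $(\bigcup_i M_i) \times N = \bigcup_i (M_i \times N)$. The central step is $\tensor$-excision: a $\ga$-equivariant collar-gluing $\pi: M \rightarrow [-1,1]$ pulls back along the projection $M \times N \rightarrow M$ to a $\ga \times J$-equivariant collar-gluing of $M \times N$ (with $J$ acting trivially on $[-1,1]$), and with respect to this collar-gluing $(M \times N)_{\pm} = M_{\pm} \times N$ and $(M \times N)_0 = M_0 \times N$. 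Applying Theorem \ref{thm:excision} on the $(m+n)$-dimensional side then yields the required relative tensor product decomposition of $F([M/\ga])$.

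Theorem \ref{thm:classification} consequently gives $F \simeq \int_{-} B$ where $B := F|_{\gar\Disk_m^G}$; on objects, Corollary \ref{cor:localcomp} identifies $B([\DD_I/\ga]) \simeq \int_{[\DD_I \times N/\ga \times J]} A$. To recognise $B$ as $\int_{[N/J]} A$, I would repeat the previous argument with the roles of the two factors swapped: for each fixed $[\DD_I/\ga] \in \gar\Disk_m^G$, the functor $[N'/J] \mapsto \int_{[\DD_I \times N'/\ga \times J]} A$ is a $\catC$-valued equivariant factorization homology theory on $J^\chi \Orb_n^H$ (by exactly the same three substeps as above), whose restriction to $J^\chi \Disk_n^H$ is the disk algebra $[V/J] \mapsto A([\DD_I \times V / \ga \times J])$. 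Theorem \ref{thm:classification} then identifies $B([\DD_I/\ga])$ with $\int_{[N/J]}$ of that disk algebra, which by definition is the value of $\int_{[N/J]} A$ at $[\DD_I/\ga]$. This establishes the first equivalence; the second follows by symmetry, exchanging $(M,\ga,\rho,G)$ with $(N,J,\chi,H)$.

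The main obstacle is notational rather than conceptual: one must make precise in what sense $\int_{[N/J]} A$ defines a $\gar\Disk_m^G$-algebra in $\catC$, and verify that a product $\DD_I \times \DD_K$ is a local model in $\ga \times J^{\rho \oplus \chi}\Disk_{m+n}^{G \times H}$ (it equals $\DD_{I \times K}$ for the subgroup $I \times K \leq_{\rho \oplus \chi} \ga \times J$), so that the two disk-algebra restrictions are mutually compatible. Once these identifications are set up, the argument proceeds entirely by formally invoking Theorems \ref{thm:excision} and \ref{thm:classification}.
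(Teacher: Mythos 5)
Your proposal is correct and follows essentially the same route as the paper: define $F := \int_{-\times[N/J]} A$, verify via the product collar-gluing that $F$ is an equivariant factorization homology theory, and conclude by the uniqueness half of Theorem \ref{thm:classification}. The only material difference is that you spend an extra paragraph (a second, pointwise application of Theorem \ref{thm:classification} on the $J$-side) to justify that $F|_{\gar\Disk_m^G}$ really is the $\gar\Disk_m^G$-algebra $\int_{[N/J]} A$, whereas the paper simply \emph{defines} $\int_{[N/J]} A$ to be the disk algebra $[\DD_I/\ga] \mapsto \int_{[\DD_I\times N/\ga\times J]}A$ and needs no such reconciliation; your version is a bit more verbose but also pins down precisely what the iterated-integral notation means, which the paper leaves implicit. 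Your observation that $\DD_I\times\DD_K = \DD_{I\times K}$ and that all isotropy groups in $M\times N$ are products $I\times K$ is the correct justification for the disk-algebra identification, and is consistent with the paper.
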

\begin{proof}
	Let us only show the first equality holds, the second computation being completely analogous. 
	Fixing $[N/J]$ we have the $\ga^{\rho}\mathrm{Disk}_{n}$-algebra $\int_{[N/J]} A$ defined via
	\[ [\DD_I/\ga] \mapsto \int_{[\DD_I \times N/\ga \times J]} A.\]
	A collar-gluing of $[M/\ga]$ induces a collar-gluing of $[M \times N/\ga \times J]$ so that $\tensor$-excision implies that $\int_\bullet \int_{[N/J]} A$ is a $\catC$-valued factorization homology theory of $(\ga,G)$-framed global quotients (preserving sequential colimits is clear).
	On the other hand, the functor
	\begin{align*} 
	&\int_{- \times [N/J]} A: \ga^{\rho}\mathrm{Orb}_{n} \rightarrow \catC,\\
	& [M/\ga] \mapsto \int_{[M\times N/\ga \times J]} A,
	\end{align*}
	clearly also defines a $\catC$-valued factorization homology theory of $(\ga,G)$-framed global quotients.
	Moreover, both restrict to the disk algebra $\int_{[N/J]} A$. 
	Therefore, they must agree.
\end{proof}

We will now prove Theorem \ref{thm:classification} by making use of the so-called equivariant handle bundle decompositions of \cite{wasserman69}.
\begin{nota}
	For $x \in M$ a global quotient denote by $\ga_m := \{ g \in \ga : gx = x\}$ the isotropy group. For orthogonal representations $V$ of some group denote $B(V) := \{ v\in V : |v| \leq 1\}$ and $S(V):= \{ v\in V : |v| = 1\}$.
\end{nota}
\begin{defn} \cite{wasserman69} Let $\ga$ act on $n$-dimensional manifold $M$. For $x\in X$ let $V_x$ and $W_x$ be orthogonal representations of $\ga_x$. 
	\begin{enumerate}
		\item The bundle $B(V) \oplus B(W) := (B(V_x) \oplus B(W_x)) \times_\ga \ga_x \rightarrow \gamma x$ over the orbit $\ga x$ of $x$ is called a \emph{handle bundle of index $\dim(W_x)$ and type $\ga_x$}. 
		\item Suppose $Z$ is a $\ga$-invariant submanifold of $M$ with boundary, an equivariant embedding $\phi: S(V)\oplus D(W) \rightarrow \partial Z$  so that $M = Z \cup_{S(V)\oplus B(W)} B(V)\oplus B(W)$. Then we say that \emph{$M$ is obtained from $Z$ by attaching a handle bundle of index $\dim(W)$}. 
	\end{enumerate}
\end{defn}

\begin{thm} \cite[Corollary 4.11]{wasserman69} \label{wasserman}
	Let $M$ be an $n$-dimensional compact manifold with a $\ga$-action. Then $M$ admits a finite handle bundle decomposition
	\[ M \cong  B(V_0)\oplus B(W_0) \cup_{ S(V_1)\oplus B(W_1)} B(V_1)\oplus B(W_1)  \dots \cup_{S(V_N)\oplus B(W_N) } B(V_N)\oplus B(W_N)  \]
	for handle bundles $B(V_i)\oplus B(W_i)$. 
\end{thm}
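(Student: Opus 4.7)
The plan is to prove this by equivariant Morse theory on the compact $\ga$-manifold $M$. First, I would equip $M$ with a $\ga$-invariant Riemannian metric by averaging an arbitrary Riemannian metric over the finite group $\ga$. This makes $\ga$ act by isometries, so the exponential map is $\ga$-equivariant and the gradient flow of any $\ga$-invariant smooth function is also $\ga$-equivariant; these are the two crucial features that will allow all geometric constructions below to stay within the $\ga$-equivariant category.

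The central technical step is the existence of an \emph{equivariant Morse function} $f: M \rightarrow \R$, meaning a $\ga$-invariant smooth function whose critical set is a disjoint union of finitely many $\ga$-orbits and whose Hessian is non-degenerate in directions normal to each critical orbit. Existence follows from an equivariant transversality argument: one stratifies $M$ by orbit type $M_{(H)} := \{ x \in M : \ga_x \text{ is conjugate to } H\}$, observes that the restriction of a $\ga$-invariant function to such a stratum factors through the smooth quotient manifold $M_{(H)}/\ga$, and then perturbs within the Fréchet space of $\ga$-invariant smooth functions to achieve transverse non-degeneracy on every orbit-type stratum simultaneously. Compactness of $M$ forces the resulting critical set to consist of only finitely many orbits.

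With such an $f$ in hand, label the distinct critical values $c_0 < c_1 < \dots < c_N$ and set $M_a := f^{-1}(-\infty, a]$. For $a < c_0$ we have $M_a = \emptyset$, and we build $M$ by tracking how $M_a$ changes as $a$ crosses each $c_i$. At the critical orbit $\ga x_i \subset f^{-1}(c_i)$, an equivariant tubular neighborhood theorem gives a $\ga$-equivariant identification of a neighborhood with the normal bundle of $\ga x_i$; on the fiber over $x_i$ the Hessian of $f$ splits $\ga_{x_i}$-equivariantly into a positive eigenspace $V_i$ and a negative eigenspace $W_i$. A standard rearrangement of $f$ in this tubular neighborhood, combined with descent along the $\ga$-equivariant gradient flow, identifies the newly added piece $\overline{M_{c_i + \epsilon} \setminus M_{c_i - \epsilon}}$ with the handle bundle $(B(V_i) \oplus B(W_i)) \times_{\ga_{x_i}} \ga$, attached along the sphere sub-bundle $(S(V_i) \oplus B(W_i)) \times_{\ga_{x_i}} \ga$. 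Iterating across $i = 0, \dots, N$ produces the asserted decomposition.

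The principal obstacle is the equivariant Morse existence result. Classical Morse theory achieves genericity by perturbing within all smooth functions, but here one must respect $\ga$-invariance, which forbids transversality on $M$ in the naive global sense (for example, a critical orbit is automatically a positive-dimensional critical set whenever it is not a fixed point). The resolution is to run Sard-Smale type transversality arguments stratum by stratum on the orbit-type decomposition, using that each $M_{(H)}/\ga$ is itself a smooth manifold on which generic smoothness of an induced function can be arranged, and then patching the strata together using an equivariant partition of unity subordinate to a $\ga$-invariant cover. Once this existence is secured, the handle attachment step is a direct equivariant elaboration of the classical Morse lemma and terminates in finitely many steps by compactness.
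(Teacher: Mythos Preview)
The paper does not supply its own proof of this statement; it is quoted as Corollary~4.11 of Wasserman \cite{wasserman69} and used as a black box in the proof of Theorem~\ref{thm:classification}. Your proposed argument via equivariant Morse theory is precisely Wasserman's approach, and the outline is correct: a $\ga$-invariant Morse function exists by an equivariant genericity argument, and the $\ga$-equivariant gradient flow then produces the handle-bundle attachments as you describe, with the splitting $V_i \oplus W_i$ at a critical point $x_i$ coming from the $\ga_{x_i}$-equivariant decomposition of the Hessian into positive and negative eigenspaces.

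One minor clarification: since $\ga$ is finite here, orbits are zero-dimensional, so an equivariant Morse function is nothing more than an ordinary Morse function that is $\ga$-invariant, and ``non-degenerate normal to the orbit'' reduces to ordinary non-degeneracy at each critical point. Your stratified transversality argument is the right one and is what Wasserman does (in the greater generality of compact Lie groups, where it is genuinely needed); for finite $\ga$ one can shortcut slightly by perturbing a given $\ga$-invariant function locally near each degenerate critical point by an invariant quadratic form, but the orbit-type stratification is still the natural organizing device since critical points on a stratum $M^I$ must be analyzed as critical points of an $I$-invariant germ. In any case there is no gap in your proposal.
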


A handle bundle attachment of index $q$ can easily be adapted to an attachment of an \emph{open handle bundle} 
\[\mathbf{S}_\ep(V) \times D(W) := \{ (v,w)\in V \times W : 1 - \ep < |v| < 1, |w|<1 \}\] 
for some $\ep > 0$. Namely for $M = Z \cup_\phi B(V)\oplus B(W)$  we have the gluing of open submanifolds of $M$: 
\[ M = (Z\setminus \partial Z) \cup_{\mathbf{S}_\ep(V) \times D(W)}  D(V) \oplus D(W). \]
Note that this defines a collar-gluing of $M$ as $\ga$ acts orthogonally on $V$ and hence trivially on the radial direction of the annulus $\mathbf{S}_\ep (V) \cong \R \times S^{n-q-1}$.
\begin{cor} \label{handlebundledecomp}
	Every $(\rho,G)$-framed global quotient $[M/\ga]$ is the sequential colimit of $(\rho,G)$-framed global quotients with finite open handle bundle decompositions. 
\end{cor}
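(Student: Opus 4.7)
The plan is to exhaust $M$ by an increasing family of open $\ga$-invariant submanifolds, each of which carries a finite open handle bundle decomposition. First, I would average any proper smooth function over the finite group $\ga$ to obtain a $\ga$-invariant proper smooth function $h:M \rightarrow [0,\infty)$. By Sard's theorem I can pick regular values $0 < t_1 < t_2 < \dots$ with $t_i \to \infty$ and set $K_i := h^{-1}[0, t_i]$. Since $\ga$ is finite it acts properly, so each $K_i$ is a compact $\ga$-invariant smooth submanifold with boundary, and $\cup_i K_i = M$ by properness of $h$.

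Second, I would apply Wasserman's Theorem \ref{wasserman} to each $K_i$ to produce a finite equivariant handle bundle decomposition
\[ K_i \cong B(V_0)\oplus B(W_0) \cup \dots \cup B(V_{N_i})\oplus B(W_{N_i}). \]
Using the open handle bundle trick recalled just before the statement of the corollary, each closed attachment $Z \cup_{S(V)\oplus B(W)} B(V)\oplus B(W)$ can be replaced by the collar-gluing $(Z\setminus \partial Z) \cup_{\mathbf{S}_\ep(V)\times D(W)} D(V)\oplus D(W)$ of open submanifolds. Performing this for every attachment in the decomposition of $K_i$ produces an open $\ga$-invariant neighbourhood $M_i \subset M$ of $K_i \setminus \partial K_i$ which carries a finite open handle bundle decomposition, and on which the ambient $(\rho,G)$-structure restricts to endow $[M_i/\ga]$ with the structure of a $(\rho,G)$-framed global quotient.

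Third, I would arrange the nesting $M_i \subset M_{i+1}$. This does not require that the handle decompositions themselves be compatible --- only that the underlying open sets increase. Since we may shrink each $\ep$ used in the thickening as small as we wish, I can inductively choose the thickenings so that $M_i \subset \mathrm{int}(K_{i+1}) \subset M_{i+1}$, and by taking the $t_i$ sufficiently spread out this causes no difficulty. The open inclusions $M_i \hookrightarrow M_{i+1}$ are automatically $(\rho,G)$-framed embeddings. Finally, since $K_i \subset M_i$ and $\cup_i K_i = M$, we have $\cup_i M_i = M$, so that $[M/\ga]$ is exhibited as the sequential colimit of the $[M_i/\ga]$ in the sense of Definition \ref{defn:homthy}(2).

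The main technical issue is ensuring that the open thickenings can be arranged to nest correctly; this is resolved by observing that the $\ep$-parameter in the open handle bundle construction is unconstrained and that we have already exhausted by the compact sets $K_i$, so nesting the $M_i$ reduces to choosing each thickening small enough to sit inside the next $K_{i+1}$. Everything else is a direct application of Wasserman's theorem together with the open-handle-bundle trick explicitly recorded in the excerpt.
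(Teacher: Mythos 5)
Your proof is correct and follows essentially the same strategy as the paper: exhaust $[M/\ga]$ by compact $\ga$-invariant pieces, apply Wasserman's Theorem \ref{wasserman} to each, and convert the resulting closed handle bundle decompositions to open ones via the $\ep$-thickening trick. Your construction of the compact exhaustion via a $\ga$-averaged proper function and regular sublevel sets is somewhat more explicit than the paper's, which simply takes $\ga$-orbits of an exhaustion of $M$ by opens with compact closure, but the underlying idea is the same.
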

\begin{proof}
	It is well known that any manifold can be decomposed as $M = \cup_i M_i$ such that all $M_i$ are contained in the interior of a compact manifold.
	Now for a $(\rho,G)$-framed global quotient we have $M = \cup_i \ga M_i$ where $\ga M_i$ is the $\ga$-global quotient obtained as the $\ga$-orbit of $M_i \subset M$. 
	Again $\ga M_i \subset \overline{\ga M_i}$, and the pieces are $\ga M_i$ inherit $(\rho,G)$-framing as open submanifolds of $M$. 
	By Theorem \ref{wasserman} and the discussion directly thereafter any compact global quotient has a finite open handle bundle decomposition. 
	Consequently, $\ga$-invariant open submanifolds of compact global quotients also have finite open handle bundle decompositions. 
	This proves the result as we then have $[M/\ga] = \cup_i [\ga M_i/\ga]$ and we've shown the $[\ga M_i/\ga]$ have the required properties.
\end{proof}

\begin{rmk} \label{findecomp}
	By Corollary \ref{handlebundledecomp} any global quotient has a handle-bundle decomposition, but it may be infinite e.g. for surfaces of infinite genus. 
\end{rmk}

\begin{nota}
	For a handle bundle $B(V)\oplus B(W)$ attachement of index $q$ and type $I$ to $M$ we will denote the corresponding open handle bundle collar gluing by $M\cup_{S^{n-q}\times \R^q} \R^n$ and the $\ga$-global quotient associated to the collar by $[S^{n-q}\times \R^q/I]$.
\end{nota}

\begin{proof}[Proof of Theorem \ref{thm:classification}]
	We will prove the theorem by showing the unit and counit of the adjunction are isomorphisms. 
	The unit of the adjunction $A \rightarrow \int_{-} A \circ i^{\tensor}$ evaluated on some $\gar\Disk_n^G$-algebra $A$ is an isomorphism since $\int$ is the left Kan extension along a fully faithful inclusion functor. 
	The counit evaluated on some equivariant homology theory $H$ is a morphism $\int A_H \rightarrow H$. 
	Since both functors preserve sequential colimits, Corollary \ref{handlebundledecomp} implies it suffices to show $\int_{[M/\ga]} A_H \cong H[M/\ga]$ for $[M/\ga]$ with finite handle bundle decomposition.
	We proceed by induction lowering the index of the handle-bundles.
	\vspace{3mm}
	
	\emph{Claim 1}: 
	The functors are isomorphic on all global quotients whose handle bundle decomposition only involve handle bundles of index $n$.\\
	\emph{Proof claim 1}: Indeed, let the decomposition be of length $0$. Then $[M/\ga] = [D(V)/\ga]$ is a handle bundle, say of type $I \leq \ga$. 
	Since the connected components of $D(V)$ are contractible $D(V)$ has the trivial $G$-structure and one easily verifies that $[D(V)/\ga] \simeq [\DD_I/\ga]$ so that
	\[ \int_{[\DD_I/\ga]} A_H = H[\DD_I/\ga] \]
	by Proposition \ref{cor:localcomp}.
	We then use induction on the length of the handle body decomposition. 
	To verify the inductive step let $M$ be obtained from $M_0$ by attaching an open handle $D(V)$ of index $n$ and type $I$. 
	We compute
	\begin{align*} 
	\int_{[M/\ga]} A_H &\simeq \int_{[M_0/\ga]} A_H \bigotimes_{\int_{[S^0\times \R^n/I]} A_H} \int_{[D(V)/I]}A_H \\
	&\simeq H[M_0/\ga] \bigotimes_{H[S^0\times \R^n/I]} H[D(V)/I] \simeq H[M/\ga] 
	\end{align*}
	where we used $[S^0\times \R^n/I] \simeq [\DD_I/\ga]\amalg [\DD_I/\ga]$ so that $H$ and $\int_A$ agree on it by Proposition \ref{cor:localcomp}.
	Thus $\int A_H$ and $H$ agree on all global quotients with a finite handle bundle decomposition only involving handles of index $n$.
	\vspace{3mm}
	
	\emph{Claim 2}: 
	Suppose that $\int A_H$ and $H$ agree on all global quotients whose handle bundle decompositions only involve handles of index larger or equal to $q$ for some given $q<n$. 
	Then $\int A_H$ and $H$ agree on a global quotient whose handle bundle decompositions only involve handles of index larger or equal to $q-1$. \\
	\emph{Proof claim 2:} 
	Indeed, let the decomposition be of length $0$, then $[M/\ga]$ is handle bundle of index $i$ where $i\geq q-1$. The cases $i>q-1$ are covered by the inductive hypothesis. 
	For the handle bundles $S^{n-q+1} \times \R^{q-1}$ of index $q-1$ and type $I$ we decompose the $I$-global quotient $S^{n-q+1}$ into handle bundles. Consequently we have a $\ga$-equivariant handle bundle decomposition of $S^{n-q+1} \times \R^{q-1}$ involving handles of index strictly greater than $q-1$. 
	By the induction hypothesis we find 
	\[\int_{[S^{n-q+1} \times \R^{q-1}/I]} A_H = H[S^{n-q+1} \times \R^{q-1}/I]. \]
	For global quotients with decompositions of greater length we can apply an induction argument on length exactly as in claim 1. 
	\vspace{3mm}
	
	Claim 1 and claim 2 prove the induction on index, and finishes the proof. 
\end{proof}

\section{Examples and applications} \label{sec:applications}

In this section we discuss various examples and applications of equivariant factorization homology. 
Using the characterisation of Theorem \ref{thm:classification} we recognise that classical (co)homology theories for global quotient orbifolds, such as Bredon homology and Chen-Ruan cohomology, are examples of equivariant factorization homology theories. 
We then turn to computing invariants of algebras in \S \ref{subsec:thh} and tensor categories in \S \ref{subsec:crt}. 
We also discuss constructions of braid group actions in the latter subsection. 

\subsection{Borel equivariant homology and Bredon homology} \label{subsec:bredon}

We recall the definition of Borel equivariant homology. For a $\ga$-global quotient $[M/\ga]$ we have the \emph{universal bundle} $\mathrm{E}\ga \rightarrow \mathrm{B}\ga$,\footnote{Recall $\mathrm{E}\ga$ is a contractible space on which $\ga$ acts freely; it is unique up to homotopy equivalence.} which we use to define the \emph{Borel construction} $(M \times \mathrm{E}\ga)/\ga$.
Then the Borel $\ga$-equivariant (co)homology of $M$ is defined as
\[ H_*^\ga (M) := H_* ((M \times \mathrm{E}\ga)/\ga)\]
the singular (co)homology of the Borel construction. 
More generally, we can consider Bredon (co)homology where the group $\ga$ is allowed to act on the coefficients. 

\begin{defn} \label{coeffsystem} Let $\ga$ be a finite group, let $\AB$ denote the category of Abelian groups. 
	\begin{enumerate}
		\item \emph{The orbit category} $\catO(\ga)$ of $\ga$ has as objects the $\ga$-sets $\ga/I$ where $I \leq \ga$ is a subgroup. Morphisms are $\ga$-equivariant maps. 
		\item A \emph{(contraviariant) coefficients system for $\ga$} is a (contravariant) functor $\catO(\ga) \rightarrow \AB$. 
	\end{enumerate}
\end{defn}
We recall the axiomatic approach to Bredon (co)homology.
Let $\ga\mathbf{CW}_{\mathrm{pairs}}$ denote the category whose objects are pairs of $\ga$-CW complexes $(X,A)$ where $A \subset X$ is a subcomplex, and maps are $G$-equivariant maps preserving subcomplexes.
\begin{defn} \cite{bredon67a,bredon67b}
	A \emph{Bredon (co)homology theory} consists of a collection of (contravariant) functors 
	\[\big\{ H_n: \ga\mathbf{CW}_{\mathrm{pairs}} \rightarrow \AB \big\}_{n \in \Z}\]
	and a collection of natural homomorphism $\{ \partial_n: H_n(A,\emptyset) \rightarrow H_{n-1}(X,A) \}_{n \in \Z}$ such that
	\begin{enumerate}
		\item (Homotopy) Homotopic maps are assigned the same map in homology.
		\item (Additivity) $H(\coprod_\alpha X_\alpha ) \cong \oplus_\alpha H_n(X_\alpha)$ for abitrary disjoint unions.
		\item (Excision) An inclusion $(X\setminus U,A\setminus U) \rightarrow (X,A)$ where $\bar{U} \subset A^{\circ}$ induces an isomorphism on homology.
		\item (Exactness) The inclusions $A \rightarrow X$ and $(X,\emptyset) \rightarrow (X,A)$ induces a long exact sequence on homology
		\[ \cdots \rightarrow H_n(A) \rightarrow H_n(X) \rightarrow H_n(X,A) \xrightarrow{\partial_n} H_{n-1}(A) \rightarrow \cdots \]
		\item (Dimension) $H_n (\ga/I) = 0$ for all $\ga/I \in \catO(\ga)$, $n \neq 0$ and $H_{-i}(X) = 0$ for all $i\geq 1$ and all $X$.
	\end{enumerate}
	where $H_n(X)$ is shorthand for $H_n(X,\emptyset)$.\footnote{For cohomology theories we of course rather have $\partial_n: H_{n-1}(X,A) \rightarrow H_n(A,\emptyset)$ and the exact sequence moves in the opposite direction.}
\end{defn}
We have the following formal consequence of the axioms:
\begin{prop} (Mayer-Vietoris) 
Let $X = X_+ \cup_{X_0} X_-$ be a $\ga$-equivariant decomposition of $X$ into $\ga$-CW complexes. 
Then for any Bredon (co)homology theory we have 
\begin{align} H(X) \cong H(X_+) \oplus_{H(X_0)} H(X_-). \label{mayer-vietoris} \end{align}
\end{prop}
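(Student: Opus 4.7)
The plan is to derive this Mayer-Vietoris pushout formula by the standard splicing argument, using only the Excision and Exactness axioms; the Homotopy, Additivity and Dimension axioms play no role in this formal step.

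First, I would apply the Excision axiom to obtain a natural isomorphism $H_n(X_-, X_0) \xrightarrow{\cong} H_n(X, X_+)$. Concretely, take the ambient pair to be $(X, X_+)$ and the excised open set to be $U = X \setminus X_-$, so that $X \setminus U = X_-$ and $X_+ \setminus U = X_+ \cap X_- = X_0$. The hypothesis $\overline{U} \subset X_+^\circ$ is guaranteed by the $\ga$-CW decomposition $X = X_+ \cup X_-$, possibly after slightly thickening $X_+$ to an equivariantly homotopy-equivalent open neighborhood in $X$.

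Second, I would invoke the Exactness axiom on the pairs $(X, X_+)$ and $(X_-, X_0)$, producing two long exact sequences, and splice them along the excision isomorphism above. Standard diagram chasing (formally identical to the non-equivariant argument) then yields the equivariant Mayer-Vietoris long exact sequence
\[ \cdots \to H_n(X_0) \xrightarrow{(i_+, -i_-)} H_n(X_+) \oplus H_n(X_-) \xrightarrow{j_+ + j_-} H_n(X) \xrightarrow{\partial} H_{n-1}(X_0) \to \cdots \]
where $i_\pm$ and $j_\pm$ are induced by the relevant inclusions.

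Third, I would extract the stated pushout isomorphism from this sequence. By definition the pushout $H(X_+) \oplus_{H(X_0)} H(X_-)$ is the cokernel of $(i_+, -i_-)$, which by exactness is canonically the image of $j_+ + j_-$ in $H(X)$. To upgrade this to an isomorphism with $H(X)$ itself as written, the natural interpretation is at the chain level: the excision argument in fact arises from a short exact sequence of Bredon chain complexes
\[ 0 \to C_*(X_0) \to C_*(X_+) \oplus C_*(X_-) \to C_*(X) \to 0, \]
exhibiting $C_*(X)$ as the derived pushout $C_*(X_+) \oplus^{\mathbb{L}}_{C_*(X_0)} C_*(X_-)$, in direct analogy with the $\tensor$-excision formula for factorization homology. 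The main subtlety, and the step that deserves care, is rigorously verifying the excision hypothesis $\overline{U} \subset X_+^\circ$ at the level of $\ga$-CW pairs and fixing the derived interpretation of the pushout symbol $\oplus_{H(X_0)}$ so that the statement makes sense as written.
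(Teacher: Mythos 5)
Your proof is correct and follows the standard textbook route that the paper must intend (the paper states the proposition as ``a formal consequence of the axioms'' without supplying a proof, so there is nothing in the text to compare against directly). Steps one and two --- the excision isomorphism $H_n(X_-,X_0)\cong H_n(X,X_+)$ and the splicing of the two long exact sequences into the Mayer--Vietoris sequence --- are exactly the argument one expects.

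The most valuable part of your write-up is the observation in step three. You are right that the cokernel of $(i_+,-i_-)$, i.e.\ the plain pushout in $\AB$, maps \emph{injectively} into $H_n(X)$ by exactness at the middle term, but is only the image of $j_++j_-$, hence equal to the kernel of the connecting map $\partial_n$, not all of $H_n(X)$, unless $\partial_n=0$. So read literally level-by-level in abelian groups the displayed isomorphism would be false, and it must be understood as a derived (homotopy) pushout. Your short exact sequence of Bredon CW chain complexes
\begin{equation*}
0 \to C_*(X_0) \to C_*(X_+)\oplus C_*(X_-) \to C_*(X) \to 0
\end{equation*}
gives the precise form, exhibiting $C_*(X)$ as the mapping cone of $C_*(X_0)\to C_*(X_+)\oplus C_*(X_-)$. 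This is in fact exactly how the proposition is invoked in the proof of Proposition \ref{prop:beh}, where $\tensor$-excision for $C^\ga_*(-,A)$ in the monoidal category $\Ch_\kk^{\oplus}$ is verified by showing that $C^\ga_*(M_+)\oplus_{C^\ga_*(M_0)}C^\ga_*(M_-)\to C^\ga_*(M)$ is a quasi-isomorphism; the two-sided bar construction in $(\Ch_\kk,\oplus)$ computes precisely that mapping cone. So your derived reading is the one the paper actually uses. Your remark about thickening $X_+$ to meet the hypothesis $\overline{U}\subset X_+^\circ$ is a reasonable precaution, though since the paper works with $\ga$-CW pairs one can also pass directly to the CW form of excision for a triad of subcomplexes and sidestep the point-set condition entirely.
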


\begin{thm} \cite[\S5]{bredon67b} \label{bredonclass} 
	For a given (contravariant) coefficient system $A: \catO(\ga) \rightarrow \AB$ there is a unique Bredon (co)homology theory $H^\ga_*(-,A)$ satisfying 
	$H^\ga_0(\ga/I,M) = A(\ga/I)$ for all $\ga/I \in \catO(\ga)$.
\end{thm}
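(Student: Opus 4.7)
The strategy is the classical uniqueness-existence argument via a cellular chain complex with coefficients in $A$.

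For \emph{uniqueness}, I would proceed by induction on the skeletal filtration of a $\ga$-CW pair $(X,B)$. Suppose $H^\ga_*$ is any Bredon homology theory extending the prescribed values on orbits. The combination of additivity, homotopy invariance, excision, and the dimension axiom forces the relative homology of a skeletal layer
\[ H^\ga_k(X^{(n)} \cup B,\, X^{(n-1)} \cup B) \]
to vanish for $k \neq n$, and to equal $\bigoplus_\alpha A(\ga/I_\alpha)$ in degree $n$, where $\alpha$ runs over the equivariant $n$-cells $\ga/I_\alpha \times D^n$ of $X\setminus B$. Indeed, excision reduces this to the relative homology of the disjoint union of pairs $(\ga/I_\alpha \times D^n, \ga/I_\alpha \times S^{n-1})$, and the long exact sequence of such a pair, together with an equivariant homotopy collapse of the $D^n$-factor and an inductive computation of $H^\ga_*(\ga/I_\alpha \times S^{n-1})$, collapses this to a shifted copy of $A(\ga/I_\alpha)$. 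Splicing these layers via the long exact sequences of the pairs $(X^{(n)} \cup B, X^{(n-1)} \cup B)$ produces a chain complex whose homology is forced to coincide with $H^\ga_*(X,B)$, and whose differentials depend only on $A$ and the attaching maps of $X$.

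For \emph{existence}, I would take the preceding computation as a definition. Set
\[ C^{\mathrm{Br}}_n(X,B;A) \;:=\; \bigoplus_\alpha A(\ga/I_\alpha), \]
with $\alpha$ indexing the equivariant $n$-cells of $X \setminus B$, and let the differential be determined cell-wise by the functoriality of $A$ applied to the orbit maps extracted from the cellular attaching map (weighted by the integer degrees of the non-equivariant factor). Define $H^\ga_*(X,B;A)$ to be the homology of this complex and $\partial_n$ to be the connecting homomorphism of the short exact sequence
\[ 0 \to C^{\mathrm{Br}}_*(B;A) \to C^{\mathrm{Br}}_*(X;A) \to C^{\mathrm{Br}}_*(X,B;A) \to 0. \]
The dimension, additivity, homotopy, excision, and exactness axioms then all follow from the corresponding properties of equivariant cellular chains, and $H^\ga_0(\ga/I) = A(\ga/I)$ by inspection.

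The \emph{main obstacle} is verifying that the cellular differential is well-defined and natural. Concretely, after collapsing $X^{(n-2)}$ the attaching map of an $n$-cell $\ga/I_\alpha \times S^{n-1} \to X^{(n-1)}$ induces an equivariant map into a wedge $\bigvee_\beta \ga/I_\beta \wedge S^{n-1}$; one must extract, for each $\beta$, a well-defined $\ga$-map $\ga/I_\alpha \to \ga/I_\beta$ in $\catO(\ga)$ together with an integer degree, and feed these through the functoriality of $A$. This is essentially the construction of equivariant cellular homology with local coefficients, which is standard but requires careful bookkeeping of the equivariant degree theory. Once the cellular complex is shown to be a well-defined functor from $\ga$-CW pairs to chain complexes over $\catO(\ga)$, the axiomatic properties reduce to their ordinary cellular counterparts, and uniqueness combined with existence yields the theorem.
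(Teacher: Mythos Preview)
The paper does not prove this theorem; it is simply quoted as a classical result of Bredon with the citation \cite[\S5]{bredon67b} and no proof given. Your sketch is essentially the original argument from Bredon's monograph (cellular chains with values in the coefficient system for existence, skeletal-filtration comparison for uniqueness), so there is nothing to compare against in the present paper.
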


\begin{ex}
	Borel equivariant (co)homology is the special case $H_*^\ga = H_*^\ga(-,\Z)$ of Bredon equivariant (co)homology with constant coefficient system $\Z$. 
\end{ex}

We have the following well-known property of Bredon (co)homology:
\begin{prop} \label{prop:bcreduces}
	Let $\ga$ act freely on $M$. Then $H_*^\ga(M,A) = H_*(M/\ga,A(\ga/\e))$ if either $A$ is a constant coefficient system or if $M = \ga \times N$ is a trivial global quotient.
\end{prop}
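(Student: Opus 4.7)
The plan is to compute both sides via Bredon's cellular chain complex. Recall that for a $\ga$-CW complex $X$ and coefficient system $A$, the Bredon cellular chain complex has the form $C_n^\ga(X, A) = \bigoplus_{[e]} A(\ga/\ga_e)$, where the sum runs over $\ga$-orbits of $n$-cells and $\ga_e$ denotes the stabilizer of a representative. Since $\ga$ acts freely on $M$ and $M$ admits a smooth equivariant CW structure (by classical equivariant triangulation), every cell has trivial stabilizer, and the cells of $M/\ga$ are in natural bijection with $\ga$-orbits of cells of $M$.

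First I would treat the trivial global quotient case $M = \ga \times N$. Fix any CW structure on $N$; this induces a $\ga$-CW structure on $\ga \times N$ with a free $\ga$-orbit of cells for each cell of $N$. The Bredon chain complex collapses to
\[ C_n^\ga(\ga \times N, A) \;=\; \bigoplus_{n\text{-cells of }N} A(\ga/\e) \;=\; C_n^{CW}(N, A(\ga/\e)), \]
and the boundary maps match because the $\ga$-equivariant attaching maps in $\ga \times N$ are obtained by $\ga$-translation from those in $N$. Taking homology yields the claim, using that $M/\ga = N$ in this case.

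Next, for the constant coefficient case, the same free-action collapse yields $C_n^\ga(M, A) = \bigoplus_{\text{cells of } M/\ga} A(\ga/\e)$, which is identified with $C_n^{CW}(M/\ga, A(\ga/\e))$. The $\ga$-equivariant cellular attaching maps on $M$ descend to cellular attaching maps on $M/\ga$, and constancy of $A$ ensures that no $\ga$-twists intervene in the boundary formula.

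The main obstacle will be the careful verification of the boundary maps in the constant-coefficient case, where one relies crucially on the fact that every morphism in $\catO(\ga)$ acts as the identity on $A$. A conceptually cleaner alternative that bypasses this bookkeeping is the classical identification of Bredon homology with constant coefficients $A$ and Borel homology $H_*((X \times \mathrm{E}\ga)/\ga, A)$: when the $\ga$-action on $M$ is free, the projection $(M \times \mathrm{E}\ga)/\ga \to M/\ga$ is a fibre bundle with contractible fibre $\mathrm{E}\ga$, hence a weak homotopy equivalence, yielding $H_*^\ga(M, A) \cong H_*(M/\ga, A)$ at once.
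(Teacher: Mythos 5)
The paper states Proposition~\ref{prop:bcreduces} without proof (it is offered as ``the following well-known property''), so there is no authorial argument to compare against; your proposal supplies the missing justification. Your cellular-chain argument is correct in substance. For a free $\ga$-CW structure, every cell has isotropy $\e$, so $C_n^\ga(M,A)=\bigoplus_{\text{orbits}}A(\ga/\e)$, and the Bredon boundary is the cellular boundary of $M/\ga$ weighted by the maps $A(g):A(\ga/\e)\to A(\ga/\e)$ induced by the automorphisms $g$ of $\ga/\e$ in $\catO(\ga)$. When $A$ is constant these weights are all $\id$ and the complex is literally $C_*^{CW}(M/\ga,A(\ga/\e))$; when $M=\ga\times N$ one chooses the representative cells to be $\{e\}\times c$, notes that the attaching map of $\{e\}\times c$ lands entirely in the slice $\{e\}\times N$, so the only group element that appears is $e$ and the twist is again the identity. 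Your Borel-construction alternative for the constant case is also sound: for a free action $(M\times \mathrm{E}\ga)/\ga\to M/\ga$ is a fibre bundle with contractible fibre $\mathrm{E}\ga$, hence a weak equivalence.

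One small remark: you suggest the ``main obstacle'' is verifying boundary maps in the constant-coefficient case, but this has it slightly backward. Constancy trivialises every $A(g)$, so that case is immediate once one knows the Bredon boundary formula. The place where a genuine choice is needed is the trivial-quotient case $M=\ga\times N$ with a \emph{non-constant} $A$: there one must pick the representatives $\{e\}\times c$ (rather than an arbitrary transversal) precisely so that the attaching maps never leave the $e$-slice and no nontrivial $A(g)$ appears. It would strengthen the write-up to make that choice, and the reason it works, explicit.
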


We now consider the symmetric monoidal $\infty$-category $\Ch_k^{\oplus}$ of chain complexes over some field $\kk$. 
The Bredon (co)homology of a $\ga$-CW complex $X$ also has an explicit construction as the homology of a certain assigned chain complex $C_*^{\ga}(X,M)$ \cite{bredon67a,bredon67b}.
Since every $\ga$-manifold admits a $\ga$-CW structure \cite[Cor. 4.1]{illmanthesis}, we can view Bredon homology as a symmetric monoidal functor
\[ C_*^\ga(-,A): \gar\Orb_n \rightarrow \Ch_\kk^{\oplus}\]
such that $H^\ga_0 [\DD_I/\ga]  = A(\ga/I)$.\footnote{Here the structure group is thus $GL(n)$, and hence framed embeddings can be taken to be $\ga$-equivariant embeddings by Proposition \ref{prop:hfrisgfr}.} Bredon cohomology yields a symmetric monoidal functor 
\[ C^*_\ga(-,A): \gar\Orb_n \rightarrow (\Ch_\kk^{\oplus})^{op}.\]
In both cases there are no restrictions on $\rho$.
\begin{prop} \label{prop:beh}
	Any Bredon (co)homology theory with (contra/co)variant coefficients $A$ is computed by factorization homology, i.e.
	\[H^\ga_*(M,A) \simeq \int_{[M/\ga]} C_*^\ga(-,A) \quad \text{and} \quad H_\ga^*(M,A) \simeq \int_{[M/\ga]} C^*_\ga(-,A). \]
\end{prop}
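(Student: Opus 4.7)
The plan is to invoke the characterisation result of Theorem \ref{thm:classification}. It suffices to prove that $C_*^\ga(-,A)$ defines a $\Ch_\kk^\oplus$-valued equivariant factorization homology theory in the sense of Definition \ref{defn:homthy}. Once established, Theorem \ref{thm:classification} furnishes a natural quasi-isomorphism between $C_*^\ga(M,A)$ and $\int_{[M/\ga]} C_*^\ga(-,A)$; passing to homology then yields the claim. The cohomology statement follows symmetrically by viewing $C^*_\ga(-,A)$ as a symmetric monoidal functor into $(\Ch_\kk^\oplus)^{op}$ and invoking the same theorem.

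There are three conditions to verify. Symmetric monoidality is exactly the additivity axiom of Bredon theory, giving a natural quasi-isomorphism $C_*^\ga(M \amalg N,A) \simeq C_*^\ga(M,A) \oplus C_*^\ga(N,A)$. Preservation of sequential colimits follows from the standard compactness argument: Bredon chains are built from cellular data on a $\ga$-CW structure (which exists by Illman's theorem), so any chain is supported on finitely many cells and hence lies in some finite stage of an exhausting sequence $M_1 \subset M_2 \subset \cdots$.

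The main step is $\oplus$-excision. For a $\ga$-equivariant collar-gluing $M \cong M_- \cup_{M_0} M_+$, the two-sided bar construction of Definition \ref{barconstr} in $\Ch_\kk^\oplus$ computes the homotopy pushout of chain complexes: unwinding the simplicial object with $\tensor = \oplus$, its geometric realization is quasi-isomorphic to the mapping cone of the natural map $C_*^\ga(M_0,A) \to C_*^\ga(M_+,A) \oplus C_*^\ga(M_-,A)$. The Mayer--Vietoris quasi-isomorphism (Equation \ref{mayer-vietoris}), which is a formal consequence of the Bredon axioms of excision, exactness and homotopy applied to the $\ga$-equivariant open cover $\{M_+, M_-\}$ of $M$ with intersection $\ga$-homotopy equivalent to $M_0$, then furnishes the desired equivalence. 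Moreover, since $[\DD_I/\ga]$ is $\ga$-homotopy equivalent to the orbit $\ga/I$, the homotopy and dimension axioms give $C_*^\ga([\DD_I/\ga],A) \simeq A(\ga/I)$ concentrated in degree zero, so the disk algebra on which we apply $\int$ agrees with the chain-level coefficients.

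The main obstacle will be carefully matching the simplicial bar construction with $\tensor = \oplus$ to the homotopy pushout of chain complexes, and verifying that the canonical augmentation arrow of Lemma \ref{canarrow} indeed realises the standard Mayer--Vietoris comparison map. The cohomological variant requires the parallel observation that in $(\Ch_\kk^\oplus)^{op}$ the $\oplus$-excision property dualises to the Mayer--Vietoris long exact sequence for Bredon cohomology, which again is formal from the axioms.
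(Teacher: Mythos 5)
Your proposal is correct and follows essentially the same route as the paper: reduce to Theorem \ref{thm:classification}, then verify $\tensor$-excision via the Mayer--Vietoris property \eqref{mayer-vietoris} of Bredon (co)homology and sequential colimit preservation via compactness. The paper states these verifications more tersely (in particular it simply asserts that "preserving sequential colimits is clear" and directly writes the two-sided bar construction as the Mayer--Vietoris comparison without unpacking the simplicial object with $\tensor=\oplus$), while you flag the identification of the geometric realization with the mapping cone as a step deserving care; this is a reasonable thing to worry about, and the argument you sketch for it is sound.
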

\begin{proof}
Preserving sequential colimits is clear, so we only need to verify the $C_*^\ga$ satisfies $\tensor$-excision. 
Indeed, a collar-gluing for $X$ induces a decomposition $X = X_- \cup_{X_0} X_+$ and taking singular chains then give a quasi-isomorphism
\[  C^\ga_*(M_+) \bigoplus_{C^\ga_*(M_0)} C^\ga_*(M_-) \rightarrow C^\ga_* (M) \]
by the Mayer-Vietoris property \ref{mayer-vietoris}. The proof for $C_\ga^*$ is identical.
\end{proof}

\subsection{Chen-Ruan cohomology of almost complex global quotients} \label{subsec:crcoh}

W. Chen and Y. Ruan developed a cohomology theory for almost complex orbifolds that is  not necessarily restricted to global quotient orbifolds \cite{chenruan04}.
This cohomology theory takes inspiration from string theories constructed out of orbifolds, as were considered in \cite{vafawitten} for example, and provides the definitions of orbifold Euler characteristic and orbifold Betti numbers. 
The Chen-Ruan cohomology theory allows a simple description for global quotients as we will now recount.
For the general theory we recommend \cite[Chapter 4]{ademleidaruan}.

\begin{defn} \cite[Def. 1.27]{ademleidaruan}
	An \emph{almost complex global quotient} is a global quotient $[M/\ga]$ with a $\ga$-equivariant bundle automorphism $J: TM \rightarrow TM$ such that $J^2=-\id$.
\end{defn}

For $g \in \ga$ we denote with $\amalg_g M^g := \{ (g,x) \in \ga \times M: gx = x  \}$. The space $\amalg_g M^g$ has a natural $\ga$-action via $h \cdot (g,x) = (hgh^{-1},hx)$.
\begin{prop} (See for example \cite[p.117]{ademleidaruan}) 
	The \emph{Chen-Ruan cohomology of $[M/\ga]$},denoted $H^*_{\mathrm{CH}}[M/\ga]$, can be computed as
	\begin{align*}
		H^*_{\mathrm{CH}}[M/\ga] \cong H^* \left(\coprod_{[g] \in \mathrm{Conj}(\ga)} M^g/C_G(g) \right)
	\end{align*}
	where $C_G(g)$ is the centraliser of $g$ in $G$ and $\mathrm{Conj}(\ga)$ is the set of conjugacy classes in $\ga$.
\end{prop}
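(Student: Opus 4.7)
The plan is to derive the formula from the standard description of Chen-Ruan cohomology as the singular cohomology of the coarse moduli space of the inertia orbifold. For a general orbifold $\mathcal{X}$, one has
\[
H^*_{\mathrm{CH}}(\mathcal{X}) \;=\; H^*(|I\mathcal{X}|),
\]
where $|-|$ denotes the coarse moduli space and $I\mathcal{X}$ is the inertia orbifold (see \cite[\S 2.3]{ademleidaruan}). The almost complex structure is only needed to equip this group with the Chen-Ruan grading via the age function; for the underlying graded vector space it is sufficient to identify the space $|I[M/\ga]|$.

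First, I would identify the inertia orbifold $I[M/\ga]$ explicitly. By definition it is the global quotient $[\widetilde{I}M / \ga]$, where
\[
\widetilde{I}M \;:=\; \coprod_{g \in \ga} M^g \;=\; \{(g,x) \in \ga \times M : gx = x\}
\]
carries the diagonal $\ga$-action $h \cdot (g,x) = (hgh^{-1},\, hx)$. This matches the coproduct presentation $\amalg_g M^g$ already fixed in the statement.

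Second, I would decompose $\widetilde{I}M$ into $\ga$-orbits using the $\ga$-equivariant projection $\widetilde{I}M \to \ga$, $(g,x) \mapsto g$, where $\ga$ is given the conjugation action on itself. Partitioning $\ga$ into conjugacy classes yields a $\ga$-equivariant decomposition $\widetilde{I}M \cong \coprod_{[g]} \ga \cdot M^g$. Since the stabilizer of $g$ under conjugation is precisely $C_\ga(g)$ and the fiber of the projection over $g$ is $M^g$, a choice of representative $g$ of $[g]$ produces a $\ga$-equivariant homeomorphism
\[
\ga \cdot M^g \;\cong\; \ga \times_{C_\ga(g)} M^g.
\]

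Finally, passing to the coarse moduli space gives $(\ga \times_{C_\ga(g)} M^g)/\ga \cong M^g/C_\ga(g)$, and summing over conjugacy classes yields
\[
|I[M/\ga]| \;\cong\; \coprod_{[g] \in \mathrm{Conj}(\ga)} M^g / C_\ga(g),
\]
so that applying $H^*$ gives the claimed formula. The main subtlety will be verifying that the set-theoretic model of $I[M/\ga]$ used here genuinely matches the inertia orbifold underlying the definition of $H^*_{\mathrm{CH}}$ in \cite{ademleidaruan}, and (should one wish to refine the statement) that the resulting grading on each summand is indeed the age-shifted one; both verifications are by now standard and are carried out in \cite[\S 4]{ademleidaruan}.
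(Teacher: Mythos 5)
Your argument is correct, and it is the standard derivation: pass through the inertia orbifold $I[M/\ga] = [\widetilde{I}M/\ga]$, stratify $\widetilde{I}M$ by conjugacy classes, use the induction isomorphism $\ga \cdot M^g \cong \ga \times_{C_\ga(g)} M^g$ to identify the quotient of each stratum with $M^g/C_\ga(g)$, and then take singular cohomology of the coarse space. Note, however, that the paper does not supply a proof of this proposition at all — it is stated as a citation to \cite[p.117]{ademleidaruan} — so there is no in-paper argument to compare against; what you have reconstructed is essentially the argument from the reference itself. Two small remarks. First, you correctly use $C_\ga(g)$ where the paper's statement has the typo $C_G(g)$; the proposition as printed conflates $\ga$ with the structure group $G$, and your notation is the intended one. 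Second, you are right to flag that the almost complex structure enters only through the age-shifted grading, which is immaterial for the vector-space-level identification being asserted; for the purposes of the factorization-homology comparison in the subsequent proposition only the underlying (ungraded) chain-level functor and its Mayer–Vietoris property are used, so your proof is at the right level of detail.
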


\begin{ex} \cite[Th. 5.11]{ademleidaruan}
	Consider an almost complex global quotient $[M^{\times k}/S_k]$ as in Example \ref{ex:stringytopology}.
	The direct sum $\bigoplus_{k \geq 0} H^*_{\mathrm{CR}} [M^{\times k}/S_k]$ is naturally an irreducible highest weight representation of the super Heisenberg algebra $\catA (H^*(M))$.\footnote{The Heisenberg algebra is a super Lie algebra one can associate to a $\Z_2$-graded vector space; it plays an important role in supersymmetric quantum field theory.}
\end{ex}

\begin{lem} 
	Let $\rho: \ga \rightarrow GL(\C^n)$ be a complex representation. A $(\rho,GL(\C^n))$-framed global quotient is an almost complex global quotient.
\end{lem}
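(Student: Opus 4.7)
The plan is to construct the required almost complex structure $J: TM \to TM$ directly from the $(\rho, GL(\C^n))$-reduction of the frame bundle, and then to observe that $\ga$-equivariance is automatic because $\rho$ is assumed to take values in $GL(\C^n)$, which centralizes the standard complex structure $J_0$ on $\R^{2n}$.

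First, I would recall the classical correspondence (Example \ref{gstrex}.4) between almost complex structures on a real $2n$-manifold and reductions of its frame bundle along $GL(\C^n) \hookrightarrow GL(\R^{2n})$. The $(\rho, GL(\C^n))$-structure on $[M/\ga]$ supplies exactly such a reduction via the isomorphism $\iota: \Fr(M) \cong \Ind_{GL(\C^n)}^{GL(\R^{2n})} P_{GL(\C^n)}(M)$, so the underlying manifold $M$ automatically carries a canonical almost complex structure $J$. The entire content of the lemma is therefore to verify that $J$ is $\ga$-equivariant.

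Next, I would make $J$ explicit in a form where $\ga$-equivariance can be read off. Let $J_0 \in GL(\R^{2n})$ denote multiplication by $i$ under the identification $\R^{2n} \cong \C^n$. The subgroup $GL(\C^n) \subset GL(\R^{2n})$ is precisely the centralizer of $J_0$, i.e., the $\R$-linear automorphisms commuting with $J_0$. Using the associated-bundle description $TM \cong P_{GL(\C^n)}(M) \times_{GL(\C^n)} \R^{2n}$ induced by $\iota$, I would define
\[ J[p,v] := [p, J_0 v]. \]
Well-definedness follows since for $A \in GL(\C^n)$ one has $[p \cdot A, A^{-1} J_0 v] = [p \cdot A, J_0 A^{-1} v]$, and $J^2 = -\id_{TM}$ is immediate from $J_0^2 = -\id_{\R^{2n}}$.

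Finally, $\ga$-equivariance is a short verification. The $\ga$-action on $TM$ inherited from the $(\rho, GL(\C^n))$-bundle acts on the first coordinate of the associated bundle, $g \cdot [p, v] = [g \cdot p, v]$, while $J$ acts on the second, so the two commute:
\[ J(g \cdot [p,v]) = [g \cdot p, J_0 v] = g \cdot J[p,v]. \]
The proof is essentially bookkeeping, and I do not expect a genuine obstacle; the only substantive point is to notice that the hypothesis $\rho(\ga) \subset GL(\C^n)$ (rather than only $GL(\R^{2n})$) is exactly what forces the $\ga$-action, expressed in any local $(\rho, GL(\C^n))$-frame, to be through $\C$-linear maps, so that it cannot interfere with $J_0$. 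The only care required is to track the $\ga \ltimes_\rho GL(\C^n)$-action through the associated-bundle construction of $TM$.
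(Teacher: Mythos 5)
Your approach is essentially the same as the paper's — construct $J$ from the $GL(\C^n)$-reduction via the associated-bundle description of $TM$ and check $\ga$-equivariance — but your explicit equivariance check glosses over the very subtlety that the hypothesis $\rho(\ga) \subset GL(\C^n)$ is needed to resolve.

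The issue is that the $\ga$-action on $P_{GL(\C^n)}(M)$ coming from its $(\rho, GL(\C^n))$-bundle structure is $c_g(p) = g_*(p)\cdot\rho(g)^{-1}$, \emph{not} $g_*$, whereas the $\ga$-action on $TM$ one must check $J$ commutes with is the differential $g_*$. Under the identification $TM \cong P_{GL(\C^n)}(M) \times_{GL(\C^n)} \R^{2n}$, the differential action is $g_*[p,v] = [c_g(p), \rho(g)v]$ — it touches \emph{both} coordinates, not just the first. Your display ``$J(g\cdot[p,v]) = [g\cdot p, J_0 v] = g\cdot J[p,v]$'' with $g\cdot[p,v]=[g\cdot p,v]$ therefore proves equivariance of $J$ with respect to a twisted action on $TM$, not the differential action, and notably makes no use of $\rho$ at all. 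The correct verification is $J(g_*[p,v]) = [c_g(p), J_0\rho(g)v]$ versus $g_* J[p,v] = [c_g(p), \rho(g)J_0 v]$, which agree precisely because $\rho(g) \in GL(\C^n) = C_{GL(\R^{2n})}(J_0)$. You clearly sense this in your final paragraph (``the only substantive point is...''), but the computation you actually wrote is the one where the substantive point doesn't appear. The paper's proof handles this cleanly by a change of perspective: rather than computing in the associated bundle, it observes that $c_g$ preserves $P_{GL(\C^n)}(M)$ by definition of a $(\rho,GL(\C^n))$-bundle, and then the assumption $\rho(\ga) \subset GL(\C^n)$ upgrades this to $g_*(P_{GL(\C^n)}(M)) \subset P_{GL(\C^n)}(M)$ — after which $g_*$ commuting with $J$ is immediate. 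I'd suggest either adopting that bundle-level argument or carrying the $\rho(g)$ factor explicitly through your associated-bundle computation.
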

\begin{proof}
	It is well known that a $GL(\C^n)$-structure on $M$ corresponds to an automorphisms $J: TM \rightarrow TM$ such that $J^2=-\id$. 
	Namely, acting by $i$ on the $GL(\C^n)$-bundle $P_{GL(\C^n)}(M)$ induces an involution $i$ of $\Fr(M)$. 
	Now take the associated vector bundle map $J$ to $i$ obtained through the canonical identification 
	\[TM \cong \Fr(M) \times_{GL(2n)} \R^{2n} .\] 
	We have that $c_g (P_{GL(\C^n)}(M)) \subset P_{GL(\C^n)}(M)$ for all $g\in \ga$ with $c_g(p) = g_*(p) \cdot \rho (g)^{-1}$.
	By our assumption on $\rho$ this implies that $g_*(P_{GL(\C^n)}(M)) \subset P_{GL(\C^n)}(M)$. 
	In particular $g_*$ commutes with the $i$-action on $P_{GL(\C^n)}(M)$ and hence $g_*$ and $J$ commute i.e. $J$ is $\ga$-equivariant. 
\end{proof}

We note that the assignment 
	$[M/\ga] \mapsto \coprod_{[g] \in \mathrm{Conj}(\ga)} M^g/C_G(g)$
	is functorial and sends disjoint unions to disjoint unions so that
	\[C^*_{\mathrm{CR}}: \gar\Orb^{GL(\C^n)}_{2n} \rightarrow (\Ch_\kk^{\oplus})^{\mathrm{op}},\quad \quad [M/\ga] \mapsto C^*\left(\coprod_{[g] \in \mathrm{Conj}(\ga)} M^g/C_G(g)\right)\] 
	defines a symmetric monoidal functor.\footnote{Here we are forgetting a framed embedding down to the underlying equivariant embedding, see Remark \ref{rmk:underlyingemb}, which induces a map on the level of $C^*_{\mathrm{CR}}$.}

\begin{prop}
	For any $\rho: \ga \rightarrow GL(\C^n)$ Chen-Ruan cohomology is computed by equivariant factorization homology, i.e.
	\[ H^*_{\mathrm{CR}}[M/\ga] \simeq  \int_{[M/\ga]} C^*_{\mathrm{CR}}.\]
\end{prop}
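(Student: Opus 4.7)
The plan is to invoke the characterisation Theorem \ref{thm:classification}: since the target $(\Ch_\kk^\oplus)^{\mathrm{op}}$ is $\tensor$-cocomplete (its monoidal structure $\oplus$ is both product and coproduct in $\Ch_\kk$), it suffices to verify that the symmetric monoidal functor $C^*_{\mathrm{CR}}$ belongs to $\cH(\gar\Orb^{GL(\C^n)}_{2n}, (\Ch_\kk^\oplus)^{\mathrm{op}})$, i.e.\ that it satisfies $\tensor$-excision and preserves sequential colimits. Once these two properties are established, Theorem \ref{thm:classification} gives $C^*_{\mathrm{CR}} \simeq \int_{-} (C^*_{\mathrm{CR}} \circ i^\tensor)$, and evaluating at $[M/\ga]$ yields the claim.

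For $\tensor$-excision, the key geometric observation is that taking $g$-fixed points commutes with $\ga$-equivariant collar-gluings. Given $M \cong M_- \cup_{N\times \R} M_+$, for every $g\in \ga$ the $\ga$-action on $\R$ is trivial, so $(N\times \R)^g = N^g \times \R$, and therefore
\[ M^g \;\cong\; M_-^g \,\cup_{N^g \times \R}\, M_+^g. \]
The centraliser $C_\ga(g)$ acts on this decomposition, and passing to quotients yields a collar-gluing of $M^g/C_\ga(g)$. Taking disjoint unions over conjugacy classes $[g] \in \mathrm{Conj}(\ga)$ gives a collar-gluing of the space $\coprod_{[g]} M^g/C_\ga(g)$. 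Applying the classical Mayer--Vietoris property of singular cochains then identifies $C^*_{\mathrm{CR}}[M/\ga]$ as the pushout (in $\Ch_\kk^\oplus$) of the cochains of the pieces, which is precisely the relative tensor product in $(\Ch_\kk^\oplus)^{\mathrm{op}}$. This is formally analogous to the proof of Proposition \ref{prop:beh} for Bredon (co)homology.

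For preservation of sequential colimits, a sequence $M_1 \subset M_2 \subset \cdots$ with $\cup_i M_i = M$ yields for each $g\in \ga$ a filtration $M_1^g \subset M_2^g \subset \cdots$ with union $M^g$, and this passes to the quotients by $C_\ga(g)$. Since singular cohomology converts such sequential colimits of spaces into sequential colimits in $(\Ch_\kk^\oplus)^{\mathrm{op}}$ (they are sequential limits in $\Ch_\kk^\oplus$), the required property holds.

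The main technical point to handle carefully is the dualisation: the target is the opposite category $(\Ch_\kk^\oplus)^{\mathrm{op}}$, so Definition \ref{def:excision} must be interpreted with arrows reversed, and one must check that the Mayer--Vietoris sequence for cochains does indeed produce the geometric realisation of the bar construction in the opposite category. Beyond that, the argument is essentially parallel to the Bredon case of Proposition \ref{prop:beh}, with the only additional ingredient being that the decomposition $\coprod_{[g]} M^g/C_\ga(g)$ underlying $C^*_{\mathrm{CR}}$ is compatible with both $\ga$-equivariant collar-gluings and $\ga$-equivariant filtrations.
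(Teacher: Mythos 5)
Your proposal is correct and takes essentially the same approach as the paper: verify that $C^*_{\mathrm{CR}}$ is an equivariant factorization homology theory (it satisfies $\tensor$-excision via the observation that $g$-fixed points and quotients by centralisers commute with $\ga$-equivariant collar-gluings, so singular-cochain Mayer--Vietoris applies to $\coprod_{[g]} M^g/C_\ga(g)$, and sequential colimit preservation is routine), then invoke Theorem \ref{thm:classification}. Your extra care about the dualisation into $(\Ch_\kk^\oplus)^{\mathrm{op}}$ is sensible but not a deviation — the paper handles it implicitly in parallel with the Bredon case.
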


\begin{proof}
	Preserving sequential colimits is clear, so it only remains to verify $\tensor$-excision. 
	Indeed, if $M = M_- \cup_{M_0} M_+$ is a collar-gluing we have 
	\[ \amalg_{[g] \in \mathrm{Conj}(\ga)} M^g/C_G(g) \quad = \quad  \amalg_{[g] \in \mathrm{Conj}(\ga)} M_-^g/C_G(g) \;  \bigcup_{\amalg_{[g] \in \mathrm{Conj}(\ga)} M_0^g/C_G(g)} \;   \amalg_{[g] \in \mathrm{Conj}(\ga)} M_+^g/C_G(g)\]
	so that the $\tensor$-excision follows from the Mayer-Vietoris property of singular homology. 
\end{proof}

\subsection{Twisted Hochschild homology} \label{subsec:thh}

Recall from Example \ref{ex:tensorcocomplete} that $\vect$ and $\Ch_\kk^{\tensor}$ are $\tensor$-cocomplete such that factorization homology exists and satisfies $\tensor$-excision. 
We give an example of a choice of coefficients. 

\begin{ex}
By combining Example \ref{ex:enalg} and Proposition \ref{prop:equivariantalgebras} we find that a $\Z_2^\triv\Disk_1^{\mathrm{fr}}$-algebra in $\vect$ consists of a unital algebra $A$ together with an algebra involution $\phi: A \rightarrow A$.
In Figures \ref{fig:algemb1} and \ref{fig:algemb2} we have illustrated the embeddings $f_\phi$, $f_\mu$ that correspond to the involution and multiplications maps on $A$, in the sense that $\phi = A(f_\phi): A \rightarrow A$ and $\mu = A(f_\mu): A \tensor A \rightarrow A$.
\end{ex}

\begin{figure}[h]
	$f_\phi: [\DD_\e/\Z_2] \rightarrow [\DD_\e/\Z_2]$ \quad
	\begin{tikzpicture}
		\draw [(-),blue,thick] (0,0) -- (1,0);
		\draw [(-),blue,thick] (1.5,0) --(2.5,0);
		\node at (.5,.5) {$b$};
		\node at (2,.5) {$b_\phi$};
	\end{tikzpicture}
	\; \; $\longrightarrow$ \; \;
	\begin{tikzpicture}
		\draw [(-),blue,thick] (0,0) -- (1,0);
		\draw [(-),blue,thick] (1.5,0) --(2.5,0);
		\node at (.5,.5) {$b_\phi$};
		\node at (2,.5) {$b$};
	\end{tikzpicture}
\caption{The $\Z_2$-equivariant embedding that induces the involution on $A$.}
	\label{fig:algemb1}
\end{figure}

\begin{figure}[h]
	$f_\mu: [\DD_\e/\Z_2]_b \amalg [\DD_\e/\Z_2]_r \rightarrow [\DD_\e/\Z_2]$ \quad   
	\begin{tikzpicture} 
		\draw [(-),black,dotted] (0,0) -- (3.5,0);
		\draw [(-),black,dotted] (4,0) --(7.5,0);
		\draw [(-),blue,thick] (0.5,0) -- (1.5,0);
		\draw [(-),red,thick] (2,0) -- (3,0);
		\draw [(-),blue,thick] (4.5,0) -- (5.5,0);
		\draw [(-),red,thick] (6,0) -- (7,0);
		\node at (1,.5) {$b$};
		\node at (2.5,.5) {$r$};
		\node at (5,.5) {$b_\phi$};
		\node at (6.5,.5) {$r_\phi$};
	\end{tikzpicture} 
	\caption{The $\Z_2$-equivariant embedding that induces the multiplication on $A$.}
	\label{fig:algemb2}
\end{figure}

We have the following natural generalization of traces and Hochschild homology in the presence of involutions.

\begin{defn} \label{def:twhh} Let $(A,\phi)$ be a unital $\kk$-algebra with involution.
	\begin{enumerate}
		\item We define the $\phi$-twisted traces of $A$ to be 
		\[\mathrm{Tr}^\phi(A) := \{f: A \rightarrow \kk \; | \; f(ab) = f(\phi(b) a) \; \; \forall a,b \in A\}. \]
		\item The $\phi$-twisted Hochschild homology of $A$, denoted $HH_*^\phi(A)$, is the Hochschild homology $HH_*(A, {}_{A}A_\phi)$ where ${}_{A}A_\phi$ is the $A$-bimodule $A$ with the action $(a,b)\cdot c = ac\phi(b)$. 
	\end{enumerate}
\end{defn}

\begin{rmk}
	Note that the linear dual of $HH^\phi_0(A)$ is isomorphic to $\mathrm{Tr}^\phi(A)$. 
\end{rmk}

\begin{prop} \label{prop:twhh}
	Let $A$ be a $\Z_2^\triv\Disk_1^{\mathrm{fr}}$-algebra in $\vect$, and denote also with $A$ the associated $\Z_2^\triv\Disk_1^{\mathrm{fr}}$-algebra in $\Ch_\kk^{\tensor}$. Let $[S^1/\Z_2]$ be the $\triv$-framed global quotient obtained by rotating the circle. 
	Then factorization homology over $[S^1/\Z_2]$ with coefficients in $A$ is computed by
	\[ \int_{[S^1/\Z_2]} A \simeq HH_*^\phi(A) \text{ in } \Ch_\kk^{\tensor} \]
	the $\phi$-twisted Hochschild homology of $A$.
\end{prop}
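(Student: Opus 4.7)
The plan is to apply $\tensor$-excision (Theorem \ref{thm:excision}) to a $\Z_2$-equivariant collar-gluing of $[S^1/\Z_2]$, identify the pieces with local models using Corollary \ref{cor:localcomp}, and then track how the $\Z_2$-equivariance of the gluing introduces a $\phi$-twist in the resulting two-sided bar complex.

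Take the $\Z_2$-invariant map $f \colon S^1 \to [-1,1]$ defined by $e^{i\theta} \mapsto \cos(2\theta)$. This is a $\Z_2$-equivariant collar-gluing, producing $M_+ = S^1\setminus\{1,-1\}$ (arcs $u_+, u_-$ swapped by $\Z_2$), $M_- = S^1\setminus\{i,-i\}$ (arcs $r_+, r_-$ swapped), and $M_0 = S^1\setminus\{1,-1,i,-i\}$ (arcs $c_1, c_2, c_3, c_4$ forming two $\Z_2$-orbits $N_1 = \{c_1,c_3\}$ and $N_2 = \{c_2,c_4\}$). Since the $\Z_2$-action is free, each $[M_\pm/\Z_2]$ is equivalent in $\Z_2^\triv\Orb_1^{\mathrm{fr}}$ to $[\DD_\e/\Z_2]$, while $[M_0/\Z_2] \simeq [\DD_\e/\Z_2]\amalg[\DD_\e/\Z_2]$. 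Corollary \ref{cor:localcomp} combined with $\tensor$-excision then gives
\[\int_{[S^1/\Z_2]}A \; \simeq \; A \underset{A\tensor A}{\bigotimes} A,\]
where $A\tensor A \simeq \int_{[M_0/\Z_2]}A$ carries the componentwise $E_1$-structure from $\mu$ (Lemma \ref{lem:e1str}).

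The crux is identifying the two $A\tensor A$-module structures. Fix coherent equivalences $[M_\pm/\Z_2], [N_i/\Z_2] \cong [\DD_\e/\Z_2] = [\Z_2\times\R/\Z_2]$ by sending the arcs $u_+, r_+, c_1, c_2$ to the $\{0\}\times\R$-component. Through these identifications, the embedding $\mu_+$ is untwisted: both $c_1$ (from $N_1$) and $c_2$ (from $N_2$) lie in $u_+$, so every inclusion is $\{0\}$-to-$\{0\}$, yielding the standard bimodule action $(x,y)\cdot a = xay$. In $\mu_-$, however, only $c_1 \subset r_+$ is $\{0\}$-to-$\{0\}$: the arc of $N_2$ lying in $r_+$ is $c_4$, which our identification places on the $\{1\}$-component of $[N_2/\Z_2]$. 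The inclusion $c_4 \hookrightarrow r_+$ is therefore $\{1\}$-to-$\{0\}$, and functoriality under the non-trivial automorphism of $[\DD_\e/\Z_2]$---which acts on $A$ as $\phi$ by Proposition \ref{prop:equivariantalgebras}---produces the twisted left action $(x,y)\cdot b = \phi(y)\cdot b\cdot x$.

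With these bimodule structures the two-sided bar complex $A\otimes^L_{A\tensor A}A$ becomes, via the algebra map $A\tensor A^{op} \to A\tensor A$, $(p,q)\mapsto(p,\phi(q))$, the Hochschild complex of $A$ with coefficients in ${}_A A_\phi$, which computes $HH_*^\phi(A)$. The vector-space statement follows by taking $H_0$: the bar relation $xay\otimes b \sim a\otimes \phi(y)bx$ recovers exactly the defining relations of the $\phi$-twisted cocenter. The main obstacle is the equivariant geometry above: tracing which arc of each $\Z_2$-orbit inside $M_0$ sits inside which component of $M_\pm$ is what pins down the $\phi$-twist on the correct factor of the bar complex; once this is done, the identification with the standard twisted Hochschild complex is routine.
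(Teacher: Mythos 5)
Your proof follows the same strategy as the paper's: apply $\tensor$-excision to a $\Z_2$-equivariant collar-gluing of the rotationally acted circle (the paper works from its Figure~\ref{fig:circledecomp}; your explicit $\cos(2\theta)$ and bookkeeping of arcs $c_1,\dots,c_4$ is a welcome clarification), identify all pieces with $[\DD_\e/\Z_2]$, and extract the $\phi$-twist from the fact that one of the bar-map inclusions relates the chosen representative arc to the other arc in its $\Z_2$-orbit. The core geometric observation is correct and matches the paper's.

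There is, however, a gap in the final algebraic reduction. You write the middle algebra as ``$A\tensor A$ with the componentwise $E_1$-structure from $\mu$,'' but the paper explicitly flags that one of the two collar arcs traverses the interval direction in the opposite sense, so the induced $E_1$-algebra is $A\tensor A^{op}$ (in your parametrisation $f$ is decreasing on $N_1$ and increasing on $N_2$, hence $A^{op}\tensor A$ with your labelling). Your module formulas $(x,y)\cdot a = xay$ and $(x,y)\cdot b = \phi(y)bx$ are in fact only module actions over $A^{op}\tensor A$, not over $A\tensor A$, so the orientation reversal is implicitly there --- but you never say so, and you never justify the $A^{op}$ factor. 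More seriously, the concluding appeal to ``the algebra map $A\tensor A^{op}\to A\tensor A$, $(p,q)\mapsto (p,\phi(q))$'' is not valid: $\phi$ is an algebra \emph{homo}morphism, not an anti-homomorphism, so this formula does not define an algebra map out of $A\tensor A^{op}$. The correct finish is to transport along the swap isomorphism $A\tensor A^{op}\cong A^{op}\tensor A$ (so that the first factor becomes the untwisted bimodule $A$), and then observe that $\phi$ itself gives a module isomorphism between the resulting left $A\tensor A^{op}$-module and ${}_A A_\phi$; this identifies the two-sided bar construction with $A\otimes^{\mathbb L}_{A\tensor A^{op}} {}_A A_\phi = HH^\phi_*(A)$, as in the paper. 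Your $H_0$ sanity check does land on the twisted cocenter, but the relation you derive is $bx\sim\phi(x)b$ rather than $xb\sim b\phi(x)$; these generate the same ideal (substitute $x\mapsto\phi(x)$), so no harm done, but it is a symptom of the uncontrolled identifications in the last paragraph.
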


\begin{proof}
	We utilise the collar-gluing of Figure \ref{fig:circledecomp}. 
	We thus have a decomposition of $[S^1/\Z_2]$ into $M_+ = [r \sqcup r_\phi /\Z_2]$ and $M_- = [b \sqcup b_\phi/\Z_2]$, with overlapping regions $[g\amalg g_\phi /\Z_2]$ and $[p\amalg p_\phi/\Z_2]$.
	Note that all pieces in the decomposition are diffeomorphic to $[\DD_\e/\Z_2]$, and get assigned $A$. 
	Let us call those assignments $A_r$, $A_b$, $A_g$ and $A_p$ respectively. 
	The orientation of $[0,1]$ in the collar-gluing matches up with $g \amalg g_\phi$, but traverses $p\amalg p_\phi$ inversely. 
	Therefore, the $E_1$-algebra structure induced on 
	\[\underset{[g \amalg g_\phi/\Z_2] \amalg [p \amalg p_\phi/\Z_2]}{\int} A \quad \cong \quad A_g \tensor A_p \] 
	from Lemma \ref{lem:e1str} is the algebra structure $A \tensor A^{op}$. 
	Next, consider how $g\amalg g_\phi$ and $p\amalg p_\phi$ embed into $b \amalg b_\phi$ and $r\amalg r_\phi$. 
	We find that $A_b$ as an $A \tensor A^{op}$ module is ${}_A A_A$, whereas $A_r$ as an $A \tensor A^{op}$-module is ${}_A A_\phi$. 
	Using $\tensor$-excision, we then compute
	\begin{align*} 
	\int_{[S^1/\Z_2]} A \quad \cong \quad \int_{[b\amalg b/\Z_2]} A \; \; \underset{\int_{[g\amalg g_\phi/\Z_2]\amalg [p \amalg p_\phi/\Z_2]}A}{\bigotimes}  \; \; \int_{[r \amalg r_\phi/\Z_2]}A \quad \cong \quad {}_{A}A_A \underset{A \tensor A^{op}}{\overset{\mathbb{L}}{\tensor}} {}_{A} A_\phi
	\end{align*}
	which by definition is $CH_*(A,{}_{A} A_\phi) = CH_*^\phi(A)$.
\end{proof}

\begin{figure}[h]
	\includegraphics[trim={0 18.5cm 3cm 2.5cm},clip,scale=.5]{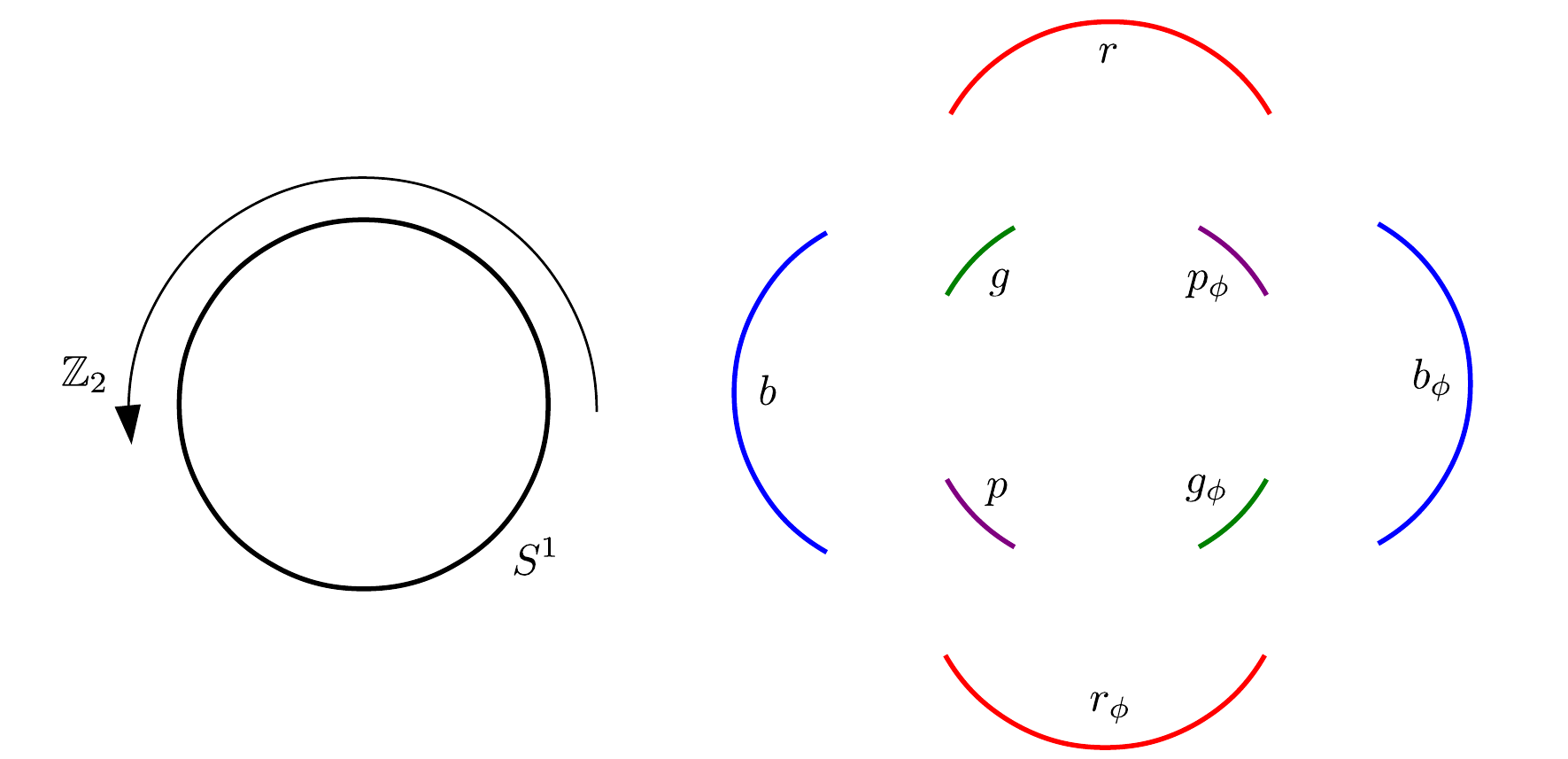}
	\caption{A $\Z_2$-equivariant decomposition of $S^1$ for the orbifold circle $[S^1/\Z_2]$.}
	\label{fig:circledecomp}
\end{figure}

\begin{rmk}
	If one computes the factorization homology of $A$ in the plain category $\vect$ one recovers only $HH^\phi_0(A)$.\footnote{The computation is identical, but the derived tensor products are then replaced by plain tensor products.}
\end{rmk}

\begin{rmk}
	The natural $S^1$-action on itself by rotations is an action by framed $\Z_2$-equivariant diffeomorphisms of $S^1$.
	By the functoriality of factorization homology, the $\phi$-twisted Hochschild homology inherits a canonical (homotopy) $S^1$-action.
\end{rmk}

\subsection{Categorical representation theory} \label{subsec:crt}

Let $\kk$ be a field. 
We consider factorization homology with values in the $(2,1)$-category of categories $\Rex$. 
By a \emph{$\kk$-linear category} we mean a category enriched over $\vect$. 
We follow the exposition of $\Rex$ given in \cite{bzbj}.

Recall that a functor is called \emph{right exact} if it preserves finite colimits. A category is \emph{essentially small} if it is equivalent to a small category.\footnote{A category is called \emph{small} if it has a set of objects and a set of morphisms, rather than proper classes.}
\begin{defn}
	$\Rex$ is the $2$-category of $\kk$-linear essentially small categories that admit finite colimits with morphisms right exact functors and 2-morphisms natural isomorphisms.
\end{defn}

For $\catC, \catD, \mathcal{E}$ $k$-linear we let $\text{Bilin}(\catC \times \catD, \mathcal{E})$ denote the category of $\kk$-bilinear functors from $\catC \times \catD$ to $\mathcal{E}$ that preserve finite colimits in each variable separately.
\begin{defn} 
	The \emph{Deligne-Kelly tensor product} $\catC \boxtimes \catD$ for $\catC,\catD \in \Rex$ is uniquely characterised by the natural equivalences
	\[ \Rex( \catC \boxtimes \catD, \mathcal{E}) \simeq \text{Bilin}(\catC \times \catD, \mathcal{E}) \simeq \Rex(\catC, \Rex(\catD, \mathcal{E})).\]
\end{defn} 
Recall from Example \ref{ex:tensorcocomplete} that $\Rex$ is $\tensor$-cocomplete such that factorization homology is well-defined and satisfies $\tensor$-excision.
In order to use the $\tensor$-excision property, we need a concrete realisation of the two-sided bar construction in $\Rex$. 

An $E_1$-algebra in $\Rex$ is a tensor category $(\catA, \tensor: \catA \boxtimes \catA \rightarrow \catA$) \cite[\S 3.3]{bzbj}.
Let $\catA$ be a tensor category, and let $(\catM, \tensor: \catA \boxtimes \catM \rightarrow \catM)$, and $(\catN, \tensor: \catN \boxtimes \catA \rightarrow \catN)$ be left and right $\catA$-module categories respectively.  
A functor $F: \catM \boxtimes \catN \rightarrow \catD$ is called \emph{$\catA$-balanced} if it is equipped with natural isomorphisms
\[ B_{m,X,n} : F(m \tensor X \boxtimes n) \cong F(n \boxtimes X \tensor n)\]
for $m \in \catM$, $X \in \catA$ and $n \in \catN$ satisfying some coherence equations \cite[Def. 3.1]{eno10}.
We denote with $\mathrm{Bal}_\catA(\catM \boxtimes \catN, \catD)$ the category of balanced functors. 

\begin{defn} \cite[Def 3.14]{bzbj}
	The \emph{relative Kelly tensor product} $\catM \boxtimes_\catA \catN$ of left and right module categories $\catM$ and $\catN$ over a tensor category $\catA$ is uniquely characterised by the natural equivalence
	\[ 	\Rex (\catM \boxtimes_\catA \catN, \catD) \cong \mathrm{Bal}_\catA(\catM \boxtimes \catN, \catD). \]
\end{defn}
The two-sided bar construction in $\Rex$ exactly computes the relative Kelly tensor product \cite[Remark 3.16]{bzbj}. 
\begin{ex} (\cite[Def. 3.1.34]{dsps13})
	A tensor category $\catA$ is naturally a module category over $\catA \boxtimes \catA^{mp}$ via the left and right self-action.\footnote{Here $\catA^{mp}$ is the tensor category with opposite multiplication.} The categorical trace of $\catA$, denoted $\mathrm{tr}(A)$, is by definition the relative Kelly tensor product $\catA \boxtimes_{\catA \boxtimes \catA^{mp}} \catA$. 
\end{ex}
For a monoidal functor $\Phi: \catA \rightarrow \catA$ and a right $\catA$-module category $\catM$ one can \emph{twist} the module category \cite[Def. 2.4.2]{dsps13}. Concretely, the $\Phi$-twisted module category $\catM_\Phi$ is the category $\catM$ where the action of $\catA$ is precomposed with the functor $\Phi$.
\begin{defn} \label{def:cattrace}
	Let $\catA$ be a tensor category and $\Phi: \catA \rightarrow \catA$ a monoidal functor. The $\Phi$-twisted trace of $\catA$ is denoted $\mathrm{tr}^\Phi(\catA)$ and defined as $\catA \boxtimes_{\catA \boxtimes \catA^{mp}} \catA_\Phi$.
\end{defn}
\begin{rmk}
	Definition \ref{def:cattrace} can be seen as a categorified version of the twisted Hochschild homology of Definition \ref{def:twhh}.
\end{rmk}
After choosing coefficients we will give an example computation of factorization homology.

\begin{ex} By combining Example \ref{ex:enalg} and Proposition \ref{prop:equivariantalgebras} we find that a $\Z_2^\triv\Disk_1^{\mathrm{fr}}$-algebra in $\Rex$ consists of a tensor category $\catA \in \Rex$ endowed with a monoidal involution, i.e. a monoidal functor $(\Phi,\phi,\phi_0): \catA \rightarrow \catA$, and a monoidal isomorphism $t: \Phi^2 \cong \id$. 
\end{ex}

\begin{prop} \label{prop:twdrinfeld}
	Let $\catA$ be a $\Z_2^\triv\Disk_1^{\mathrm{fr}}$-algebra. Let $[S^1/\Z_2] \in \ga\Quot_1^{\mathrm{fr}}$ be the framed free quotient obtained by rotating the circle. 
	Then factorization homology over $[S^1/\Z_2]$ with coefficients in $\catA$ is given by
	\[ \int_{[S^1/\Z_2]} \catA \cong \mathrm{tr}^\Phi(\catA)\]
	the $\Phi$-twisted trace of $\catA$.
\end{prop}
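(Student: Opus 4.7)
The plan is to mimic the proof of Proposition \ref{prop:twhh}, replacing twisted Hochschild homology with its categorical analogue. The core idea is to reuse the $\Z_2$-equivariant collar-gluing of $[S^1/\Z_2]$ depicted in Figure \ref{fig:circledecomp}, apply $\tensor$-excision (Theorem \ref{thm:excision}) in the $\tensor$-cocomplete target $\Rex^\boxtimes$, and identify the resulting two-sided bar construction with the definition of the twisted trace given in Definition \ref{def:cattrace}.

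First I would recall that each of the four pieces of the collar-gluing, namely $[b \amalg b_\phi /\Z_2]$, $[r \amalg r_\phi /\Z_2]$, $[g \amalg g_\phi /\Z_2]$ and $[p \amalg p_\phi /\Z_2]$, is $\Z_2$-equivariantly diffeomorphic to the local model $[\DD_\e/\Z_2]$, so that Corollary \ref{cor:localcomp} identifies each of the four factorization homology values with the underlying category $\catA$. Next, using Lemma \ref{lem:e1str}, the $E_1$-algebra structure induced on
\[ \int_{[g \amalg g_\phi/\Z_2]\amalg [p \amalg p_\phi/\Z_2]} \catA \; \cong \; \catA \boxtimes \catA \]
is $\catA \boxtimes \catA^{mp}$, because the two overlap arcs are traversed in opposite orientations by the collar map to $[-1,1]$ — exactly as in the proof of Proposition \ref{prop:twhh}.

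The key step — and the place where $\Phi$ enters — is the identification of the module structures. Looking at how $g \amalg g_\phi$ and $p \amalg p_\phi$ embed $\Z_2$-equivariantly into $b \amalg b_\phi$, one obtains the standard bimodule structure, so $\int_{[b \amalg b_\phi/\Z_2]}\catA$ recovers $\catA$ as an $\catA \boxtimes \catA^{mp}$-module via the usual left and right action. For the arc $r \amalg r_\phi$, however, one of the two overlap embeddings must compose with the swap of the two connected components of $[\DD_\e/\Z_2]$; under the dictionary of Proposition \ref{prop:equivariantalgebras} this swap is assigned the monoidal involution $\Phi$. Therefore $\int_{[r \amalg r_\phi/\Z_2]}\catA$ acquires the structure of the twisted module category $\catA_\Phi$ in the sense of \cite[Def.~2.4.2]{dsps13}.

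Assembling these identifications, $\tensor$-excision yields
\[ \int_{[S^1/\Z_2]} \catA \; \cong \; \int_{[b \amalg b_\phi/\Z_2]}\catA \underset{\int_{[g \amalg g_\phi /\Z_2] \amalg [p \amalg p_\phi /\Z_2]}\catA}{\boxtimes} \int_{[r \amalg r_\phi /\Z_2]}\catA \; \cong \; \catA \underset{\catA \boxtimes \catA^{mp}}{\boxtimes} \catA_\Phi, \]
which is exactly $\mathrm{tr}^\Phi(\catA)$ by Definition \ref{def:cattrace} and the identification of the two-sided bar construction with the relative Kelly tensor product recalled after Definition 5.4.3. The main obstacle is the middle step: one must be careful to track, at the level of framed embeddings rather than bare equivariant embeddings, the $\Z_2$-action on the swap piece of $r \amalg r_\phi$ so as to verify that it really implements precomposition with $\Phi$ and not, e.g., the identity. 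This amounts to unpacking the equivalence $\Alg_{\Z_2^\triv \Disk_1^{\mathrm{fr}}}(\Rex) \cong \Alg_{\Disk_1^{\mathrm{fr}}}(\Rex)^{\Z_2}$ on the generating morphism $f_\phi$ of Figure \ref{fig:algemb1}, and is essentially the categorical analogue of the corresponding check for $HH_*^\phi$ in Proposition \ref{prop:twhh}.
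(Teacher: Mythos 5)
Your proposal follows exactly the approach of the paper: it reuses the collar-gluing of Figure \ref{fig:circledecomp}, applies $\tensor$-excision in $\Rex^\boxtimes$, and identifies the resulting two-sided bar construction with $\catA \boxtimes_{\catA \boxtimes \catA^{mp}} \catA_\Phi$ just as the paper does (which in turn mirrors the argument of Proposition \ref{prop:twhh}). The extra detail you supply about tracking the swap of components of $[\DD_\e/\Z_2]$ to see why the twist by $\Phi$ appears is exactly the check the paper delegates to the Hochschild case, so the arguments coincide.
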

\begin{proof}
	We utilise the collar-gluing of Figure \ref{fig:circledecomp}, exactly as we did in the proof of Proposition \ref{prop:twhh}, and compute using $\tensor$-excision
	\begin{align*} 
		\int_{[S^1/\Z_2]} \catA \quad \cong \quad \int_{[\DD_\e/\ga]} \catA \; \; \underset{\int_{[\DD_\e\amalg \DD^{op}_\e/\ga]}A}{\bigotimes} \; \; \int_{[\DD_\e/\ga]} \catA \quad \cong \quad \catA \underset{\catA\boxtimes \catA^{mp}}{\boxtimes} \catA_\Phi.
	\end{align*}
\end{proof}

Next we will show that invariants computed from factorization homology in $\Rex$ automatically come equipped with braid group actions and diffeomorphism group actions. 

\begin{prop} \label{prop:brgract} Let $A$ be a $\gar\Disk_2^{G}$-algebra in $\Rex$, $[\Sigma/\ga] \in \gar\Orb_2^{G}$ and denote $\catA := A[\DD_\e/\ga]$.
	\begin{enumerate}
		\item Let $G=GL^+(n)$. The invariant $\int_{[\Sigma/\ga]} A$ carries a canonical action of the fundamental groupoid $\Pi_{\leq 1} \mathrm{Diff}^{+}[\Sigma/\ga]$ of the space of oriented $\ga$-equivariant diffeomorphisms. 
		\item Let $G=\e$. For $X_1,\dots,X_k \in \catA$ there exists an associated object 
		\[\int_{[\Sigma/\ga]} X_1 \tensor \dots \tensor X_k \in \int_{[\Sigma/\ga]} A\] 
		carrying a canonical action of the orbifold braid group $B_k[\Sigma/\ga]$.
	\end{enumerate}
\end{prop}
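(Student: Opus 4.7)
Both parts rest on functoriality of the symmetric monoidal functor $\int_- A: \gar\Orb_2^G \to \Rex$ together with the crucial observation that $\Rex$ is a $(2,1)$-category, so its mapping spaces are $1$-truncated groupoids. The induced maps on embedding spaces
\[ \EEmb^G_{\fr}([M/\ga],[N/\ga]) \longrightarrow \Rex\Bigl(\int_{[M/\ga]}A,\ \int_{[N/\ga]}A\Bigr) \]
thus factor through the fundamental groupoids $\Pi_{\leq 1}\EEmb^G_{\fr}([M/\ga],[N/\ga])$: paths give natural isomorphisms, and loops through a fixed embedding give natural automorphisms of the induced functor. This observation is the engine driving both parts.

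For part (1), I would specialize to $[M/\ga]=[N/\ga]=[\Sigma/\ga]$ with $G=GL^+(n)$. Proposition \ref{prop:hfrisgfr} identifies $\EEmb^{GL^+(n)}_{\fr}([\Sigma/\ga],[\Sigma/\ga])$, up to weak equivalence, with the space of strongly framed self-embeddings, namely the oriented $\ga$-equivariant self-embeddings of $\Sigma$. Since $\mathrm{Diff}^+[\Sigma/\ga]$ sits inside this space, passing to fundamental groupoids yields the desired action of $\Pi_{\leq 1}\mathrm{Diff}^+[\Sigma/\ga]$ on $\int_{[\Sigma/\ga]}A$.

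For part (2), I would first construct the object. Since $A$ is symmetric monoidal, $A(\coprod_k[\DD_\e/\ga])\simeq \catA^{\boxtimes k}$. Pick a framed embedding $\iota:\coprod_k[\DD_\e/\ga]\hookrightarrow [\Sigma/\ga]$ landing in the smooth locus (existence via Corollary \ref{cor:cover} with $I=\e$), apply $\int_- A$ to obtain a functor $\iota_*:\catA^{\boxtimes k}\to \int_{[\Sigma/\ga]}A$, and set $\int_{[\Sigma/\ga]}X_1\tensor\dots\tensor X_k := \iota_*(X_1\boxtimes\dots\boxtimes X_k)$.

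For the braid-group action, observe that $\ga^{\times k}\rtimes S_k$ acts on $\coprod_k[\DD_\e/\ga]$ by the $\ga$-action inside each copy and the $S_k$-permutation of copies, hence by framed self-automorphisms; by precomposition this group acts on $E:=\EEmb^\e_{\fr}(\coprod_k[\DD_\e/\ga],[\Sigma/\ga])$. Theorem \ref{htype} identifies $E$ equivariantly with $\Fr^\e(F_k^\e[\Sigma/\ga])$. Because the configuration space parametrises points in the smooth locus, the $\ga^{\times k}\rtimes S_k$-action on the base (and hence on the total space) is free, so the homotopy quotient agrees with the strict quotient $E/(\ga^{\times k}\rtimes S_k)$, a $GL(n)^{\times k}$-bundle over $F_k^\e[\Sigma/\ga]/(\ga^{\times k}\rtimes S_k)$. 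Its fundamental group therefore surjects onto $\pi_1\bigl(F_k^\e[\Sigma/\ga]/(\ga^{\times k}\rtimes S_k)\bigr)=B_k[\Sigma/\ga]$ by Definition \ref{def:braidmcg}. A loop in this quotient based at $[\iota]$ lifts to a path in $E$ from $\iota$ to $g\cdot\iota$ for some $g\in\ga^{\times k}\rtimes S_k$; functoriality supplies a natural isomorphism $\iota_*\Rightarrow \iota_*\circ g_*$, and composing its value at $X_1\boxtimes\dots\boxtimes X_k$ with the symmetry $g_*(X_1\boxtimes\dots\boxtimes X_k)\cong X_1\boxtimes\dots\boxtimes X_k$ in $\catA^{\boxtimes k}$ yields the claimed automorphism. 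The main obstacle I anticipate is verifying that this assignment assembles into a genuine group homomorphism $B_k[\Sigma/\ga]\to \Aut\bigl(\int_{[\Sigma/\ga]}X_1\tensor\dots\tensor X_k\bigr)$, which amounts to matching the coherence of the symmetric-monoidal structure on $\catA^{\boxtimes k}$ to concatenation of loops in $E$ and disentangling the $B_k[\Sigma/\ga]$-factor from the potential extra $\pi_1(GL(n))^{\times k}$-contribution in the bundle $E/(\ga^{\times k}\rtimes S_k)$.
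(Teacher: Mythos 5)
Your approach matches the paper's in structure: both parts rest on functoriality plus the $(2,1)$-truncation in $\Rex$. Part (1) is fine as written. Part (2) has a technical slip that, happily, is what created the obstacle you flag at the end.

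You invoke Theorem \ref{htype} to identify $E=\EEmb^\e_{\fr}\bigl(\coprod_k[\DD_\e/\ga],[\Sigma/\ga]\bigr)$ with $\Fr^\e(F_k^\e[\Sigma/\ga])$, but $\Fr^\e$ is by notation the case $G=GL(n)$. In part (2) the structure group is $G=\e$, so the relevant fixed-point bundle is a principal $C_\e(\e)^{\times k}=\e$-bundle and the theorem gives $E\simeq F_k^\e[\Sigma/\ga]$ on the nose --- there is no $GL(n)^{\times k}$-bundle at all. Consequently $E/(\ga^{\times k}\rtimes S_k)\simeq F_k^\e[\Sigma/\ga]/(\ga^{\times k}\rtimes S_k)$, its $\pi_1$ is $B_k[\Sigma/\ga]$ by Definition \ref{def:braidmcg} (no surjection needed, it is an isomorphism), and the phantom $\pi_1(GL(n))^{\times k}$-contribution you worry about disentangling simply does not exist. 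The paper avoids this by keeping the action at the level of the functor: $\Pi_{\leq 1}E$ gives the pure braid group $PB_k[\Sigma/\ga]$ acting on the functor $\catA^{\boxtimes k}\to\int_{[\Sigma/\ga]}A$, and $\ga^{\times k}\ltimes S_k$-equivariance of this functor (which is part of the data of $A$ being a symmetric monoidal functor out of $\gar\Disk_2^\e$, so your coherence concern is answered by definition) lets the action descend to $B_k[\Sigma/\ga]$, which is then evaluated on the object $X_1\boxtimes\dots\boxtimes X_k$. Your variant --- lifting a loop in the quotient to a path from $\iota$ to $g\cdot\iota$ and post-composing with the symmetry $g_*(X_1\boxtimes\dots\boxtimes X_k)\cong X_1\boxtimes\dots\boxtimes X_k$ --- is a correct unwinding of the same descent once the $G=\e$ correction is made.
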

\begin{proof}
	The first result follows directly from the functoriality of factorization homology, Proposition \ref{prop:hfrisgfr} and the fact that $\Rex$ is a $(2,1)$-category so that the homotopy action is truncated.
	For part two, we fix a composite of framed embeddings 
	\[ \amalg_k [\DD_\e/\Z_2] \rightarrow [\DD_\e/\Z_2] \rightarrow [\Sigma/\Z_2],\]
	where the first embedding corresponds to the $n$-fold tensor product in the tensor category $\catA$.   
	Factorization homology then assigns a functor 
	\begin{align*} 
		\catA^{\boxtimes k} \rightarrow \int_{[\Sigma/\Z_2]} A,\quad \quad \quad 
		X_1 \boxtimes \dots \boxtimes X_k \mapsto  \int_{[\Sigma/\Z_2]} X_1 \tensor \dots \tensor X_n.
	\end{align*}
	The functor inherits $\ga^{\times k} \ltimes S_k$-equivariance from $A$. 
	In the (2,1)-category $\Rex$ the homotopy action on the functor truncates to a strict action of the fundamental group $\pi_1 F^\e_{k}[\Sigma/\Z_2]$, which is the pure braid group $PB_k[\Sigma/\ga]$ by Theorem \ref{htype}. 
	This action descends, by equivariance, to an action of the braid group $B_k[\Sigma/\ga]$ on the functor.
	In particular, by evaluating the functor on objects $X_1,\dots, X_k$ we obtain an action on the object $\int_{[\Sigma/\Z_2]} X_1 \tensor \dots \tensor X_n$.
\end{proof}

\begin{rmk}
	Proposition \ref{prop:brgract} of course extends to other structure groups. 
	Part two generalises to actions of what could be called $G$-framed orbifold braid groups. 
\end{rmk}

\begin{rmk}
	For $\ga=\e$ and $\Sigma$ an oriented surface the fundamental group $\pi_1 \mathrm{Diff}^{+}(\Sigma)$ is trivial so that the $\Pi_{\leq 1}\mathrm{Diff}^{+}(\Sigma)$-action reduces to an action of $\pi_0 \mathrm{Diff}^{+}(\Sigma)$, the mapping class group of the surface. 
\end{rmk}

\section{Variations} \label{sec:variations}

In this section we discuss two minor generalisations of equivariant factorization homology. The first variation simplifies the algebraic set-up needed to construct invariants for concrete examples. The second variation allows for more flexibility in constructing invariants.

\subsection{Restricting singularity types} \label{subsec:restrsing}

If one wishes to compute factorization homology of a $(\rho,G)$-global quotient $[M/\ga]$ one first fixes a $\gar\Disk_n^G$-algebra $A$. 
However, if there are many $I\leq_\rho \ga$ then describing $A$ will involve a lot of data, whereas the global quotient $[M/\ga]$ will typically only have certain types of singularities. 
We will now explain that one can compute the factorization homology in such cases only focussing on the relevant singularity types.

\begin{defn} Given a representation $\rho: \ga \rightarrow GL(n)$.
	A \emph{family of $\rho$-singularity types}  is a subset $\cS$ of the set $\{ I : I \leq_\rho\ga\}$.
\end{defn}

\begin{defn} Let $\cS$ be a family of $\rho$-singularity types. 
	\begin{enumerate}
		\item A $(\rho,G)$-framed global quotient $[M/\ga]$ is called \emph{$\cS$-singular} if every point $m\in M$ admits a $\rho$-framed neighbourhood equivalent to a local model of singularity type $I \in \cS$. 
		\item The $\infty$-category of $\cS$-singular $(\rho,G)$-framed global quotients, denoted $\cS\Orb_n^G$, is the full subcategory of $\cS$-singular $(\rho, G)$-framed global quotients.
		\item $\cS\Disk_n^G \subset  \cS\Orb_n^G$ is the full subcategory consisting of local models.
		\item Let $\catC$ be a symmetric monoidal $\infty$-category a \emph{$\cS\Disk_n^G$-algebra in $\catC$} is a symmetric monoidal functor $\cS\Disk_n^G \rightarrow \catC$. 
		\item Given a $\cS\Disk_n^G$-algebra $A$ in $\catC$ the \emph{factorization homology of $\cS$-singular $(\rho,G)$-framed global quotients with coefficients in $A$} is the symmetric monoidal left Kan extension \[\int_{-} A: \cS\Orb_n^G \rightarrow \catC.\] 
		along the symmetric monoidal inclusion $i^{\tensor}: \cS\Disk_n^G \rightarrow \cS\Orb_n^G$.
	\end{enumerate}
\end{defn}

The proofs of Proposition \ref{aftlem}, Theorem \ref{thm:excision} and Theorem \ref{thm:classification} easily generalise:

\begin{prop} \label{aftlemagain}
	Let $\catC$ be $\tensor$-cocomplete. Factorization homology of $\cS$-singular $(\rho,G)$-framed global quotients exists and is computed by the left Kan extension on objects.
\end{prop}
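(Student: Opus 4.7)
The plan is to mimic the proof of Proposition \ref{aftlem}, invoking \cite[Lemma 2.17]{aft17}, but with the categories $\gar\Disk_n^G$ and $\gar\Orb_n^G$ replaced by their full subcategories $\cS\Disk_n^G$ and $\cS\Orb_n^G$. Since $\catC$ is $\tensor$-cocomplete, the same abstract machinery producing the symmetric monoidal left Kan extension applies once the relevant finality conditions are verified.

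First I would observe that $\cS\Orb_n^G$ is closed under disjoint union: if every point of $[M/\ga]$ and every point of $[N/\ga]$ admits a neighbourhood equivalent to a local model of singularity type in $\cS$, then the same holds for $[M \amalg N/\ga]$. In particular the empty global quotient belongs to $\cS\Orb_n^G$, and $\cS\Disk_n^G \subset \cS\Orb_n^G$ is a symmetric monoidal subcategory with respect to $\amalg$. Then the inclusion $i^\tensor: \cS\Disk_n^G \rightarrow \cS\Orb_n^G$ is a symmetric monoidal functor between small symmetric monoidal $\infty$-categories.

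Next I would verify the two finality conditions needed to invoke \cite[Lemma 2.17]{aft17}, namely that the canonical functors
\begin{align*}
&(\cS\Disk_n^G)_{/\emptyset} \longrightarrow (\cS\Orb_n^G)_{/\emptyset},\\
&(\cS\Disk_n^G)_{/[M/\ga]}\times (\cS\Disk_n^G)_{/[N/\ga]} \longrightarrow (\cS\Disk_n^G)_{/[M\amalg N/\ga]}
\end{align*}
are final for all $[M/\ga], [N/\ga]\in \cS\Orb_n^G$. The first is an equivalence of terminal categories. For the second, the argument used in the proof of Proposition \ref{aftlem} applies verbatim: a framed embedding from a disjoint union of local models into $[M\amalg N/\ga]$ splits canonically into its components landing in $[M/\ga]$ and $[N/\ga]$, giving an inverse equivalence, and this splitting preserves the constraint that each local model have singularity type lying in $\cS$. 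Hence both functors are equivalences, and in particular final.

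The main (minor) obstacle is verifying that the slice category identification of the second map restricts properly: one must check that a local model summand appearing in a framed embedding into $[M\amalg N/\ga]$ has singularity type in $\cS$ if and only if its image in the corresponding summand does, which is immediate from the definition of $\cS\Disk_n^G$ as a full subcategory closed under the relevant structure. Once the finality is established, \cite[Lemma 2.17]{aft17} produces the symmetric monoidal left Kan extension and shows its underlying functor is computed as the ordinary left Kan extension, which by $\tensor$-cocompleteness of $\catC$ is given pointwise by the colimit formula of Equation \eqref{LKE} applied to the inclusion $i: \cS\Disk_n^G \rightarrow \cS\Orb_n^G$. This concludes the proof.
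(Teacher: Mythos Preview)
Your proposal is correct and follows exactly the approach the paper intends: the paper does not give a separate proof but simply states that the proof of Proposition~\ref{aftlem} ``easily generalises,'' and your argument is precisely the verbatim adaptation of that proof to the full subcategories $\cS\Disk_n^G \subset \cS\Orb_n^G$, verifying the same two finality conditions needed for \cite[Lemma~2.17]{aft17}.
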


\begin{thm} \label{thm:classsing}Let $\catC$ be $\tensor$-cocomplete. Factorization homology provides an equivalence of categories
	\begin{align*}
		\int : \mathrm{Alg}_{\cS\Disk_n^G} \rightleftharpoons   \cH(\cS\Orb_n^G, \catC): (i^\tensor)^*
	\end{align*}
	where $\cH(\cS\Orb_n^G, \catC)$ is the $\infty$-category of symmetric functors that satisfy $\tensor$-excision and preserve sequential colimits.  
\end{thm}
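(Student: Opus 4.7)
The plan is to mirror the proof of Theorem \ref{thm:classification}, adapted to the $\cS$-singular setting. As before, I would show the unit and counit of the adjunction are equivalences. The unit $A \to \int_- A \circ i^\tensor$ is an equivalence on any $\cS\Disk_n^G$-algebra $A$ because $i^\tensor : \cS\Disk_n^G \to \cS\Orb_n^G$ is a fully faithful inclusion; the argument is identical to Corollary \ref{cor:localcomp} and uses Proposition \ref{aftlemagain} to guarantee that the left Kan extension along $i^\tensor$ restricts to $A$ on the generating subcategory.

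For the counit $\int A_H \to H$ on some $H \in \cH(\cS\Orb_n^G,\catC)$, the strategy is again to reduce to objects with a finite handle bundle decomposition and then induct on handle index. The first substep is to establish an analogue of Corollary \ref{handlebundledecomp} in the $\cS$-singular setting: every $\cS$-singular $(\rho,G)$-framed global quotient is a sequential colimit of $\cS$-singular global quotients admitting a \emph{finite} open handle bundle decomposition in which every handle has singularity type in $\cS$. This is where one uses the defining property of $\cS$-singularity: every point of $M$ has a $\rho$-framed $\ga$-neighbourhood equivalent to some $[\DD_I/\ga]$ with $I \in \cS$, so when one invokes Wasserman's equivariant handle bundle decomposition (Theorem \ref{wasserman}) the isotropy groups that appear can be arranged to lie in $\cS$ by first refining the chosen compact exhaustion $M = \bigcup_i \ga M_i$ so that each $M_i$ is covered by $\cS$-singular charts.

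The inductive argument then proceeds verbatim as in the proof of Theorem \ref{thm:classification}. For handle bundles of maximal index $n$, the piece $[D(V)/I]$ is equivalent to the local model $[\DD_I/\ga]$ on which $H$ and $\int A_H$ agree by Corollary \ref{cor:localcomp}. The inductive step attaches an open handle $[D(V)/I]$ to $[M_0/\ga]$ along the collar $[S(V)\times \R^q/I]$; both sides of the collar-gluing stay within $\cS\Orb_n^G$ because the collar neighbourhood is itself $\cS$-singular. Applying the $\tensor$-excision condition on $H$ and equivariant $\tensor$-excision on $\int A_H$, together with the inductive hypothesis, collapses the bar constructions and identifies the two invariants. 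Since both functors preserve sequential colimits, this suffices on all of $\cS\Orb_n^G$.

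The main obstacle is the first substep: verifying that Wasserman's decomposition can be made $\cS$-compatible. The concern is that an \emph{a priori} handle bundle decomposition may introduce handles $B(V_x)\oplus B(W_x)$ at points $x$ whose isotropy group, viewed together with $\rho|_{\ga_x}$, is not in $\cS$. The resolution is that for an $\cS$-singular manifold the local charts at each $x$ are already modeled on some $[\DD_I/\ga]$ with $I \in \cS$, so one can shrink each handle into one of these charts, which forces the type to lie in $\cS$; alternatively one can argue chart-by-chart and apply Wasserman's theorem relative to an $\cS$-atlas. Once this geometric input is in place, the remainder of the proof is formally identical to the unrestricted case.
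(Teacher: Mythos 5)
Your proposal follows the same route the paper intends (the paper itself only says the proofs ``easily generalise'' without giving details): reduce to finite handle-bundle decompositions via Corollary \ref{handlebundledecomp}, then induct on handle index using $\tensor$-excision, with the unit being an equivalence by full-faithfulness of $i^{\tensor}$. You correctly flag the one real subtlety, namely that Wasserman's decomposition must stay inside $\cS\Orb_n^G$.

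Your resolution of that subtlety, however, is slightly off as stated. Shrinking a handle into an $\cS$-chart does \emph{not} ``force the type to lie in $\cS$'': the handle around the orbit $\ga x$ is $\ga$-equivariantly diffeomorphic to $[\DD_{\ga_x}/\ga]$, so its type is the isotropy group $\ga_x$, and $\ga_x$ need not be a member of $\cS$ just because $x$ sits inside some $[\DD_I/\ga]$-chart with $I\in\cS$; generically $\ga_x$ is merely a stabiliser subgroup of such an $I$. What saves the argument is a different observation: every open $\ga$-invariant submanifold of an $\cS$-singular quotient is again $\cS$-singular, so the handle $[\DD_{\ga_x}/\ga]$ and the collar $[S^{n-q}\times\R^q/I]$, being such open subsets (or locally modelled on such), are themselves objects of $\cS\Orb_n^G$. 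Since $\cS\Disk_n^G$ is defined as the full subcategory of \emph{all} local models lying in $\cS\Orb_n^G$, not only those whose type is literally in $\cS$, the handle $[\DD_{\ga_x}/\ga]$ automatically lies in $\cS\Disk_n^G$, and $H$ and $\int A_H$ agree on it by the $\cS$-analogue of Corollary \ref{cor:localcomp}. The same remark disposes of the concern in Claim 2 of the original proof, where one decomposes the $I$-sphere $S^{n-q+1}$ and produces handles of types that are subgroups of $I$. With this correction your argument is complete and matches the intended proof.
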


The classification of Theorem \ref{thm:classsing} ensures us that we recover the usual invariants.\footnote{Alternatively, it is also clear that we obtain the usual invariants by inspection of the colimit in Proposition \ref{aftlemagain}.}

\begin{cor}
	Let $\catC$ be $\tensor$-cocomplete, $A$ be a	$\cS\Disk_n^G$-algebra $A$ in $\catC$ and $\tilde{A}$ be any $\gar\Disk_n^G$-algebra in $\catC$ such that 
	$\tilde{A}|_{\cS\Disk_n^G} = A$. Then
	\[ \int_{[M/\ga]} A \simeq \int_{[M/\ga]} \tilde{A}\]
	for any $\cS$-singular $(\rho,G)$-framed global quotient $[M/\ga]$.
\end{cor}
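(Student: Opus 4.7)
The plan is to invoke the characterisation theorem for $\cS$-singular quotients (Theorem \ref{thm:classsing}): both $\int_{(-)} A$ and the restriction of $\int_{(-)} \tilde{A}$ to $\cS\Orb_n^G$ are objects of $\cH(\cS\Orb_n^G, \catC)$ whose restrictions to $\cS\Disk_n^G$ agree, and hence they must coincide.

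First, I would record the obvious but crucial observation that $\cS$-singularity is a local property on $M$ and so is inherited by $\ga$-invariant opens: if $[M/\ga]$ is $\cS$-singular and $U \subset M$ is a $\ga$-invariant open, any point of $U$ still has a $\rho$-framed neighbourhood equivalent to some $[\DD_I/\ga]$ with $I \in \cS$. In particular, any collar-gluing $M \cong M_- \cup_{N \times \R} M_+$ of an $\cS$-singular global quotient decomposes into $\cS$-singular pieces, and any sequential cover $M = \bigcup_i M_i$ by $\ga$-invariant opens has each $[M_i/\ga]$ in $\cS\Orb_n^G$. Consequently, collar-gluings and sequential colimits in $\cS\Orb_n^G$ are the same as those computed in $\gar\Orb_n^G$.

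Next, I would verify that the restriction $\int_{(-)} \tilde{A}\big|_{\cS\Orb_n^G}$ lies in $\cH(\cS\Orb_n^G, \catC)$: $\tensor$-excision follows from Theorem \ref{thm:excision} applied to $\tilde{A}$, together with the locality statement above which ensures all pieces of a collar gluing in $\cS\Orb_n^G$ are themselves $\cS$-singular; preservation of sequential colimits follows from Lemma \ref{lem:efhseqcol} in the same way. The restriction to $\cS\Disk_n^G$ is computed by Corollary \ref{cor:localcomp}: for $I \in \cS$,
\[ \int_{[\DD_I/\ga]} \tilde{A} \;\simeq\; \tilde{A}([\DD_I/\ga]) \;=\; A([\DD_I/\ga]), \]
where the last equality uses the hypothesis $\tilde{A}\big|_{\cS\Disk_n^G} = A$. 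On the other hand, $\int_{(-)} A \in \cH(\cS\Orb_n^G, \catC)$ by the $\cS$-singular analogues of Theorem \ref{thm:excision} and Lemma \ref{lem:efhseqcol} (the proofs go through verbatim, as is implicit in the discussion preceding Theorem \ref{thm:classsing}), and its restriction to $\cS\Disk_n^G$ is $A$ by the $\cS$-singular analogue of Corollary \ref{cor:localcomp}. Applying Theorem \ref{thm:classsing}, both functors are identified as the unique equivariant factorization homology theory for $\cS$-singular $(\rho,G)$-framed global quotients with coefficient system $A$, and the desired equivalence follows.

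The main obstacle is purely bookkeeping: ensuring that the locality of $\cS$-singularity is correctly tracked so that the collar-gluings and sequential colimits required to invoke $\tensor$-excision and the sequential colimit axiom genuinely stay inside $\cS\Orb_n^G$. No new technical input beyond what is already established in the previous sections is needed.
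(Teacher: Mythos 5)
Your proposal is correct and matches what the paper intends: the corollary is stated immediately after Theorem~\ref{thm:classsing} as a consequence of that classification, and your argument---showing both $\int_{(-)} A$ and $\int_{(-)} \tilde{A}\big|_{\cS\Orb_n^G}$ lie in $\cH(\cS\Orb_n^G,\catC)$ and restrict to $A$ on $\cS\Disk_n^G$, then invoking Theorem~\ref{thm:classsing}---is exactly the intended route, and your care in checking that $\cS$-singularity is preserved under passing to $\ga$-invariant opens (so that collar-gluings and sequential covers stay in $\cS\Orb_n^G$) is the right precaution. The paper also notes in a footnote a slightly more elementary alternative: by inspecting the colimit formula of Proposition~\ref{aftlemagain}, one can observe directly that the slice category $\gar\Disk_n^G \downarrow [M/\ga]$ indexing the Kan extension for $\tilde{A}$ coincides with $\cS\Disk_n^G \downarrow [M/\ga]$ when $[M/\ga]$ is $\cS$-singular (only disks of singularity types in $\cS$ admit framed embeddings into $[M/\ga]$), so the two colimits agree term by term; this avoids the characterisation theorem entirely, but your argument is no less valid.
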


\subsection{Colouring singular strata} \label{subsec:colouring}

In the coend formula \eqref{coendformula} that computes the invariant of some global quotients different singular points of the same singularity type are assigned the same value. This is a simple consequence of the definition of $\gar\Disk_n^G$-algebras.
\begin{ex} \label{ex:circlenocolour}
 Consider the circle $S^1$ with the reflection $\Z_2$-action and an $r$-framed global quotient for $r: \Z_2 \rightarrow GL(2)$ the reflection representation. The global quotient has two isolated singularities. A $\Z_2^r\Disk_n^{\mathrm{fr}}$-algebra in $\vect$ consists of a unital algebra $A$ with anti-involution $\phi$ and a pointed module $M$. 
 Then $\int_{[S^1/\Z_2]} (A,M) \simeq M \tensor_A M$ by $\tensor$-excision. 
\end{ex}
However, one might wish to assign different values to the various singular strata. To accommodate this we introduce colourings.

\begin{defn} Let $[M/\ga]$ be a global quotient, and $(C,c_*\in C)$ be a pointed set. A
	\begin{enumerate}
		\item A \emph{singular stratum} of $[M/\ga]$ is the $\ga$-orbit of a connected component of $M^I$ for some $I \leq \ga$, $I\neq \e$. 
		\item A \emph{$C$-colouring} of $[M/\ga]$ is an assignment of elements $c \in C$ to the singular strata of $[M/\ga]$.
	\end{enumerate}
\end{defn}

\begin{conv}
	We always colour smooth strata with the default colour $c_*$.
\end{conv}

A map between global quotients with $C$-colourings is a map such that $f(x) = y$ implies that the strata containing $x$ and $y$ have the same colour. 
Consequently we have a natural notion of framed embedding beteen $C$-coloured $(\rho,G)$-framed global quotients.
\begin{defn}
	Let $\cS$ be a choice of $\rho$-singularity types. A \emph{family of $C$-colourings of $\cS$-singularities} is a choice of subset $\cF \subset \cS \times C$ such that $\cF \cap ( \e \times C )  =  \e \times \{c_*\}$. 
\end{defn}

We leave it to the reader to formulate the corresponding notions of $\cF\Disk_n^G$-algebra, factorization homology and the classification analogue of Theorem \ref{thm:classsing}. 
As the colourings are just a formal addition, the proofs are verbatim the same. 

\begin{ex}
	We consider $\Z_2$-global quotients thar are $r$-framed as in Example \ref{ex:circlenocolour} with singular strata coloured by either $c$ or  $c_*$. 
	A $\cF\Disk_1^{\mathrm{fr}}$-algebra in $\vect$ consists of a unital algebra $A$ with anti-involution $\phi$ and two pointed modules $M$ and $N$. 
	We colour the two singular points of $[S^1/\Z_2]$ by $c$ and $c_*$ respectively. We then have:
	\[\int_{[S^1/\Z_2]} (A,M,N) \simeq M \tensor_A N.\]
\end{ex}

\setcounter{section}{0}
\setcounter{equation}{0}
\renewcommand{\theequation}{\thesection.\arabic{equation}}
\setcounter{figure}{0}
\setcounter{table}{0}
 


\begin{thebibliography}{99}

\bibitem[AF15]{af15}
D.~Ayala and J.~Francis.
\newblock Factorization homology of topological manifolds.
\newblock {\em J. Topol.}, 8(4):1045--1084, 2015.

\bibitem[AFT17]{aft17}
D.~Ayala, J.~Francis, and H.~L. Tanaka.
\newblock Factorization homology of stratified spaces.
\newblock {\em Selecta Math. (N.S.)}, 23(1):293--362, 2017.

\bibitem[ALR07]{ademleidaruan}
A.~Adem, J.~Leida, and Y.~Ruan.
\newblock {\em Orbifolds and stringy topology}, volume 171 of {\em Cambridge
  Tracts in Mathematics}.
\newblock Cambridge University Press, Cambridge, 2007.

\bibitem[And10]{andrade10}
R.~Andrade.
\newblock {\em From manifolds to invariants of {E}n-algebras}.
\newblock ProQuest LLC, Ann Arbor, MI, 2010.
\newblock Thesis (Ph.D.)--Massachusetts Institute of Technology.

\bibitem[Ati88]{atiyah88}
M.~Atiyah.
\newblock Topological quantum field theories.
\newblock {\em Inst. Hautes \'Etudes Sci. Publ. Math.}, (68):175--186, 1988.

\bibitem[BD95]{bd95}
J.~C. Baez and J.~Dolan.
\newblock Higher-dimensional algebra and topological quantum field theory.
\newblock {\em J. Math. Phys.}, 36(11):6073--6105, 1995.

\bibitem[BD04]{bd04}
A.~Beilinson and V.~Drinfeld.
\newblock {\em Chiral algebras}, volume~51 of {\em {AMS} Coll. Publ.}
\newblock {AMS}, Providence, RI, 2004.

\bibitem[BH15]{blumberghill13}
A.~J. Blumberg and M.~A. Hill.
\newblock Operadic multiplications in equivariant spectra, norms, and
  transfers.
\newblock {\em Adv. Math.}, 285:658--708, 2015.

\bibitem[Bre67a]{bredon67a}
G.~E. Bredon.
\newblock Equivariant cohomology theories.
\newblock {\em Bull. Amer. Math. Soc.}, 73:266--268, 1967.

\bibitem[Bre67b]{bredon67b}
G.~E. Bredon.
\newblock {\em Equivariant cohomology theories}.
\newblock Lecture Notes in Mathematics, No. 34. Springer-Verlag, Berlin-New
  York, 1967.

\bibitem[BZBJa]{bzbj}
D.~Ben-Zvi, A.~Brochier, and D.~Jordan.
\newblock Integrating quantum groups over surfaces.
\newblock Preprint: \url{https://arxiv.org/abs/1501.04652}.

\bibitem[BZBJb]{bzbj2}
D.~Ben-Zvi, A.~Brochier, and D.~Jordan.
\newblock Quantum character varieties and braided module categories.
\newblock Preprint: \url{https://arxiv.org/abs/1606.04769}.

\bibitem[Cer61]{cerf61}
J.~Cerf.
\newblock Topologie de certains espaces de plongements.
\newblock {\em Bull. Soc. Math. France}, 89:227--380, 1961.

\bibitem[CG17]{cg17}
K.~Costello and O.~Gwilliam.
\newblock {\em Factorization algebras in quantum field theory. {V}ol. 1},
  volume~31 of {\em New Mathematical Monographs}.
\newblock Cambridge University Press, Cambridge, 2017.

\bibitem[Che66]{chern66}
S.~S. Chern.
\newblock The geometry of {$G$}-structures.
\newblock {\em Bull. Amer. Math. Soc.}, 72:167--219, 1966.

\bibitem[CR04]{chenruan04}
W.~Chen and Y.~Ruan.
\newblock A new cohomology theory of orbifold.
\newblock {\em Comm. Math. Phys.}, 248(1):1--31, 2004.

\bibitem[DI04]{duggerisakson}
D.~Dugger and D.~C. Isaksen.
\newblock Topological hypercovers and {$\Bbb A^1$}-realizations.
\newblock {\em Math. Z.}, 246(4):667--689, 2004.

\bibitem[DSS13]{dsps13}
C.~L. {Douglas}, C.~{Schommer-Pries}, and N.~{Snyder}.
\newblock {Dualizable tensor categories}.
\newblock Preprint: \url{https://arxiv.org/abs/1312.7188}, 2013.

\bibitem[ENO10]{eno10}
P.~Etingof, D.~Nikshych, and V.~Ostrik.
\newblock Fusion categories and homotopy theory.
\newblock {\em Quantum Topol.}, 1(3):209--273, 2010.
\newblock With an appendix by Ehud Meir.

\bibitem[Fra]{frankland}
M.~Frankland.
\newblock Math 527 - homotopy theory, homotopy pullbacks.
\newblock Lecture notes, available at
  \url{https://faculty.math.illinois.edu/~franklan/Math527_0308.pdf}.

\bibitem[Fra13]{francis13}
J.~Francis.
\newblock The tangent complex and {H}ochschild cohomology of {$ E_n$}-rings.
\newblock {\em Compos. Math.}, 149(3):430--480, 2013.

\bibitem[Gin15]{ginot15}
G.~Ginot.
\newblock Notes on factorization algebras, factorization homology and
  applications.
\newblock In {\em Mathematical aspects of quantum field theories}, Math. Phys.
  Stud., pages 429--552. Springer, Cham, 2015.

\bibitem[GM]{guilloumay17}
B.J. Guillou and J.P. May.
\newblock Equivariant iterated loop space theory and permutative
  {G}-categories.
\newblock Preprint: \url{https://arxiv.org/abs/1207.3459}.

\bibitem[Hil]{hill17}
M.A. Hill.
\newblock On the algebras over equivariant little disks.
\newblock Preprint: \url{https://arxiv.org/abs/1709.02005}.

\bibitem[Hor17]{horel17}
G.~Horel.
\newblock Factorization homology and calculus {\it \`a la} {K}ontsevich
  {S}oibelman.
\newblock {\em J. Noncommut. Geom.}, 11(2):703--740, 2017.

\bibitem[Ill72]{illmanthesis}
S.~A. Illman.
\newblock {\em Equivariant algebraic topology}.
\newblock ProQuest LLC, Ann Arbor, MI, 1972.
\newblock Thesis (Ph.D.)--Princeton University.

\bibitem[Jam63]{james63}
I.~M. James.
\newblock The space of bundle maps.
\newblock {\em Topology}, 2:45--59, 1963.

\bibitem[JM11]{jordanma}
D.~Jordan and X.~Ma.
\newblock Quantum symmetric pairs and representations of double affine {H}ecke
  algebras of type {$C^\vee C_n$}.
\newblock {\em Selecta Math. (N.S.)}, 17(1):139--181, 2011.

\bibitem[Jor09]{jordan09}
D.~Jordan.
\newblock Quantum {$D$}-modules, elliptic braid groups, and double affine
  {H}ecke algebras.
\newblock {\em Int. Math. Res. Not. IMRN}, (11):2081--2105, 2009.

\bibitem[Kob72]{kobayashi72}
S.~Kobayashi.
\newblock {\em Transformation groups in differential geometry}.
\newblock Springer-Verlag, New York-Heidelberg, 1972.
\newblock Ergebnisse der Mathematik und ihrer Grenzgebiete, Band 70.

\bibitem[Lee03]{lee03}
J.~M. Lee.
\newblock {\em Introduction to smooth manifolds}, volume 218 of {\em Graduate
  Texts in Mathematics}.
\newblock Springer-Verlag, New York, 2003.

\bibitem[Let99]{letzter99}
G.~Letzter.
\newblock Symmetric pairs for quantized enveloping algebras.
\newblock {\em J. Algebra}, 220(2):729--767, 1999.

\bibitem[Let02]{letzter02}
G.~Letzter.
\newblock Coideal subalgebras and quantum symmetric pairs.
\newblock In {\em New directions in {H}opf algebras}, volume~43 of {\em Math.
  Sci. Res. Inst. Publ.}, pages 117--165. Cambridge Univ. Press, Cambridge,
  2002.

\bibitem[Let03]{letzter03}
G.~Letzter.
\newblock Quantum symmetric pairs and their zonal spherical functions.
\newblock {\em Transform. Groups}, 8(3):261--292, 2003.

\bibitem[Lur09a]{lhtt}
J.~Lure.
\newblock {\em Higher Topos Theory}.
\newblock Princeton University Press, 2009.

\bibitem[Lur09b]{lurietft}
J.~Lurie.
\newblock On the classification of topological field theories.
\newblock In {\em Current developments in mathematics, 2008}, pages 129--280.
  Int. Press, Somerville, MA, 2009.

\bibitem[Lur17]{lha}
J.~Lurie.
\newblock Higher algebra.
\newblock Available at \url{http://www.math.harvard.edu/~lurie/papers/HA.pdf},
  2017.

\bibitem[McD75]{mcduff75}
D.~McDuff.
\newblock Configuration spaces of positive and negative particles.
\newblock {\em Topology}, 14:91--107, 1975.

\bibitem[Mit]{mitchell}
S.A. Mitchell.
\newblock Notes on principal bundles and classifying spaces.
\newblock Lecture notes, available at
  \url{https://sites.math.washington.edu/~mitchell/Notes/prin.pdf}.

\bibitem[MM03]{moerdijkmrcun03}
I.~Moerdijk and J.~Mr\v{c}un.
\newblock {\em Introduction to foliations and {L}ie groupoids}, volume~91 of
  {\em Cambridge Studies in Advanced Mathematics}.
\newblock Cambridge University Press, Cambridge, 2003.

\bibitem[MW12]{mw12}
S.~Morrison and K.~Walker.
\newblock Blob homology.
\newblock {\em Geom. Topol.}, 16(3):1481--1607, 2012.

\bibitem[Pal60]{palais60}
R.~S. Palais.
\newblock Local triviality of the restriction map for embeddings.
\newblock {\em Comment. Math. Helv.}, 34:305--312, 1960.

\bibitem[Rie14]{riehl14}
E.~Riehl.
\newblock {\em Categorical homotopy theory}, volume~24 of {\em New Mathematical
  Monographs}.
\newblock Cambridge University Press, Cambridge, 2014.

\bibitem[RS00]{rourkesanderson}
C.~Rourke and B.~Sanderson.
\newblock Equivariant configuration spaces.
\newblock {\em J. London Math. Soc. (2)}, 62(2):544--552, 2000.

\bibitem[Sal01]{salvatore01}
P.~Salvatore.
\newblock Configuration spaces with summable labels.
\newblock In {\em Cohomological methods in homotopy theory ({B}ellaterra,
  1998)}, volume 196 of {\em Progr. Math.}, pages 375--395. Birkh\"auser,
  Basel, 2001.

\bibitem[Sat56]{satake56}
I.~Satake.
\newblock On a generalisation of the notion of manifold.
\newblock {\em Proc. Nat. Acad. Sci. U.S.A.}, 42:359--363, 1956.

\bibitem[Sat57]{satake57}
I.~Satake.
\newblock The {G}auss-{B}onnet theorem for {V}-manifolds.
\newblock {\em J. Math. Soc. Japan}, 9:464--492, 1957.

\bibitem[Sch14]{scheimbauer}
C.I. Scheimbauer.
\newblock {\em Factorization Homology as a Fully Extended Topological Field
  Theory}.
\newblock PhD thesis, ETH Z\"{u}rich, 2014.

\bibitem[Seg73]{segal73}
G.~Segal.
\newblock Configuration-spaces and iterated loop-spaces.
\newblock {\em Invent. Math.}, 21:213--221, 1973.

\bibitem[Seg04]{segal04}
G.~Segal.
\newblock The definition of conformal field theory.
\newblock In {\em Topology, geometry and quantum field theory}, volume 308 of
  {\em London Math. Soc. Lecture Note Ser.}, pages 421--577. Cambridge Univ.
  Press, Cambridge, 2004.

\bibitem[Seg10]{segal10}
G.~Segal.
\newblock Locality of holomorphic bundles, and locality in quantum field
  theory.
\newblock In {\em The many facets of geometry}, pages 164--176. Oxford Univ.
  Press, Oxford, 2010.

\bibitem[Ste57]{steenrod57}
N.~Steenrod.
\newblock {\em The {T}opology of {F}ibre {B}undles}.
\newblock Princeton Mathematical Series, vol. 14. Princeton University Press,
  Princeton, N. J., second edition edition, 1957.

\bibitem[Ste64]{sternberg64}
S.~Sternberg.
\newblock {\em Lectures on differential geometry}.
\newblock Prentice-Hall, Inc., Englewood Cliffs, N.J., 1964.

\bibitem[Thu97]{thurston92}
W.~P. Thurston.
\newblock Three-dimensional geometry and topology, preliminary draft.
\newblock Unpublished, 1997.

\bibitem[VW94]{vafawitten}
C.~Vafa and E.~Witten.
\newblock A strong coupling test of {$S$}-duality.
\newblock {\em Nuclear Phys. B}, 431(1-2):3--77, 1994.

\bibitem[Was69]{wasserman69}
A.~G. Wasserman.
\newblock Equivariant differential topology.
\newblock {\em Topology}, 8:127--150, 1969.

\bibitem[Wee]{TOQSP}
T.A.N. Weelinck.
\newblock A topological origin of quantum symmetric pairs.
\newblock Preprint: \url{https://arxiv.org/abs/1804.02315}.

\bibitem[Xic97]{xicot97}
M.~Xicot\'encatl.
\newblock On configuration spaces of orbits of points and their loop space
  homology.
\newblock {\em Morfismos}, 1(1):13--26, 1997.

\end{thebibliography}
\end{document}